\documentclass[11pt,letterpaper,reqno]{amsart}
\usepackage[margin=2cm]{geometry}
\usepackage{hyperref,amssymb,amsfonts,amsthm,amsrefs,mathtools,verbatim,microtype,bm,nicefrac,tikz-cd,quiver}
\usepackage{stmaryrd}
\usepackage[shortlabels]{enumitem}
\usepackage{setspace}
\newtheorem{Theorem}{Theorem}[section]
\newtheorem{Proposition}[Theorem]{Proposition}
\newtheorem{Lemma}[Theorem]{Lemma}
\newtheorem{Corollary}[Theorem]{Corollary}

\newtheorem*{Claim*}{Claim}

\theoremstyle{definition}
\newtheorem{Definition}[Theorem]{Definition}

\newtheorem{Question}[Theorem]{Question}
\newtheorem{Observation}[Theorem]{Observation}

\DeclareFontFamily{U}{mathb}{\hyphenchar\font45}
\DeclareFontShape{U}{mathb}{m}{n}{
<-6> mathb5 <6-7> mathb6 <7-8> mathb7
<8-9> mathb8 <9-10> mathb9
<10-12> mathb10 <12-> mathb12
}{}
\DeclareSymbolFont{mathb}{U}{mathb}{m}{n}
\DeclareMathSymbol{\pprec}{\mathrel}{mathb}{"CE}
\DeclareMathSymbol{\ssucc}{\mathrel}{mathb}{"CF}

\DeclareFontFamily{U}{mathb}{\hyphenchar\font45}
\DeclareFontShape{U}{mathb}{m}{n}{
      <5> <6> <7> <8> <9> <10> gen * mathb
      <10.95> mathb10 <12> <14.4> <17.28> <20.74> <24.88> mathb12
      }{}
\DeclareSymbolFont{mathb}{U}{mathb}{m}{n}
\DeclareFontSubstitution{U}{mathb}{m}{n}
\DeclareMathSymbol{\monus}{2}{mathb}{"01}

\newcommand{\andd}{\wedge}

\newcommand{\da}{{\downarrow}}
\newcommand{\ua}{{\uparrow}}
\newcommand{\imp}{\rightarrow}
\newcommand{\Imp}{\Rightarrow}

\newcommand{\Biimp}{\Leftrightarrow}

\newcommand{\iso}{\cong}

\newcommand{\Nb}{\mathbb{N}}
\newcommand{\Zb}{\mathbb{Z}}
\newcommand{\Qb}{\mathbb{Q}}

\newcommand{\keq}{\simeq}

\newcommand{\mc}[1]{\mathcal{#1}}
\newcommand{\mf}[1]{\mathfrak{#1}}

\newcommand{\ol}[1]{\overline{#1}}

\usepackage{xcolor}
\usepackage{soul}

\AtBeginDocument{%
   \def\MR#1{}
}

\title{On cohesive products of fields} 

\author[Dimitrov]{Rumen Dimitrov}
\address{Department of Mathematics \& Philosophy\\
Western Illinois University\\
476 Morgan Hall\\
1 University Circle\\
Macomb, IL 61455\\
USA}
\email{rd-dimitrov@wiu.edu}
\urladdr{http://www.wiu.edu/users/rdd104/}

\author[Harizanov]{Valentina Harizanov}
\address{Department of Mathematics\\
The George Washington University\\
Phillips Hall\\
801 22\textsuperscript{nd} St.\ NW\\
Washington, DC 20052\\
USA
}
\email{harizanv@gwu.edu}
\urladdr{https://blogs.gwu.edu/harizanv/}

\author[Klatt]{Henry J. Klatt}
\address{The George Washington University\\
Phillips Hall\\
801 22\textsuperscript{nd} St.\ NW\\
Washington, DC 20052\\
USA}
\email{klatt@gwu.edu}
\urladdr{https://blogs.gwu.edu/klatt/}

\author[Srinivasan]{Keshav Srinivasan}
\address{Yeshiva University\\
Department of Mathematical Sciences \\
Yeshiva University \\
215 Lexington, New York, NY 10016\\
USA}
\email{ksrinivasan@gwu.edu}

\date{\today}

\begin{document}

\begin{abstract}
We develop the foundations of effective ultraproducts of fields and their Galois groups using the methods of computability theory. These computability-theoretic analogs of ultraproducts are called cohesive products, since the role of an ultrafilter is played by a cohesive set. A set of natural numbers is cohesive if it is infinite and cannot be partitioned into two infinite subsets by any computably enumerable set. In particular, we investigate the way cohesive products interact with field extensions with emphasis on both finite and infinite Galois extensions, and the associated Galois groups. We study the first-order theories and definability of cohesive powers of number fields, and characterize the infinite Galois groups of cohesive powers for a large class of infinite Galois extensions.  Finally, we introduce hyper-automorphisms, which are automorphisms of a cohesive power that respect non-standard field operations, and give a complete description of the hyper-automorphism groups of cohesive powers of a large class of computable Galois extensions, and use them to describe the classical infinite Galois groups of such fields.

Keywords: computably enumerable sets, computable structures,  cohesive products, ultraproducts, infinite Galois theory, hyper-automorphism groups

MSC 2020: 03C57, 03C20, 03D45

\end{abstract}

\maketitle

\section{Introduction and preliminaries} \label{intro}

One of the most important constructions in model theory and other areas of mathematics is that of an ultraproduct. The ultraproduct construction uses a non-principal ultrafilter to combine a sequence of structures into a large structure with many desirable properties, and its first-order theory is controlled by the ultrafilter and the sequence of structures via {\L}o\'{s}'s theorem. However, the existence of a non-principal ultrafilter over an infinite set requires the use of a weak form of the axiom of choice, and the ultrapower construction typically produces an uncountable structure.  Both of these are obstacles to computable structure theory, which studies the algorithmic properties of countable structures. 

The first ultrapower-like construction is due to Skolem (\cite{Skolem}), who constructed a countable non-standard model of true arithmetic.  However, it more closely resembles a cohesive power than an ultrapower.  Skolem's model was constructed by taking the quotient of the set of all arithmetically definable functions via an arithmetically inseparable set (i.e., arithmetically cohesive set)
(see ~\cite{DimitrovHarizanovSKolem}).  

Feferman, Scott, and Tennenbaum investigated whether Skolem's construction can be made more effective by assuming that $C$ is only \emph{r-cohesive} (i.e., indecomposable for the collection of recursive sets) and by restricting the domain to total computable functions $f$.  They answered the question negatively by showing that it is not possible to obtain a model of Peano arithmetic in this way.  The result was announced in ~\cite{FefermanScottTennenbaum}, however the proof was never published.  Lerman (\cite{LermanCo-r-Max}) published a proof of the results of Feferman, Scott, and Tennenbaum. Additional results were obtained in ~\cites{HirschfeldModels, HirschfeldWheelerBook,ML1}.

Dimitrov (\cite{DimitrovCohPow}) generalized the Feferman, Scott, Tennenbaum construction from $\Nb$ to arbitrary computable structures.  This \emph{cohesive power} construction arose naturally from the study of the automorphisms of the lattice of computably enumerable vector spaces modulo finite dimension over a computable field (\cites{DimitrovLattices,DimitrovHarizanov,DimitrovHarizanovMillerMourad}).  In \cite{DimitrovCohPow},  Dimitrov showed that the cohesive power satisfies a restricted version of {\L}o\'{s}'s theorem.  The scope of {\L}o\'{s}'s theorem in the cohesive power can be extended by introducing extra decidability into the structure, and is fully restored if the structure has a computable elementary diagram.  Cohesive powers of other structures have been studied in \cites{HS,LinOrd,shafer2023effectivepowersomegadelta2}.

In this paper, we return to the study of cohesive products of computable fields. This paper is organized as follows.  In the remainder of this section, we discuss important definitions and results about computability theory and Galois theory, including splitting algorithms and effective Galois actions.

In section \ref{Q}, we present results about cohesive powers of the field of rational numbers.  In contrast to classical ultrapowers, cohesive powers of the field of rational numbers are rigid.  Further, we define a recursively axiomatizable first order analog of the categorical second order theory of the rational numbers, and demonstrate that the cohesive powers of $\Qb$ fail to satisfy this theory.

Section~\ref{alg} expands the analysis to algebraic number fields, which are finite extensions of the field of rational numbers.  We show that cohesive powers of Galois number fields preserve their Galois groups.  Further, we characterize their automorphisms and analyze their first order theories using the recent developments in Hilbert's 10th problem for number rings.  This answers question 6.4 of the AIM problem list for Definability and Decidability Problems in Number Theory.

Section~\ref{inf} analyzes cohesive powers of infinite algebraic extensions in general.  We classify the transcendence behavior of cohesive powers of field extensions over fields with splitting algorithms. In the second half of \ref{inf}, we develop a technique to extend infinite sequences of cohesive powers to hyper-finite lengths indexed over the cohesive power of $\Nb$.  Using this technique, we obtain a description of the infinite Galois groups of the algebraic parts of cohesive powers of infinite algebraic extensions by taking the projective limit of cohesive products of Galois actions. 

Finally in Section \ref{hyp}, we introduce the notion of hyper-automorphism groups.  A hyper-automorphism of a cohesive product of fields is an automorphism that preserves non-standard ``hyper-finite'' sums, products and polynomial functions.  We completely characterize the hyper-automorphism groups of cohesive products and powers of  ``fully computable'' Galois extensions in terms of the pseudo-finite cohesive products of finite Galois groups and their ``canonical hyper-finite'' sub extensions.  We equip the hyper-automorphism group with a non-standard metric, as in non-standard analysis, and analyze the groups relationship with both the topological and algebraic structure of the infinite Galois groups of the original field extensions, as well as the Galois groups of the their algebraic parts, re-characterizing the results of section \ref{inf}.

\subsection*{Computability Theory and Cohesive Products}

We use basic concepts and notation from computability theory and computable structure theory.  Comprehensive references include~\cites{LermanBook, SoareBookRE, SoareBookTC} for computability theory and~\cites{AshKnightBook,FHM, MontalbanBook,Montalban_2026} for computable structure theory.

Throughout the paper, we consider only first-order languages and first-order formulas. A computable language is a countable language with a uniformly computable set of symbols and their arities. Fix a computable language $\mf{L}$. A computable $\mf{L}$-structure $\mc{A}$ consists of a non-empty computable domain $A \subseteq \omega$ and a uniformly computable interpretation of all constant, function, and relation symbols of $\mf{L}$.  We often denote the domain of a structure $\mc{A}$ by $|\mc{A}|$, or simply $\mathcal{A}$ when there is no ambiguity. Equivalently, a computable structure is a structure with computable domain having a computable atomic diagram (or, equivalently, a computable quantifier-free diagram).  A sequence of structures $\{\mathcal{A}_i\}_{i\in \omega}$ is \emph{uniformly computable} if the sequence $\{ | \mathcal{A}_i |\}_{i \in \omega}$ of domains is uniformly computable, and there is a single binary computable function $f$ such that $f(i,\_)$ decides the atomic diagram of $\mathcal{A}_i$.

Now $\omega = \{ 0,1,2,\dots  \}$ is the set of natural numbers, while $\Nb = (\omega, +,\times, \leq )$ denotes the standard model of arithmetic.  We denote partial computable functions by $\varphi$, $\psi$, etc.  For a partial computable function $\varphi$, by $\varphi(n)\da$ we mean that $\varphi$ halts on input $n$, thus producing an output, and by $\varphi(n)\ua$, we mean that $\varphi$ does not halt on input $n$.  The notation $\varphi \keq \psi$ means that $\varphi$ and $\psi$ are equal partial functions:  for every $n$, either $\varphi(n)\da = \psi(n)\da$ or both $\varphi(n)\ua$ and $\psi(n)\ua$.  As usual, $\{{\varphi_e}\}_{e \in \omega}$ is the standard effective enumeration of all partial computable functions. 

Let $A,B \subseteq \omega$. Then we write  $A \subseteq^* B$ iff $A-B$  is finite, that is, $B$ contains all but finitely many elements of $A$. We also write  $B\supseteq^* A$ if $A\subseteq^* B$.
\begin{Definition}

   An infinite set $C \subseteq \omega$ is \emph{cohesive} if for every c.e.\ set $W$, either $C \subseteq^* W$ or $C \subseteq^* \ol{W}$.

\end{Definition}

More generally, a straightforward induction proof shows that if $C$ is a cohesive set and $X$ is a Boolean combination of c.e.\ sets, then either $C \subseteq^* X$ or $C \subseteq^* \ol{X}$.  Note that if $C$ is cohesive and $X$ is a Boolean combination of c.e.\ sets, then $C \cap X$ being infinite implies that $C \subseteq^* X$.  Also, if $A,B \supseteq^* C$, then $A\cap B \supseteq^*C $ as well.  

We will often say a property $P$ holds for almost every $i$ if $\{i \mid P(i)\} \supseteq^* C$.

\begin{Definition}\label{def-CohProd}
Let $\mf{L}$ be a computable language.  Let $\{\mc{A}_i\}_{i\in \omega}$ be a uniformly computable sequence of $\mf{L}$-structures with domains $\{|\mc{A}_i|\}_{i\in \omega}$.  Let $C \subseteq \omega$ be a cohesive set.  The \emph{cohesive product of $\{\mc{A}_i\}_{i\in \omega} $} over $C$ is the $\mf{L}$-structure $\prod_C \mc{A}_i$ defined as follows.

\begin{itemize}
\item Let $\mathcal{D}$ be the set of partial computable functions $\varphi$ such that $\forall i \, (\varphi(i)\da \,\Imp\, \varphi(i) \in |\mc{A}_i|)$ and $C \subseteq^* dom(\varphi)$.

\medskip

\item For $\varphi, \psi \in \mathcal{D}$, let $\varphi \simeq_C \psi$ denote $C \subseteq^* \{i \in \omega| \varphi(i)\da = \psi(i)\da\}$.  The relation $\simeq_C$ is an equivalence relation on $\mathcal{D}$.  Let $[\varphi]$ denote the equivalence class of $\varphi \in \mathcal{D}$ with respect to $\simeq_C$.

\medskip

\item The domain of $\prod_C \mc{A}_i$ is the set $|\prod_C \mc{A}_i| = \{[\varphi] \mid \varphi \in \mathcal{D}\}$.

\medskip

\item
Let $R$ be an $m$-ary relation symbol of $\mf{L}$.  For $[\varphi_{0}], \dots  , [\varphi_{m-1}] \in |\prod_C \mc{A}_i|$, define $R^{\prod_C \mc{A}_i}([\varphi_0], \dots  , [\varphi_{m-1}])$ by
\begin{align*}
R^{\prod_C \mc{A}_i}([\varphi_0], \dots  , [\varphi_{m-1}]) \;\Biimp\; C \subseteq^* \bigl\{i \in \omega \mid R^{\mc{A}_i}(\varphi_0(i), \dots  , \varphi_{m-1}(i))\bigr\}.
\end{align*}
Here we think of $R^{\mc{A}_i}(\varphi_0(i), \dots  , \varphi_{m-1}(i))$ as including the condition that $\varphi_j(i)\da$ for each $j < m$.

\medskip

\item Let $f$ be an $m$-ary function symbol of $\mf{L}$.  For $[\varphi_0], \dots  , [\varphi_{m-1}] \in |\prod_C \mc{A}_i|$, let $\psi$ be the partial computable function defined by
\begin{align*}
\psi(i) = f^{\mc{A}_i}(\varphi_0(i), \dots  , \varphi_{m-1}(i)),
\end{align*}
and note that $C \subseteq^* dom(\psi)$ because $C \subseteq^* dom(\varphi_j)$ for each $j < m$.  Define $f^{\prod_C \mc{A}_i}$ by
\begin{align*}
f^{\prod_C \mc{A}_i}([\varphi_0], \dots  , [\varphi_{m-1}]) = [\psi].
\end{align*}

\medskip

\item Let $c$ be a constant symbol of $\mf{L}$.  Let $\psi$ be the total computable function defined by $\psi(i) = c^{\mc{A}_i}$, and define $c^{\prod_C \mc{A}_i} = [\psi]$.
\end{itemize}

In the case where $\mc{A}_i$ is the same fixed computable structure $\mc{A}$ (with fixed presentation) for every $i$, the cohesive product $\prod_C \mc{A}_i$ is called the \emph{cohesive power of $\mc{A}$ over $C$}, and is denoted by $\prod_C \mc{A}$.  This is analogous to the ultrapower.
\end{Definition}

We often consider cohesive powers of computable structures by co-c.e.\ cohesive sets.  The co-c.e.\ cohesive sets are exactly the complements of the \emph{maximal} sets, which are the co-atoms in the lattice of c.e.\ sets modulo finite sets (see~\cite{SoareBookRE}*{Section~X.3}).  Such sets exist by a well-known theorem of Friedberg (see~\cite{SoareBookRE}*{Theorem~X.3.3}).  Cohesive powers are intended to be effective analogs of ultrapowers, so in light of this analogy, it makes sense to impose effectivity on the cohesive set (which plays the role of the ultrafilter), as well as on the base structure itself. It is often useful in a construction to be able to learn what numbers are not in the cohesive set $C$ when building a computable structure $\mc{A}$ so as to influence $\prod_C \mc{A}$ in a particular way.  Cohesive products by co-c.e.\ cohesive sets also have the property that every member of the cohesive product has a total computable representative.  See \cite{DimitrovHarizanovSurvey} for details.  

As in the case of ultrapowers, a computable structure $\mc{A}$ always naturally embeds into its cohesive power.  For $a \in |\mc{A}|$, let $f_a: \omega \to |\mc{A}|$ be the total computable function with constant value $a$.  Then for any cohesive set $C$, the map $a \mapsto [f_a]$ embeds $\mc{A}$ into $\prod_C \mc{A}$.  This map is called the \emph{canonical embedding} of $\mc{A}$ into $\prod_C \mc{A}$, and is a two-quantifier elementary embedding. 

If $\mc{A}$ is finite and $C$ is cohesive, then every partial computable function $\varphi \colon \omega \imp |\mc{A}|$ with $C \subseteq^* dom(\varphi)$ is eventually constant on $C$.  In this case, every element of $\prod_C \mc{A}$ is in the range of the canonical embedding, and therefore $\mc{A} \iso \prod_C \mc{A}$.  If $\mc{A}$ is an infinite computable structure, then every cohesive power $\prod_C \mc{A}$ is countably infinite: infinite because $\mc{A}$ embeds into $\prod_C \mc{A}$, and countable because the elements of $\prod_C \mc{A}$ are represented by partial computable functions, of which there are only countably many.  Furthermore, in the infinite case, the canonical embedding is never onto, even if the cohesive power is isomorphic to the original structure. For example, a cohesive power of a dense linear order without endpoints is isomorphic to the original structure, but the image of the canonical embedding is a bounded set.  See~\cite{DimitrovCohPow} for further details.

\subsection*{Analogs of {\L}o\'{s}'s theorem}  \hspace{1mm}

A restricted version of {\L}o\'{s}'s theorem holds for cohesive powers. 

\begin{Theorem}[The Fundamental Theorem of Cohesive Products]\label{thm-LosProdParam}
Let $\mf{L}$ be a computable language, let $\{\mc{A}_i\}_{i\in\omega}$ be a uniformly computable sequence of $\mf{L}$-structures, and let $C$ be a cohesive set.
\begin{enumerate}[(1)]
\item\label{it-LosProdPramSig2} Let $\Phi(x_0, \dots  , x_{m-1})$ be a $\Sigma^0_{2}$ formula.  Then for any $[\varphi_0], \dots  , [\varphi_{m-1}] \in |\prod_C \mc{A}_i|$,
\begin{align*}
\prod\nolimits_C \mc{A}_i \models \Phi([\varphi_0], \dots  , [\varphi_{m-1}]) \quad\Imp\quad C \subseteq^* \bigl\{i\in \omega \mid \mc{A}_i \models \Phi(\varphi_0(i), \dots  , \varphi_{m-1}(i))\bigr\}.
\end{align*}

\medskip

\item\label{it-LosProdPramPi2} Let $\Phi(x_0, \dots  , x_{m-1})$ be a $\Pi^0_{2}$ formula.  Then for any $[\varphi_0], \dots  , [\varphi_{m-1}] \in |\prod_C \mc{A}_i|$,
\begin{align*}
C \subseteq^* \bigl\{i \in \omega \mid \mc{A}_i \models \Phi(\varphi_0(i), \dots  , \varphi_{m-1}(i))\bigr\} \quad\Imp\quad \prod\nolimits_C \mc{A}_i \models \Phi([\varphi_0], \dots  , [\varphi_{m-1}]).
\end{align*}

\end{enumerate}
\end{Theorem}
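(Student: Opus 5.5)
We argue by induction on formula complexity. Throughout, $\Sigma^0_2$ means of the form $\exists \bar y\,\forall \bar z\,\theta$ and $\Pi^0_2$ of the form $\forall \bar y\,\exists \bar z\,\theta$, with $\theta$ quantifier-free. The base case is that every quantifier-free $\theta$ satisfies both directions, i.e.
\begin{align*}
\prod\nolimits_C \mc{A}_i \models \theta([\varphi_0],\dots,[\varphi_{m-1}]) \;\Biimp\; C \subseteq^* \bigl\{i \mid \mc{A}_i \models \theta(\varphi_0(i),\dots,\varphi_{m-1}(i))\bigr\}.
\end{align*}

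For atomic formulas, first check by induction on terms that the value of a term $t$ at $([\varphi_j])$ is $[\psi_t]$, where $\psi_t(i)=t^{\mc{A}_i}(\varphi_0(i),\dots)$ is partial computable; this follows directly from Definition~\ref{def-CohProd}. Equality and relation symbols are then immediate from the definitions of $\simeq_C$ and $R^{\prod_C\mc{A}_i}$. For Boolean combinations, the key point is that for quantifier-free $\theta$ the set $S_\theta=\{i \mid \varphi_j(i)\da \text{ for all } j,\ \mc{A}_i\models\theta(\bar\varphi(i))\}$ is c.e., by uniform computability of the atomic diagrams. Let $D=\bigcap_j dom(\varphi_j)$, so $C\subseteq^* D$. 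Negation then works because $D\setminus S_\theta$ is a Boolean combination of c.e.\ sets. By cohesiveness, either $C\subseteq^* S_\theta$ or $C\subseteq^* D\setminus S_\theta = S_{\neg\theta}$ up to $D$. Conjunction follows from closure of $\supseteq^* C$ under intersection. This dichotomy is exactly where cohesiveness is used.

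For (1), suppose $\prod_C\mc{A}_i\models\exists\bar y\,\forall\bar z\,\theta(\bar{[\varphi]},\bar y,\bar z)$, witnessed by $\bar{[\psi]}$. Assume toward contradiction that $C\not\subseteq^*\{i \mid \mc{A}_i\models\forall\bar z\,\theta(\bar\varphi(i),\bar\psi(i),\bar z)\}$. The set $B=\{i \mid \exists\bar z\ \mc{A}_i\models\neg\theta(\bar\varphi(i),\bar\psi(i),\bar z)\}$, with all functions halting, is c.e. Since $C\cap B$ is infinite, cohesiveness gives $C\subseteq^* B$. Define a partial computable $\bar\chi$ by searching, on input $i$, for the first $\bar z\in|\mc{A}_i|$ with $\neg\theta$. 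Then $C\subseteq^* dom(\bar\chi)$, so $[\bar\chi]$ lies in the product, and by the base case $\prod_C\mc{A}_i\models\neg\theta(\bar{[\varphi]},\bar{[\psi]},\bar{[\chi]})$. This contradicts the $\forall\bar z$ clause. Hence $\forall\bar z\,\theta$ holds on almost every $i$, which gives the existential statement at those coordinates.

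For (2), suppose $\forall\bar y\,\exists\bar z\,\theta$ holds at almost every $i\in C$, and fix arbitrary $\bar{[\psi]}$. Let $\bar\chi(i)$ be a computable search for the first $\bar z$ with $\mc{A}_i\models\theta(\bar\varphi(i),\bar\psi(i),\bar z)$. This search halts on almost all of $C$, so $[\bar\chi]$ exists, and the base case gives $\prod_C\mc{A}_i\models\theta(\bar{[\varphi]},\bar{[\psi]},\bar{[\chi]})$.

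The main obstacle is the base case's negation step and the uniform handling of partiality, since quantifier-free formulas must be equivalent in both directions. Everything else is the effective-witness search, which works precisely because the inner formula is quantifier-free and hence decidable uniformly in $i$. This also explains why the result stops at $\Sigma^0_2$/$\Pi^0_2$: witnesses for deeper formulas cannot be found computably.
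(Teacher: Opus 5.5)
Your argument is correct and is the standard proof of this result: first the quantifier-free biconditional, with cohesiveness entering exactly at the negation step and $C$ being almost contained in the common domain of the $\varphi_j$ absorbing the partiality, and then a computable search for witnesses to the innermost quantifier block. The paper does not prove this theorem; it quotes it from earlier work on cohesive powers, so there is no different route to compare against. The one step you leave implicit in (1) — that failure of $\forall\bar z\,\theta$ at infinitely many $i\in C$ makes $C\cap B$ infinite — is immediate, since $\bar\varphi$ and $\bar\psi$ converge on almost all of $C$.
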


It turns out that this result is tight in general.  We will now switch from cohesive products to cohesive powers. The next theorem is a version of {\L}o\'{s}'s theorem for cohesive powers (see \cite{CohPowCiE}).

\begin{Theorem}[The Fundamental Theorem of Cohesive Powers]\label{thm-LosGeneral}
Let $\mf{L}$ be a computable language, let $\mc{A}$ be an computable $\mf{L}$-structure, and let $C$ be a cohesive set.

\begin{enumerate}[(1)]
\item\label{it-LosDelta2Param} Let $\Phi(x_0, \dots  , x_{m-1})$ be a Boolean combination of $\Sigma_{1}^0$ and  $\Pi_1^0$ formulas.  Then for any $[\varphi_0], \dots  , [\varphi_{m-1}] \in |\prod_C \mc{A}|$,
\begin{align*}
\prod\nolimits_C \mc{A} \models \Phi([\varphi_0], \dots  , [\varphi_{m-1}]) \quad\Biimp\quad C \subseteq^* \bigl\{i \in \omega \mid \mc{A} \models \Phi(\varphi_0(i), \dots  , \varphi_{m-1}(i))\bigr\}.
\end{align*}

\medskip

\item\label{it-LosDelta3Sent} Let $\Phi$ be a Boolean  combination of $\Sigma^0_{2}$ and $\Pi_2^0$  sentences.  We have $\mc{A} \models \Phi$ if and only if $\prod_C \mc{A} \models \Phi$.

\medskip

\item\label{it-LosSigma3Sent} Let $\Phi$ be a $\Sigma^0_{3}$ sentence.  If $\mc{A} \models \Phi$, then $\prod_C \mc{A} \models \Phi$.
\end{enumerate}
\end{Theorem}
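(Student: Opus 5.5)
The plan is to derive all three parts from Theorem~\ref{thm-LosProdParam}, specialised to the constant sequence $\mc{A}_i=\mc{A}$. The new ingredient is that, when every factor is the same computable structure, the set of indices at which a $\Sigma^0_1$ or $\Pi^0_1$ property holds is a Boolean combination of c.e.\ sets. Cohesiveness then lets us flip either direction of the one-sided implications.

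For \ref{it-LosDelta2Param}, I will first treat a $\Sigma^0_1$ formula $\Phi=\exists\bar y\,\theta(\bar x,\bar y)$ with $\theta$ quantifier-free. The set
\[
X=\{i \mid \mc{A}\models \Phi(\varphi_0(i),\dots,\varphi_{m-1}(i))\}
\]
is c.e., since we can wait for each $\varphi_j(i)\da$ and then search for witnesses using the computable diagram of $\mc{A}$. By cohesiveness, $C\subseteq^* X$ or $C\subseteq^*\ol{X}$.

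If $C\subseteq^* X$, then $\prod_C\mc{A}\models\Phi$ by Theorem~\ref{thm-LosProdParam}\ref{it-LosProdPramPi2}, because $\Sigma^0_1\subseteq\Pi^0_2$. For this case it also suffices to define the witness functions directly, by searching for the least witness tuple.

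If $C\subseteq^*\ol{X}$, then $C$ is almost contained in the set where $\neg\Phi$ holds. The formula $\neg\Phi$ is $\Pi^0_1\subseteq\Pi^0_2$, so Theorem~\ref{thm-LosProdParam}\ref{it-LosProdPramPi2} gives $\prod_C\mc{A}\models\neg\Phi$.

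Hence the equivalence holds for $\Sigma^0_1$ formulas, and by negation for $\Pi^0_1$ formulas. For a Boolean combination, I will induct on the Boolean structure. The sets involved are Boolean combinations of c.e.\ sets, so $C\subseteq^*X\cup Y$ iff $C\subseteq^*X$ or $C\subseteq^*Y$, and $C\subseteq^*\ol X$ iff $C\not\subseteq^*X$. These facts let the equivalence commute with $\wedge$, $\vee$ and $\neg$.

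For \ref{it-LosDelta3Sent}, it suffices to prove the biconditional for $\Sigma^0_2$ sentences; negation handles $\Pi^0_2$, and the Boolean combinations follow since truth commutes with connectives on both sides.

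Let $\Phi=\exists\bar x\,\forall\bar y\,\theta$ be a $\Sigma^0_2$ sentence. If $\mc{A}\models\Phi$ with witness $\bar a$, then the canonical images $[f_{\bar a}]$ satisfy the $\Pi^0_1$ formula $\forall\bar y\,\theta(\bar x,\bar y)$ at every index. Part \ref{it-LosDelta2Param} then gives $\prod_C\mc{A}\models\Phi$.

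Conversely, if $\prod_C\mc{A}\models\Phi$, then Theorem~\ref{thm-LosProdParam}\ref{it-LosProdPramSig2} gives that $\mc{A}\models\Phi$ for almost every $i\in C$. The factor does not depend on $i$ and $C$ is infinite, so $\mc{A}\models\Phi$.

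For \ref{it-LosSigma3Sent}, write $\Phi=\exists\bar x\,\Psi(\bar x)$ with $\Psi$ a $\Pi^0_2$ formula, and pick a witness $\bar a$ in $\mc{A}$. The constant functions $f_{\bar a}$ satisfy $\Psi$ at every $i$, so Theorem~\ref{thm-LosProdParam}\ref{it-LosProdPramPi2} yields $\prod_C\mc{A}\models\Psi([f_{\bar a}])$, and therefore $\prod_C\mc{A}\models\Phi$.

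The main obstacle is the reverse direction in \ref{it-LosDelta2Param}, where $\prod_C\mc{A}\models\Phi$ must be shown to imply that $C$ is almost contained in the truth set. This is exactly where the constant factor is used: it makes the truth set of a $\Sigma^0_1$ formula c.e., which in general fails for products only when non-uniformity enters. In the proof I will check carefully that the Boolean closure of the sets handled by cohesiveness covers every case.
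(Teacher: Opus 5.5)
Your proposal is correct. It is essentially the standard derivation of this result from Theorem~\ref{thm-LosProdParam}, combined with cohesiveness applied to the truth sets, which are Boolean combinations of c.e.\ sets. The paper gives no proof of its own here and simply defers to \cite{CohPowCiE}.

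One correction to your closing remark: part (1) does not actually use that the factors are constant. Uniform computability of $\{\mc{A}_i\}_{i\in\omega}$ already makes the $\Sigma^0_1$ truth sets c.e., so your argument proves (1) verbatim for cohesive products. Constancy is needed only in parts (2) and (3), to pass from ``almost every $i$'' to $\mc{A}$ itself and to have the constant witnesses $[f_{\bar a}]$ available.
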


The first studied and most central cohesive power is the cohesive power of the standard model of arithmetic.  In the case where $\mathcal{A} = \Nb$, there is a $\Pi_3^0$ sentence $\Psi$ with $\Nb \models \Psi$ but $\prod_C \Nb \models \neg \Psi$ (see \cites{LermanCo-r-Max, FefermanScottTennenbaum}). This shows that Theorem \ref{thm-LosGeneral}  is tight in general.

We recover full {\L}o\'{s}'s theorem for all first-order sentences when we consider cohesive powers of decidable structures (see \cite{DimitrovCohPow}), that is, structures in which one may decide the entire elementary diagram algorithmically. One may see this by adding predicates for every first-order definable relation.

In Definition~\ref{def-CohProd},  the condition $\forall i \, (\varphi(i)\da \,\Imp\, \varphi(i) \in |\mc{A}_i|)$ and $C \subseteq^* dom(\varphi)$ is equivalent to the a priori slightly weaker $\{ i \in \omega \mid \varphi(i)\da \,\andd\, \varphi(i) \in |\mc{A}_i|\}\supseteq^* C$.  If $\varphi$ is a partial computable function satisfying the weaker condition, let $\psi$ be the partial computable function given by
\begin{align*}
\psi(i) =
\begin{cases}
\varphi(i) & \text{if $\varphi(i)\da$ and $\varphi(i) \in |\mc{A}_i|$}\\
\ua & \text{otherwise}.
\end{cases}
\end{align*}
Then $\psi$ satisfies the original condition, and $\psi \simeq_C \varphi$.

We may also relax the definition to allow $\{ \mc{A}_i \}_{ i \in W}$ to be a partial sequence, with domain a c.e.\ set $W$ with $W \supseteq^* C$.  Since this particular generalization is new, we prove that the fundamental theorem of cohesive products remains true in this more general setting.

\begin{Proposition}
    Let $\{\mc{A}_i\}_{i \in W }$
    be a uniformly computable partial sequence of structures, $C$ a cohesive set with $C\subseteq^* W$, and let $f: \omega\to W$ be a computable bijection.  Then there is a cohesive set $D$ such that $\prod_C \mc{A}_i \cong \prod_D \mc{A}_{f(i)}$. 
\end{Proposition}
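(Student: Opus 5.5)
The plan is to take $D = f^{-1}[C \cap W]$, the pullback of $C$ along $f$, and to show that precomposition with $f^{-1}$ induces an isomorphism. Since $f\colon \omega \to W$ is a computable bijection onto the c.e.\ set $W$, its inverse $f^{-1}\colon W \to \omega$ is partial computable: on input $j$, search for $i$ with $f(i)=j$. This search halts exactly when $j \in W$. Here $\prod_C \mc{A}_i$ is understood in the relaxed sense, with elements represented by partial computable $\varphi$ such that $\varphi(i)\in|\mc{A}_i|$ for almost every $i\in C$.

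\emph{Step 1: $D$ is cohesive.} $D$ is infinite because $C\cap W$ is infinite (as $C\subseteq^* W$) and $f^{-1}$ is injective. Given a c.e.\ set $V$, the image $f[V]$ is c.e. Since $C\subseteq^* W$, cohesiveness of $C$ gives either $C \subseteq^* f[V]$ or $C\subseteq^* \ol{f[V]}$. Pulling back along the bijection $f$, which preserves finiteness of differences, yields $D\subseteq^* V$ or $D\subseteq^* \ol V$.

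\emph{Step 2: define the map.} For $\varphi$ representing an element of $\prod_C \mc{A}_i$, let $\Theta([\varphi]) = [\varphi\circ f]$. This is partial computable, and $(\varphi\circ f)(i) = \varphi(f(i)) \in |\mc{A}_{f(i)}|$ for almost every $i\in D$. Indeed, the exceptional set is $f^{-1}$ of the exceptional set of $\varphi$ within $C\cap W$, which is finite. Well-definedness and injectivity follow together: $\varphi\simeq_C\psi$ iff $\{j : \varphi(j)\da=\psi(j)\da\}\supseteq^* C$ iff its $f$-preimage $\supseteq^* D$. Here we use that $C\cap W =^* C$, and that $f$ and $f^{-1}$ map finite sets to finite sets.

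\emph{Step 3: surjectivity and homomorphism.} Given $\chi$ representing an element of $\prod_D \mc{A}_{f(i)}$, set $\varphi = \chi\circ f^{-1}$. This is partial computable with domain inside $W$, and $\varphi\circ f = \chi$. Moreover, $\varphi(j)\in|\mc{A}_j|$ for almost every $j\in C$, since the bad set is the $f$-image of a finite set together with $C\setminus W$, which is finite. For the relations, the $C$-condition for $R$ on the tuple $\varphi_0,\ldots,\varphi_{m-1}$ holds iff the $D$-condition holds for the tuple $\varphi_0\circ f,\ldots,\varphi_{m-1}\circ f$, again by pulling back along $f$. For functions and constants, $f^{\mc{A}_{f(i)}}(\varphi_0(f(i)),\dots)$ is exactly the composite $\psi\circ f$, where $\psi$ is the function defining the operation in $\prod_C\mc{A}_i$.

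The only delicate point I anticipate is Step 1. There we need $f[V]$ to be c.e., which holds because $f$ is computable, and we must handle the possibility $C\not\subseteq W$, which the hypothesis $C\subseteq^* W$ resolves. The rest is bookkeeping with $\subseteq^*$ transported along a computable bijection.
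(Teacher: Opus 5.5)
Your proposal is correct and follows the paper's proof. The paper uses the same cohesive set $D=f^{-1}(C)$, which equals your $f^{-1}[C\cap W]$ since $f$ maps into $W$; it proves cohesiveness by the same pullback through the c.e.\ set $f[V]$, and it uses the same map $[\varphi]\mapsto[\varphi\circ f]$. Your explicit checks of well-definedness, injectivity, surjectivity via $\chi\circ f^{-1}$, and preservation of relations and function symbols fill in details that the paper's proof leaves implicit.
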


\begin{proof}
    Let $D= f^{-1}(C)$.  First, we claim that $D$ is cohesive.  Let $V$ be a c.e. set with $V \cap D $ infinite.  Then $f(D) \cap f(V) = C\cap f(V)$ is infinite.  Thus, $C \subseteq^* f(V)$.  Therefore $ D \subseteq^* V$.

    It is clear that for any c.e. set $V$, $D\subseteq^* V \iff C \subseteq^* f(V).$

    Let $\Psi : \prod_C \mathcal{A}_i \to \prod_D \mathcal{A}_{f(i)}$ be defined by $\ [h]\mapsto [h\circ f] $.  Let $\Phi(x_1,\dots  ,x_n)$ be a $\Sigma^0_1$ formula, and let $[h_1],\dots  ,[h_n] \in \prod_C \mathcal{A}_i$. Then

$$\prod_C \mathcal{A}_i \models \Phi([h_1],\dots  ,[h_n]) \iff C \subseteq^*\{ i \mid \mathcal{A}_i \models \Phi(h_1(i),\dots  ,h_n(i))\} $$$$ \iff  D\subseteq^* f^{-1}(\{ i \mid \mathcal{A}_i \models \Phi(h_1(i),\dots  ,h_n(i))\} ) \iff D \subseteq^* \{ i \mid \mathcal{A}_{f(i)} \models \Phi (h_1(f(i),\dots  ,h_n(f(i)) ) \} $$$$\iff \prod_D \mathcal{A}_{f(i)} \models \Phi ( \Psi ([h_1 \circ f]),\dots  ,\Psi([h_n\circ f]))$$
In particular, every function, constant, and relation is preserved.
\end{proof}

For this paper, an important corollary to the fundamental theorem of cohesive powers is the following. 

\begin{Corollary} \label{One Quant Definitions Lift}
    If $\mc{A}$ is a $\Sigma_1^0$ $(\Pi_1^0)$ definable substructure of $\mc{B}$ (with or without parameters), then $\prod_C \mc{A}$ is $\Sigma_1^0$ ($\Pi_1^0)$ definable in $\prod_C \mc{B}$, using the same formula and parameters. 
    
\end{Corollary}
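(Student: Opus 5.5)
We interpret the statement as follows: $\mc{A}$ is a computable substructure of the computable structure $\mc{B}$, and $|\mc{A}| = \{ b \in |\mc{B}| \mid \mc{B} \models \Theta(b,\bar p)\}$ for some $\Sigma^0_1$ (resp.\ $\Pi^0_1$) formula $\Theta(x,\bar y)$ and parameters $\bar p$ from $\mc{B}$. The plan is to identify $\prod_C \mc{A}$ with a substructure of $\prod_C \mc{B}$ and then apply part (1) of the Fundamental Theorem of Cohesive Powers (Theorem~\ref{thm-LosGeneral}). That part applies to $\Theta$, since $\Sigma^0_1$ and $\Pi^0_1$ formulas are trivially Boolean combinations of such formulas. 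The parameters $\bar p$ are carried into $\prod_C \mc{B}$ by the canonical embedding, that is, as $[f_{p_j}]$.

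\textbf{Step 1: the embedding.} Send $[\varphi] \in \prod_C \mc{A}$ to the class of the same $\varphi$ in $\prod_C \mc{B}$. This is well defined and injective, because $\simeq_C$ is defined by the same condition in both products. It preserves all symbols, because the operations of $\mc{A}$ are the restrictions of those of $\mc{B}$ and relations are evaluated pointwise in both products. We therefore regard $\prod_C \mc{A}$ as a substructure of $\prod_C \mc{B}$.

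\textbf{Step 2: the image is contained in the defined set.} Let $[\varphi] \in \prod_C \mc{A}$. Then $\varphi(i) \in |\mc{A}|$ for almost every $i \in C$, so $\mc{B} \models \Theta(\varphi(i), \bar p)$ for almost every $i$. By Theorem~\ref{thm-LosGeneral}(1), applied with the constant representatives of $\bar p$, we get $\prod_C \mc{B} \models \Theta([\varphi], [f_{\bar p}])$.

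\textbf{Step 3: the defined set is contained in the image.} This is the only real point. Suppose $\prod_C \mc{B} \models \Theta([\psi],[f_{\bar p}])$. By the same theorem, $C \subseteq^* \{ i \mid \mc{B} \models \Theta(\psi(i), \bar p)\} = \{ i \mid \psi(i)\da \in |\mc{A}|\}$. We cannot simply say $[\psi] \in \prod_C \mc{A}$, because $\psi$ may take values outside $|\mc{A}|$ at finitely many points of $C$ and at points outside $C$. We repair $\psi$ as follows. The domain $|\mc{A}|$ is computable, since $\mc{A}$ is a computable structure. Define $\psi'(i) = \psi(i)$ if $\psi(i)\da \in |\mc{A}|$, and let $\psi'(i)$ diverge otherwise. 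This is the same trimming argument given after Definition~\ref{def-CohProd}. Then $\psi'$ is partial computable, takes all its values in $|\mc{A}|$, satisfies $C \subseteq^* dom(\psi')$, and $\psi' \simeq_C \psi$. Hence $[\psi] = [\psi']$ lies in the image of $\prod_C \mc{A}$.

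Combining Steps 2 and 3, $\Theta(x, [f_{\bar p}])$ defines exactly $\prod_C \mc{A}$ inside $\prod_C \mc{B}$, with the same formula and the canonically embedded parameters. No separate argument is needed for the $\Pi^0_1$ case, since part (1) of Theorem~\ref{thm-LosGeneral} is a biconditional for both $\Sigma^0_1$ and $\Pi^0_1$ formulas.
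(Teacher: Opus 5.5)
Your proof is correct and follows the paper's argument: both apply part (1) of the Fundamental Theorem of Cohesive Powers to the defining formula, with the parameters carried over by the canonical embedding. You additionally spell out two things the paper leaves implicit in its chain of set equalities: the identification of $\prod_C \mc{A}$ with a substructure of $\prod_C \mc{B}$, and the trimming of representatives, which uses the computability of $|\mc{A}|$.
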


\begin{proof}
    Suppose $\mc{A} = \{ b \in B \mid \mc{B} \models \Psi (b,\vec{p}) \}$, where $ \Psi$ is $\Sigma_1^0$ ($\Pi_1^0)$ and $\vec{p} \in B$. Then 

    $\prod_C \mc{A} = \{ [\varphi_e] \mid \text{ran}(\varphi_e) \subseteq \mc{A}  \wedge \text{dom} (\varphi_e) \supseteq^* C \} = \{ [\varphi_e ] \mid     \mc{B}\models \Psi (\varphi_e(i), \vec{p}) \text{ for almost every } i\} .$

    $ =\{ [\varphi_e ] \mid \prod_C \mc{B} \models \Psi([\varphi_e],\vec{p}) \}$

    Thus, $\Psi $ defines $\prod_C \mc{A}$ in $\prod_C \mc{B}$. 
\end{proof}

\subsection{Fields and Galois Theory}

For completeness, we review some of the basics of field theory and Galois theory. A field extension $K/F$ is a pair of fields $F,K$ such that $F \subseteq K$.  The extension $K/F$ is \textit{finite} (\textit{infinite}) if $K$ is a finite (infinite) dimensional $F$-vector space.  The dimension of $K$ as a vector space is known as the \textit{degree} of the extension, and is denoted $[K:F]$. For $E \subseteq F \subseteq K$ we have $[K:E]=[K:F][F:E]$.

An element $\alpha \in K$ is \textit{algebraic} over $F$ if $\alpha$ is a root of some polynomial $p\in F[x]$.  An element $\alpha \in K$ is called \textit{transcendental} over $F$ if it is not algebraic over $F$.  A finite set of elements $\{\alpha_1,\dots  ,\alpha_n\} \subseteq K$ is called \textit{algebraically independent} if $\alpha_i$ is transcendental over $F (\alpha_1,\dots  ,\alpha_{i-1} ,\alpha_{i+1} ,\dots  , \alpha_n)$ for each $i \leq n$, where $F (\beta_0,\dots  ,\beta_n)$ is the smallest subfield of $K$ extending $F$ and containing $\beta_0,\dots  ,\beta_n$.  An infinite set $\{ \alpha_1,\alpha_2,\dots  \}  \subseteq K$ is algebraically independent if every finite subset is algebraically independent.  The \textit{transcendence degree} of $K/F$ is the cardinality of a maximal algebraically independent subset of $K$.  A maximal algebraically independent set exists by Zorn's Lemma.

The algebraic part of $K/F$, denoted $\text{Alg}_F(K)$, is the field of all elements of $K$ that are algebraic over $F$.  When the base field is clear from context, we will simply write $\text{Alg}(K)$. For every $\alpha \in K$ that is algebraic over $F$, there is a unique monic polynomial $p_\alpha \in F[x]$ of minimal degree such that $p_\alpha (\alpha ) = 0$, referred to as the minimal polynomial of $\alpha$.  Moreover, for every other $q(x) \in F[x]$, if $q(\alpha)=0$, then $p_\alpha \mid q$.  A polynomial $p\in F[x]$ is \textit{separable} if for every field extension $K/F$ in which $p$ factors in $K[x]$ into linear factors, $p$ has no repeated roots in $K$,  i.e., $p(x) = (x-\alpha_1)\cdots(x-\alpha_n)$ where $\alpha_i \not= \alpha_j$ for every $i\not= j$. 

An extension $K/F$ is \textit{algebraic} if every element of $K$ is algebraic over $F$.  It is \textit{normal} if it is algebraic and every irreducible $p(x)\in F[x]$ with a root in $K$ factors completely into linear factors in $K[x]$,  and it is \textit{separable} if every $\alpha \in K$ has a separable minimal polynomial.  All extensions of characteristic $0$ are separable.  An extension is \textit{Galois} if it is both normal and separable.

If $\alpha$ is algebraic over $F$, then the degree of $\alpha$ over $F$ is the degree of its minimal polynomial $p_\alpha \in F[x]$. We will denote the degree by $\deg_F (\alpha)$, or by $\deg(\alpha)$ when it is clear from context which field we are taking the degree over.  Further, the \textit{simple extension} of $F$ by $\alpha$, denoted $F(\alpha)$, is the least extension of $F$ containing $\alpha$.  It is always the case that $[F(\alpha) :F] = \deg_F (\alpha)$.  An element $ b \in F(\alpha)$ can be written uniquely as $a_{n-1} \alpha^{n-1} +\cdots+ a_0$, where $a_i \in F$, and $n= \deg_F (\alpha)$.  In other words, $1, \alpha, \dots  , \alpha^{n-1}$ is an $F$-vector space basis of $F(\alpha)$.  

The finite simple extensions are completely classified up to isomorphism.

 \begin{Theorem}[Primitive Element Theorem]
     Let $K/F$ be a finite and separable extension.  Then $K = F(\alpha)$ for some $\alpha \in K$.
 \end{Theorem}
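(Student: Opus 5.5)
We prove the Primitive Element Theorem by splitting on whether $F$ is finite, and in the infinite case we reduce to extensions generated by two elements.

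\textbf{Finite base field.} If $F$ is finite, then $K$ is finite as well, since $K$ is a finite-dimensional vector space over a finite field. The multiplicative group $K^\times$ is then cyclic: it is a finite subgroup of the multiplicative group of a field. A standard way to see this is to compare the number of elements of each order $d$ with Euler's $\varphi(d)$, using that $x^d-1$ has at most $d$ roots. If $\alpha$ generates $K^\times$, then every nonzero element of $K$ is a power of $\alpha$, so $K=F(\alpha)$.

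\textbf{Infinite base field.} Assume $F$ is infinite. Since $[K:F]$ is finite, $K=F(\beta_1,\dots,\beta_n)$ for finitely many elements, for instance a vector space basis. By induction on $n$, using $F(\beta_1,\dots,\beta_n)=F(\beta_1,\dots,\beta_{n-1})(\beta_n)$, it suffices to show that $F(\beta,\gamma)=F(\beta+c\gamma)$ for some $c\in F$. This two-generator case is the key step.

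\textbf{The two-generator case.} Let $f=p_\beta$ and $g=p_\gamma$ be the minimal polynomials. Pass to an extension $L$ of $K$ in which both split. Write the roots as $\beta=\beta_1,\dots,\beta_r$ and $\gamma=\gamma_1,\dots,\gamma_s$. The $\gamma_j$ are distinct because $\gamma$ is separable, which holds because $K/F$ is separable.

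Only finitely many $c\in F$ satisfy $\beta_i+c\gamma_j=\beta+c\gamma$ for some $i$ and some $j\neq 1$, since each such pair $(i,j)$ determines $c$ uniquely. Because $F$ is infinite, we can choose $c$ avoiding all of them. Set $\theta=\beta+c\gamma$ and $E=F(\theta)$.

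Consider $h(x)=f(\theta-cx)\in E[x]$. Then $\gamma$ is a root of both $h$ and $g$. Any common root in $L$ must be some $\gamma_j$ with $\theta-c\gamma_j=\beta_i$ for some $i$, and by the choice of $c$ this forces $j=1$. Hence $\gcd(g,h)$ in $L[x]$ is exactly $x-\gamma$, as $g$ has no repeated roots.

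The gcd of two polynomials is computed by the Euclidean algorithm, so it does not depend on the field in which it is computed. Therefore $x-\gamma$ lies in $E[x]$, which gives $\gamma\in E$. Then $\beta=\theta-c\gamma\in E$ as well, so $F(\beta,\gamma)=F(\theta)$.

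The main obstacle is this gcd argument. Separability is used exactly there: without distinct roots of $g$, the gcd could be a higher power of $x-\gamma$, and the conclusion $\gamma\in E$ would not follow.
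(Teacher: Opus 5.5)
The paper gives no proof of this theorem. It quotes it as classical background, pointing to Dummit--Foote and Milne, so there is no paper argument to compare against.

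Your proof is the standard textbook one: cyclicity of $K^\times$ when $F$ is finite, and the $\theta=\beta+c\gamma$ gcd argument when $F$ is infinite. It is correct and complete as written.

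The only degenerate case is $h\equiv 0$, which happens exactly when $c=0$. Your choice of $c$ already excludes $c=0$ whenever $\deg g\ge 2$ (take $i=1$ and any $j\neq 1$). When $\deg g=1$ we have $\gamma\in F$ and nothing needs proving.

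You also correctly note that only the separability of $\gamma$ is used, which is the usual sharper form of the two-generator step.
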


Since all Galois extensions are separable, all finite Galois extensions are of the form $F(\alpha) / F$ for some $\alpha$.  A finite separable extension $F(\alpha)/F$ is Galois if and only if the minimal polynomial $p_\alpha$ of $\alpha$ splits into linear factors in $F(\alpha)$.

    The group $\text{Aut}(K/F)$ is the subgroup of $\text{Aut} (K)$ that fixes $F$, i.e., for every $\sigma \in \text{Aut}(K/F)$ we have $\sigma(\alpha) = \alpha$ for every $\alpha \in F$. 
    Often, we will omit parentheses and write $\sigma \alpha = \sigma(\alpha)$.  We will denote the action of $Aut(K/F)$ on $K$, that is the map $Aut(K/F) \times K \to K$  defined by $(\sigma, \alpha) \mapsto \sigma(\alpha)$, by $Aut(K/F) \curvearrowright K$.

    If $K/F$ is Galois, then $\text{Aut} (K/F)$ is denoted $\text{Gal}(K/F)$ and is called the \textit{Galois group} of $K/F$. The structure of the Galois group reveals a great deal of the arithmetic of the field extension via the fundamental theorem of Galois theory.

\begin{Theorem}[Fundamental Theorem of Finite Galois Theory] \label{fundamental theorem of Galois theory}

Let $K/F$ be a finite Galois extension.   

\begin{itemize}
    \item For each subgroup $H \subseteq Gal (K/F)$, there is a subfield $ E^H $ of $ K$ that is the largest subfield fixed by $H$. That is,

    \subitem $E^H = \{ \alpha \in K \mid \forall \sigma \in H (\sigma \alpha = \alpha) \}$.

    \item For each field $E$ with $F \subseteq E \subseteq K$, we have that $K/E$ is Galois, and $Gal(K/E)$ is a subgroup of $Gal (K/F)$.  

    \item The above two operations are inverse to each other. This is known as the \textit{Galois correspondence}:

    \subitem $Gal( E^H /F) = H$ and $E^{Gal(L/F )} = L$.

    \item The Galois correspondence is inclusion-reversing:  

        \subitem $H \subseteq H'$ if and only if $E^{H} \supseteq E^{H'}$.  

    \item $ | H| = [K :E^H]  $ and $ |Gal(K/F) : H | = [ E^H :F]$.

    \item A group $H$ is a normal subgroup of $Gal(K/F)$ if and only if $E^H /F$ is Galois.  In this case, $Gal(K/F)/H \cong Gal(E^H /F)$, with isomorphism induced by the quotient map $Gal (K/F) \to Gal(E^H/F)$ that sends $\sigma \in Gal( K/F) $ to $\sigma \upharpoonright E^H \in Gal (E^H / F) $.
\end{itemize}
    
\end{Theorem}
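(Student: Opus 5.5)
The Fundamental Theorem of Finite Galois Theory is classical; I would prove it in the standard order, with three counting facts as the core. Fix a finite Galois extension $K/F$ and write $G = Gal(K/F)$.

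\emph{Step 1: $|G| = [K:F]$.} By the Primitive Element Theorem, $K = F(\alpha)$. Any $\sigma \in G$ is determined by $\sigma\alpha$, and $\sigma\alpha$ is a root of $p_\alpha$. Since $K/F$ is normal and separable, $p_\alpha$ has exactly $n = \deg_F(\alpha) = [K:F]$ distinct roots in $K$. For each root $\beta$, the assignment $\alpha \mapsto \beta$ extends to an $F$-isomorphism $F(\alpha) \to F(\beta) \subseteq K$. Because $[F(\beta):F] = n$, we get $F(\beta) = K$, so this map lies in $G$. Hence $|G| = n$.

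\emph{Step 2: Artin's lemma, $[K:E^H] \le |H|$ for a finite group $H$ of automorphisms of $K$.} This is the step I expect to be the main obstacle. I would use a linear-algebra argument. Suppose $u_1,\dots,u_m \in K$ with $m > |H|$. The homogeneous system $\sum_j \sigma(u_j)x_j = 0$, for $\sigma \in H$, has a nonzero solution in $K$. Take a solution with the fewest nonzero entries and normalize one entry to $1$. Applying each $\tau \in H$ and subtracting yields a shorter solution, so the minimal solution must be fixed by $H$. It therefore lies in $E^H$, and taking $\sigma = id$ shows the $u_j$ are $E^H$-dependent.

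Combining the steps: $H \subseteq Gal(K/E^H)$. Also $K/E^H$ is Galois, since $p_\alpha$ over $E^H$ divides $p_\alpha$ over $F$ and so still splits with distinct roots in $K$. Step 1 applied to $K/E^H$ then gives $|H| \le |Gal(K/E^H)| = [K:E^H] \le |H|$. So $Gal(K/E^H) = H$ and $|H| = [K:E^H]$.

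\emph{Step 3: the correspondence and the remaining items.} For an intermediate field $E$, $K/E$ is Galois by the same minimal-polynomial argument. Let $E' = E^{Gal(K/E)}$. Then $E \subseteq E'$, and Step 1 gives $[K:E] = |Gal(K/E)| = [K:E']$, so $E = E'$. Hence the two maps are mutually inverse. Inclusion reversal is immediate from the definitions combined with bijectivity. The index formula $|G:H| = [E^H:F]$ follows from $|H| = [K:E^H]$, $|G| = [K:F]$, and the tower law.

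For normality, note that $E^{\tau H \tau^{-1}} = \tau(E^H)$. So $H$ is normal in $G$ iff every $\tau \in G$ maps $E^H$ into itself. Suppose this holds. Take $\beta \in E^H$; every root of its minimal polynomial has the form $\tau\beta$ for some $\tau \in G$, by Step 1's extension argument applied to $F(\beta)$ and then extended to $K$. So all these roots lie in $E^H$, and $E^H/F$ is normal, hence Galois. Conversely, if $E^H/F$ is Galois, then each $\tau\beta$ is a root of $p_\beta$, so it lies in $E^H$, and $\tau$ preserves $E^H$. In the normal case, restriction $G \to Gal(E^H/F)$ is a homomorphism with kernel $Gal(K/E^H) = H$. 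It is surjective by counting: $|G|/|H| = [E^H:F] = |Gal(E^H/F)|$. This gives the stated isomorphism $Gal(K/F)/H \cong Gal(E^H/F)$.
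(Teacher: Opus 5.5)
The paper gives no proof of this theorem. It is recalled as classical background, with a pointer to Dummit--Foote and Milne, so there is no in-paper argument to compare against. Your proof is correct and is the standard Artin-style route:
\begin{enumerate}
\item count $|G| = [K:F]$ using the Primitive Element Theorem (which the paper states just before);
\item prove Artin's lemma $[K:E^H]\le |H|$ by the minimal-support linear-algebra trick;
\item obtain everything else by counting.
\end{enumerate}
Using a primitive element makes your Step 1 shorter than the textbook route through the isomorphism extension theorem. Note that you proved the intended correspondence, $Gal(K/E^H) = H$ and $E^{Gal(K/L)} = L$. The statement as printed says $Gal(E^H/F) = H$ and $E^{Gal(L/F)} = L$; these are typos, since taken literally they already fail for trivial $H$ whenever $K \neq F$. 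You were right to read it the intended way.

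One step is under-justified. In the normality argument you assert that every root of $p_\beta$ equals $\tau\beta$ for some $\tau\in G$, ``by Step 1's extension argument applied to $F(\beta)$ and then extended to $K$.'' Step 1 only extends the identity on $F$ along a generator of $K/F$. Extending a non-identity isomorphism $F(\beta)\to F(\beta')$ to an automorphism of $K$ is the isomorphism extension theorem, which you have not proved.

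You do not need that theorem; the claim follows from what you already have. The stabilizer of $\beta$ in $G$ is $Gal(K/F(\beta))$. By Step 1 applied to the Galois extension $K/F(\beta)$, it has order $[K:F(\beta)]$. So the orbit $G\beta$ has $[K:F]/[K:F(\beta)] = \deg p_\beta$ elements, all of them roots of $p_\beta$. Since $p_\beta$ has exactly $\deg p_\beta$ roots in $K$, the orbit is all of them. Alternatively, $\prod_{\gamma\in G\beta}(x-\gamma)$ is $G$-invariant, so it lies in $E^G[x] = F[x]$ by your correspondence, and hence $p_\beta$ divides it.

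There is also a small citation slip in Step 3. The equality $|Gal(K/E)| = [K:E']$ comes from your combined Artin step with $H = Gal(K/E)$, not from Step 1. With these repairs the argument is complete.
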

    
For more about Galois theory see \cites{dummit_foote_2004,milne2022}.

The infinite version of the fundamental theorem of Galois theory requires a skosh of topology. 

\begin{Definition}
    A topological group is a topological space $G$, together with continuous operations $*:G^2\to G$ and $^{-1}: G\to G$, such that $(G,*,^{-1} )$ is a group.  
\end{Definition}

\begin{Definition}
\hfill
  \begin{itemize} 
  
  \item Let $(L, <)$ be a linear order. An inverse system of groups is a collection of groups $\{G_i\}_{i\in L}$, together with a collection of surjective group homomorphism $f_{m,n} : G_{m} \to G_n$, whenever $m>n$.  The group maps must satisfy $f_{n,k} \circ f_{m,n}=f_{m,k}$ whenever $k<n<m$. 

   \item Each $G_i$ is a topological group (typically with the discrete topology), and the maps $f_{n,m}$ are continuous.

   \item  The \emph{projective limit} of the inverse system is the group
   
   $$\varprojlim_{i\in L} G_i = \{\{ g_i\}_{i\in L }\in \prod_{i\in L} G_i \mid \forall m>n  ((f_{m,n} (g_m )=g_n))  \}$$

  \item  The projective limit  $\varprojlim_{i\in L} G_i$ is given the subspace topology inherited from $\prod_{i\in L} G_i$. \end{itemize}

\end{Definition}

Confusingly, the projective limit is also known as both the limit and inverse limit. When all $G_i$ are finite and given the discrete topology, for each $h\in G_j$, the set $U_{h,i} = \{ \{ g_i\}_{i\in L} \mid g_j =h \}$ is both open and closed, and the collection of all such $U_{h,j}$ generates the topology on $G$.  In this case, $\varprojlim _{i\in L }G_i$ is known as a \emph{profinite group}.

We turn our attention back to field extensions.

\begin{Lemma}
    An infinite extension $K/F$ is Galois if and only if $K = \bigcup_{i\in I} K_i$ for some $\{ K_i \}_ {i\in I}$, where each $K_i /F$ is a finite Galois extension.  
\end{Lemma}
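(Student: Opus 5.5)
We prove the two directions separately. Throughout, we take "infinite Galois" in the usual sense: $K/F$ is algebraic, normal and separable.

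\emph{Forward direction.} Suppose $K/F$ is Galois and let $\alpha \in K$. Since $K/F$ is algebraic, $\alpha$ has a minimal polynomial $p_\alpha \in F[x]$. Because $K/F$ is normal, $p_\alpha$ splits into linear factors in $K[x]$, say with roots $\alpha=\alpha_1,\dots,\alpha_n \in K$. Set $K_\alpha = F(\alpha_1,\dots,\alpha_n)$, the splitting field of $p_\alpha$ over $F$ inside $K$. This is a finite extension. Every root of $p_\alpha$ is separable over $F$, so $K_\alpha$ is generated by separable elements and is therefore separable. As the splitting field of a polynomial it is also normal, so $K_\alpha/F$ is finite Galois. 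Taking $I = K$ and $K_i = K_\alpha$ for $i=\alpha$, we get $K = \bigcup_{\alpha\in K} K_\alpha$, since $\alpha \in K_\alpha \subseteq K$.

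\emph{Reverse direction.} Suppose $K = \bigcup_{i\in I} K_i$ with each $K_i/F$ finite Galois.
\begin{itemize}
\item Algebraic: every $\alpha\in K$ lies in some finite extension $K_i$, so $\alpha$ is algebraic over $F$.
\item Separable: the minimal polynomial of $\alpha$ over $F$ is the same whether computed in $K_i$ or in $K$, and it is separable because $K_i/F$ is separable.
\item Normal: let $p\in F[x]$ be irreducible with a root $\alpha\in K$, and pick $i$ with $\alpha \in K_i$. Since $K_i/F$ is normal, $p$ splits into linear factors in $K_i[x] \subseteq K[x]$.
\end{itemize}
Hence $K/F$ is Galois. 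If infinitude of the extension is also required, it is given by hypothesis in the statement and plays no role in either direction.

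\emph{Main obstacle.} The only subtle point is in the forward direction: showing that the splitting field $K_\alpha$ of a separable polynomial is separable, that is, that every element of $K_\alpha$, and not just the generators $\alpha_1,\dots,\alpha_n$, is separable over $F$. The cleanest route is the standard fact that $F(\alpha_1,\dots,\alpha_n)$ is Galois when it is the splitting field of a separable polynomial; see \cite{dummit_foote_2004}. Alternatively, one can observe that separability of $K_\alpha$ is simply inherited from $K$, since $K_\alpha \subseteq K$ and $K/F$ is separable by assumption. This second observation avoids the issue entirely, so we will use it.
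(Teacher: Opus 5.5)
Your proof is correct. The paper does not prove this lemma; it quotes it in the preliminaries as a standard fact of Galois theory, with only the general references to Dummit--Foote and Milne. Your argument is the standard textbook proof that fills that gap: for the forward direction you take $K_\alpha$ to be the splitting field of $p_\alpha$ inside $K$, and for the converse you check algebraicity, separability and normality elementwise inside a single $K_i$.

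One remark on your ``main obstacle.'' As you observe yourself, separability of $K_\alpha$ is immediate by inheritance from $K$, so the difficulty is not there. The genuinely nontrivial input in the forward direction is the normality of $K_\alpha$, which you take from the theorem that a splitting field is normal. That theorem is proved by extending isomorphisms. Given $\beta \in K_\alpha$ and another root $\beta'$ of its minimal polynomial, the $F$-isomorphism $F(\beta)\to F(\beta')$ extends to an $F$-embedding of $K_\alpha$ into an algebraic closure. Such an embedding permutes the roots of $p_\alpha$, so it maps $K_\alpha$ onto itself, which forces $\beta'\in K_\alpha$. Citing this theorem is perfectly legitimate, but it is the step that carries the weight.

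Your converse uses no directedness of the family $\{K_i\}_{i\in I}$, which matches the statement as given.
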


We are primarily interested in the case where $K= \bigcup_{i\in \omega} K_i$, with each $K_i$ finite, and $K_i \subseteq K_{i+1}$.  This is no less general for countable fields, as we may take $L_i = \langle K_1,\dots  ,K_i \rangle$.  That is, we may take $L_i$ to be the least subfield of $K$ containing $K_1,\dots  ,K_i$.

\begin{Lemma}
    Let $K/F$ be a  extension such that there exist finite  sub-extensions $K_i/F$, with $K=\bigcup_{i\in \omega} K_i$ and $K_i \subseteq K_{i+1}$.  Let $G_i = Gal(K_i /F)$, and let $f_{i+1} : G_{i+1}\to G_i$ be the associated surjections as in the fundamental theorem of Galois theory. Then $Gal (K/F) \iso \varprojlim G_i$, where $G_i$ is given the discrete topology.  In particular, $Gal (K/F)$ is a topological group, with basic open sets of the form $U_{\sigma, i} = \{ f \in Gal (K/F) \mid f \upharpoonright K_i = \sigma \}$ for $\sigma \in G_i$.  
\end{Lemma}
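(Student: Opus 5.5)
The plan is to define the restriction map
\[
\Theta\colon Gal(K/F)\to \prod_{i\in\omega} G_i,\qquad \sigma\mapsto \{\sigma\upharpoonright K_i\}_{i\in\omega},
\]
and show that it is an isomorphism onto $\varprojlim G_i$. The statement implicitly requires each $K_i/F$ to be Galois (otherwise $G_i$ is only an automorphism group), so I will assume this. Then $K/F$ is Galois by the preceding Lemma.

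Two preliminary facts come first.

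\textbf{Well-definedness.} Since $K_i/F$ is normal, any $\sigma\in Gal(K/F)$ maps $K_i$ into itself. Indeed, $\sigma$ sends a root of the minimal polynomial $p_\alpha$ to another root of $p_\alpha$, and all such roots lie in $K_i$. Because $K_i$ is finite dimensional and $\sigma$ is injective and $F$-linear, $\sigma(K_i)=K_i$, so $\sigma\upharpoonright K_i\in G_i$.

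\textbf{Landing in the limit.} The surjections $f_{i+1}\colon G_{i+1}\to G_i$ are the restriction maps. This comes from the Fundamental Theorem of Finite Galois Theory (Theorem~\ref{fundamental theorem of Galois theory}) applied to $K_{i+1}/F$, using the normal subgroup $Gal(K_{i+1}/K_i)$. Compatibility $f_{i+1}(\sigma\upharpoonright K_{i+1})=\sigma\upharpoonright K_i$ is then immediate. The composite maps $f_{m,n}$ are likewise restrictions.

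With these in hand, the main steps are as follows.

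\textbf{Homomorphism.} $\Theta$ is a homomorphism, because restriction commutes with composition.

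\textbf{Injectivity.} If $\sigma\upharpoonright K_i=\mathrm{id}$ for all $i$, then $\sigma=\mathrm{id}$, since $K=\bigcup K_i$.

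\textbf{Surjectivity.} Given a compatible sequence $\{g_i\}$, define $\sigma(\alpha)=g_i(\alpha)$ for any $i$ with $\alpha\in K_i$.
\begin{itemize}
\item This is independent of $i$: the $K_i$ are nested, and compatibility gives $g_m\upharpoonright K_n=g_n$ for $m>n$.
\item It is a field homomorphism fixing $F$: any finitely many elements lie in a common $K_m$, where $\sigma$ agrees with $g_m$.
\item It is surjective onto $K$: each $g_i$ is onto $K_i$.
\end{itemize}
So $\sigma\in Gal(K/F)$ and $\Theta(\sigma)=\{g_i\}$.

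\textbf{Topology.} Transfer the subspace topology from $\prod G_i$ (discrete factors) through $\Theta$. A basic open set of the product is a finite intersection of cylinders $\{g\mid g_j=h\}$. On the limit, such an intersection equals a single cylinder at the largest index involved, or is empty, since by compatibility the higher coordinate determines the lower ones. The preimage of $\{g_j=h\}$ under $\Theta$ is exactly $U_{h,j}=\{f\mid f\upharpoonright K_j=h\}$. Hence these sets form a basis for the transferred topology, and $\Theta$ is a homeomorphism by definition.

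\textbf{Topological group.} $\varprojlim G_i$ is a subgroup of the product of topological groups $\prod G_i$, and multiplication and inversion in the product are coordinatewise, hence continuous. Therefore it is a topological group, and so is $Gal(K/F)$.

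The main obstacle is the surjectivity step: checking that the glued map is well defined and is a genuine automorphism of $K$. Everything there rests on the nestedness $K_i\subseteq K_{i+1}$ together with the compatibility conditions.
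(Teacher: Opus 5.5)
Your argument is correct and complete. The paper states this lemma without proof as standard background (pointing to Dummit--Foote and Milne), and what you give is the standard textbook argument: the restriction map $\sigma\mapsto\{\sigma\upharpoonright K_i\}$, injectivity from $K=\bigcup K_i$, gluing a compatible family to get surjectivity, and matching the cylinders $\{g_j=h\}$ with the sets $U_{h,j}$. You were also right to make explicit that each $K_i/F$ must be Galois, which the statement only implies through the notation $Gal(K_i/F)$ and its appeal to the fundamental theorem.
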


While the setting in which each $K_i /F$ is finite is fully general, we will encounter situations in which we are forced to consider infinite Galois $K_i/F$ indexed over some linear order $L\not=\omega$. 

\begin{Lemma} \label{Infinite Galois Theory infinite proj lim}
    Let $K/F$ be a Galois extension such that there exists sub-extensions $K_i/F$, indexed over some linear order $L$ with $K = \bigcup_{i\in L} K_i$ and $K_i \subseteq K_{j}$ whenever $i\leq j$.  Let $G_i = Gal(K_i /F)$, and  for any $j>i$, let $f_{j,i} : G_{j}\to G_i$ be the associated surjections as in the fundamental theorem of Galois theory. Then $Gal (K/F) \iso \varprojlim G_i$.
\end{Lemma}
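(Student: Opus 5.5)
We prove Lemma~\ref{Infinite Galois Theory infinite proj lim} by writing down the restriction map and checking it is a bijective homomorphism. Define $\Theta \colon Gal(K/F) \to \varprojlim_{i\in L} G_i$ by $\Theta(\sigma) = \{\sigma\upharpoonright K_i\}_{i\in L}$.

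First, $\Theta$ is well defined. Each $K_i/F$ is Galois, hence normal, so $\sigma(K_i) = K_i$ and $\sigma\upharpoonright K_i \in G_i$. (This is the hypothesis in the statement, which speaks of the Galois groups $G_i$.) For $j>i$ we have $f_{j,i}(\sigma\upharpoonright K_j) = \sigma \upharpoonright K_i$, because the maps $f_{j,i}$ are restriction maps. The identity $f_{n,k}\circ f_{m,n} = f_{m,k}$ holds trivially for restrictions, so $\{G_i\}$ really is an inverse system. The map $\Theta$ is a group homomorphism since restriction commutes with composition. It is injective: if $\sigma\upharpoonright K_i = \mathrm{id}$ for all $i$, then $\sigma = \mathrm{id}$ on $\bigcup_{i\in L} K_i = K$.

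The main step is surjectivity. Given a compatible family $\{g_i\}_{i\in L}$, define $\sigma(\alpha) = g_i(\alpha)$ for any $i$ with $\alpha\in K_i$. This is independent of the choice of $i$: for $i\le j$ with $\alpha \in K_i\subseteq K_j$, compatibility gives $g_j\upharpoonright K_i = f_{j,i}(g_j) = g_i$. Here we use that $L$ is a linear order, so any two indices are comparable; this replaces directedness. To check that $\sigma$ is a field automorphism fixing $F$, take any finitely many elements. They lie in a common $K_j$, namely the one with the largest index among finitely many, and there $\sigma$ agrees with the automorphism $g_j$. This gives additivity, multiplicativity, and fixing of $F$. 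Bijectivity follows the same way, using $g_j^{-1}$, whose restrictions are again compatible. Then $\Theta(\sigma) = \{g_i\}$.

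The only subtlety is the direction of the maps for a general linear order. If $L$ has a greatest element $m$, then $K = K_m$ and the limit is just $G_m$, which the argument above handles uniformly. If topology is wanted, $\Theta$ is a homeomorphism when $Gal(K/F)$ carries the Krull topology and the $G_i$ carry their own profinite topologies. This holds because basic open sets correspond: for $\alpha$ in some $K_i$, the set of automorphisms with prescribed values on a finite tuple is determined by a condition in one coordinate $G_i$. The statement only asks for a group isomorphism, and that is what the argument establishes.
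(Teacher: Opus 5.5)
Your proof is correct. The restriction map $\sigma\mapsto\{\sigma\upharpoonright K_i\}_{i\in L}$ is injective because $K=\bigcup_{i\in L}K_i$. A compatible family glues to an element of $Gal(K/F)$ because any two indices in the linear order $L$ are comparable, and any finitely many have a maximum.

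The paper states this lemma without proof, as standard background from infinite Galois theory, so there is no competing argument to compare with. Your argument is the standard one the paper implicitly relies on.

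Your topological remark also matches the paper's later use of the lemma ``as topological groups.'' When some $K_i/F$ is infinite, $G_i$ must carry its Krull topology rather than the discrete topology mentioned in the paper's definition of inverse systems. Otherwise the coset $\{\sigma : \sigma\upharpoonright K_i = h\}$ would be open in the limit but not in $Gal(K/F)$.
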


With the profinite topology in hand, we may now state the fundamental theorem of Galois theory in the case where $K/F$ is infinite.  The only real difference is that now the subgroups in the correspondence are \textit{closed} in the profinite topology on $Gal(K/F)$.

\begin{Theorem}[Fundamental Theorem of Infinite Galois Theory]

\hspace{1mm}

    \begin{itemize}
    \item For each \emph{closed} subgroup $H \subseteq Gal (K/F)$, there is a subfield $ E^H $ of $ K$ that is fixed by $H$.

    \subitem That is, $E^H = \{ \alpha \in K \mid (\forall \sigma \in H) (\sigma \alpha = \alpha) \}$.

    \item For each field $E$ with $F \subseteq E \subseteq K$, and $K/E$ is Galois, and $Gal(K/E)$ is a \emph{closed} subgroup of $Gal (K/F)$.  

    \item The above two operations are inverse to each other. This is known as the \textit{Galois correspondence}:

    \subitem $Gal( E^H /F) = H$ and $E^{Gal(L/F )} = L$.

    \item The correspondence is inclusion reversing:  

        \subitem for closed $H, H'$, we have $H \subseteq H'$ if and only if $E^{H} \supseteq E^{H'}$.  

    \item For \emph{closed} $H$, $ | H| = [K :E^H]  $, and $ |Gal(K/F) : H | = [ E^H :F]$

    \item For \emph{closed} subgroups $H$ of $Gal(K/F)$, H is a normal subgroup of $Gal(K/F)$, if and only if $E^H /F$ is Galois.  In this case, $G/H \cong Gal(E^H /F)$.
\end{itemize}
\end{Theorem}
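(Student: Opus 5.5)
The plan is to deduce each clause from the Fundamental Theorem of Finite Galois Theory (Theorem \ref{fundamental theorem of Galois theory}) by writing $K=\bigcup_{i} K_i$ with each $K_i/F$ finite Galois, and by identifying $Gal(K/F)$ with $\varprojlim Gal(K_i/F)$ via the preceding lemmas (Lemma \ref{Infinite Galois Theory infinite proj lim} in the general indexed case). The basic open sets $U_{\sigma,i}$ give a concrete handle on the topology. In particular, the open subgroups are exactly those containing some $Gal(K/K_i)$, which is the kernel of restriction to a finite Galois layer, and each such kernel has finite index. I will first check the easy direction. For an intermediate field $E$, the extension $K/E$ is Galois because $K=\bigcup_i K_iE$, and each $K_iE/E$ is finite Galois: it is generated by roots of the same polynomials, so it remains normal and separable. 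The group $Gal(K/E)=\bigcap_{\alpha\in E}\{\sigma \mid \sigma\alpha=\alpha\}$ is closed, since each set in this intersection is a union of basic open sets $U_{\sigma,i}$ with $\alpha\in K_i$, and its complement is likewise open.

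Next I will show that the two maps $H\mapsto E^H$ and $E\mapsto Gal(K/E)$ are mutually inverse. The identity $E^{Gal(K/E)}=E$ follows by taking $\alpha\in K\setminus E$ and choosing $i$ with $\alpha\in K_i$. Finite Galois theory applied to $K_iE/E$ gives $\tau\in Gal(K_iE/E)$ with $\tau\alpha\neq\alpha$. I will lift $\tau$ to an element of $Gal(K/E)$ using the projective limit description, which needs surjectivity of $Gal(K/E)\to Gal(K_iE/E)$. That surjectivity is the compactness step: the nonempty finite sets of compatible lifts form an inverse system, whose limit is nonempty.

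The main obstacle is the identity $Gal(K/E^H)=H$ for closed $H$; this is exactly where closedness is used. The inclusion $H\subseteq Gal(K/E^H)$ is trivial. Conversely, let $\sigma\in Gal(K/E^H)$ and consider any basic neighbourhood $\sigma\,Gal(K/K_i)$. The restriction of $H$ to $K_i$ is a subgroup $H_i$ of $Gal(K_i/F)$ with fixed field $E^H\cap K_i$. By finite Galois theory, $\sigma\upharpoonright K_i$ fixes $E^H\cap K_i$, so $\sigma\upharpoonright K_i\in H_i$. Hence some $h\in H$ agrees with $\sigma$ on $K_i$, so every neighbourhood of $\sigma$ meets $H$, and therefore $\sigma\in\ol{H}=H$. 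In the same spirit, this shows that $Gal(K/E^H)$ is the closure of $H$ for an arbitrary subgroup, which explains why closedness is needed.

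The remaining clauses follow from the correspondence. Inclusion reversal is immediate from the definitions together with the bijection. For the index statements, $[E^H:F]$ finite corresponds to $H$ open. Here I will compare with a finite layer $K_i\supseteq E^H$ and use the finite formula $|Gal(K_i/F):H_i|=[E^H:F]$, noting that $H\supseteq Gal(K/K_i)$. I read the equality $|H|=[K:E^H]$ as an equality of cardinals, possibly infinite, and handle it the same way. For normality, conjugation gives $\tau Gal(K/E)\tau^{-1}=Gal(K/\tau E)$. So $H$ is normal if and only if $E^H$ is stable under all of $Gal(K/F)$, which is equivalent to $E^H/F$ being normal, hence Galois. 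Finally, the restriction map $Gal(K/F)\to Gal(E^H/F)$ is surjective by the same compactness lifting argument, and its kernel is $H$, which gives $G/H\cong Gal(E^H/F)$.
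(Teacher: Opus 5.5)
The paper gives no proof of this theorem: it is quoted as standard background without proof, and your outline is essentially the standard textbook argument. Most of it is sound.

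\begin{itemize}
\item Closedness of $Gal(K/E)$ via the clopen sets $\{\sigma \mid \sigma\alpha=\alpha\}$ is correct.
\item So is the identity $E^{Gal(K/E)}=E$ via lifting from $Gal(K_iE/E)$.
\item In particular, the density argument giving $Gal(K/E^H)=\overline{H}$ is correct. The restriction $H_i$ has fixed field $E^H\cap K_i$ in $K_i$, so finite Galois theory puts $\sigma\upharpoonright K_i$ in $H_i$.
\end{itemize}

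You also rightly read the garbled correspondence in the statement as $Gal(K/E^H)=H$ and $E^{Gal(K/L)}=L$. Two details deserve a sentence each. First, surjectivity of $Gal(K/F)\to Gal(E^H/F)$ should be run through the finite Galois layers $K_i\cap E^H$, i.e.\ through the nonempty finite sets of $\tau\in Gal(K_i/F)$ agreeing with a given $\rho$ on $K_i\cap E^H$. Second, the implication ``$E^H$ stable under $Gal(K/F)$ $\Rightarrow$ $E^H/F$ normal'' uses transitivity of $Gal(K/F)$ on conjugates, which again comes from that lifting.

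The genuine gap is the order/index clause. You propose to read $|H|=[K:E^H]$ as an equality of possibly infinite cardinals and to handle it like the finite case. In that reading the clause is false, and so is $|Gal(K/F):H|=[E^H:F]$ for closed $H$ that is not open, so no argument can establish them.

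Take $F=\Qb$ and $K=\Qb(\sqrt{p} : p \text{ prime})$. Then $H=Gal(K/\Qb)\cong\prod_p \Zb/2\Zb$ has cardinality $2^{\aleph_0}$, while $E^H=\Qb$ and $[K:\Qb]=\aleph_0$. Taking $H=\{e\}$ in the same example breaks the index formula.

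In general, an infinite profinite group, and likewise $Gal(K/F)/H$ for closed $H$ of infinite index, is a compact Hausdorff homogeneous space. It has no isolated points (one would make it discrete, hence finite), so it has size at least $2^{\aleph_0}$. By contrast, every degree between countable fields, in particular all computable ones in this paper, is at most $\aleph_0$. So for countable $K$ the two cardinal equalities hold exactly when $H$ is finite, respectively open.

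What is true, and what finite-layer reasoning like yours actually yields, is the following:
\begin{itemize}
\item $H$ is open iff $[E^H:F]<\infty$, and then $|Gal(K/F):H|=[E^H:F]$.
\item $H$ is finite iff $[K:E^H]<\infty$, and then $|H|=[K:E^H]$, by Artin's theorem or by finite Galois theory applied to $K/E^H$.
\end{itemize}
If you want the clause with no finiteness hypothesis, orders, indices and degrees must be read as supernatural (Steinitz) numbers, i.e.\ least common multiples over the finite layers, not as cardinals.
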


The behavior of non-closed subgroups within the Galois correspondence is determined by their closure.  

\begin{Lemma} \label{same fixed field iff same closure}
    Let $K/F$ be an infinite Galois extension, and let $H,G$ be subgroups of $ Gal (K/F)$, not necessarily closed.  $E^H = E^G $ if and only if $\overline{H} = \overline{G}$.  That is to say two groups fix the same field if and only if they have the same closure. 

    In particular, a subgroup $G$ of $Gal (K/F)$ is dense if and only if $E^G = F$.  
\end{Lemma}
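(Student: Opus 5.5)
The plan is to reduce both directions to the Galois correspondence for closed subgroups, through the identity $E^H = E^{\overline{H}}$ for an arbitrary subgroup $H \subseteq Gal(K/F)$. Granting this identity, the lemma follows at once. If $\overline{H} = \overline{G}$, then $E^H = E^{\overline{H}} = E^{\overline{G}} = E^G$. Conversely, if $E^H = E^G$, then $E^{\overline{H}} = E^{\overline{G}}$. The closure of a subgroup of a topological group is again a subgroup, by continuity of multiplication and inversion. So the Fundamental Theorem of Infinite Galois Theory applies to $\overline{H}$ and $\overline{G}$. Since the correspondence is injective on closed subgroups, $\overline{H} = Gal(K/E^{\overline{H}}) = Gal(K/E^{\overline{G}}) = \overline{G}$.

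To prove $E^H = E^{\overline{H}}$, I will use the basic open sets $U_{\sigma,i}$ from the lemma identifying $Gal(K/F)$ with $\varprojlim Gal(K_i/F)$. Since $H \subseteq \overline{H}$, we have $E^{\overline{H}} \subseteq E^H$. For the reverse inclusion, take $\alpha \in E^H$ and $\tau \in \overline{H}$. Choose $i$ with $\alpha \in K_i$; this exists because $K = \bigcup K_i$, and for a countable $K$ we may take the $K_i$ finite Galois and nested. The set $U_{\tau\upharpoonright K_i, i}$ is an open neighborhood of $\tau$, so it meets $H$ in some $\sigma$. Then $\tau(\alpha) = \sigma(\alpha) = \alpha$, since $\tau$ and $\sigma$ agree on $K_i$. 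Hence $\alpha \in E^{\overline{H}}$. The point that every $\alpha$ lies in some finite Galois subextension holds in general by the earlier lemma, so countability is not really needed.

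For the second statement, note that $E^{Gal(K/F)} = F$. This is the correspondence applied to the closed subgroup $Gal(K/F)$, equivalently the fact that $F$ is the fixed field of a Galois extension. So $E^G = F$ holds iff $E^G = E^{Gal(K/F)}$. By the first part, this holds iff $\overline{G} = \overline{Gal(K/F)} = Gal(K/F)$, which is exactly the statement that $G$ is dense.

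The only step with real content is the identity $E^H = E^{\overline{H}}$. It rests on the description of the profinite topology by basic open sets that depend only on restrictions to finite subextensions; everything else is formal bookkeeping with the correspondence.
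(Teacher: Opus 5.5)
Your proof is correct, and it is the standard argument. The paper gives no proof of this lemma; it quotes it in the preliminaries as a background fact of infinite Galois theory, citing Dummit--Foote and Milne. Your route is the textbook one: first prove $E^H = E^{\overline{H}}$, using that the basic open neighbourhood $\{f \mid f\upharpoonright K_i = \tau\upharpoonright K_i\}$ of any $\tau\in\overline{H}$ must meet $H$. Then apply $Gal(K/E^{\overline{H}}) = \overline{H}$ to the closed subgroup $\overline{H}$.

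You also correctly used the correspondence in the form $Gal(K/E^{H}) = H$ for closed $H$. The paper's displayed statement of the fundamental theorem instead reads $Gal(E^H/F) = H$, which is a typo.
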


\subsection{Computability of Field Extensions}

There is some variability in how one can computably present a field extension.  It is common to present $K/F$ as a computable presentation of $K$, with a predicate for $F$.  However, it is more general to present $K/F$ as a triple $(K,F,\Psi)$, where $K$ and $F$ are computably presented with domain $\omega$, and $\Psi : F\to K$ is a computable ring homomorphism.  The difference is subtle, but has important ramifications for infinite extensions and uniformity considerations.  In this paper, we will exclusively use the more general presentation.  We will also often use uniformly computable sequences of field extensions, which we define here formally for completeness.

\begin{Definition}
        A sequence of field extensions $(K_i,F_i, \Psi_i)$ is uniformly computable if there is are three computable functions $f,g,h$ of two variables such that $f(i,\_)$ decides the atomic diagram of $K_i$, and $g(i,\_)$ decides the atomic diagram of $F_i$, and $h(i,\_)$ is the map $\Psi_i$. 
\end{Definition}

An important bifurcation in the computability of fields is whether or not a field $F$ has a \textit{splitting algorithm}.  A splitting algorithm for a field $F$ is a computable procedure which, given $p\in F[x]$, factors $p$ into a product of irreducible factors.  It is sufficient for the set of irreducible polynomials in $F[x]$ to have a decision procedure.
A root algorithm for $F$ decides whether a polynomial $p\in F[x]$ has a zero in $F$. A computable field has a splitting algorithm if and only if it has a root algorithm  (see \cite{10.1098/rsta.1956.0003}).

Many familiar fields have splitting algorithms.  Kronecker developed a splitting algorithm for $\Qb$. Every finite extension of $\Qb$ has a splitting algorithm. There are splitting algorithms for the global fields $\mathbb{F}_p(t)$ (\cite{POHST2005617}). In general, if a computable field $F$ has a splitting algorithm, and $t$ is transcendental over $F$, then $F(t)$ also has a splitting algorithm. If $
\alpha$ is algebraic and separable over a computable field $F$ with a splitting algorithm, then $F(\alpha)$ has a splitting algorithm.  

Not all computable fields have splitting algorithms.  The field $\Qb(\sqrt{p_e})_{e\in H}$, where $H = \{ e \mid \varphi_e(e)\downarrow \} $ is the halting set and $\{p_e \}_{e\in \omega}$ is the sequence of prime numbers, can be computably presented since $H$ is a c.e.\ set.  However, no computable isomorphic copy of this field can have a splitting algorithm, as otherwise deciding if $x^2 -p_e$ factors would allow one to decide whether $e\in H$, contradicting the fact that $H$ is not computable.

Rabin proved the following important result in \cite{ce2fcec9-0c83-3872-8226-7a164e94ad51}.

\begin{Theorem}[Rabin]
   Given a computable field $F$, the extension $\overline{F}/F$ admits a computable presentation  $(\overline{F},F,\Psi)$.  That is, $\overline{F}$ can be computably presented, and $F$ can be computably embedded into $\overline{F}$. Moreover, the copy of $F$ in $\overline{F}$ (that is, the image of $\Psi$) is a computable subset of $\overline{F}$ if and only if $F$ has a splitting algorithm.  
\end{Theorem}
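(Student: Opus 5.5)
The plan is to build a computable presentation of $\overline{F}$ by an effective direct-limit construction, adjoining roots of polynomials one at a time. We fix an enumeration $p_0,p_1,\dots$ of the nonconstant polynomials in $F[x]$; this is possible since $F$ is computable. We then construct a uniformly computable chain of computable fields $E_0 = F \subseteq E_1 \subseteq \cdots$ with computable embeddings $E_s \hookrightarrow E_{s+1}$, and let $\overline{F} = \bigcup_s E_s$. Computability of the union then comes from assigning fresh natural numbers to new elements at each stage.

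At stage $s$ we want to adjoin a root of some polynomial that has no root in $E_s$. Without a splitting algorithm we cannot find an irreducible factor, so we cannot form $E_s[x]/(q)$ directly. The main obstacle is exactly this, and Rabin's idea is to avoid it. We work with the ring $E_s[x]/(p_s)$ and handle zero divisors lazily: we present the "current field" as a quotient $E_s[x]/(q)$ where $q$ is a factor of $p_s$ discovered so far. Whenever a computation of a product or inverse reveals a nontrivial gcd, we replace $q$ by the relevant factor.

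To keep the atomic diagram decidable, it is cleaner to build the algebraic closure at once as a computable model via the following route. The theory ACF together with the atomic diagram of $F$ is complete, since ACF has quantifier elimination and $F$ fixes the characteristic. It is also computably axiomatizable relative to the computable diagram of $F$. Hence it is decidable, and a Henkin construction produces a decidable prime model of it. Quantifier elimination makes every model decidable in the relevant sense, and the prime model is the algebraic closure $\overline{F}$. The constants naming $F$ give the computable embedding $\Psi$. This handles existence and is what I would actually write. Decidability of the theory is the key step, and it follows from quantifier elimination: a sentence with parameters from $F$ reduces to a quantifier-free statement about $F$, which $F$ decides.

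For the "moreover" part, we treat each direction separately.

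If $F$ has a splitting algorithm, then to decide whether $b\in\overline{F}$ lies in $\Psi(F)$, we search for a nonzero polynomial over $F$ vanishing at $b$. Such a polynomial exists and its vanishing is decidable in $\overline{F}$. We factor it over $F$ into irreducibles and check whether some linear factor $x-a$ has $\Psi(a)=b$. This settles membership.

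Conversely, suppose $\Psi(F)$ is computable. Given $p\in F[x]$, we find its finitely many roots in $\overline{F}$ by searching, which is possible since we know $\deg p$ and can compute the multiplicities via derivatives. We then check whether any root lies in $\Psi(F)$. This gives a root algorithm, and by the cited equivalence it yields a splitting algorithm.
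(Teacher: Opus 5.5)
The paper does not prove this theorem; it is quoted from Rabin, so your argument has to stand on its own. Your reduction to the decidability of $T=\mathrm{ACF}\cup\mathrm{Diag}(F)$ is fine, and so is the direction ``splitting algorithm $\Rightarrow$ $\Psi(F)$ computable.''

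\textbf{The gap.} The problem is the sentence ``a Henkin construction produces a decidable prime model.'' A Henkin construction over a complete decidable theory yields \emph{a} decidable model, but nothing in it forces primeness. When you complete the Henkin theory, a witness constant $d$ can end up with $q(d)\neq 0$ for every nonzero $q\in F[x]$. Then the term model contains elements transcendental over $F$ and is not $\overline{F}$. This is not a formality: a complete decidable theory with a prime model need not have any decidable prime model. So decidability of $T$ is not the whole key step.

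\textbf{The repair.} You need an extra requirement forcing every element to be algebraic over $F$, and here it is easy to add. Whenever a new constant $d_k$ is introduced and the current finite set of conditions is $\Gamma(\bar d)$, search for a nonzero $q\in F[x]$ with $T\vdash\exists\bar y\,\bigl(\bigwedge\Gamma(\bar y)\wedge q(y_k)=0\bigr)$, and add $q(d_k)=0$.
\begin{itemize}
\item The search is effective because $T$ is decidable.
\item It halts because $T\cup\Gamma$ is consistent and $T$ is complete, so $\Gamma$ is realized in $\overline{F}\models T$ by a tuple of algebraic elements.
\item The resulting term model is an algebraically closed field algebraic over the image of $F$, hence $\overline{F}$.
\item $\Psi(a)$ is the class of the constant naming $a$.
\item The domain is computable by taking least representatives, since equality is decided by the computable completed theory.
\end{itemize}

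\textbf{A smaller point.} In the converse direction of the ``moreover'' clause, computing multiplicities via derivatives fails in characteristic $\ell>0$; for example, $x^\ell-t$ over $\mathbb{F}_\ell(t)$ has zero derivative. You do not need multiplicities. Instead, search for $b_1,\dots,b_n\in\overline{F}$, with $n=\deg p$, such that $p=\mathrm{lc}(p)\prod_j(x-b_j)$ in $\overline{F}[x]$. This halts because $\overline{F}$ is algebraically closed, and then $p$ has a root in $F$ iff some $b_j\in\Psi(F)$.

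You can also bypass the root/splitting equivalence altogether. The polynomial $p$ is reducible over $F$ iff, for some proper nonempty $S\subseteq\{1,\dots,n\}$, the product $\prod_{j\in S}(x-b_j)$ has all coefficients in $\Psi(F)$. This condition is decidable once $\Psi(F)$ is computable.
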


We will also make frequent use of the degree of an element of a field extension.  The degree map also need not be computable.  However, the map is computable in two key settings.

\begin{Proposition}
    If $K/F$ is a computable field extension, and $F$ has a splitting algorithm, then $\deg_F :K \to \Nb$ is computable. 
\end{Proposition}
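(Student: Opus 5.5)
Given $\alpha \in K$, we will compute $\deg_F(\alpha)$ by searching for a nonzero polynomial over $F$ that $\alpha$ satisfies, and then using the splitting algorithm to pick out the factor that actually vanishes at $\alpha$. The extension is presented as a triple $(K,F,\Psi)$ with $\Psi$ a computable embedding. Since $K$ is computable, we can evaluate $\Psi(q)(\alpha)$ for any $q \in F[x]$ and test whether it equals $0$; here $\Psi(q)$ is obtained by applying $\Psi$ to the coefficients.

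\textbf{Step 1: find an annihilating polynomial.} We enumerate all nonzero $q \in F[x]$ and halt at the first one with $\Psi(q)(\alpha)=0$. This search halts only when $\alpha$ is algebraic over $F$, so we assume $K/F$ is algebraic. This is the setting in which $\deg_F : K \to \Nb$ is a total function; for transcendental elements one would need the convention $\deg = \infty$, and then the map is not computable in general. We will state this hypothesis explicitly.

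\textbf{Step 2: factor and test.} We apply the splitting algorithm for $F$ to write $q = c\,p_1^{e_1}\cdots p_k^{e_k}$ with each $p_j$ irreducible in $F[x]$. Since $K$ is a field and $\Psi(q)(\alpha)=0$, some $\Psi(p_j)(\alpha)=0$, and we can find such a $j$ by evaluating each factor in $K$. Making $p_j$ monic gives the minimal polynomial $p_\alpha$: it is irreducible, and $p_\alpha \mid p_j$ forces them to be equal. We then output $\deg p_j$. The procedure is uniform in $\alpha$, so $\deg_F$ is computable.

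\textbf{Main point of care.} The only subtle issue is totality (the algebraicity assumption in Step 1). Otherwise the argument is routine; in particular, the splitting algorithm works in $F$ itself, and everything is transported into $K$ via $\Psi$, so we never need $\Psi(F)$ to be a decidable subset of $K$.
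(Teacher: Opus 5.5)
Your proof is correct and is essentially the paper's argument: search for an annihilating polynomial, factor it with the splitting algorithm, identify the irreducible factor vanishing at $\alpha$, and output its degree. Two of your remarks go beyond what the paper states. You restrict to algebraic $K/F$, which is needed for the search to halt and for $\deg_F$ to be a total map into $\Nb$; the paper leaves this hypothesis implicit. You also correctly note that $\Psi(F)$ need not be decidable in $K$.
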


    \begin{proof}
        Let $\alpha$ be an element of $K$.  Search through $F[x]$ for a polynomial $q(x)$ with $q(\alpha)=0$.  Run the splitting algorithm on $q$ to factor $q$ into irreducible factors.  One factor must be the minimal polynomial $p_\alpha$, which can be identified by evaluating each factor on $\alpha$.  Once $p_\alpha$ has been identified, output its degree. 
    \end{proof}

\begin{Proposition}
    If $K/F$ is a finite Galois extension with Galois group $G$, the degree map is computable from the Galois action $G \curvearrowright K$.  
\end{Proposition}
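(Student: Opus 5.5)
The plan is to compute $\deg_F(\alpha)$ as the size of the orbit of $\alpha$ under $G$. Fix $\alpha \in K$ and let $p_\alpha \in F[x]$ be its minimal polynomial. Because $K/F$ is Galois, $p_\alpha$ is separable and splits completely in $K$. Moreover, $G$ acts transitively on the roots of $p_\alpha$: each root $\beta$ gives an $F$-isomorphism $F(\alpha) \to F(\beta)$, and since $K/F$ is normal this extends to an element of $G$. Conversely, any $\sigma \in G$ sends $\alpha$ to a root of $p_\alpha$, since $\sigma$ fixes the coefficients of $p_\alpha$. So the orbit $G\alpha = \{\sigma\alpha \mid \sigma \in G\}$ is exactly the set of roots of $p_\alpha$. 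By separability there are $\deg p_\alpha$ distinct roots, and therefore
\begin{align*}
\deg_F(\alpha) = |G\alpha|.
\end{align*}
We could equally use the orbit--stabilizer theorem together with the Fundamental Theorem of Finite Galois Theory (Theorem \ref{fundamental theorem of Galois theory}): $F(\alpha) = E^{H}$ with $H = \mathrm{Stab}(\alpha)$, so $[F(\alpha):F] = |G:H| = |G\alpha|$.

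The algorithm is then direct. $G$ is finite, with $|G| = [K:F]$, and the action $G \curvearrowright K$ is given as a computable map $(\sigma,\alpha)\mapsto \sigma\alpha$ on a fixed finite list $\sigma_1,\dots,\sigma_n$ of elements of $G$. Given $\alpha$, compute $\sigma_1\alpha,\dots,\sigma_n\alpha$. Equality in the computable field $K$ is decidable, so we can count the distinct values among these and output that number. This needs only finitely many applications of the action and finitely many equality checks, so the degree map is computable from the action. No splitting algorithm for $F$ is needed.

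The one point needing care is what ``computable from the Galois action'' supplies: we must be able to enumerate all of $G$ as a finite list of codes, rather than merely evaluate individual automorphisms. I will take this as the intended meaning, so that the action is a computable function on $\{1,\dots,n\}\times K$ for a known $n$. Given that, the main step is the orbit–root identification, which is standard Galois theory; everything else is bookkeeping.
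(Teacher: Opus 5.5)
Your proposal is correct, and it uses the same algorithm as the paper: list $G=\{g_0,\dots,g_n\}$, compute each $g_j\alpha$, and output the number of distinct values.

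Where you differ is the justification, and here your version is the sound one. The paper asserts that $\{g_0\alpha,\dots,g_n\alpha\}$ is an $F$-basis of $F(\alpha)$ and reads off the degree as its cardinality. That assertion is false in general:
\begin{itemize}
\item for $\alpha=\sqrt2$ in $\Qb(\sqrt2)$, the conjugates $\pm\sqrt2$ are linearly dependent over $\Qb$;
\item for the real cube root $\alpha=\sqrt[3]{2}$ in the splitting field of $x^3-2$, the non-real conjugates do not even lie in $\Qb(\alpha)$.
\end{itemize}
The paper's conclusion $\deg_F(\alpha)=|G\alpha|$ is nevertheless right.

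Your argument supplies the correct reason. You identify the orbit $G\alpha$ with the root set of the separable polynomial $p_\alpha$, getting transitivity by extending $F(\alpha)\to F(\beta)$ to $K$ via normality. Your alternative also works: orbit--stabilizer with $\mathrm{Stab}(\alpha)=Gal(K/F(\alpha))$, together with $[G:\mathrm{Stab}(\alpha)]=[F(\alpha):F]$ from Theorem \ref{fundamental theorem of Galois theory}. So your proof repairs the paper's rather than merely reproducing it.

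Your caveat that $G$ must be available as an explicit finite list is exactly the hypothesis the paper uses implicitly when it writes $G=\{g_0,\dots,g_n\}$.
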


\begin{proof}
    Let $\alpha \in K$, and let $G= \{ g_0, g_1,\dots  ,g_n\}$. Recall that $\deg_F(\alpha) = [ F(\alpha): F ]$. The field $F(\alpha)$ has $F$ basis $\{ g_0 \alpha, g_1\alpha,\dots, g_n \alpha\}$. (Note that this enumeration will in general not be repetition-free.)  Therefore, $\deg_F (\alpha ) =|\{ g_0 \alpha, g_1 \alpha,\dots, g_n \alpha \}|$.
\end{proof}

\begin{Corollary}
    If $K/F$ is finite and Galois, a primitive element $\alpha \in K$ can be found uniformly from the Galois group and action $G \curvearrowright K$. 
\end{Corollary}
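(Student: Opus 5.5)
The natural plan is a search argument that uses the preceding proposition, which computes $\deg_F$ from the action $G \curvearrowright K$. First, $[K:F] = |G|$ for a finite Galois extension, and $|G|$ can be read off directly from the given finite list $G=\{g_0,\dots,g_n\}$, so we know $n+1 = [K:F]$. Next, an element $\alpha \in K$ is primitive exactly when $F(\alpha)=K$. This happens exactly when $[F(\alpha):F] = [K:F]$, i.e.\ when $\deg_F(\alpha) = n+1$.

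By the preceding proposition, $\deg_F(\alpha) = |\{g_0\alpha,\dots,g_n\alpha\}|$. Equivalently, $\alpha$ is primitive if and only if the orbit map $\sigma \mapsto \sigma\alpha$ is injective on $G$, i.e.\ $g_j\alpha \neq g_k\alpha$ for all $j\neq k$. Since $K$ is computable and each $g_j$ is applied through the computable action, this condition is a finite conjunction of computable inequalities. It is therefore decidable, uniformly in $\alpha$ and in the presentation of $G \curvearrowright K$.

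The algorithm enumerates the domain of $K$ (a computable subset of $\omega$) in order. For each $\alpha$ it checks whether the $n+1$ values $g_j\alpha$ are pairwise distinct, and it outputs the first $\alpha$ that passes. Each check halts, so the only thing to verify is that the search terminates. That follows from the Primitive Element Theorem: $K/F$ is finite and separable, being Galois, so some primitive $\alpha$ exists and occurs at some finite stage of the enumeration.

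The only genuine subtlety is making sure $|G| = [K:F]$ can be used, and that the input encodes $G$ as an explicit finite list with a computable action. This is how ``given the Galois group and action'' should be read, and everything is uniform in that data.
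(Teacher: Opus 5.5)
Your proposal is correct and is essentially the paper's argument. The paper's proof observes only that $F(\alpha)=K$ if and only if $\deg_F(\alpha)=|G|$, with $\deg_F$ computable from the action by the preceding proposition. Your search, which tests whether $g_0\alpha,\dots,g_n\alpha$ are pairwise distinct and terminates by the Primitive Element Theorem, is exactly the algorithm that criterion implies, written out in more detail.
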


\begin{proof}

For $\alpha \in K$, we have $F(\alpha) = K$ if and only if $\deg_F (\alpha) = |G|$.
\end{proof}

\begin{Proposition}\label{tfaegal}
    Let $K/F$ be a computable finite Galois extension.  Each of the following may be computed uniformly from the others: 

    \begin{enumerate}
        \item The Galois group and Galois action $Gal(K/F)\curvearrowright K$;

        \item The minimal polynomial of a primitive generator $\alpha \in K$;

        \item The degree map $\deg_F : K \to \Nb$, and the degree $[K:F]$;

        \item The minimal polynomial map $ K \to F[x]$ and the degree $[K:F]$.
    \end{enumerate}
    
\end{Proposition}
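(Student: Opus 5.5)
I will prove the equivalences by a cycle of uniform reductions, $(1)\Rightarrow(3)\Rightarrow(4)\Rightarrow(2)\Rightarrow(1)$. Throughout, $K$ and $F$ are computably presented with a computable embedding $\Psi$, and I use the basis representation of $F(\alpha)$ recalled above.

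For $(1)\Rightarrow(3)$ I use the preceding proposition. Given the action, $\deg_F(\alpha)=|\{g\alpha\mid g\in G\}|$, and equality in $K$ is decidable, so this is computable. Also $[K:F]=|G|$.

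For $(3)\Rightarrow(4)$, take $\alpha\in K$ and compute $d=\deg_F(\alpha)$. Then search $F^d$ for coefficients $c_0,\dots,c_{d-1}$ with $\alpha^d+\sum_{j<d} \Psi(c_j)\alpha^j=0$. Such a tuple exists, and it is unique because $1,\alpha,\dots,\alpha^{d-1}$ are linearly independent over $F$. Uniqueness also means the monic polynomial found is $p_\alpha$, so the search halts with the right answer.

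For $(4)\Rightarrow(2)$, search through $K$ for an $\alpha$ whose minimal polynomial has degree $[K:F]$. The primitive element theorem guarantees one exists, and for it $F(\alpha)=K$. Output $p_\alpha$.

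The direction $(2)\Rightarrow(1)$ is the main one. Suppose we are given $p$, the minimal polynomial of a primitive element, with $n=\deg p$. First, find a primitive root $\alpha\in K$ by searching for an element with $p(\alpha)=0$ and $1,\alpha,\dots,\alpha^{n-1}$ linearly independent. Any root of $p$ is primitive, since $K/F$ is Galois and its degree is $n=\deg p$, so in fact checking $p(\alpha)=0$ suffices.

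Since $K/F$ is normal, $p$ splits in $K$ into $n$ distinct roots. We enumerate $K$ until we have found $n$ distinct roots $\alpha=\alpha_1,\dots,\alpha_n$, and this search terminates. Each $\sigma\in G$ is determined by $\sigma(\alpha)=\alpha_k$, so $G=\{\sigma_1,\dots,\sigma_n\}$.

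To compute the action on an arbitrary $\beta\in K$, search for $a_0,\dots,a_{n-1}\in F$ with $\beta=\sum_j \Psi(a_j)\alpha^j$. This representation exists and is unique, so the search halts. Then set $\sigma_k(\beta)=\sum_j\Psi(a_j)\alpha_k^j$.

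The group law comes out of the same procedure: $\sigma_k\circ\sigma_l$ is the $\sigma_m$ with $\alpha_m=\sigma_k(\alpha_l)$, and $\sigma_k(\alpha_l)$ is computed as above.

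The main obstacle is making sure every unbounded search is guaranteed to halt without any decision procedure for $F$ beyond equality. Each search above looks for a witness that exists and is unique, or whose correctness is checked by a decidable equality in $K$, so this is fine. The degree $[K:F]$ needed in items (3) and (4) is simply $n=\deg p$ here.

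All reductions are uniform in the indices of the presentations, which gives the claim.
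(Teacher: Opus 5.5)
Your proof is correct and uses the same ingredients as the paper. Those ingredients are: orbit counting $\deg_F(\beta)=|\{g\beta \mid g\in G\}|$ for the degree map; a search for the unique monic polynomial of degree $\deg_F(\beta)$ vanishing at $\beta$; a search for an element of degree $[K:F]$ to obtain a primitive element; and computing the group and action by writing $\beta$ in the power basis of a root $\alpha$ of $p$ and substituting the other roots $\alpha_k$. The only difference is that you run the cycle as $(1)\Rightarrow(3)\Rightarrow(4)\Rightarrow(2)\Rightarrow(1)$ instead of the paper's $(1)\Rightarrow(2)\Rightarrow(3)\Rightarrow(4)\Rightarrow(1)$, so you never need the paper's product formula $p_\alpha=\prod_{g\in G}(x-g\alpha)$, and your $(2)\Rightarrow(1)$ is just the second half of the paper's $(4)\Rightarrow(1)$.
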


\begin{proof}
    (1) $\Imp$ (2)  An element $\alpha \in K$ is a primitive generator if and only if $g\alpha \not= \alpha$ for all non-trivial $g \in Gal(K/F)$.  Once such an $\alpha$ is identified, we may compute its minimal polynomial by recalling that $p_\alpha = \prod_{g\in G}(x- g\alpha)$.

    (2) $\Imp$ (3) Let $p$ be the minimal polynomial for a generator of $K$.  Note that $\deg(p) = [K:F]$. Search through $K$ for all roots of $p$, call them $\alpha_1,\dots  ,\alpha_n$.  Let $\beta \in K$.  Since $\alpha_1$ is a generator, $\beta = a_{n-1} \alpha_1^{n-1} +\cdots+a_0$.  With these coefficients in hand, compute $\beta_j=a_{n-1} \alpha_j^{n-1} +\cdots+a_0$ for $j \leq n$. Then   $\beta=\beta_1,\beta_2,\dots  ,\beta_n$ is a list of the Galois conjugates of $\beta$, not necessarily repetition-free.  The number of unique entries in the list is the degree of $\beta$.

    (3) $\Imp$ (4)  Let $\beta \in K$, and let $m = \deg_F (\beta)$.  The unique polynomial $p$ with $\deg(p) = \deg_F(\beta)$ and $p (\beta) = 0$ is the minimal polynomial of $\beta$. 

    (4) $\Imp$ (1) Search for $\alpha_1 \in K$ where $\deg(p_{\alpha_1}) = [K:F]$. The element  $\alpha_1$ is necessarily a generator of $K$.  Let $\alpha_1, \alpha_2,\dots  , \alpha_n$ be a repetition-free list of the roots of $p_{\alpha_1}$, i.e., $\alpha_1$'s Galois conjugates. The group  $Gal(K/F)$ is isomorphic to a subgroup of the symmetric group $S_n$.  We may identify which subgroup it is as follows. Since $\alpha_1$ generates $K$, we have $\alpha_j = p_j (\alpha_1)$ for some $p \in F[x]$.  Let $\sigma_i\in S_n$ such that $\alpha_{\sigma_i (j)} = p_j(\alpha_i)$.  Then $Gal(K/F) \cong \{ \sigma_1,\dots  \sigma_n\} \leq S_n$.  

    The action $Gal(K/F) \curvearrowright K$ is computed by setting $\sigma_i \beta = q(\alpha_i)$, where $q\in F[x]$ is such that $\beta= q(\alpha_1)$.
\end{proof}

\section{Cohesive powers of  the field of rational numbers}\label{Q}

We begin by recalling the previous work on the cohesive powers of $\Qb$ from \cite{DimitrovHarizanovMillerMourad}. Consider the field of rational numbers, $(\mathbb{Q} , + , \cdot).$ Recall that a set of natural numbers is maximal if it is c.e.\ and its complement is cohesive.  Equivalently, a c.e.\ set $A$ is maximal if for any other c.e.\ set $B$, if $A\subseteq^* B$ and $B-A$ is infinite, then $B$ is co-finite.

For any two maximal sets $M_{1}$ and $M_{2}$ with complements $\overline{M_{1}}$ and $\overline{M_{2}}$, we have
\begin{equation*}\prod _{\overline{M_{1}}}(\mathbb{Q} , + , \cdot ) \cong \prod _{\overline{M_{2}}}(\mathbb{Q} , + , \cdot )~ \mathrm{i} \mathrm{f} \mathrm{f}~ M_{1}  \equiv _{m}M_{2}\text{,}
\end{equation*}where $ \equiv _{m}$ stands for $m$-equivalence. A set $X$ is $m$-reducible (many-one and not necessarily one-one reducible) to a set $Y$ if there is a computable reduction from $X$ to $Y$, that is, a function $f$ on the set of natural numbers such that $ \forall i[i \in X \Leftrightarrow f(i) \in Y]$. The sets $X$ and $Y$ are $m$-equivalent, in symbols $ X  \equiv _{m}Y $, if they are $m$-reducible to each other.

A cohesive power $\prod_C \Qb $ is always a transcendental extension of $\Qb$ with infinite transcendence degree, and no algebraic part. 

\begin{Proposition}(\cite{DimitrovHarizanovMillerMourad}) A cohesive power 
    $\prod_C \Qb$ does not admit any non-trivial automorphisms, i.e., it is rigid. 
\end{Proposition}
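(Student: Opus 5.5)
The plan is to show that every automorphism $\sigma$ of $\prod_C \Qb$ preserves the order and fixes every element of the form $[f_q]$ coming from the canonical embedding. Then, using that $\Qb$ is dense in the non-standard order with respect to suitable witnesses, I will show that $\sigma$ must be the identity. The underlying idea is the classical proof that $\mathbb{R}$ is rigid, transported to the cohesive setting via Theorem~\ref{thm-LosGeneral}.

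\textbf{Step 1: the order is definable.} Consider the $\Sigma^0_1$ formula
\begin{align*}
\Phi(x) \;:=\; \exists y_1\exists y_2\exists y_3\exists y_4\,\bigl(x = y_1^2+y_2^2+y_3^2+y_4^2\bigr).
\end{align*}
By Lagrange's four-square theorem applied to numerators and denominators, $\Phi$ defines the nonnegative rationals in $\Qb$. By Corollary~\ref{One Quant Definitions Lift} (or directly by part (1) of Theorem~\ref{thm-LosGeneral}), $\Phi$ then defines in $\prod_C \Qb$ the set of $[\varphi]$ with $\varphi(i)\ge 0$ for almost every $i$. Define $[\varphi]\le[\psi]$ by $\Phi([\psi]-[\varphi])$; this is the coordinatewise order modulo $C$. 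It is a linear order, because the set $\{i \mid \varphi(i)\le\psi(i)\}$ is c.e.\ and therefore almost contains $C$ or its complement does. Since $\le$ is field-definable, any automorphism $\sigma$ preserves it.

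\textbf{Step 2: $\sigma$ fixes the prime field.} This is automatic, since $\sigma$ fixes $1$ and hence every $[f_q]$.

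\textbf{Step 3: density.} This is the step I expect to be the main obstacle, because the cohesive power is non-archimedean, so the standard rationals alone do not separate elements. I therefore need a definable ``dense'' family that $\sigma$ fixes. Suppose $\sigma[\varphi]=[\psi]\ne[\varphi]$; without loss of generality $[\varphi]<[\psi]$ (otherwise use $\sigma^{-1}$). I will look for an element $[\theta]$ that lies strictly between them and is fixed by $\sigma$. That yields a contradiction, since $[\varphi]<[\theta]$ forces $[\psi]=\sigma[\varphi]<\sigma[\theta]=[\theta]$.

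My first choice for the fixed elements is the definable set of hyper-integers. A natural candidate is to use the field-definability of $\Zb$ in $\Qb$. However, that definition is $\forall\exists\forall$, which is not covered by Theorem~\ref{thm-LosGeneral}, so it is risky. I would instead try an argument that avoids it, as follows.

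\textbf{Step 4: the fallback argument.} First show that $\sigma$ fixes the identity function $[\mathrm{id}]$, or more generally every element of the form $[i\mapsto n_i]$ with $n_i\in\Zb$ computable. Then build $[\theta]$ with $\theta(i)$ a dyadic rational, such as $\lfloor 2^{i}\varphi(i)+1\rfloor/2^{i}$. The element $[\theta]$ is a rational function of $[\mathrm{id}]$-type integer elements with integer parameters, so $\sigma$ fixes it. To make $\theta$ land strictly between $\varphi$ and $\psi$, choose the exponent depending on $i$ via the computable gap $\psi(i)-\varphi(i)>0$. This is a c.e.\ condition holding almost everywhere on $C$.

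The hard remaining point is showing that $\sigma$ fixes integer-valued elements. I would attempt it by proving that $\sigma$ commutes with coordinatewise floor, or by expressing $[i\mapsto n_i]$ through $\Sigma^0_1$ data such as the number of representations as sums of squares.
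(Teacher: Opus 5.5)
Your Steps 1--3 are fine as a reduction: the four-squares formula is $\Sigma^0_1$, so the order transfers and $\sigma$ preserves it. The gap is that the argument stops exactly where the content of the proposition lies. Nowhere do you show that $\sigma$ fixes a single non-standard element, and Step~4 leaves this as the ``hard remaining point.''

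That point is essentially the whole statement, not a lemma on the way to it. Every $[\varphi]\in\prod_C\Qb$ equals $[p]\cdot[q]^{-1}$ with $p,q$ partial computable and integer-valued: write $\varphi(i)=p(i)/q(i)$ with $q(i)>0$. So once every integer-valued element is fixed, $\sigma$ is the identity outright, and the order, density and dyadic approximations do no work.

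The weaker target ``$\sigma$ fixes $[\mathrm{id}]$'' would not carry Step~4 either. The elements $[i\mapsto 2^i]$ and $[i\mapsto\lfloor 2^i\varphi(i)+1\rfloor]$ are not rational functions of $[\mathrm{id}]$ over $\Qb$; the subfield generated by $[\mathrm{id}]$ is just $\Qb([\mathrm{id}])$. Nothing in your argument makes a field automorphism commute with coordinatewise computable operations such as $i\mapsto 2^i$ or floor.

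Your two suggested repairs do not close this. Floor is definable only once the hyper-integers are, and commuting with it fixes no non-standard integer. For a $\Sigma^0_1$ description, the natural route is an existential definition in $\Qb$ of the graph of $i\mapsto n_i$. Any such definition would make $\Nb$ Diophantine in $\Qb$, which is open, and it would still need the parameter $[\mathrm{id}]$ to be fixed.

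The missing ingredient is definability of the hyper-integers, which you set aside because J.~Robinson's definition of $\Zb$ in $\Qb$ is $\forall\exists\forall$. Koenigsmann's definition \cite{Koenigsmann} is universal, hence $\Pi^0_1$ and parameter-free. By Corollary~\ref{One Quant Definitions Lift} it therefore defines $\prod_C\Zb$ in $\prod_C\Qb$. This is where the paper's $\Sigma^0_2$-definability of $\prod_C\Nb$ comes from. The paper does not reprove the proposition; it cites \cite{DimitrovHarizanovMillerMourad}, which obtains rigidity from Koenigsmann's work. Consequently $\sigma$ restricts to an automorphism of $(\prod_C\Nb,+,\cdot)$.

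From here one workable route is as follows. By MRDP, the graph of each partial computable $f$ is $\exists\bar z\,(P(x,y,\bar z)=Q(x,y,\bar z))$. Apply Theorem~\ref{thm-LosGeneral}(1) to this $\Sigma^0_1$ formula and to its $\Pi^0_1$ uniqueness clause. This shows $[f]$ is the unique $y$ with $\exists\bar z\,(P([\mathrm{id}],y,\bar z)=Q([\mathrm{id}],y,\bar z))$. Applying $\sigma$ and transferring back gives $\sigma([f])=[f\circ\psi]$, where $[\psi]=\sigma([\mathrm{id}])$.

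What remains is a genuinely computability-theoretic step: show $\psi(i)=i$ for almost every $i\in C$. The tools are cohesiveness and the fact that $\sigma$ preserves every set defined by a Boolean combination of $\Sigma^0_1$ formulas. For instance, suppose $\psi(i)>i$ for almost every $i\in C$ (otherwise replace $\sigma$ by $\sigma^{-1}$), and let $[\chi]=\sigma^{-1}([\mathrm{id}])$. Then $\chi(\psi(i))=i$ for almost every $i\in C$. When $\chi$ is total, e.g.\ for co-maximal $C$, take the parity of the length of the descending chain $x>\chi(x)>\chi^2(x)>\cdots$. This is a computable predicate that holds of exactly one of $[\mathrm{id}]$ and $[\psi]$, which contradicts $\sigma$ preserving it.

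Nothing of this kind appears in your proposal, and the order-and-density scheme cannot substitute for it.
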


  This is in contrast to the ultrapower, which is saturated, and therefore admits $2^{2^{\aleph_0}}$ automorphisms (see \cite{Keisler}).

\begin{Proposition}(\cite{DimitrovHarizanovMillerMourad}) A cohesive power $\prod_C \Qb$ is not elementarily equivalent to $\Qb$.
    
\end{Proposition}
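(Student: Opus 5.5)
One natural strategy is to exhibit a first-order sentence that holds in $\Qb$ but fails in $\prod_C \Qb$. By Theorem~\ref{thm-LosGeneral}\eqref{it-LosDelta3Sent} such a sentence cannot be a Boolean combination of $\Sigma^0_2$ and $\Pi^0_2$ sentences, so it must have genuinely higher quantifier complexity. The cleanest source of complex sentences true in $\Qb$ is Julia Robinson's theorem: $\Zb$ is first-order definable in $(\Qb,+,\cdot)$ by a formula $Z(x)$. Transferring that definition to the cohesive power, though, takes more than Corollary~\ref{One Quant Definitions Lift}, since Robinson's formula is not $\Sigma^0_1$ or $\Pi^0_1$. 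I would therefore argue by contradiction. Suppose $\prod_C \Qb \equiv \Qb$. Then every sentence true in $\Qb$ that is expressed using $Z$ also holds in $\prod_C\Qb$, so $Z$ defines there a set $Z^*$ satisfying the same first-order properties that $\Zb$ satisfies inside $\Qb$. In particular, $(Z^*,+,\cdot)$ is a model of true arithmetic $Th(\Zb)$ (via the nonnegative part, a model of $Th(\Nb)$).

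The next step is to pin down $Z^*$ using only low-complexity information, because that is what the cohesive power actually controls. The set $\Zb$ is $\Sigma_1$-definable in $\Qb$ together with parameters from $\Zb$? No, that is not available. Instead I would use the facts that are true in $\Qb$ and inherited by $\prod_C\Qb$ at the $\Pi_2$ level. First, elements $[\varphi]$ with $\varphi(i)\in\Zb$ for almost every $i$ form the subring $\prod_C\Zb$. Second, $\Zb$ satisfies Robinson's formula and the elementary properties of $Z$ (closed under $\pm1$, every element of $\Qb$ lies between consecutive elements of $Z$, etc.). I expect one can show that $Z^*$ must coincide with $\prod_C \Zb$. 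The inclusion $\prod_C\Zb \subseteq Z^*$ would come from the ``floor'' property: the sentence ``for every $x$ there is a unique $n\in Z$ with $n\le x<n+1$'' transfers by elementarity. Here order on $\Qb$ is itself definable (sums of four squares), and one then compares the floor in $Z^*$ with the coordinatewise floor, which is a computable function.

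Having $\prod_C \Zb \cong Z^*$, I would finish with the Feferman--Scott--Tennenbaum/Lerman result recalled above: $\prod_C\Nb$ fails a $\Pi^0_3$ sentence true in $\Nb$, so it is not a model of $Th(\Nb)$ (indeed not even of $PA$). The nonnegative part of $\prod_C\Zb$ is $\prod_C\Nb$, and this contradicts $(Z^*)^{\ge0}\models Th(\Nb)$. Hence $\prod_C\Qb\not\equiv\Qb$.

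The main obstacle is the identification $Z^*=\prod_C\Zb$. Elementarity only gives the abstract properties of $Z^*$, and the link to coordinatewise behavior holds only for $\Sigma_2/\Pi_2$ formulas. If the identification resists, my fallback is to avoid it entirely. The plan there is to translate the specific $\Pi^0_3$ arithmetic sentence $\Psi$ failing in $\prod_C\Nb$ into a sentence about $\Qb$ through $Z$, and then show directly that its negation's witness in $\prod_C\Nb$ still witnesses failure in $\prod_C\Qb$. This requires only that the relevant elements of $\prod_C\Nb$ lie in $Z^*$ and that the inner $\Sigma_2$ part, restricted to them, is decided coordinatewise. That is plausible because Robinson's $Z$ has a $\forall\exists$ form that Theorem~\ref{thm-LosProdParam}\eqref{it-LosProdPramPi2} lets us push upward along $C$.
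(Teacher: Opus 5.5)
Your proposal has a genuine gap where you suspect it: the inclusion $\prod_C\Zb\subseteq Z^*$ does not follow from the floor property.

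Under the hypothesis $\prod_C\Qb\equiv\Qb$, elementarity makes $Z^*$ a discretely ordered subring that is an integer part of $\prod_C\Qb$. Here the order is the one given by sums of four squares, which by Corollary~\ref{One Quant Definitions Lift} is the coordinatewise order. The computable coordinatewise floor shows that $\prod_C\Zb$ is an integer part as well. But integer parts of an ordered field are not unique: in the real closure of $\Qb(t)$ with $t$ infinite, the order-automorphism $t\mapsto t+\frac12$ carries an integer part containing $t$ to a different one. Comparing the two floors only shows that they differ by less than $1$. To conclude that they agree, you need one of them to lie in both rings. So the floor property upgrades an inclusion to an equality, but it cannot produce the inclusion.

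Elementarity by itself never ties $Z^*$ to coordinatewise behaviour. For that, the defining formula must fall within the scope of Theorems~\ref{thm-LosProdParam} and~\ref{thm-LosGeneral}, and Robinson's formula, which has the form $\forall\exists\forall$, does not.

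The fallback does not get around this. The $\forall\exists$ definition you invoke is Poonen's, not Robinson's. Moreover, in the relativization of $\neg\Psi$ the universal quantifiers range over all of $Z^*$. Knowing that the elements of $\prod_C\Nb$ lie in $Z^*$ is therefore not enough: you need $Z^*\subseteq\prod_C\Zb$. That is precisely the direction that upward $\Pi^0_2$ transfer does not supply, and after relativization the inner part is no longer $\Sigma^0_2$ in any case.

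The missing ingredient is a definition of $\Zb$ in $\Qb$ of low complexity. The route the paper relies on uses Koenigsmann's universal definition; it cites \cite{DimitrovHarizanovMillerMourad}, and compare the $\Sigma^0_2$-definability of $\prod_C\Nb$ in $\prod_C\Qb$ and the corollary for number fields. By Corollary~\ref{One Quant Definitions Lift}, Koenigsmann's formula defines exactly $\prod_C\Zb$ in $\prod_C\Qb$. Hence the formula defining $\Nb$ in $\Qb$ via four squares defines $\prod_C\Nb$ in $\prod_C\Qb$. Relativizing the Feferman--Scott--Tennenbaum/Lerman $\Pi^0_3$ sentence then gives a sentence true in $\Qb$ and false in $\prod_C\Qb$, with no argument by contradiction needed.

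Alternatively, your contradiction framework can be repaired with Poonen's $\forall\exists$ definition:
\begin{enumerate}
\item Theorem~\ref{thm-LosProdParam}(2) gives $\prod_C\Zb\subseteq Z^*$.
\item For $z\in Z^*$, let $\lfloor z\rfloor\in\prod_C\Zb$ be its coordinatewise floor. The element $z-\lfloor z\rfloor$ lies in $Z^*\cap[0,1)$, hence is $0$ by elementarity.
\item So $Z^*=\prod_C\Zb$, and your final step goes through.
\end{enumerate}
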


This raises the question whether $\prod_C \Qb$ satisfies some reasonable fragment of the theory of $\Qb$.  We develop such a fragment $T$, but find that $\prod_C \Qb \not \models T$.

The field $\mathbb{Q}$ is the unique model up to isomorphism of the
second-order theory consisting of the field axioms, axioms of the form
\(\neg(1 + \cdots + 1 = 0)\) for any number of \(1\)'s, and the axiom 
\begin{equation*}\
    \forall X  SubField(X) \implies \forall y(y\in X),
\end{equation*}\ where $SubField(X)$ is the abbreviation for 
$$ (0 ,1 \in X )\wedge \forall x \forall y \forall z [( x,y \in X \wedge x+z = y)\implies \in X]\wedge \forall x\forall y \forall z ([( x,  y \in X \wedge x\not=0 \wedge xz = y )\implies z \in X)].$$ This is a model-theoretic rephrasing of the standard fact that $\Qb$ is the unique prime field of characteristic $0$.  

\begin{Definition} The theory \(T\) is the first-order theory consisting of the field axioms, axioms of the form \(\neg(1 + \cdots + 1 = 0)\) for any number of \(1\)s, and the axiom schema 
$$ SubField(\Phi) \implies \forall y \Phi(y)$$
for each formula $\Phi(x)$ with parameters, where $SubField(\Phi)$ is an abbreviation for 
$$ \Phi(0) \wedge \Phi(1) \wedge \forall x \forall y \forall z [( \Phi (x) \wedge \Phi (y) \wedge x+z = y)\implies  \Phi(z)]\wedge \forall x\forall y \forall z [( \Phi(x) \wedge \Phi(y) \wedge xz = y \wedge x
\not= 0) \implies \Phi(z) ].$$
\end{Definition}

The theory $T$ is to the second-order categorical axiomatization of $\Qb$ as first-order $PA$ is to the second-order categorical axiomatization of $\Nb$, and as $RCF$ is to the second order categorical axiomatization of $\mathbb{R}$.  The theory \(T\) is a proper subset of the theory \(Th( \Qb )\), as $Th(\Qb)$ is not recursively axiomatizable. This follows from a result of J. Robinson in \cite{Robinson} that $Th(\Nb)$ is interpretable in $Th(\Qb)$. The models of \(T\) are the fields of characteristic \(0\) which have no proper subfields that are definable with parameters.

The following is related to a result in \cite{HirschfeldModels}, but proven in this form in \cite{DimitrovHarizanovMillerMourad}.

\begin{Proposition} The structure \(\mathbb{N}\) is \(\Pi_{3}^{0}\)-definable in \(\prod_{C}^{}\mathbb{N}\) without parameters.
\end{Proposition}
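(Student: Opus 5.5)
We need to show $\Nb$ (the image of the canonical embedding) is $\Pi^0_3$-definable in $\prod_C \Nb$ without parameters. The plan is to find a $\Pi^0_3$ formula $\Theta(x)$ that every standard element satisfies and no nonstandard element satisfies.

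We first handle the standard elements. We take $\Theta(x)$ to express ``$x$ halts'': $\exists s\, T(x,x,s)$, or more generally $x$ behaves like a standard number with respect to a $\Sigma^0_1$ property. The cleaner route is the classical Feferman--Scott--Tennenbaum/Lerman idea. Every element $[\varphi]$ is $\le$ some element, and $[\varphi]$ is standard iff $\varphi$ is eventually constant on $C$. We use a $\Pi^0_1$ formula $\theta(x,y)$ of arithmetic defining a computable relation and exploit the failure of {\L}o\'s beyond $\Sigma^0_2/\Pi^0_2$. Concretely, we let $\Theta(x)$ be
\[
\forall y\,\exists z\,\forall w\;\psi(x,y,z,w),
\]
with $\psi$ quantifier-free, chosen so that in $\Nb$ it holds for all $x$. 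In $\prod_C\Nb$ it holds exactly at standard $x$.

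For standard $x=n$, we must verify $\Theta(n)$ in the cohesive power. Here $\Theta(n)$ has the form $\forall y\,\Phi(n,y)$ with $\Phi$ a $\Sigma^0_2$ formula. Given $[\varphi]$ for $y$, we need a witness $[\zeta]$ with $\forall w\,\psi(n,[\varphi],[\zeta],w)$. We choose $\psi$ so that a computable choice of $z$ uniformly in $y$ exists when $x$ is a fixed constant. Then part (2) of Theorem~\ref{thm-LosProdParam} applied to the $\Pi^0_1$ formula gives the result.

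For nonstandard $x=[\varphi]$, the key design is the following. Let $\psi$ say that $z$ bounds the halting time of every computation $\varphi_e(e)$ with $e<x$ that halts by stage $y$, or an analogous statement built from a fixed universal enumeration. For nonstandard $x$, the needed witness would, for almost every $i\in C$, have to compute the halting information of $\varphi_e(e)$ for all $e<\varphi(i)$, where $\varphi(i)\to\infty$ on $C$. A computable $\zeta$ achieving this on $C$, by choosing $y$ large, would decide the halting set on an infinite c.e.-approximable set. We then show this is impossible. We pick $y=[\mathrm{id}]$ or similar, apply part (1) of Theorem~\ref{thm-LosProdParam} to the resulting $\Pi^0_1$ statement about $[\zeta]$ to transfer it to almost every $i\in C$, and derive a contradiction with the noncomputability of $K$.

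The main obstacle is calibrating $\psi$ so that both directions work simultaneously. It must be weak enough that for each fixed standard $n$ the finitely many halting facts below $n$ can be hard-coded into a computable witness, which is nonuniform in $n$ but fine. It must also be strong enough that unboundedly many facts cannot be computably supplied along $C$. I would first try the bounded-halting statement ``$\forall y\,\exists z\,\forall e<x\,(\exists s<y\,T(e,e,s)\to\exists s<z\,T(e,e,s))$''. I would then check whether cohesiveness forces a contradiction in the nonstandard case. If this trivializes, since $z=y$ works, I would instead replace it with a $\Pi^0_2$-in-$e$ property whose truth pattern is not $\Delta^0_2$-uniformizable, mimicking Lerman's $\Pi^0_3$ sentence separating $\Nb$ from $\prod_C\Nb$.
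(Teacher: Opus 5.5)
There is a genuine gap. You never commit to a formula, and the mechanism you plan to use for nonstandard $x$ cannot work.

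\textbf{Why the planned route fails.} Your first candidate is trivial, as you note ($z=y$). The natural nontrivial variant is the one where $z$ must bound the halting computations of all $e<x$ with $e\in K$ (or of any fixed c.e.\ set). It does not define $\Nb$ for every cohesive $C$:
\begin{itemize}
\item Take a one-to-one computable enumeration $k_0,k_1,\dots$ of $K$, and let $S$ be its infinite set of non-deficiency stages, i.e.\ those $s$ with $k_t>k_s$ for all $t>s$.
\item Put $\varphi(s)=k_s+1$, and let $\zeta(s)$ be the largest halting code among $k_0,\dots,k_s$.
\item For $s\in S$ we have $K\cap[0,k_s]\subseteq\{k_0,\dots,k_s\}$, so $\zeta(s)$ is a correct bound.
\item Every infinite set has a cohesive subset, so choose a cohesive $C\subseteq S$. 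Then the nonstandard element $[\varphi]$ satisfies your formula in $\prod_C\Nb$.
\end{itemize}
This also pinpoints the error in the step ``a computable $\zeta$ correct on $C$ would decide the halting set.'' Correctness on almost all of $C$ yields no decision procedure, because $C$ is not c.e. So the noncomputability of $K$ alone cannot separate standard from nonstandard elements.

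\textbf{The missing idea.} Let the universally quantified $y$ range over \emph{programs}, so that one $y$ can diagonalize against every candidate witness $z$. Let
\[
\Theta(x):\quad \forall y\,\exists z\,\forall w\,\forall e\,\bigl[(e<x\wedge T(y,e,w))\imp w\le z\bigr],
\]
where $T(y,e,w)$ says that $w$ codes the (unique) halting computation of program $y$ on input $e$. The matrix is $\Delta_0$, and $\Delta_0$ formulas are evaluated pointwise in $\prod_C\Nb$ because bounded witness search is computable. Hence $\Theta$ is $\Pi^0_3$, and it holds for every $x$ in $\Nb$.

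\emph{Standard $x=n$.} Fix any $[\eta]$. For each $e<n$, the c.e.\ set $V_e=\{i:\varphi_{\eta(i)}(e)\da\}$ either almost contains $C$ or is almost disjoint from it. Let $\zeta(i)$ be the largest halting code of $\varphi_{\eta(i)}(e)$ over those $e<n$ with $C\subseteq^* V_e$ (or $0$ if there are none). This $[\zeta]$ is a witness, by Theorem~\ref{thm-LosGeneral}(1) applied to the $\Pi^0_1$ matrix.

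\emph{Nonstandard $x=[\varphi]$.} Let $\eta$ be total computable such that $\varphi_{\eta(i)}(e)$ runs $\varphi_e(i)$ and, if it returns $v$, halts with a computation code larger than $v$. Any candidate witness has the form $[\varphi_{e_0}]$.
\begin{itemize}
\item Each level set $\{i:\varphi(i)=m\}$ is c.e.\ and, since $[\varphi]$ is nonstandard, almost disjoint from $C$. Hence $\varphi(i)>e_0$ and $\varphi_{e_0}(i)\da$ for almost every $i\in C$.
\item So for almost every $i\in C$, the matrix fails at $(\varphi(i),\eta(i),\varphi_{e_0}(i))$ with $e=e_0$.
\item By the same transfer, $\prod_C\Nb\models\neg\Theta([\varphi])$.
\end{itemize}
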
 

The following proposition follows from the work of Koenigsmann in \cite{Koenigsmann}.

\begin{Proposition}[\cite{DimitrovHarizanovMillerMourad}]
$\prod_C \Nb $ is $\Sigma_2^0$-definable in $\prod_C \Qb$.
\end{Proposition}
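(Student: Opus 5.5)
The approach is to interpret $\prod_C \Nb$ inside $\prod_C \Qb$ using Koenigsmann's universal definition of $\Zb$ in $\Qb$. Koenigsmann shows that $\Zb$ is definable in $\Qb$ by a universal formula, so $\Qb\setminus\Zb$ is defined by an existential formula. Combined with Lagrange's four squares theorem, this yields a $\Pi^0_1$ definition $\Psi(x)$ of $\Zb$ in $\Qb$, and hence a definition of $\Nb$ in $\Qb$:
\[
\Theta(x):\quad \Psi(x)\wedge \exists a\,\exists b\,\exists c\,\exists d\,\bigl(x=a^2+b^2+c^2+d^2\bigr).
\]
Note that $\Psi$ is only needed on $x$ itself; the squares may be taken in $\Qb$, because a nonnegative rational is a sum of four rational squares. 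The formula $\Theta$ is a conjunction of a $\Pi^0_1$ and a $\Sigma^0_1$ formula, so it is in particular $\Sigma^0_2$.

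The key step is to lift this definition to the cohesive powers. The fundamental theorem of cohesive powers, Theorem~\ref{thm-LosGeneral}(1), applies to Boolean combinations of $\Sigma^0_1$ and $\Pi^0_1$ formulas with parameters. It gives, for every $[\varphi]\in\prod_C\Qb$,
\[
\prod\nolimits_C\Qb\models\Theta([\varphi]) \iff C\subseteq^*\{i\mid \Qb\models\Theta(\varphi(i))\}=\{i\mid \varphi(i)\in\Nb\}.
\]
This is exactly the argument of Corollary~\ref{One Quant Definitions Lift}, extended to conjunctions of a $\Sigma^0_1$ and a $\Pi^0_1$ formula, which Theorem~\ref{thm-LosGeneral}(1) allows. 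So $\Theta$ defines in $\prod_C\Qb$ the set $S$ of classes $[\varphi]$ with $\varphi(i)\in\Nb$ for almost every $i\in C$.

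It then remains to identify $S$ with $\prod_C\Nb$, and to do so compatibly with the operations. Fix computable presentations of $\Nb$ and $\Qb$ and the computable embedding $\iota:\Nb\to\Qb$. Map $[\psi]\in\prod_C\Nb$ to $[\iota\circ\psi]$. This map is well defined and injective, since equality of values is preserved by $\iota$. It preserves $+$ and $\times$ pointwise, so by the definition of the operations in the cohesive power it preserves them in the powers. Order is defined in $\Nb$ by $x\le y\iff\exists z\,(x+z=y)$, so it is transported by the same $\Sigma^0_1$ definition with $z$ restricted by $\Theta$.

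For surjectivity onto $S$, take $[\varphi]\in S$. Define a partial computable $\psi$ by $\psi(i)=\iota^{-1}(\varphi(i))$ whenever $\varphi(i)\downarrow$ lands in $\iota(\Nb)$. This set is decidable, because $\Qb$ with its copy of $\Nb$ is computable. Then $\psi\simeq_C$ agrees with $\varphi$ on almost all of $C$, so $[\varphi]=[\iota\circ\psi]$.

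Hence $\prod_C\Nb\cong S$, and $S$ is $\Sigma^0_2$-definable without parameters. The main obstacle is the input from Koenigsmann: one must check that his formula really is purely universal over $\Qb$, so that the final formula stays within the fragment where Theorem~\ref{thm-LosGeneral}(1) gives a biconditional. Everything else is routine transfer.
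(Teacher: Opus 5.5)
Your proof is correct and follows essentially the route the paper intends. The paper gives no argument beyond citing \cite{DimitrovHarizanovMillerMourad} and Koenigsmann, and it proves the analogous number-field result by the same pattern: lift a $\Pi^0_1$ definition of the integers via Corollary~\ref{One Quant Definitions Lift}, then use a $\Sigma^0_1$ definition of $\Nb$ inside it. Your one refinement, taking the four squares in $\Qb$ rather than relativizing them to $\Zb$, makes $\Theta$ a conjunction of a $\Pi^0_1$ and a $\Sigma^0_1$ formula, so a single application of Theorem~\ref{thm-LosGeneral}(1) gives the biconditional directly, and your definition is in fact also $\Pi^0_2$. By contrast, the chained definition $\Psi(x)\wedge\exists \vec y\,(\bigwedge_j\Psi(y_j)\wedge x=\sum_j y_j^2)$ is of $\Sigma^0_2$ form, matching the complexity stated in the proposition.
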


\begin{Theorem}
     A cohesive power \(\prod_{C}^{}\mathbb{Q}\) has definable proper subfields and is thus not a model of \(T\). 
\end{Theorem}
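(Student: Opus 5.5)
Our plan is to use the two preceding propositions: $\Nb$ is $\Pi^0_3$-definable in $\prod_C \Nb$, and $\prod_C \Nb$ is $\Sigma^0_2$-definable in $\prod_C \Qb$. Composing these definitions (relativizing the quantifiers of the $\Pi^0_3$ formula to the $\Sigma^0_2$ formula defining $\prod_C\Nb$) gives a first-order formula $\Theta(x)$ with $\Theta^{\prod_C \Qb} = \{[f_n] \mid n\in\omega\}$. That is, the image of the standard naturals under the canonical embedding is definable in $\prod_C\Qb$. Concretely, we first check that the copy of $\prod_C\Nb$ inside $\prod_C\Qb$, given by Corollary~\ref{One Quant Definitions Lift}/Koenigsmann, consists exactly of the classes of functions with values in $\omega\subseteq\Qb$. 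We also check that the field operations restricted to it agree with those of $\prod_C \Nb$, so the defining formula for $\Nb$ inside $\prod_C\Nb$ transfers verbatim.

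Next we form the candidate proper subfield
\[
\Phi(z) \;:=\; \exists x\,\exists y\,\exists w\,\bigl(\Theta(x)\wedge\Theta(y)\wedge\Theta(w)\wedge \neg(y=0)\wedge z\cdot y = x - w\bigr),
\]
which defines $\{[f_q]\mid q\in\Qb\}$, the canonical image of $\Qb$. This set is clearly a subfield, since it is the image of a field embedding, so $SubField(\Phi)$ holds in $\prod_C\Qb$. It is proper because the canonical embedding of an infinite structure is never onto; for instance, the class of the identity function $i\mapsto i$ is not eventually constant on $C$. Hence the instance $SubField(\Phi)\implies\forall y\,\Phi(y)$ of the schema fails, and $\prod_C\Qb\not\models T$.

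The main obstacle is the first step. We must ensure that the definition of $\prod_C\Nb$ in $\prod_C\Qb$ is genuinely the natural copy of $\prod_C\Nb$, so that composing it with the $\Pi^0_3$ definition of $\Nb$ yields exactly the standard copy. In particular, no nonstandard elements may slip in through the interpretation. If this identification is delicate, the fallback is to use the $\Sigma^0_2$ definition directly. The formula $\Psi$ (Koenigsmann's universal-existential definition of $\Zb$ in $\Qb$, transported via Theorem~\ref{thm-LosProdParam} to define $\prod_C\Zb$) defines $\prod_C\Zb$, a proper subring. Quotients of its elements then give a definable subfield $\{a/b\}$ of $\prod_C\Qb$. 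To show this subfield is proper, we would exhibit an element such as $[\,i\mapsto \text{a rational with large prime denominator}\,]$. The difficulty is that quotients of nonstandard integers may cover it, so the route through the standard $\Nb$ is preferred.
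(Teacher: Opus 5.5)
Your proposal is correct and follows the paper's proof. You compose the $\Pi^0_3$ definition of $\Nb$ in $\prod_C\Nb$ with the $\Sigma^0_2$ definition of $\prod_C\Nb$ in $\prod_C\Qb$, then define the canonical copy of $\Qb$ as quotients of standard integers; your condition $zy = x-w$ is an equivalent variant of the paper's $ny=1\wedge(x=my\vee x+my=0)$. Drop the fallback entirely rather than keeping it as a backup: writing $\varphi(i)$ computably in lowest terms shows that every element of $\prod_C\Qb$ is a quotient of two elements of $\prod_C\Zb$, so that route defines the whole field.
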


\begin{proof}Since \(\mathbb{N}\) is \(\Pi_{3}^{0}\)-definable in
\(\prod_{C}^{}\mathbb{N}\), and \(\prod_{C}^{}\mathbb{N}\) is
\(\Sigma_{2}^{0}\)-definable in \(\prod_{C}^{}\mathbb{Q}\), it follows
that \(\mathbb{N}\) is first-order definable in
\(\prod_{C}^{}\mathbb{Q}\), so let \(\Phi(x)\) be a first-order
formula defining \(\mathbb{N}\) in \(\prod_{C}^{}\mathbb{Q}\). Then the formula
$$\exists m\exists n\exists y[\Phi(m)\wedge\Phi(n)\wedge ny = 1\wedge (x = my\ \vee\ x + my = 0)]$$
is a first-order definition of \(\mathbb{Q}\) in
\(\prod_{C}^{}\mathbb{Q}\), so \(\mathbb{Q}\) is a definable proper subfield of \(\prod_{C}^{}\mathbb{Q}\), and thus
\(\prod_{C}^{}\mathbb{Q}\) is not a model of \(T\).
\end{proof}

\section{Cohesive powers of algebraic number fields}\label{alg}

An algebraic number field is a finite extension of \(\mathbb{Q}\).  Since field extensions of characteristic $0$ are always separable, it follows from the primitive element theorem that all number fields are of the form $\Qb (\alpha)$, where $\alpha \in \overline{ \Qb}$ is an algebraic number.  

If \(K\) is a number field, then \(O_{K}\), the ring of integers of \(K\), is the set of all roots in \(K\) of monic polynomials in \(\mathbb{Z}\lbrack x\rbrack\). The following result is due to Park, and is a generalization of an earlier result by Koenigsmann \cite{Koenigsmann}.

\begin{Theorem} \label{park theorem}
    (Park \cite{Park}) If \(K\) is a number field, then \(O_{K}\) is
\(\Pi_{1}^{0}\)-definable in \(K\).
\end{Theorem}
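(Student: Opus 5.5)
This is Park's theorem, which the paper cites rather than proves, so the plan reconstructs Park's argument in outline. The strategy, following Koenigsmann's approach for $\Zb$ in $\Qb$, is to express $O_K$ as an intersection of valuation rings. By definition $O_K=\bigcap_{\mathfrak p} O_{\mathfrak p}$, where $\mathfrak p$ runs over the finite primes of $K$ and $O_{\mathfrak p}$ is the local ring at $\mathfrak p$.

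The first step is to show that the local rings are definable uniformly by an existential formula in parameters. For $a,b\in K^\times$, consider the quaternion algebra $(a,b)_K$ and the set
$$S_{a,b}=\{\operatorname{Tr}(x) \mid x\in(a,b)_K,\ \operatorname{Nrd}(x)=1\}.$$
This set is existentially definable from $a,b$. By the local--global principle (Hasse--Minkowski), membership of $t$ in $S_{a,b}$ reduces to local conditions at the set $\Delta_{a,b}$ of primes where the algebra ramifies. Mimicking Poonen's argument, one then shows that
$$T_{a,b}=S_{a,b}+S_{a,b}=\bigcap_{\mathfrak p\in\Delta_{a,b}} O_{\mathfrak p}.$$
The main computation here uses local reduced norms and traces, together with weak approximation to combine the local solutions into a global one.

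The second step is to take the intersection over all parameters. Every finite prime lies in some $\Delta_{a,b}$, so
$$O_K=\bigcap_{a,b} T_{a,b}.$$
This gives a $\forall\exists$ definition. To obtain a universal ($\Pi^0_1$) definition, complement the statement: $x\notin O_K$ if and only if there exist $a,b$ with $x\notin T_{a,b}$. The task is then to make the condition $x\notin T_{a,b}$ existential. Park, like Koenigsmann, does this by restricting the parameters to those for which $\Delta_{a,b}$ is controlled, for instance to a single prime or an explicitly described set. One then shows that the Jacobson radical
$$J_{a,b}=\bigcap_{\mathfrak p\in\Delta_{a,b}}\mathfrak p O_{\mathfrak p}$$
is existentially definable; Koenigsmann's trick expresses $J$ through the elements $t$ with $1/(1+t)$, or similar expressions, in $T$. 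Finally, $x\notin O_{\mathfrak p}$ holds if and only if $x^{-1}\in\mathfrak p O_{\mathfrak p}$, which is existential. Hence the complement of $O_K$ is $\exists$-definable, and $O_K$ is $\forall$-definable.

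The main obstacle is this last step: arranging, over a general number field $K$, that the parameters can be existentially restricted so that $\Delta_{a,b}$ picks out the intended primes. Over $\Qb$, Koenigsmann handles this with quadratic reciprocity and congruence conditions. For general $K$ this needs global class field theory (the Hilbert reciprocity law, and the existence of primes in ray classes) to handle the primes above $2$ and the real places. This is the technical heart of Park's paper, and I would follow its case analysis there.
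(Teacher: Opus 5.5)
The paper gives no proof of this statement; it imports it from Park as a black box. So deferring the case analysis to Park matches what the paper does. The outline you give in its place, however, would not work as written.

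First, $T_{a,b}=\bigcap_{\mathfrak p\in\Delta_{a,b}}O_{\mathfrak p}$ fails for general $a,b\in K^\times$ as soon as $H_{a,b}$ ramifies at a real embedding $\rho$. In that case $H_{a,b}\otimes_\rho\mathbb{R}$ is Hamilton's quaternions, and reduced-norm-one elements have trace in $[-2,2]$. Hence $\rho(T_{a,b})\subseteq[-4,4]$, while the right-hand side contains $\Zb$. You must restrict to parameters split at every real place (e.g.\ $a$ totally positive, which is existential by Siegel's four-squares theorem).

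Second, under that restriction Hilbert reciprocity makes $|\Delta_{a,b}|$ even, so $\Delta_{a,b}$ is never a single prime. When $|\Delta_{a,b}|\ge 2$, the condition $x^{-1}\in J_{a,b}$ says that $x$ is non-integral at \emph{every} prime of $\Delta_{a,b}$, which is much stronger than $x\notin T_{a,b}$. For instance, over $\Qb$, $p\in J_{a,b}$ forces $\Delta_{a,b}\subseteq\{p\}$, hence $\Delta_{a,b}=\emptyset$. So no admissible parameter with nonempty ramification set witnesses $1/p\notin\Zb$, and your existential description of $K\setminus O_K$ misses such elements.

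The missing idea, which is how Koenigsmann proceeds over $\Qb$ and which Park follows, is to isolate a single prime as an \emph{intersection} of two ramification sets. By the approximation theorem, $T_{a,b}+T_{a',b'}$ and $J_{a,b}+J_{a',b'}$ are the semilocal ring and its Jacobson radical attached to $\Delta_{a,b}\cap\Delta_{a',b'}$, and sums of existential sets are existential. One then wants $x\notin O_K$ if and only if $x^{-1}\in J_{a,b}+J_{a',b'}$ for some admissible $(a,b,a',b')$. This imposes two requirements:
\begin{itemize}
\item \emph{Soundness.} The admissible set must be existentially definable and must force $\Delta_{a,b}\cap\Delta_{a',b'}\neq\emptyset$ for \emph{every} admissible value, not only the intended prime-like ones. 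For disjoint nonempty ramification sets, approximation gives $J_{a,b}+J_{a',b'}=K$, and integers would be accepted.
\item \emph{Completeness.} Every $\mathfrak p$ must arise as such an intersection.
\end{itemize}
Achieving both without a known existential way to restrict parameters to primes is exactly where the real work lies. This is what local conditions at the primes above $2$ and at the real places, Hilbert reciprocity, and class field theory are for. Your sketch names these tools but not what they must accomplish.

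Finally, you leave the existential definability of $J_{a,b}$ open. A condition like $1/(1+t)\in T_{a,b}$ holds for every $t\in T_{a,b}$ with $t\not\equiv -1$ modulo each prime of $\Delta_{a,b}$, so it does not cut out $J_{a,b}$.
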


The analog to Hilbert's 10th problem for rings of integers over algebraic number fields was a significant open problem in number theory for decades.  Significant work was done by Shlapentokh in \cite{shlapentokh2007ringsalgebraicnumbersinfinite} and Poonen in \cite{Poonen_2002}.  Finally, the general problem was solved in December 2024 by Koymans and Pagano \cites{KoymansPagano}, in January 2025 by Alpöge, Bhargava, Ho, and Shnidman \cites{ABHS}, and again in May 2025 by Zywina (\cites{zywina2025rankellipticcurvesrank}) .  

\begin{Theorem} \label{H10 for O_K}
    (\cites{KoymansPagano, ABHS, zywina2025rankellipticcurvesrank}) The  ring \(\mathbb{Z}\) is \(\Sigma_{1}^{0}\)-definable in \(O_{K}\) for all number fields \(K\).
\end{Theorem}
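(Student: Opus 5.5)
The statement is imported from \cites{KoymansPagano, ABHS, zywina2025rankellipticcurvesrank}. In the present paper it is only invoked as a black box. The natural route is the elliptic-curve reduction of Denef, Poonen and Shlapentokh. The plan has three steps: first reduce the problem to the Galois closure, then reduce it to a rank condition on a single elliptic curve, and finally produce that curve.

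\emph{Step 1 (reduction to Galois $K$).} Let $L$ be the Galois closure of $K$ over $\Qb$. Fix an integral basis $\omega_1,\dots,\omega_n$ of $O_L$. Note that $O_L$ need not be free over $O_K$, but it sits between $\bigoplus O_K\omega_j$ and $\frac1d\bigoplus O_K\omega_j$ for some $d \in \Zb$. Writing each element of $O_L$ in coordinates over $O_K$, a positive-existential definition of $\Zb$ in $O_L$ becomes a positive-existential definition over $O_K$ (restriction of scalars). Since $\Zb \subseteq O_K \subseteq O_L$, this yields a definition of $\Zb$ in $O_K$. It therefore suffices to treat the case where $K$ is Galois. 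This reduction is also consistent with Step 2, because $E(\Qb)\subseteq E(K)\subseteq E(L)$.

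\emph{Step 2 (Shlapentokh--Poonen criterion).} I would invoke the theorem of Poonen \cite{Poonen_2002} and Shlapentokh \cite{shlapentokh2007ringsalgebraicnumbersinfinite}. It says that if there is an elliptic curve $E/\Qb$ with $\operatorname{rk}E(K)=\operatorname{rk}E(\Qb)>0$, then $\Zb$ is Diophantine, i.e.\ $\Sigma^0_1$-definable, in $O_K$. The mechanism is to take multiples $nP$ of a generator-type point. Their denominators then satisfy divisibility relations that let one define the sequence of $n$ positive-existentially. From that sequence one recovers $\Zb$, and hence a $\Sigma_1^0$ formula.

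\emph{Step 3 (main obstacle: producing rank-stable curves).} The real content is to exhibit, for every number field $K$, a curve $E/\Qb$, which I would take to be a quadratic twist $E^{(t)}$ of a fixed curve, such that $\operatorname{rk}E^{(t)}(\Qb)=\operatorname{rk}E^{(t)}(K)=1$ (or at least positive and equal). The plan is to control $2$-Selmer groups of the twists $E^{(t)}$ over both $\Qb$ and $K$, which bound the ranks from above. One chooses the prime factors of $t$ by Chebotarev density, so that the local conditions force $\operatorname{Sel}_2(E^{(t)}/K)$ to be as small as parity allows. Then one shows that a point of infinite order exists over $\Qb$. This existence can come from explicit families in which a rational point is built in, or from parity combined with additive-combinatorial equidistribution of Selmer structures, as in Koymans--Pagano. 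The hardest part is the simultaneous lower bound over $\Qb$ and upper bound over $K$. I would first try a family with a built-in rational point, in the spirit of \cite{ABHS} and \cite{zywina2025rankellipticcurvesrank}, together with Selmer-rank minimization over $K$. Combining Steps 1--3 gives the $\Sigma_1^0$ definition of $\Zb$ in $O_K$.
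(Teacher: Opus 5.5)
The paper does not prove this theorem; it imports it from Koymans--Pagano, Alp\"oge--Bhargava--Ho--Shnidman and Zywina. Your Steps 1 and 2 are sound. They are the standard restriction-of-scalars and Poonen/Shlapentokh framework on which those works rest. One minor slip in Step 1: with an integral basis of $O_L$ over $\Zb$, the sum $\sum O_K\omega_j$ is not direct. It already equals $O_L$, though, so no $\frac1d$-sandwich or integrality condition is needed.

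The genuine gap is Step 3. It is the whole content of the cited theorems, and you give only a plan. As described, the plan does not work, because the two demands on $t$ pull against each other.

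\emph{A point over $\Qb$ restricts $t$ to a thin set.} For $E\colon y^2=x(x-a)(x-b)$, the twist $ty^2=x(x-a)(x-b)$ has a point with $x=u/v$ exactly when $t$ is, up to squares, $uv(u-av)(u-bv)$. Chebotarev lets you choose individual primes with prescribed Frobenius in $K$. It says nothing about the prime factorization of such values, nor about the residue symbols among the factors that govern $\operatorname{Sel}_2(E^{(t)}/K)$. Supplying this control is the new idea in the cited work. My understanding is that Koymans--Pagano get it from additive combinatorics (Green--Tao--Ziegler-type results on linear patterns of primes). Alp\"oge--Bhargava--Ho--Shnidman instead, as I recall, control Selmer groups on average in families with a marked point, in quadratic extensions.

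\emph{Parity does not produce a point.} Your alternative, parity plus equidistribution of Selmer groups, gives only Selmer information. Minimal Selmer rank plus parity yields rank one only if the $2$-primary Shafarevich--Tate group is finite. That is the conditional Mazur--Rubin route, which these works were designed to avoid.

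Your sketch also ignores the primes ramified in $K$, and Step 1 can make this worse. Let $\rho$ be an orthogonal irreducible representation of $\mathrm{Gal}(K/\Qb)$ with trivial determinant and conductor prime to that of $E$. Then $w(E\otimes\rho)=w(E)^{\dim\rho}$. Odd-dimensional such $\rho$ are common, for example the $3$-dimensional representation of $A_4$, or $\mathrm{std}\otimes\mathrm{sgn}$ for $S_4$. The latter occurs for the Galois closure of an $S_4$-quartic field but plays no role for the quartic field itself.

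For such $K$, take any $E^{(t)}$ with good reduction at the ramified primes and $w(E^{(t)})=-1$. The sign $-1$ is forced by Monsky's $2$-parity theorem once the $2$-Selmer rank over $\Qb$ is one, as your plan requires. Then $w(E^{(t)}\otimes\rho)=-1$, so parity predicts $\operatorname{rk}E^{(t)}(K)\ge 1+\dim\rho$. In that case ``as small as parity allows'' is not small enough. Frobenius conditions at unramified primes alone cannot give rank stability there; you must also impose local conditions at the ramified primes, such as bad reduction there.

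So the proposal correctly reduces the statement to the existence of rank-stable curves, but it does not establish that existence. Within this paper, that step is handled only by the citation.
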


\begin{Theorem}
    If \(K\) is a number field, then \(\prod_{C}\mathbb{N}\) is definable in \(\prod_{C}^{}K\).
\end{Theorem}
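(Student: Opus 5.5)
The plan is to combine Park's theorem (Theorem~\ref{park theorem}) with the solution of Hilbert's tenth problem for $O_K$ (Theorem~\ref{H10 for O_K}), lift both definitions to the cohesive power via Corollary~\ref{One Quant Definitions Lift}, and then cut out the non-negative part of the resulting ring. Fix a computable presentation of $K$. By Theorem~\ref{park theorem} there is a $\Pi^0_1$ formula $\Theta(x)$, possibly with parameters from $K$, defining $O_K$ in $K$. By Corollary~\ref{One Quant Definitions Lift}, $\Theta$ defines $\prod_C O_K$ inside $\prod_C K$, with the parameters taken through the canonical embedding. Here $O_K$ is a computable substructure of $K$: it is co-c.e.\ by Park's formula, and c.e.\ by searching for monic integer polynomials. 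Hence $\prod_C O_K$ makes sense as a cohesive power, and its elements are the classes $[\varphi]$ with $\varphi(i)\in O_K$ for almost every $i\in C$.

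Next I use Theorem~\ref{H10 for O_K} to obtain an existential formula $\Xi(x)$ over $O_K$ (with parameters in $O_K$) defining $\Zb$ inside $O_K$. Applying Corollary~\ref{One Quant Definitions Lift} again, now with $O_K$ as the ambient structure, $\Xi$ defines $\prod_C \Zb$ inside $\prod_C O_K$. We need a copy of $\Zb$ inside $O_K$ that is computable so that the cohesive power is well defined; the image of $\Zb$ in $K$ is computable because $K$ has a splitting algorithm. Relativizing the quantifiers of $\Xi$ to $\Theta$ gives a formula $\Xi^{\Theta}(x)$ in the field language, and this formula defines $\prod_C\Zb$ in $\prod_C K$. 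The formula $\Xi^\Theta$ is no longer $\Sigma^0_1$, but that does not matter: the relativized statement holds in $\prod_C K$ because $\prod_C O_K$ is literally the $\Theta$-definable subset there, and $\Xi$ is evaluated in the structure $\prod_C O_K$ itself.

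Finally I pass from $\prod_C\Zb$ to $\prod_C\Nb$. I will use Lagrange's four-square theorem in $\Zb$, namely that $x\ge 0$ if and only if $\exists a,b,c,d\,(x=a^2+b^2+c^2+d^2)$. This is a $\Sigma^0_1$ definition of $\Nb$ in $\Zb$, so by Corollary~\ref{One Quant Definitions Lift} it defines $\prod_C\Nb$ in $\prod_C\Zb$, with quantifiers relativized to $\Xi^\Theta$. The ring operations on $\prod_C\Nb$ are the restrictions of those on $\prod_C K$, and the order is definable as $x\le y \iff \exists z\,(\text{$z$ is in the set} \wedge x+z=y)$. This yields a definition of the structure $\prod_C\Nb$, identified with $\prod_C$ of the image of $\Nb$ in $K$.

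I expect the main obstacle to be the careful justification that each nested application of Corollary~\ref{One Quant Definitions Lift} is legitimate. This requires the relevant substructures to be computable, so that their cohesive powers are identified with subsets of the ambient cohesive power given by classes of functions landing almost everywhere in the substructure. The identification $\prod_C \mc{A}\subseteq\prod_C\mc{B}$ is a set-level statement, and I would check it directly from Definition~\ref{def-CohProd} rather than citing anything further.
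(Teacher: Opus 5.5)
Your proposal is correct and follows the same route as the paper: lift Park's universal definition of $O_K$ to $\prod_C K$ via Corollary~\ref{One Quant Definitions Lift}, lift the existential definition of $\Zb$ in $O_K$ to $\prod_C O_K$ in the same way, and chain the two definitions. The extra details you supply (computability of $O_K$ and $\Zb$ as subsets of $K$, the four-square step from $\prod_C\Zb$ to $\prod_C\Nb$, and the definable order) fill in points the paper leaves implicit; just conjoin $\Theta(x)$ to $\Xi^{\Theta}(x)$ so that the defined set stays inside $\prod_C O_K$.
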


\begin{proof} 

By Theorem \ref{park theorem}, $O_K$ is $\Pi_1^0$ definable in $K$.  Thus, by Corollary \ref{One Quant Definitions Lift}, $\prod_C O_K$ is definable in $\prod_C K$.  By Theorem \ref{H10 for O_K}, $\Nb$ is $\Sigma_1^0$ definable in $\prod_C O_K$.  Thus, $\prod_C \Nb$ is $\Sigma_1^0$ definable in $\prod_C O_K$.  Chaining definitions, we see that $\prod_C \Nb$ is definable in $\prod_C K$. 

\end{proof}

\begin{Corollary}A cohesive power of a number field \(\prod_{C}K\) is not elementarily equivalent to $K$.
\end{Corollary}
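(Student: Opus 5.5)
The strategy is to exhibit a first-order sentence that holds in one of $K$, $\prod_C K$ but not the other. The template is the argument for $\prod_C \Qb$. By the preceding theorem there is a formula $\Theta(x)$ defining $\prod_C \Nb$ in $\prod_C K$. We may also assume that $\Theta$ is obtained by chaining Park's $\Pi^0_1$ definition of $O_K$ with the $\Sigma^0_1$ definition of $\Zb$ in $O_K$ from Theorem~\ref{H10 for O_K}, followed by a Lagrange four-squares step to pass from $\Zb$ to $\Nb$. The point is that the same formula $\Theta$, read in $K$ itself, defines $\Nb$ in $K$.

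Next I use the fact that inside $\prod_C K$ the set defined by $\Theta$, namely $\prod_C \Nb$, is a nonstandard model in which arithmetic fails. I would pull back to $\Nb$ the $\Pi^0_3$ sentence $\Psi$ of Lerman/Feferman--Scott--Tennenbaum, which satisfies $\Nb \models \Psi$ but $\prod_C \Nb \models \neg\Psi$. Let $\Psi^\Theta$ be the relativization of $\Psi$ to $\Theta$, with $+$ and $\times$ interpreted by the field operations restricted to the defined set. The structure defined by $\Theta$ in $K$ is $(\Nb,+,\times)$, so $K\models \Psi^\Theta$. In $\prod_C K$ the defined structure is $\prod_C\Nb$ with its operations; this holds because the field operations of $\prod_C K$ restricted to $\prod_C \Nb$ are computed coordinatewise, as are those of $\prod_C\Nb$. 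Hence $\prod_C K\models\neg\Psi^\Theta$, and the two structures are not elementarily equivalent.

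The main obstacle is the identification of the defined substructure with $\prod_C \Nb$, together with its arithmetic operations. Corollary~\ref{One Quant Definitions Lift} gives $\prod_C O_K$ as a definable subset of $\prod_C K$. However, the $\Sigma^0_1$ definition of $\Zb$ inside $\prod_C O_K$ must be read with the quantifiers ranging over $\prod_C O_K$ rather than all of $\prod_C K$. I would handle this by noting that the relativized formula is still existential over a $\Pi^0_1$ predicate, so it is a Boolean combination to which Theorem~\ref{thm-LosGeneral}(1) applies coordinatewise, exactly as in the preceding theorem.

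An alternative that avoids $\Psi$ is to argue via $\Nb\subseteq\prod_C\Nb$ as in Section~\ref{Q}. In that case I would instead use a simpler witness: $K$ satisfies ``every element of $\Theta$ is $0$ or a successor of an element of $\Theta$'' together with induction-type statements. I would, however, prefer the $\Psi$ route, since it is cleaner.
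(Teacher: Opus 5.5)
Your argument is the paper's argument. A formula $\Theta$ that defines $\Nb$ in $K$ also defines $\prod_C\Nb$ in $\prod_C K$, so relativizing the $\Pi^0_3$ sentence $\Psi$ that separates $\Nb$ from $\prod_C\Nb$ gives a sentence true in $K$ and false in $\prod_C K$. Your check that the field operations restrict to the coordinatewise arithmetic of $\prod_C\Nb$ supplies a detail the paper leaves implicit.

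The one step you justify incorrectly is the obstacle in your third paragraph. Take an existential formula whose matrix contains Park's universal formula, $\exists\vec y\,\bigl(\bigwedge_j \theta(y_j)\wedge\phi(x,\vec y)\bigr)$. This is $\Sigma^0_2$, not a Boolean combination of $\Sigma^0_1$ and $\Pi^0_1$ formulas. So Theorem~\ref{thm-LosGeneral}(1) does not apply, and Theorem~\ref{thm-LosProdParam} gives only ``true in $\prod_C K$ implies true at almost every coordinate.''

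The converse direction needs partial computable witnesses lying in $O_K$. These exist because $O_K$ is a decidable subset of $K$: integrality can be read off from the computable minimal polynomial over $\Qb$. Then $O_K$ is a computable structure, and the chaining in the preceding theorem goes through. Corollary~\ref{One Quant Definitions Lift} identifies the set defined by Park's formula in $\prod_C K$ with $\prod_C O_K$. Then $\Sigma^0_1$-{\L}o\'{s} inside $\prod_C O_K$ identifies the defined set with $\prod_C\Zb$, and the four-squares step gives $\prod_C\Nb$. Equivalently, you can adjoin a predicate for $O_K$, as the paper does with $Q$ in Theorem~\ref{Coh Pow number fields have same degree}.

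Separately, the alternative witness in your last paragraph does not distinguish the structures. ``Every element is $0$ or a successor'' is a $\Pi^0_2$ sentence true in $\Nb$, hence true in $\prod_C\Nb$ by Theorem~\ref{thm-LosGeneral}(2). Only an induction instance for a formula defining a proper cut would work, such as the $\Pi^0_3$ definition of $\Nb$ in $\prod_C\Nb$.
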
 

\begin{proof}
    There is a $\Pi_3^0$ sentence $\Psi$ with $\Nb \models \Psi$ but $\prod_C \Nb \not \models \Psi$.  Since $\Nb$ is definable in $K$, and since $\prod_C \Nb $ is definable in $\prod_C K$ by the same formula, there is a sentence that distinguishes them.
\end{proof}

\begin{Theorem} \label{Coh Pow number fields have same degree}
If $K = \Qb(\alpha)$, then $\prod_C K = (\prod_C \Qb) (\alpha)$.  Furthermore, the minimal polynomial of $\alpha$ over $\prod_C \Qb$ is the minimal polynomial of $\alpha$ over $\Qb$.  Hence, we have $|\prod_C K : \prod_C \Qb | = |K:\Qb|.$ \end{Theorem}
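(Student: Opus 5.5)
Fix a computable presentation of $K=\Qb(\alpha)$ with $\Qb\subseteq K$ (a number field has a splitting algorithm, so $\Qb$ is a computable subset). Let $n=[K:\Qb]$ and $p\in\Qb[x]$ the minimal polynomial of $\alpha$. The plan is to argue coordinatewise via the basis $1,\alpha,\dots,\alpha^{n-1}$, and then use the Fundamental Theorem of Cohesive Powers to transfer irreducibility of $p$. We identify $\prod_C\Qb$ with its image in $\prod_C K$ under the map $[\varphi]\mapsto[\varphi]$; this is an embedding because the field operations are computed coordinatewise. We identify $\alpha$ with $[f_\alpha]$ under the canonical embedding.

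\textbf{Step 1: $\prod_C K=(\prod_C\Qb)(\alpha)$.} Let $[\varphi]\in\prod_C K$. Since every $\beta\in K$ is uniquely $a_{n-1}\alpha^{n-1}+\dots+a_0$ with $a_j\in\Qb$, the coordinates can be found computably: search through tuples of rationals until equality holds. This gives partial computable $\psi_j$ with $\varphi(i)=\sum_j\psi_j(i)\alpha^j$ whenever $\varphi(i)\da$, and $\operatorname{dom}\psi_j=\operatorname{dom}\varphi\supseteq^* C$. The equality is atomic and holds on $\operatorname{dom}\varphi$, so $[\varphi]=\sum_j[\psi_j]\alpha^j$ in $\prod_C K$ with $[\psi_j]\in\prod_C\Qb$. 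Hence $\prod_C K\subseteq(\prod_C\Qb)(\alpha)$; the reverse inclusion is trivial.

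\textbf{Step 2: $p$ stays irreducible over $\prod_C\Qb$.} This is the main point. Since $p(\alpha)=0$, it suffices to show $1,\alpha,\dots,\alpha^{n-1}$ are linearly independent over $\prod_C\Qb$. Then $\deg_{\prod_C\Qb}\alpha=n$, so $p$ is the minimal polynomial, and by Step 1 the degree of the extension is $n$.

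Suppose $\sum_j[\psi_j]\alpha^j=0$ with $[\psi_j]\in\prod_C\Qb$. Equality is atomic, so by Theorem~\ref{thm-LosProdParam} (or directly from the definition) $\sum_j\psi_j(i)\alpha^j=0$ in $K$ for almost every $i\in C$. Independence of $1,\alpha,\dots,\alpha^{n-1}$ over $\Qb$ then gives $\psi_j(i)=0$ for almost every $i\in C$, so $[\psi_j]=0$.

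Alternatively, one can phrase this through Theorem~\ref{thm-LosGeneral}: the statement ``$p$ has no factorization of degree $d$ with coefficients in $\Qb$'' is $\Pi_1$ relative to the $\Qb$-predicate, and so transfers. The linear-independence argument avoids quantifying over the subfield and is cleaner, so I would use it.

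\textbf{Main obstacle.} The only delicate point is justifying that the coordinate functions $\psi_j$ are partial computable with domain $\supseteq^*C$ and values in $\Qb$. This needs the computable basis representation, which holds because $\alpha$ is fixed and the presentation of $K$ makes $\Qb$ decidable inside it. Everything else is coordinatewise bookkeeping.
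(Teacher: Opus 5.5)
Your argument is correct. Step 2 agrees in substance with the paper, but Step 1 takes a more hands-on route.

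The paper works in the expanded structure $(K,Q)$, with a predicate $Q$ for $\Qb$; this is why it first checks that $\Qb$ is decidable in $K$. It then transfers the $\Sigma^0_3$ sentence $\exists z\,\forall x\,\exists \vec y\,\bigl(x=\sum_{j<n} y_jz^j\wedge p(z)=0\wedge\bigwedge_{j<n} Q(y_j)\bigr)$ by the Fundamental Theorem of Cohesive Powers. It observes that $Q$ still picks out $\prod_C\Qb$ in $\prod_C(K,Q)$. Finally it transfers a $\Pi^0_1$ formula with parameter $\alpha$ expressing that $1,\alpha,\dots,\alpha^{n-1}$ are independent over $Q$. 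Your Step 2 is exactly that $\Pi^0_1$ transfer, unpacked at the level of representatives.

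In Step 1 you instead construct the coordinate functions $\psi_j$ explicitly. This needs only the computable embedding $\Qb\to K$ and the field operations of $K$: the search halts because a representation exists, and uniqueness makes it the right one. So decidability of $\Qb$ inside $K$, which you list as the main obstacle, is never actually used in your proof. Your construction also writes every element directly in terms of $\alpha$.

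By contrast, the $\Sigma^0_3$ transfer as stated only produces \emph{some} root $z$ of $p$ with $\prod_C K=(\prod_C\Qb)(z)$. To land on $\alpha$ itself, one must do one of two things. One option is to transfer the $\Pi^0_2$ formula with parameter $\alpha$ via the Fundamental Theorem of Cohesive Products. The other is to note that $z$, being algebraic over $\Qb$, lies in $K=\Qb(\alpha)$.

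What the paper's route buys in return is uniformity of method. It reduces everything to the quantifier complexity of the statements being transferred, which is the same {\L}o\'{s}-style bookkeeping used throughout the paper.
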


\begin{proof}

The domain $\Qb$ is a computable subset of $K$, since it is c.e.\ and we can computably enumerate $\Qb^c$ by applying every non-constant polynomial in $\Qb[x]$ of degree less than $| K: \Qb|$ to $\alpha$.  

Since $\Qb$ is computable, we may augment $K$ with a predicate $Q$ for membership in $\Qb$ without changing the computability of the structure.  Let $p$ be the minimal polynomial of $\alpha$ over $\Qb$.  Since $K = \Qb(\alpha)$ we have

$$ (K, Q) \models \exists z \forall x \exists y_0\dots  \exists y_{n-1} \left( (x=y_{n-1} z^{n-1} +\cdots+y_0)\wedge p(z)= 0\wedge \bigwedge_{ i < n } Q(y_i )\right),$$ where $n= \deg(\alpha)$. Thus, as this is $\Sigma^0_3$, for the cohesive power we have 

$$\prod_C(K,Q) \models\exists z \forall x \exists y_0\dots  \exists y_{n-1} \left((x=y_{n-1} z^{n-1} +\cdots+y_0)\wedge p(z)= 0 \wedge\bigwedge_{ i < n }Q(y_i) \right). $$  

In the cohesive power, the predicate $Q$ ranges over $\prod_C \Qb $ by Corollary \ref{One Quant Definitions Lift}.  Therefore, $\prod_C K$ is $(\prod_C \Qb) (\alpha)$.

Furthermore, $$(K,Q) \models \forall a_{0}\dots   \forall a_{n-1} \hspace{2mm}\left( \bigwedge_{i < n} Q(a_i)\Imp a_{n-1} \alpha^{n-1} +\cdots+ a_0 \not =0\right).$$

Therefore, since this sentence is $\Pi_1^0$ with a parameter, we have

 $$\prod_C (K,Q) \models \forall a_{0}\cdots \forall a_{n-1} \hspace{2mm} \left(\bigwedge_{i < n} Q(a_i)\Imp a_{n-1} \alpha^{n-1} +\dots  + a_0 \not =0\right).$$

 Thus, $\alpha$ has the same degree in $\prod_C K $ as in $K$, and therefore the same minimal polynomial.  Hence, we have $|\prod_C K : \prod_C \Qb | = |K:\Qb|.$

\end{proof}

\begin{Proposition}
    The algebraic part $\text{ \emph{Alg}}_\Qb (\prod_C K)$ is $K$.
\end{Proposition}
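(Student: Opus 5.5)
We must show that every element of $\prod_C K$ algebraic over $\Qb$ already lies in the image of the canonical embedding of $K$. The inclusion $K \subseteq \text{Alg}_\Qb(\prod_C K)$ is immediate, since the canonical embedding is a field embedding and every element of $K$ is algebraic over $\Qb$. For the converse, take $[\varphi] \in \prod_C K$ with $q([\varphi]) = 0$ for some nonzero $q \in \Qb[x]$. Here $q$ has coefficients that are standard rationals, i.e.\ images of constants under the canonical embedding. We may take $q$ irreducible over $\Qb$.

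The first step is to transfer the polynomial equation coordinatewise. The formula $q(x) = 0$ is atomic (quantifier-free) with parameters from the canonical image, so part (1) of Theorem \ref{thm-LosGeneral} gives
\[
C \subseteq^* \{ i \mid q(\varphi(i)) = 0 \text{ in } K\}.
\]
The key observation is that $q$ has only finitely many roots in $K$, say $\beta_1,\dots,\beta_k \in K$, with $k \le \deg q$. Hence for almost every $i \in C$ we have $\varphi(i) \in \{\beta_1,\dots,\beta_k\}$.

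The second step is to use cohesiveness to pin $\varphi$ to a single root. For each $j \le k$ the set $W_j = \{ i \mid \varphi(i)\da = \beta_j\}$ is c.e.\ (indeed its membership is semi-decidable, as equality in $K$ is computable). These finitely many sets cover $C$ up to a finite set, so at least one $W_j$ meets $C$ infinitely. By cohesiveness $C \subseteq^* W_j$, so $\varphi \simeq_C f_{\beta_j}$, the constant function. Thus $[\varphi] = [f_{\beta_j}]$ lies in the canonical image of $K$.

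There is essentially no obstacle. The only points needing care are that the polynomial has finitely many roots in the field $K$ (standard), and that the parameters of $q$ are standard, so that the Łoś transfer for quantifier-free formulas applies directly. This reproduces the general remark that a function into a finite set is eventually constant on $C$.
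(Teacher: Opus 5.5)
Your proof is correct and follows the same route as the paper's: transfer $q([\varphi])=0$ coordinatewise via the fundamental theorem of cohesive powers, use that $q$ has finitely many roots in $K$, and conclude that $\varphi$ is eventually constant on $C$. The only difference is that you spell out the cohesiveness step with the c.e.\ sets $W_j$, which the paper compresses into the phrase ``eventually constant on $C$''; you also state the easy inclusion $K \subseteq \text{Alg}_\Qb(\prod_C K)$, which the paper leaves implicit.
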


\begin{proof}
    Let $q_0,\dots,q_n \in \Qb$, and let $[\alpha] \in \prod_C K$ such that $\prod_C K \models q_n [\alpha]^n +\dots+q_0=0$.  Then by the fundamental theorem of cohesive powers, $ K \models q_n (\alpha(i))^n + \dots +q_0 = 0$ for almost every $i$.  The field $K$ has at most $n$ many roots of $q_n x^n +\dots +q_0$, call them $\beta_0,\dots,\beta_m$, for $m < n$.  Thus $\alpha(i)$ is among the $\beta_j$'s for almost every $i$.  Since there are finitely many $\beta_j$'s, it follows that $\alpha$ is eventually constant on $C$.  Thus, $\prod_C K \models [\alpha] = \beta_k$ for some $k \leq m$.  Therefore, $[\alpha] \in K$.  Since $[\alpha]$ was arbitrary, $Alg_\Qb (\prod_C K) = K$.
\end{proof}

\begin{Theorem} \label{Finite Galois Groups Cohesive Power Finite Extensions}
If \(K\) is a number field, then
\(Aut\left( \prod_{C}^{}K/\prod_{C}^{}\mathbb{Q} \right)\) is isomorphic
to \(Aut\left( K\mathbb{/Q} \right)\).
\end{Theorem}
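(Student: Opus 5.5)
The plan is to build a map from $Aut(\prod_C K/\prod_C \Qb)$ to $Aut(K/\Qb)$ by restriction and show it is an isomorphism. By Theorem \ref{Coh Pow number fields have same degree}, write $K=\Qb(\alpha)$ with minimal polynomial $p$ of degree $n$. Then $\prod_C K=(\prod_C\Qb)(\alpha)$, and $p$ is also the minimal polynomial of $\alpha$ over $\prod_C\Qb$. Every element of $\prod_C K$ is therefore uniquely of the form $a_{n-1}\alpha^{n-1}+\dots+a_0$ with $a_j\in\prod_C\Qb$. An automorphism $\tau$ of $\prod_C K$ fixing $\prod_C\Qb$ is thus determined by $\tau(\alpha)$, which must be a root of $p$ in $\prod_C K$.

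First I identify the roots of $p$ in $\prod_C K$. By the Proposition that $\text{Alg}_\Qb(\prod_C K)=K$, every root of $p$ in $\prod_C K$ is algebraic over $\Qb$, so it lies in the canonical copy of $K$. One can also argue directly via the fundamental theorem of cohesive powers: a root $[\varphi]$ satisfies $p(\varphi(i))=0$ for almost every $i\in C$, so $\varphi$ is eventually constant on $C$. Hence the roots of $p$ in $\prod_C K$ are exactly the roots $\alpha=\alpha_1,\dots,\alpha_m$ of $p$ lying in $K$. It follows that $\tau(K)\subseteq K$, since $\tau(\alpha)\in K$ and $\tau$ fixes $\Qb\subseteq\prod_C\Qb$. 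So $\tau\restriction K\in Aut(K/\Qb)$, and restriction $\rho:\tau\mapsto\tau\restriction K$ is a group homomorphism.

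Injectivity of $\rho$ follows because $\tau$ is determined by $\tau(\alpha)$, by the unique representation above. For surjectivity, take $\sigma\in Aut(K/\Qb)$ and define $\hat\sigma$ on $\prod_C K$ by
\[
\hat\sigma\Bigl(\sum_{j<n} a_j\alpha^j\Bigr)=\sum_{j<n} a_j\sigma(\alpha)^j .
\]
Since $p$ is irreducible over $\prod_C\Qb$ and $\sigma(\alpha)$ is a root of $p$, the standard isomorphism $(\prod_C\Qb)[x]/(p)\to(\prod_C\Qb)(\sigma(\alpha))$ is a field embedding fixing $\prod_C\Qb$. Its image is $(\prod_C\Qb)(\sigma(\alpha))$, of degree $n$ over $\prod_C\Qb$ and contained in $\prod_C K$, which also has degree $n$, so the embedding is onto.

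Alternatively, $\hat\sigma$ can be realised concretely as $[\varphi]\mapsto[\sigma\circ\varphi]$, using that $\sigma$ is computable on the computable presentation of $K$. This map is clearly a well-defined automorphism fixing $\prod_C\Qb$ pointwise, since constants $q\in\Qb$ satisfy $\sigma(q)=q$ coordinatewise, and it restricts to $\sigma$ on the canonical copy of $K$. Either way $\rho(\hat\sigma)=\sigma$, so $\rho$ is an isomorphism.

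The main obstacle I expect is the step pinning down the roots of $p$ in $\prod_C K$, that is, ruling out nonstandard roots. This step rests on the algebraic-part proposition, or equivalently on the fact that functions into a finite set are eventually constant on a cohesive set. Everything else is standard field theory once Theorem \ref{Coh Pow number fields have same degree} gives the degree equality.
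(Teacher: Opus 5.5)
Your proof is correct, and it has the same skeleton as the paper's: restriction to $K$ in one direction, $[\beta]\mapsto[\sigma\circ\beta]$ in the other, and both maps determined by the image of $\alpha$. Where you differ is the crucial step, showing that an automorphism $\Psi$ of $\prod_C K$ over $\prod_C\Qb$ maps $K$ into $K$.

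The paper argues that $K$ is first-order definable in $\prod_C K$. It gets this from the definability of $\Qb$ in $\prod_C\Qb$ in Section~\ref{Q}, which rests on Koenigsmann's work and on the $\Pi^0_3$-definability of $\Nb$ in $\prod_C\Nb$. You instead note that $\Psi(\alpha)$ is a root of $p\in\Qb[x]$, and that every root of $p$ in $\prod_C K$ is eventually constant on $C$, hence lies in $K$ by the proposition $\text{Alg}_\Qb(\prod_C K)=K$.

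Your route needs only cohesiveness, and it supplies something the paper's sketch leaves implicit. Transporting a definition of $K$ along $\Psi$ gives $\Psi(K)=K$ only if the definition uses no parameters moved by $\Psi$; using $\alpha$ as a parameter just yields $\Psi(K)=\Qb(\Psi(\alpha))$. The natural parameter-free definition, ``roots of a nonzero rational polynomial of degree at most $n$,'' defines $K$ exactly because of the algebraic-part proposition you invoke. Your argument also uses only that $\Psi$ fixes the prime field, so it shows $K$ is invariant under every automorphism of $\prod_C K$.

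You also make explicit, via Theorem~\ref{Coh Pow number fields have same degree}, why an automorphism over $\prod_C\Qb$ is determined by its value on $\alpha$; the paper uses this tacitly. Your first surjectivity argument (irreducibility of $p$ over $\prod_C\Qb$ plus the degree count) is purely field-theoretic. The paper's, which is your alternative, relies on computability of $\sigma$.

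What the definability viewpoint offers beyond this theorem is control over $\prod_C\Qb$ itself under arbitrary automorphisms. That is what the companion corollary $Aut(\prod_C K)=Aut(\prod_C K/\prod_C\Qb)$ requires. For the theorem as stated, your route is the cleaner one.
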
  

\begin{proof}

Let $K = \Qb(\alpha)$.

First, we may extend $\sigma \in Aut (K/ \Qb)$ to a $\Psi_\sigma \in Gal(\prod_C K / \prod_C \Qb)$, by setting $\Psi_\sigma ([ \beta]) = [\sigma \circ \beta]$.  Since every $\sigma \in Aut(K/\Qb)$ is computable, it follows from a straightforward application of the fundamental theorem of cohesive powers that $\Psi_\sigma$ is a well-defined automorphism.  

Let $\Psi \in Aut (\prod_C K / \prod_C \Qb)$.  Since $\prod_C \Qb$ is definable in $\prod_C K$, and since $\Qb$ is definable in $\prod_C \Qb$, $\Qb$ is definable in $\prod_C K$. Therefore $K$ is definable in $\prod_C K$.  Thus $\Psi$ must send $K$ to $K$, meaning $\sigma_\Psi = \Psi \upharpoonright K $ is an element of $ Aut(K/ \Qb)$.

It suffices to show these two maps are inverse.  This follows immediately, as both are determined by their action on $\alpha$, $\sigma_{\Psi_\sigma} (\alpha) = \Psi_\sigma (\alpha) = \sigma(\alpha)$ and, $\Psi_{\sigma_\Psi} (\alpha) = \sigma_\Psi (\alpha) = \Psi(\alpha) $.

\end{proof}

This answers positively question 6.4 of the 2019 AIM problem list in \cite{AIMDDC}. We also have the following corollaries.

\begin{Corollary}
    Let \(K\) be a number field. Then \(K\) is rigid, if and only if \(\prod_{C}^{}K\) is rigid. Further, 
\(Aut\left( \prod_{C}^{}K \right) = Aut\left( \prod_{C}^{}K/\prod_{C}^{}\mathbb{Q} \right)\).
\end{Corollary}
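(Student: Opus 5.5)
The corollary has two assertions, and both reduce to Theorem \ref{Finite Galois Groups Cohesive Power Finite Extensions}. The key extra input is that every automorphism of $\prod_C K$ fixes $\prod_C \Qb$ pointwise. I will prove the equality $Aut(\prod_C K) = Aut(\prod_C K/\prod_C \Qb)$ first, and then deduce the rigidity statement from it.

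For the equality, the inclusion $\supseteq$ is trivial. For $\subseteq$, take any $\Psi \in Aut(\prod_C K)$. The restriction of $\Psi$ to $\prod_C \Qb$ is what needs control. The plan is to use definability exactly as in the proof of Theorem \ref{Finite Galois Groups Cohesive Power Finite Extensions}:
\begin{itemize}
\item $\Qb$ is a computable subset of $K$ (shown in the proof of Theorem \ref{Coh Pow number fields have same degree}). It is therefore $\Sigma^0_1$ definable in $K$ with the parameter $\alpha$, and also $\Pi^0_1$ definable, by saying $x$ is not a nonconstant polynomial in $\alpha$ of degree $<n$ with rational coefficients. By Corollary \ref{One Quant Definitions Lift}, $\prod_C \Qb$ is definable in $\prod_C K$, so $\Psi$ maps $\prod_C \Qb$ onto $\Psi(\prod_C\Qb)$, a subfield defined by the same formula with parameter $\Psi(\alpha)$.
\item This parameter is the weak point. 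To avoid it, I would instead define $\prod_C\Qb$ without parameters. The route is that $\prod_C\Nb$ is definable in $\prod_C K$ by the theorem preceding the corollary, and the definitions via Park's result and Theorem \ref{H10 for O_K} are parameter-free. So $\prod_C\Qb$ is definable without parameters as the set of quotients $\pm m/n$ with $m,n \in \prod_C\Nb$, $n\neq 0$. That this set is exactly $\prod_C \Qb$ follows by choosing numerator and denominator representatives pointwise computably.
\end{itemize}
Hence $\Psi$ maps $\prod_C\Qb$ onto itself, and the restriction is an automorphism of $\prod_C\Qb$. That field is rigid by the cited proposition, so $\Psi$ fixes $\prod_C\Qb$ pointwise, giving $\Psi \in Aut(\prod_C K/\prod_C\Qb)$.

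For rigidity, the same reasoning with $K$ in place of $\prod_C K$ shows $Aut(K) = Aut(K/\Qb)$, since any automorphism fixes the prime field. Combining the equality above with Theorem \ref{Finite Galois Groups Cohesive Power Finite Extensions} gives $Aut(\prod_C K) \cong Aut(K/\Qb) = Aut(K)$. So one group is trivial if and only if the other is.

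The main obstacle is the parameter-free definability of $\prod_C\Qb$ inside $\prod_C K$. I first check that the parameter-free chain (Park's definition of $O_K$, then the Diophantine definition of $\Zb$ in $O_K$) lifts correctly through Corollary \ref{One Quant Definitions Lift}. I then check that the set of quotients of elements of $\prod_C\Nb$ is all of $\prod_C\Qb$.
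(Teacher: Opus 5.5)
Your strategy is right, and it is the argument the paper leaves implicit; the paper gives no separate proof of this corollary. The idea is to show that every automorphism of $\prod_C K$ preserves $\prod_C\Qb$ by definability, use the rigidity of $\prod_C\Qb$, and then apply Theorem~\ref{Finite Galois Groups Cohesive Power Finite Extensions}. You are more careful about parameters than the paper is. However, the step you single out as the main obstacle does not go through as you state it. Neither Theorem~\ref{park theorem} nor Theorem~\ref{H10 for O_K} promises parameter-free definitions. In particular, Park's universal definition of $O_K$ is usually stated as $\{t\mid\forall\vec x\,f(t,\vec x)\neq 0\}$ with $f$ having coefficients in $K$. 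Likewise, Diophantine definitions of $\Zb$ over $O_K$ may use coefficients from $O_K$. So your check of the ``parameter-free chain'' will fail as planned.

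The repair is short. Take $\alpha$ integral and write the coefficients as rational polynomials in $\alpha$. Clear denominators, and let $p\in\Zb[x]$ be the minimal polynomial of $\alpha$ with denominators cleared. Then replace $\alpha$ by a universally quantified conjugate: $\forall a\,\forall\vec x\,\bigl(p(a)\neq 0\vee f(t,\vec x,a)\neq 0\bigr)$. Every root of $p$ in $K$ is $\sigma(\alpha)$ for some $\sigma\in Aut(K)$, and $\sigma(O_K)=O_K$. Hence this formula still defines $O_K$, and it is still $\Pi^0_1$. In the same way, $\exists a\,\exists\vec y\,\bigl(p(a)=0\wedge g(t,\vec y,a)=0\bigr)$ removes parameters from the definition of $\Zb$ in $O_K$ and stays $\Sigma^0_1$. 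So Corollary~\ref{One Quant Definitions Lift} applies exactly as you intended.

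Alternatively, you can keep the parameter $\alpha$. Any $\Psi\in Aut(\prod_C K)$ fixes $\Qb$, so it maps $K=\text{Alg}_\Qb(\prod_C K)$ into itself, giving $\Psi(\alpha)=\sigma(\alpha)$ for some $\sigma\in Aut(K)$. The lifted definitions with parameter $\sigma(\alpha)$ then still define $\prod_C O_K$ and $\prod_C\Zb$.

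Your remaining steps are fine: the description of $\prod_C\Qb$ as quotients of elements of $\prod_C\Nb$, the use of rigidity, and the deduction $Aut(\prod_C K)\cong Aut(K/\Qb)=Aut(K)$. One last point: in your first bullet, the inference from ``$\Qb$ is a computable subset of $K$'' to a $\Sigma^0_1$ or $\Pi^0_1$ definition in the field language is not valid. Computability is not first-order definability, and the paper's $Q$ is an added predicate. Since you abandon that bullet, this does no harm.
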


\begin{Corollary}
    If $K/\Qb$ is Galois, then so is $\prod_C K /\prod_C \Qb$, and thus $Gal(\prod_C K /\prod_ C \Qb)\cong Gal(K/\Qb)$.
\end{Corollary}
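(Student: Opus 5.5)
By Theorem \ref{Coh Pow number fields have same degree}, writing $K=\Qb(\alpha)$ with minimal polynomial $p$ of degree $n=[K:\Qb]$, we have $\prod_C K=(\prod_C\Qb)(\alpha)$, and $p$ is still the minimal polynomial of $\alpha$ over $\prod_C\Qb$. So $\prod_C K/\prod_C\Qb$ is a finite simple extension of degree $n$. Since everything has characteristic $0$, it is separable. By the criterion recalled after the Primitive Element Theorem, Galoisness then reduces to one check: $p$ splits into linear factors over $(\prod_C\Qb)(\alpha)=\prod_C K$.

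That check is immediate. Because $K/\Qb$ is Galois, $p$ has $n$ distinct roots $\alpha=\alpha_1,\dots,\alpha_n$ in $K$. Under the canonical embedding $a\mapsto[f_a]$, which is a field embedding fixing $\Qb$, their images are $n$ distinct elements of $\prod_C K$. Each is still a root of $p$, since $p([f_{\alpha_j}])=[f_{p(\alpha_j)}]=[f_0]=0$. Hence $p(x)=\prod_j(x-\alpha_j)$ in $(\prod_C K)[x]$, and $\prod_C K/\prod_C\Qb$ is Galois of degree $n$.

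For the group, Theorem \ref{Finite Galois Groups Cohesive Power Finite Extensions} already gives $Aut(\prod_C K/\prod_C\Qb)\iso Aut(K/\Qb)$. By definition of the Galois group of a Galois extension, this reads $Gal(\prod_C K/\prod_C\Qb)\iso Gal(K/\Qb)$.

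As a sanity check, both groups have order $n$: the left side because the extension is Galois of degree $n$, the right side because $K/\Qb$ is Galois of degree $n$. This matches the explicit injection $\sigma\mapsto\Psi_\sigma$ from that proof. There is no real obstacle here. The only point needing care is that the roots of $p$ lying in $K$ really land in $\prod_C K$, and this holds via the canonical embedding, which preserves polynomial identities with rational coefficients.
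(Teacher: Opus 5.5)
Your proof is correct, but it establishes Galoisness through a different criterion than the paper does. The paper argues by counting. It takes $[\prod_C K:\prod_C\Qb]=[K:\Qb]$ from Theorem~\ref{Coh Pow number fields have same degree} and $|Aut(\prod_C K/\prod_C\Qb)|=|Gal(K/\Qb)|$ from Theorem~\ref{Finite Galois Groups Cohesive Power Finite Extensions}. It then applies the fact that a finite separable extension is Galois exactly when its degree equals the order of its automorphism group. You instead use the splitting criterion for simple extensions $F(\alpha)/F$, and you verify splitting by pushing the $n$ conjugates of $\alpha$ through the canonical embedding.

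The trade-off is this. Once the automorphism theorem is available, the paper's argument is a one-line count. But that theorem's surjectivity half rests on the definability of $K$ in $\prod_C K$, and hence on Park's theorem and the recent resolution of Hilbert's tenth problem over $O_K$. Your Galoisness argument uses only the degree theorem and the canonical embedding, so it is elementary and independent of that number-theoretic machinery.

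Your ``sanity check'' can in fact be promoted to a full proof of the isomorphism. The map $\sigma\mapsto\Psi_\sigma$ is a homomorphism $Gal(K/\Qb)\to Gal(\prod_C K/\prod_C\Qb)$. Its well-definedness uses only the computability of $\sigma$ and the fundamental theorem, not definability. It is injective because $\Psi_\sigma$ restricts to $\sigma$ on the canonical copy of $K$. Both groups have order $n$, so the map is onto. This gives the entire corollary without appealing to Theorem~\ref{Finite Galois Groups Cohesive Power Finite Extensions} at all.
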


\begin{proof}
    By Theorem \ref{Coh Pow number fields have same degree} we have that $|\prod_C K: \prod_C \Qb| = |K:\Qb|$.  Now a finite separable field extension is Galois if and only if its degree is equal to the size of its automorphism group.  Therefore, $$\left|\prod_C K: \prod_C \Qb\right| = |K:\Qb| = |Gal(K/\Qb)| = \left|Aut\left(\prod_C K/\prod_C \Qb\right)\right|$$
    
    Thus $\prod_C K /\prod_C \Qb$ is Galois.
\end{proof}

\section{Cohesive powers of infinite extensions} \label{inf}

There is a class of fields of infinite degree over their prime subfield for which $\Zb$ is low-level definable. 

\begin{Proposition}(\cite{Den3}) If \(F = R(t)\) where \(R\) is a formally real field which contains the algebraic real numbers, then \(\mathbb{Z}\) is
\(\Sigma_{1}^{0}\)-definable in \(F\).
\end{Proposition}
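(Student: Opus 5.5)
This proposition is a cited result of Denef, so the natural route is to reconstruct the standard Diophantine argument rather than derive it from earlier parts of this paper. The tool is Denef's use of elliptic curves over function fields. I fix an elliptic curve $E$ over $\Qb$ with $E(\Qb)$ of rank zero and no complex multiplication, for instance $y^2 = x^3 + ax + b$. I then consider the quadratic twist
\[
E_t:\ (t^3+at+b)\,y^2 = x^3+ax+b
\]
over $F = R(t)$. This curve has the obvious point $P=(t,1)$.

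The first step is to show that $E_t(R(t))$ has rank one, generated by $P$ modulo torsion, and that the torsion is $E(R)$-controlled and finite. The reasons are the following:
\begin{itemize}
\item $R(t)$-points of $E_t$ correspond to $R$-rational maps from $E$ to itself that anticommute with $-1$.
\item Since $E$ has no CM, its endomorphism ring is $\Zb$.
\item Since $R$ contains the real algebraic numbers and $E$ is defined over $\Qb$, the relevant morphisms are defined over the algebraic reals anyway.
\end{itemize}

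The second step is to compute the multiples $nP=(x_n,y_n)$ and to show that the quotient $x_n/(t\,y_n)$, or a suitable normalization of it, specializes at $t=0$ (in the $t$-adic sense) to $n$. Then one expresses the set $\{n \in \Zb\}$ as
\[
\{\, z \mid \exists x\,\exists y \ \bigl((x,y)\in E_t(F) \wedge z = \text{the ``derivative at } t=0\text{'' of } x/(ty)\bigr)\,\}.
\]

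Membership in $E_t(F)$ is polynomial. The delicate part is expressing the $t$-adic value condition existentially. For this, formal reality is used: sums of squares give positivity, and positivity conditions give existential definitions of valuation-ring conditions, like $\{f \mid \mathrm{ord}_t f \ge 0\}$, via statements of the form ``$1+t f^2$ is a sum of squares''. The key point is that $R$ is formally real, so sums of squares behave as in real closed fields over $R(t)$.

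The main obstacle is this existential definition of the $t$-adic valuation ring in $R(t)$ for an arbitrary formally real $R$, together with extracting the integer $n$ exactly, i.e.\ ruling out spurious non-integer values coming from torsion or from constants in $R$. I would first try Denef's trick of combining the condition ``$z - x/(ty)$ has positive $t$-order'' with the requirement $z\in R$. Constancy is needed here, and it can be encoded existentially as $\mathrm{ord}_p\ge 0$ at two places. Finally, all quantifiers are existential over a polynomial system, so the resulting definition is $\Sigma_1^0$.
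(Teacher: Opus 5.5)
The paper gives no proof of this proposition; it only cites it. Your reconstruction starts from the right object: the twist $E_t$, with $E_t(R(t))\cong\Zb P\oplus E[2](R)$. But the step that turns this into a definition of $\Zb$ fails.

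Your claim that constancy of $z$ can be encoded as $\mathrm{ord}_p z\ge 0$ at two places is false. For $c\neq 0$, $1/(t-c)$ is regular at both $t=0$ and $t=\infty$. A rational function is constant only if it is regular at \emph{every} place of $R(t)/R$, and no finite conjunction of valuation conditions expresses that. Without constancy, your formula (even with the place corrected as below) defines $\{z : \mathrm{ord}_\infty z\ge 0,\ z(\infty)\in\Zb\}$, which contains $n+1/t$. That set would suffice for undecidability of the existential theory of $R(t)$. It does not give a $\Sigma^0_1$ definition of $\Zb$, which is what the paper needs in order to apply Corollary~\ref{One Quant Definitions Lift}.

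You also never genuinely use the hypothesis that $R$ contains the real algebraic numbers. Your third bullet is vacuous, since a curve without CM has $\mathrm{End}(E)=\Zb$ over every field of characteristic $0$. That hypothesis is what lets you close the gap:
\begin{itemize}
\item For the constant curve $E$ one has $E(R(t))=E(R)$, because there is no nonconstant morphism $\mathbb{P}^1\to E$.
\item For every sufficiently large integer $m$, the number $\sqrt{m^3+am+b}$ is a real algebraic number, so it lies in $R$.
\item Hence $S=\{x_1-x_2 : \exists y_1\exists y_2\,(y_1^2=x_1^3+ax_1+b\wedge y_2^2=x_2^3+ax_2+b)\}$ is existentially definable, with $\Zb\subseteq S\subseteq R$.
\item Once the valuation condition below is available, intersecting $S$ with the set above gives exactly $\Zb$.
\end{itemize}

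The remaining steps also need repair.
\begin{itemize}
\item \emph{Wrong place.} The specialization happens at $t=\infty$, not at $t=0$. The point $P$ corresponds to the generic point $(t,\sqrt{t^3+at+b})$ of $E$, which tends to the origin as $t\to\infty$. Since $[n]$ is multiplication by $n$ on the formal group, one gets $\mathrm{ord}_\infty\bigl(x_n/(ty_n)-n\bigr)>0$. At $t=0$, by contrast, $x_n/(ty_n)$ typically has a pole: for $n=2$, $x_2(0)=a^2/(4b)\neq 0$, because $ab\neq0$ for a curve without CM.
\item \emph{Derivatives.} The exact identity $dx_n/dt=ny_n$ does not help, because derivatives are not existentially expressible.
\item \emph{Valuation step.} This is not established. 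Formal reality only tells you that a nonzero sum of squares in $R(t)$ has even order at $\infty$ and at every $R$-rational place; it does not make sums of squares behave as over a real closed field. So ``$1+tf^2$ is a sum of squares'' is merely a sufficient condition, forcing $\mathrm{ord}_\infty f\ge1$ and $\mathrm{ord}_0 f\ge 0$. It does not describe a valuation ring.
\item \emph{It rejects the elements you need.} The function $w_2=x_2/(ty_2)-2$ has a pole at $t=0$. So $1+tw_2^2$ has odd order there and is not a sum of squares.
\end{itemize}
An existential description of (enough of) the maximal ideal at $\infty$ is the technical core of the argument, and your proposal does not supply it.
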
 

\begin{Theorem}If \(F = R(t)\) where \(R\) is a formally real field which
contains the algebraic real numbers, then \(\prod_{C}^{}F\) is not
elementarily equivalent to \(F\).
\end{Theorem}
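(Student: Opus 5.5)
We follow the template of the number field case, i.e.\ the Corollary that $\prod_C K$ is not elementarily equivalent to $K$. The plan is to transfer a $\Sigma^0_1$ definition of $\Zb$ (hence of $\Nb$) from $F$ to $\prod_C F$, identify what it defines there, and then use the $\Pi^0_3$ sentence separating $\Nb$ from $\prod_C \Nb$.

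First, we fix a computable presentation of $F$; this is implicit, since $\prod_C F$ only makes sense for computable $F$. By the Proposition of \cite{Den3}, there is an existential formula $\Theta(x)$, possibly with parameters $\vec p \in F$, defining $\Zb$ in $F$. From it we obtain an existential definition of $\Nb$ in $F$: either take $\exists y_1\dots\exists y_4\,(\Theta(y_1)\wedge\cdots\wedge\Theta(y_4)\wedge x = y_1^2+\cdots+y_4^2)$ by Lagrange's four-square theorem, or simply note that $\Nb$ is $\Sigma^0_1$-definable in $\Zb$ and compose.

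Next, we apply Corollary~\ref{One Quant Definitions Lift}. Since $\Zb$ is a $\Sigma^0_1$-definable substructure of $F$, $\prod_C \Zb$ is defined in $\prod_C F$ by the same formula with the same parameters (the parameters are viewed via the canonical embedding). Likewise, $\prod_C \Nb$ is defined in $\prod_C F$ by the existential formula for $\Nb$. Here $\prod_C \Zb$ and $\prod_C \Nb$ are the cohesive powers of the computable substructures induced from $F$'s presentation; these are isomorphic to the usual cohesive powers, because $\Zb \subseteq F$ is a computably enumerable copy and the induced presentation is computably isomorphic to the standard one. Call the defining formula $\Phi(x,\vec p)$.

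Finally, let $\Psi$ be the $\Pi^0_3$ sentence with $\Nb \models \Psi$ but $\prod_C \Nb \models \neg\Psi$, as cited after Theorem~\ref{thm-LosGeneral}. We relativize $\Psi$ to $\Phi$, interpreting $+$ and $\times$ as the field operations, to get a sentence $\Psi^{\Phi}(\vec p)$. Then $F \models \Psi^\Phi(\vec p)$ while $\prod_C F \models \neg\Psi^\Phi(\vec p)$, where the parameters are read through the canonical embedding. To turn this into a sentence distinguishing the two fields, we existentially quantify the parameters together with a formula pinning down their relevant properties.

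The main obstacle is exactly this parameter issue. If Denef's definition is parameter-free (I believe for $R(t)$ it uses only $t$, whose role can be described existentially), we are done directly. Otherwise, I would first try to characterize the parameters first-order by a formula $\chi(\vec p)$ such that any $\vec q$ satisfying $\chi$ makes $\Phi(x,\vec q)$ define a copy of $\Nb$ (respectively $\prod_C \Nb$) with the standard ordering and operations. I would then use the sentence $\exists \vec q\,(\chi(\vec q)\wedge \Psi^\Phi(\vec q))$. Its truth in $\prod_C F$ fails provided $\chi$ transfers via Theorem~\ref{thm-LosGeneral}, which needs checking for $\chi$ of low complexity.
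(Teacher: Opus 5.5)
Your argument is the paper's proof: an existential definition of $\Zb$, and hence of $\Nb$, in $F$ from \cite{Den3}, transferred by Corollary~\ref{One Quant Definitions Lift} to a definition of $\prod_C \Nb$ in $\prod_C F$, followed by the $\Pi^0_3$ sentence that holds in $\Nb$ but fails in $\prod_C \Nb$. The paper's proof never mentions parameters and uses the definition as if it directly gave a sentence, which is your parameter-free case, so your proposal is at least as complete as the paper's; but if the definition genuinely needs $t$, both arguments only show that the canonical embedding $F \hookrightarrow \prod_C F$ is not elementary, and your remark that $t$'s role "can be described existentially" is unsupported, since $t$ is not definable in $R(t)$ (the automorphism $t \mapsto t+1$ moves it) and quantifying it out requires the formula to define $\Zb$ for every admissible value of the new variable.
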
 

\begin{proof}Since \(\mathbb{Z}\) is \(\Sigma_{1}^{0}\)-definable in \(F\), it follows that \(\mathbb{N}\) is \(\Sigma_{1}^{0}\)-definable in \(F\). Since this is at a sufficiently low level, by Corollary \ref{One Quant Definitions Lift}, it follows that the same \(\Sigma_{1}^{0}\)
formula that defines \(\mathbb{N}\) in \(F\) also defines
\(\prod_{C}^{}\mathbb{N}\) in \(\prod_{C}^{}F\). Thus since
\(\prod_{C}^{}\mathbb{N}\) is not elementarily equivalent to
\(\mathbb{N}\), it follows that \(\prod_{C}^{}F\) is not elementarily
equivalent to \(F\).
\end{proof}

Now, we study the cohesive products of field extensions in more generality. 

\begin{Proposition}
    Let $\{(K_i, F_i, \Psi_i)\}_{i\in \omega}$ be a uniformly computable sequence of field extensions, and let $C$ be a cohesive set. Then $\prod_C F_i$ is isomorphic to a subfield of $\prod_C K_i$, with natural inclusion map $\theta : \prod_C F_i \hookrightarrow \prod_C K_i$ with $\theta( [\alpha]) = [\varphi]$, where $\varphi(i)= \Psi_i(\alpha(i))$.
\end{Proposition}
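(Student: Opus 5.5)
The plan is to check directly that $\theta$ is a well-defined, injective ring homomorphism that sends $1$ to $1$; this suffices, since an injective ring homomorphism between fields identifies $\prod_C F_i$ with its image, a subfield of $\prod_C K_i$.

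\emph{Well-definedness.} Given $[\alpha] \in \prod_C F_i$, let $\varphi(i) = \Psi_i(\alpha(i))$.
\begin{itemize}
\item $\varphi$ is partial computable, since $h(i,\_)$ computes $\Psi_i$ uniformly.
\item $\varphi(i)\da$ exactly when $\alpha(i)\da$, so $dom(\varphi) = dom(\alpha) \supseteq^* C$.
\item $\varphi(i) \in |K_i|$ whenever it converges, so $\varphi \in \mathcal{D}$ for the product $\prod_C K_i$.
\end{itemize}
If $\alpha \simeq_C \beta$, then $\{i \mid \alpha(i)\da = \beta(i)\da\} \supseteq^* C$. On that set $\Psi_i(\alpha(i)) = \Psi_i(\beta(i))$, so the corresponding functions are $\simeq_C$-equivalent. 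Hence $\theta$ does not depend on the choice of representative.

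\emph{Homomorphism.} I will compare representatives. For $[\alpha],[\beta]$, the class $[\alpha]+[\beta]$ is represented by $i \mapsto \alpha(i) +^{F_i} \beta(i)$. Applying $\theta$ gives $i \mapsto \Psi_i(\alpha(i)+\beta(i))$. Each $\Psi_i$ is a ring homomorphism, so this equals $\Psi_i(\alpha(i)) +^{K_i} \Psi_i(\beta(i))$ wherever both sides converge. The latter function represents $\theta([\alpha]) + \theta([\beta])$. The same argument handles multiplication, negation, $0$ and $1$: the constants are represented by $i \mapsto 0^{F_i}$ and $i \mapsto 1^{F_i}$, and $\Psi_i$ preserves these pointwise. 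So the two representatives agree on all of $C$ where defined, which by definition means $\simeq_C$.

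\emph{Injectivity.} Every ring homomorphism between fields that sends $1$ to $1$ is injective. So once I know $\prod_C F_i$ is a field, injectivity of $\theta$ is automatic. I will confirm the field property via part (2) of the Fundamental Theorem of Cohesive Products (Theorem~\ref{thm-LosProdParam}). The field axioms, including $\forall x\,(x\neq 0 \to \exists y\, xy=1)$ and $0 \neq 1$, are $\Pi^0_2$ sentences that hold in every $F_i$, so they lift to $\prod_C F_i$, and likewise to $\prod_C K_i$.

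Alternatively, injectivity can be checked directly. Suppose $\theta([\alpha]) = \theta([\beta])$. Then $\Psi_i(\alpha(i)) = \Psi_i(\beta(i))$ for almost every $i$. Each $\Psi_i$ is injective, so $\alpha(i) = \beta(i)$ for almost every $i$ in $C$, and therefore $[\alpha]=[\beta]$.

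None of these steps is a real obstacle; all of them are routine pointwise verifications. The only subtlety is the treatment of partial functions: every equality should be read as holding on the cofinite-in-$C$ set where all the relevant functions converge. This is exactly the convention in Definition~\ref{def-CohProd}.
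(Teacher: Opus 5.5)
Your proposal is correct and follows essentially the same route as the paper. Both check pointwise that $\theta([\alpha])$ lies in $\prod_C K_i$ and that $\theta$ preserves $+$ and $\cdot$ because each $\Psi_i$ does, then conclude injectivity since $\theta$ is a ring homomorphism between fields. You additionally verify independence of representatives and that $\prod_C F_i$ and $\prod_C K_i$ are fields, via the $\Pi^0_2$ clause of Theorem~\ref{thm-LosProdParam}; the paper leaves both points implicit.
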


\begin{proof}
    Let $[\alpha], [\beta] \in \prod_C F_i$.  Since $\alpha(i)$ is defined for almost every $i$, so is $\Psi_i (\alpha(i))$.  Further, since $\text{ran}(\Psi_i) \subseteq K_i$, $\alpha(i) \in K_i$ for almost every $i$.  Thus, $\theta([\alpha]) \in \prod_C K_i$.  Further $\theta( [\alpha] + [\beta]) = \theta ([\alpha]) + \theta ([\beta])$, as $ \Psi_i(\alpha(i)+\beta(i))= \Psi_i(\alpha(i))+ \Psi_i(\beta(i))$ for almost every $i$.  Similarly, $\theta([\alpha][\beta])=\theta([\alpha])\theta([\beta])$.  Thus, $\theta$ is a ring homomorphism between fields, and is therefore injective.  
\end{proof}

From here on out, we will make no reference to the $\Psi_i$'s within the definition of a computable field extension to avoid clutter.  

We will begin by studying the algebraic/transcendental behavior of cohesive products of field extensions.

\begin{Lemma} \label{classification of algebraic elements of cohprods}

Let $\{K_i/F_i \}_{i \in \omega}$ be a uniformly computable sequence of field extensions.  Let $[\alpha] \in \prod_C K_i $.  $[\alpha]$ is algebraic over $\prod_C F_i$ with degree $n$ if and only if  $\deg_{F_i} (\alpha(i)) = n$ for almost all $i \in \omega$.

\end{Lemma}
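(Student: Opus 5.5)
Both directions reduce to a statement about polynomials with coefficients in the cohesive product. The key fact is that a polynomial $P(x)=\sum_{k\le m}[\gamma_k]x^k$ with $[\gamma_k]\in\prod_C F_i$ corresponds, coordinatewise, to the polynomials $P_i(x)=\sum_k\gamma_k(i)x^k\in F_i[x]$. Evaluation is built from finitely many field operations, so by the definition of the cohesive product,
\[
P([\alpha])=[\,i\mapsto P_i(\alpha(i))\,].
\]
In particular, $P([\alpha])=0$ iff $P_i(\alpha(i))=0$ for almost every $i$. The equivalence relation $\simeq_C$ handles this directly; if needed, the quantifier-free case of Theorem~\ref{thm-LosProdParam} covers it too. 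Likewise, $P\neq 0$ iff $P_i\neq 0$ for almost every $i$. This holds because "some coefficient of $P_i$ is nonzero" is a c.e.\ condition in $i$. By cohesiveness it holds either for almost all $i$ or for almost no $i$. If it held for almost no $i$, every coefficient of $P$ would vanish.

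\emph{Reverse direction.} Suppose $\deg_{F_i}(\alpha(i))=n$ for almost every $i$. I will produce a monic degree-$n$ annihilating polynomial computably. Given $i$, search (c.e.\ search, uniformly in $i$) for a monic polynomial $q_i\in F_i[x]$ of degree exactly $n$ with $q_i(\alpha(i))=0$. For almost every $i$ this search succeeds. Since $\deg\alpha(i)=n$, the only such polynomial is the minimal polynomial, so the first one found is $p_{\alpha(i)}$. The coefficients of $q_i$ give partial computable $\gamma_k$ defined on a superset of $C$. The resulting $P$ is monic of degree $n$ and $P([\alpha])=0$, so $[\alpha]$ is algebraic of degree $\le n$. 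To get degree exactly $n$, suppose some nonzero $Q$ of degree $m<n$ over $\prod_C F_i$ kills $[\alpha]$. Then for almost all $i$, $Q_i$ is nonzero of degree $\le m$ and kills $\alpha(i)$, contradicting $\deg\alpha(i)=n$.

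\emph{Forward direction.} Suppose $[\alpha]$ has degree $n$ with minimal polynomial $P$. Then $P_i(\alpha(i))=0$ and $P_i$ is monic of degree $n$ for almost every $i$, so $\deg\alpha(i)\le n$ almost everywhere. The difficulty is ruling out a smaller degree $d<n$ on $C$. The degree map need not be computable, since no splitting algorithm is assumed. However, the set $\{i : \deg_{F_i}\alpha(i)\le d\}$ is c.e., because it asks for a nonzero polynomial of degree $\le d$ vanishing at $\alpha(i)$. By cohesiveness, exactly one value $d\le n$ has $\deg\alpha(i)=d$ for almost every $i$. This follows by taking the least $d$ whose c.e.\ set almost contains $C$; for $d-1$ the set is then almost disjoint from $C$. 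Applying the reverse direction gives $\deg[\alpha]=d$, so $d=n$.

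The main obstacle is exactly this stabilization of degree along $C$ without a computable degree map. The fix is to work with the c.e.\ sets "$\deg\le d$" and to choose witnesses by search rather than deciding degree.
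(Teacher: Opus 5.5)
Your proof is correct. Its skeleton matches the paper's: polynomial identities are transferred coordinatewise (the paper cites the fundamental theorem of cohesive products), and in the reverse direction the coefficient functions are built by a computable search for an annihilating polynomial of $\alpha(i)$.

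The difference is that the paper stops there. In each direction it only exhibits a degree-$n$ polynomial killing $[\alpha]$ (resp.\ $\alpha(i)$), so it establishes only the ``degree $\le n$'' half. It never rules out $\deg_{F_i}\alpha(i)<n$ on almost all of $C$, or a lower-degree annihilator of $[\alpha]$ over $\prod_C F_i$. It also does not check that the polynomial it finds has leading coefficient nonzero on almost all of $C$.

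Your two extra ingredients are exactly what is needed to get ``degree exactly $n$'' in both directions:
\begin{itemize}
\item $P\neq 0$ over $\prod_C F_i$ iff $P_i\neq 0$ for almost every $i$, by cohesiveness applied to the c.e.\ set where some coefficient is nonzero.
\item $\deg_{F_i}\alpha(i)$ stabilizes along $C$, by applying cohesiveness to the c.e.\ sets $\{i : \deg_{F_i}\alpha(i)\le d\}$.
\end{itemize}
Neither needs a computable degree map. The paper's version is shorter; yours actually proves the lemma as stated. That matters later: the kernel computation in Theorem~\ref{Bad Description of the Kernel} and the index computation in the Galois correspondence for internal subgroups both use the exact-degree conclusion.
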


\begin{proof}

Assume $[\alpha]$ is algebraic with degree $n$.  Thus, $ [f_n] [\alpha]^n +\cdots+ [f_0] = 0  $ for some $[f_0],\dots  ,[f_n] \in \prod_C K$.  Thus, $f_n(i) \alpha(i)^n+\cdots+ f_0(i) = 0 $ for almost all $i$, by the fundamental theorem of cohesive products.

Assume now that, for almost all $i$, $\deg_{F_i} (\alpha(i)) = n$.  Let $f_n(i),\dots  ,f_0(i)$ be the coefficients of the first polynomial  in  $F[x]$ with $f_n (i) \alpha(i)^n +\cdots+f_0(i) = 0$. Function $f_j$ is partial computable, and halts on almost all $i$.  Thus, $[f_n] [\alpha]^n +\cdots+ [f_0] = 0$.
    
\end{proof}
As an easy corollary, we have the following.

\begin{Corollary} \label{Corollary for fixing that one proof}
    Let $K/F$ be a computable field extension, and let $[f_0],[f_1],\dots  ,[f_n] \in \prod_C K$. Element $[f_0]$ is algebraic over $\prod_C F([f_1],\dots  ,[f_n])$ with degree $m$ if and only if $ [F(f_0(i),\dots  ,f_n(i)) :F( f_1(i),\dots  ,f_n(i))] =m$ for almost every $i$.  
\end{Corollary}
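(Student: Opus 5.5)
The plan is to deduce the corollary from Lemma \ref{classification of algebraic elements of cohprods}, applied to a new uniformly computable sequence of field extensions. For each $i$, set
\[
F_i' = F(f_1(i),\dots,f_n(i)), \qquad K_i' = F(f_0(i),f_1(i),\dots,f_n(i)),
\]
both computed inside $K$. Only indices $i$ where all $f_j(i)$ converge matter; this set is c.e.\ and $\supseteq^* C$. So we may work with a partial sequence indexed over a c.e.\ set, as allowed by the relaxation discussed before the reindexing proposition, or simply restrict to such $i$.

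The first step is to present $\{K_i'/F_i'\}$ uniformly computably. Each $K_i'$ and $F_i'$ is a subfield of $K$ generated by finitely many elements, so it is a c.e.\ subset of $K$, uniformly in $i$: enumerate all rational expressions over $F$ in the generators. A uniformly c.e.\ sequence of subfields of a computable field has uniformly computable presentations with domain $\omega$, obtained by pulling back along the enumeration and deciding equality in $K$. The inclusions $F_i' \hookrightarrow K_i'$ are uniformly computable as well. Thus $\{(K_i',F_i',\Psi_i)\}$ is uniformly computable in the sense of the paper's definition.

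The second step is to identify cohesive products. I will show that $\prod_C F_i'$ is the subfield $(\prod_C F)([f_1],\dots,[f_n])$ of $\prod_C K$, and likewise for $\prod_C K_i'$ with $[f_0]$ adjoined.

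\begin{itemize}
\item Any element of $(\prod_C F)([f_1],\dots,[f_n])$ has the form $P([f_1],\dots,[f_n])/Q([f_1],\dots,[f_n])$ with coefficients $[g_k]\in\prod_C F$. Evaluating pointwise gives a partial computable $\varphi$ with $\varphi(i)\in F_i'$ for almost every $i$. This uses the fundamental theorem: the denominator is nonzero at almost every $i$, since the statement $Q\neq 0$ is quantifier-free.
\item Conversely, take $[\varphi]$ with $\varphi(i)\in F_i'$ for almost every $i$. The witnessing rational expression can be searched for computably, uniformly in $i$. Its shape, namely the monomial degrees, is drawn from a countable family, but it need not be constant on $C$.
\end{itemize}

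The second bullet is the main obstacle. I will handle it by choosing the witness canonically: the first expression found in a fixed enumeration. Then I will argue that the element lies in $\prod_C F_i'$, which is a field containing $\prod_C F$ and the $[f_j]$ and is contained in $\prod_C K$. So $\prod_C F_i' \supseteq (\prod_C F)([f_1],\dots,[f_n])$. For the lemma I only need $\prod_C F_i'$ to serve as the base field. If equality fails, I will instead phrase the claim so that the degree of $[f_0]$ over $\prod_C F_i'$ is what is computed.

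Alternatively, I would bypass that identification by reproving the lemma directly. Suppose $[f_0]$ satisfies a polynomial of degree $m$ whose coefficients are polynomial expressions, over $\prod_C F$, in $[f_1],\dots,[f_n]$. Then by Theorem \ref{thm-LosProdParam}(1), applied to atomic formulas, the pointwise polynomial vanishes at $f_0(i)$ for almost every $i$. This gives degree at most $m$ for almost every $i$.

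For the converse, suppose $[F(f_0(i),\dots,f_n(i)) : F(f_1(i),\dots,f_n(i))]=m$ for almost every $i$. Search uniformly in $i$ for a monic polynomial of degree $m$ with coefficients that are rational expressions in the $f_j(i)$ over $F$ and that vanishes at $f_0(i)$. This search halts for almost every $i$, and its coefficients assemble into elements of $\prod_C F_i'$, which gives degree at most $m$ in the product.

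Minimality in each direction comes from cohesiveness. For each $k$, the set of $i$ where the pointwise degree is at most $k$ is c.e., so $C$ decides it. This pins the pointwise degree to a single value $m'$ for almost every $i$, and the two inequalities then force $m=m'$, exactly as in the proof of Lemma \ref{classification of algebraic elements of cohprods}.
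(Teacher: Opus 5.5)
\emph{Verdict: there is a genuine gap, and it sits at the step you flagged. It cannot be closed, because the statement, read with the generated subfield as base, is false.}

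Your reduction needs $M:=\prod_C F(f_1(i),\dots,f_n(i))$ to coincide with $L:=(\prod_C F)([f_1],\dots,[f_n])$, but you only establish $L\subseteq M$. Your ``canonical witness'' paragraph just re-proves that an element of $M$ lies in $M$. The paper gives no argument beyond calling this an easy corollary of Lemma~\ref{classification of algebraic elements of cohprods}, so it implicitly uses the same reduction and the same unproved identification.

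Rephrasing the claim over $M$ gives the Lemma for the sequence $F(f_0(i),\dots,f_n(i))/F(f_1(i),\dots,f_n(i))$. That is not the stated corollary. The paper needs the $L$-reading, since it later concludes transcendence over $(\prod_C F)([f_{k_2}],\dots,[f_{k_n}])$.

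Your direct reproof has the same defect. The converse search produces a monic polynomial with coefficients in $M$, so it bounds the degree over $M$, not over $L$. The closing step ``the two inequalities force $m=m'$'' compares a degree over $L$ with a degree over $M$, and yields only $m'\le m$.

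In general $L\neq M$, and the statement fails for $L$ in both directions.

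\emph{The ``if'' direction fails.} Take $F=\Qb$, $K=\Qb(t)$, $f_1(i)=t$, $f_0(i)=t^i$. Pointwise the degree is $1$ for every $i$. Yet $[f_0]$ is transcendental over $L=(\prod_C\Qb)(t)$. After clearing denominators, suppose $\sum_{k\le m}P_k(t)[f_0]^k=0$ with $P_k\in(\prod_C\Qb)[x]$ of standard degree $\le d$ and some $P_{k_0}(t)\neq 0$. Then for almost every $i$ we get $\sum_{k\le m}\sum_{j\le d}p_{kj}(i)\,t^{ik+j}=0$ in $\Qb[t]$. Once $i>d$ these exponents are pairwise distinct, so every $p_{kj}(i)=0$. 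This contradicts the fact that for almost every $i$ some $p_{k_0j}(i)\neq 0$.

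\emph{The ``only if'' direction fails.} Take $K=\overline{\Qb}$, let $f_1(i)$ be a root of $x^{2i}-2$, and let $f_0(i)=f_1(i)^i$, which is a square root of $2$. Pointwise the degree is $1$. But $[f_1]$ is transcendental over $\prod_C\Qb$, since its pointwise degrees $2i$ are unbounded. So $L$ is purely transcendental over $\prod_C\Qb$, and $\prod_C\Qb$ is relatively algebraically closed in $L$. Since $\prod_C\Qb$ contains no square root of $2$, $[f_0]$ has degree $2$ over $L$.

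\emph{What survives.} Your forward argument correctly proves a one-sided bound: if $[f_0]$ is algebraic of degree $m$ over $L$, then the pointwise degree is at most $m$ for almost every $i$. Hence pointwise degrees that are unbounded on $C$ force transcendence over $L$. This bound, together with the exact statement with $M$ in place of $L$ (which is just the Lemma), is what is true. It is also all the paper's later applications use, so it is what you should aim to prove.
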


\begin{Proposition}
    Let $K_i / F_i$ be a uniformly computable sequence of computable field extensions.  Assume further that there is a single $n \in \Nb$ so that for every $i$ and every $\alpha \in K_i$, $\deg_{F_i} (\alpha) \leq n$.  Then $\prod_C K_i / \prod_C F_i$ is algebraic.
\end{Proposition}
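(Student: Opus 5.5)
The plan is to reduce the statement to Lemma \ref{classification of algebraic elements of cohprods}: an element $[\alpha]\in\prod_C K_i$ is algebraic over $\prod_C F_i$ of degree $m$ exactly when $\deg_{F_i}(\alpha(i))=m$ for almost every $i$. It therefore suffices to show that every $[\alpha]$ has a degree $m\le n$ such that $\deg_{F_i}(\alpha(i))=m$ for almost every $i\in C$.

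Fix $[\alpha]\in\prod_C K_i$. By hypothesis, for each $i\in\operatorname{dom}(\alpha)$ the value $\alpha(i)$ is algebraic over $F_i$ of some degree $d_i\in\{1,\dots,n\}$. For $m\le n$, set
\begin{align*}
W_m=\{i\mid \alpha(i)\da \text{ and } \exists\, a_0,\dots,a_{m}\in F_i \text{ not all zero with } \textstyle\sum_{j\le m} a_j\alpha(i)^j=0\}.
\end{align*}
Each $W_m$ is c.e., uniformly in $m$: we dovetail a search over tuples of $F_i$ (via $\Psi_i$ into $K_i$), and checking $\sum_j a_j\alpha(i)^j=0$ is computable in the uniformly computable $K_i$. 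Then $i\in W_m$ if and only if $d_i\le m$, so $W_1\subseteq W_2\subseteq\cdots\subseteq W_n=\operatorname{dom}(\alpha)\supseteq^* C$. Cohesiveness decides each $W_m$ on $C$, so we let $m$ be least with $C\subseteq^* W_m$. Then $C\subseteq^* W_m\setminus W_{m-1}$, which is a Boolean combination of c.e. sets (with $W_0=\emptyset$), and hence $d_i=m$ for almost every $i\in C$. The lemma now gives that $[\alpha]$ is algebraic of degree $m$, and since $[\alpha]$ was arbitrary, the extension is algebraic.

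The step requiring care is producing the witnessing polynomial uniformly in $i$ with a nonzero leading coefficient. If one instead proves algebraicity directly, rather than through the lemma, this is the issue. The fix is to let the partial computable functions $f_j(i)$ be the coefficients of the first-found nonzero polynomial of degree at most $m$ vanishing at $\alpha(i)$. Since $d_i=m$ almost everywhere on $C$, this polynomial is a nonzero scalar multiple of the minimal polynomial, so its coefficients are defined on almost all of $C$ and $f_m(i)\neq0$ there. Consequently $[f_m]\neq0$ in $\prod_C F_i$, and $\sum_j[f_j][\alpha]^j=0$ holds by the fundamental theorem of cohesive products, giving a nontrivial polynomial relation. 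No splitting algorithm is needed, because the uniform bound $n$ lets the c.e. sets $W_m$ and cohesiveness stand in for computing degrees.
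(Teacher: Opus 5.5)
Your proof is correct and takes the same route as the paper, which simply applies Lemma~\ref{classification of algebraic elements of cohprods} to each element of $\prod_C K_i$. Your cohesiveness argument with the nested c.e.\ sets $W_1\subseteq\cdots\subseteq W_n$ shows that $\deg_{F_i}(\alpha(i))$ equals a single constant $m$ for almost every $i\in C$, which is exactly the step the paper's one-line proof leaves implicit before the lemma can be invoked.
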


\begin{proof}

Apply Lemma \ref{classification of algebraic elements of cohprods} to every element of $\prod_C K_i$.  

\end{proof}

\begin{Proposition} 
    Let $K/F$ be a computable transcendental extension.  Then $\prod_C K / \prod_C F$ has infinite transcendence degree.  
\end{Proposition}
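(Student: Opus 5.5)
Fix a transcendental element $t \in K$ over $F$; it exists because $K/F$ is transcendental. The plan is to exhibit, for every $n$, elements $[g_1],\dots,[g_n] \in \prod_C K$ that are algebraically independent over $\prod_C F$; the infinite transcendence degree then follows. The natural candidates are the total computable functions $g_k(i) = t^{(i+1)^k}$, or more simply $g_k(i)=t^{N_i^k}$ with $N_i = i+2$. These are total and computable because the field operations of $K$ are computable, so each $[g_k]$ is an element of $\prod_C K$.

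The key step is a Corollary~\ref{Corollary for fixing that one proof}-style criterion. Suppose $P([g_1],\dots,[g_n]) = 0$ for a nonzero polynomial $P$ with coefficients $[a_\mu] \in \prod_C F$, where $\mu$ ranges over a fixed finite set of exponent multi-indices. This is an atomic statement, so by the Fundamental Theorem of Cohesive Products (Theorem~\ref{thm-LosProdParam}) it holds coordinatewise for almost every $i \in C$: $\sum_\mu a_\mu(i)\, t^{\sum_k \mu_k N_i^k} = 0$ in $K$. Nonzeroness of $P$ likewise gives some $\mu_0$ with $a_{\mu_0}(i) \ne 0$ for almost every $i$; this uses cohesiveness, since $a_{\mu_0}(i)\neq 0$ is a c.e. (indeed decidable on the domain) condition. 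Now take $i$ large enough that $N_i$ exceeds every exponent $\mu_k$ appearing in $P$. Then the map $\mu \mapsto \sum_k \mu_k N_i^k$ is injective on the finitely many multi-indices of $P$, by uniqueness of base-$N_i$ expansions. So the coordinate equation is a polynomial identity in the single element $t$ with distinct exponents and at least one nonzero coefficient. This contradicts transcendence of $t$ over $F$. Hence no such $P$ exists, and $[g_1],\dots,[g_n]$ are algebraically independent for every $n$.

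One bookkeeping point concerns the embedding. The coefficients live in $\prod_C F$ and are pushed into $\prod_C K$ via the natural inclusion $\theta$ from the preceding proposition, with $\theta([a])(i)=\Psi(a(i))$. Since $\Psi$ is an injective field homomorphism, the coordinatewise identity is genuinely an $F$-polynomial relation for $t$.

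I expect the main obstacle to be making sure the polynomial $P$ is fixed independently of $i$ while the exponents $N_i^k$ grow with $i$. This is exactly why the base-$N_i$ injectivity argument works only for almost every $i$, and why cohesiveness is needed to pass from "nonzero in the cohesive power" to "nonzero at almost every coordinate." A simpler alternative I would also try is $g_k(i) = t^{k}$ composed with varying substitutions, but the exponent-encoding choice above seems the most direct.
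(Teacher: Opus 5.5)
Your proof is correct. It uses essentially the same witnesses as the paper, which takes $f_n(i)=t^{i^n}$, but it verifies algebraic independence by a different route.

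The paper argues through degrees. Since $t^{i^{k_2}},\dots,t^{i^{k_n}}$ are all powers of $t^{i^{k_1}}$, the coordinatewise relative degree $[F(t^{i^{k_1}},\dots,t^{i^{k_n}}):F(t^{i^{k_2}},\dots,t^{i^{k_n}})]=i^{k_2-k_1}$ is unbounded. Corollary~\ref{Corollary for fixing that one proof} then makes $[f_{k_1}]$ transcendental over $(\prod_C F)([f_{k_2}],\dots,[f_{k_n}])$. Independence follows by applying this to every suffix and using the usual tower criterion, which the paper leaves implicit.

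You instead check the definition directly. A fixed nonzero polynomial has bounded exponents, so once $N_i$ exceeds them, the base-$N_i$ encoding separates the monomials at coordinate $i$. A coefficient that is nonzero at almost every coordinate then yields a nontrivial $F$-relation for $t$. You correctly get that coefficient from cohesiveness, since $\{i \mid a_{\mu_0}(i)\downarrow\neq 0\}$ is c.e.

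What your route buys:
\begin{itemize}
\item It is self-contained, needing only the definition of the operations in $\prod_C K$ plus cohesiveness.
\item It exposes the mechanism behind the paper's degree count. At each coordinate the $[g_k]$ are all powers of the single element $t$. Independence appears only because relations in the cohesive power have standard degree while the exponents grow with $i$.
\item It avoids Corollary~\ref{Corollary for fixing that one proof} altogether. The paper really only needs that corollary's easy half: a relation of degree $m$ over $(\prod_C F)([f_{k_2}],\dots,[f_{k_n}])$ forces coordinatewise degree at most $m$.
\end{itemize}
What the paper's route buys is brevity, by reusing a general criterion already stated in Section~\ref{inf}.

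Drop your closing remark about $g_k(i)=t^k$. Taken literally, those constant sequences lie in $K$ and are algebraically dependent, and your main construction makes any such alternative unnecessary.
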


    \begin{proof}
        Let $ t \in K$ be transcendental over $F$.  Consider the functions $f_n (i) = t^{i^n}$.  Let $k_1<\dots  <k_n \in \omega$. Then $[F(f_{k_1}(i),\dots  .,f_{k_n}(i) ): F(f_{k_2}(i),\dots  ,f_{k_n}(i))]=[F(t^{i^{k_1}},\dots  ,t^{i^{k_n}}):F(t^{i^{k_2}},\dots  ,t^{i^{k_n}})]= [F(t^{i^{k_1}} ):F(t^{i^{k_2}})]$ as $t^{i^{k_1}}\mid t^{i^{k_2}} \mid \dots  \mid t^{i^{k_n}}$.  Further, $[F(t^{i^{k_1}} ):F(t^{i^{k_2}})]= i^{k_2 -k_1} \geq i$, as $t^{i^{k_1}}$ has minimal polynomial $x^{i^{k_2-k_1}} - t^{i^{k_2}}$, where minimality follows from $t$ being transcendental over $F$.  Thus, by Corollary \ref{Corollary for fixing that one proof}, $[f_{k_1}]$ is transcendental over $(\prod_C F)([f_{k_2}],\dots  ,[f_{k_n}])$, as $[F(f_{k_1}(i),\dots  .,f_{k_n}(i) ): F(f_{k_2}(i),\dots  ,f_{k_n}(i))] $ is not equal to any fixed natural number for almost every $i$.
    \end{proof}

\begin{Theorem}
    Let $F \hookrightarrow K$ be a computable field extension with $F$ having a computable degree map $\deg:K\to \Nb $. Further, assume that $\{\deg_F (\alpha) \mid \alpha \in K\} $ is unbounded. Then $\prod_C K / \prod_C F$ is transcendental with infinite transcendence degree. 
\end{Theorem}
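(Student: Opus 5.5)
The plan is to exhibit explicitly an infinite sequence of total computable functions $f_0, f_1, f_2, \dots$ with $f_k(i) \in K$ whose classes $[f_0], [f_1], \dots$ are algebraically independent over $\prod_C F$. I will use the standard criterion that $\{[f_k]\}_{k\in\omega}$ is algebraically independent if and only if, for each $k$, $[f_k]$ is transcendental over $(\prod_C F)([f_0],\dots,[f_{k-1}])$. By Corollary \ref{Corollary for fixing that one proof}, it therefore suffices to arrange that the degree
$$[F(f_0(i),\dots,f_k(i)) : F(f_0(i),\dots,f_{k-1}(i))]$$
is not equal to any fixed natural number $m$ for almost every $i$. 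I will ensure this by making the degree at least $i$, so that for each $m$ only finitely many $i$ can have degree $m$.

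\textbf{Reduction to $K/F$ algebraic.} If some element of $K$ is transcendental over $F$, the preceding Proposition already gives infinite transcendence degree. So I may assume $K/F$ is algebraic, and then $\deg_F$ is a total computable map $K \to \Nb$.

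\textbf{Construction.} Define $f_k(i)$ by recursion on $k$, uniformly in $i$. Let $f_k(i)$ be the first $\alpha \in K$, in the computable enumeration of $K$, with
$$\deg_F(\alpha) \;\geq\; i \cdot \prod_{j<k} \deg_F(f_j(i)).$$
This search always halts because $\{\deg_F(\alpha) \mid \alpha \in K\}$ is unbounded and $\deg_F$ is computable. Hence every $f_k$ is total computable, uniformly in $k$, and each $[f_k]$ is an element of $\prod_C K$.

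\textbf{Degree estimate.} Set $E = F(f_0(i),\dots,f_{k-1}(i))$. By the tower law, $[E:F] \leq \prod_{j<k}\deg_F(f_j(i))$. Also, $F(f_k(i)) \subseteq E(f_k(i))$, so
$$[E(f_k(i)):E] \;=\; \frac{[E(f_k(i)):F]}{[E:F]} \;\geq\; \frac{\deg_F(f_k(i))}{\prod_{j<k}\deg_F(f_j(i))} \;\geq\; i.$$
Since the relative degree is at least $i$, for any fixed $m$ the set of $i$ where it equals $m$ is finite. Because $C$ is infinite, it is not the case that the degree equals $m$ for almost every $i \in C$. By Corollary \ref{Corollary for fixing that one proof}, $[f_k]$ is not algebraic of any degree over $(\prod_C F)([f_0],\dots,[f_{k-1}])$.

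\textbf{Conclusion and main obstacle.} Applying this for every $k$ gives an infinite algebraically independent set, so $\prod_C K / \prod_C F$ is transcendental with infinite transcendence degree.

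The main point needing care is this appeal to Corollary \ref{Corollary for fixing that one proof}. Its statement is phrased for $n$ generators in a fixed order, so I must check it applies with the previously chosen $[f_j]$ as the base generators. This follows from Lemma \ref{classification of algebraic elements of cohprods} applied to the uniformly computable sequence of extensions $F(f_0(i),\dots,f_k(i))/F(f_0(i),\dots,f_{k-1}(i))$. I also need that $\prod_C$ of these intermediate fields is $(\prod_C F)([f_0],\dots,[f_{k-1}])$ on the relevant elements, which follows from the fundamental theorem of cohesive products.
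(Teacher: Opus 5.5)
Your proof is correct and is essentially the paper's argument. The paper likewise chooses the $f_k$ by a search forcing $\deg_F(f_k(i))$ to dominate $\prod_{j<k}\deg_F(f_j(i))$, bounds the relative degree from below by the tower law, and concludes with Corollary~\ref{Corollary for fixing that one proof}; the paper uses an extra factor $\deg_F(f_k(i-1))$ (a recursion in $i$) to make the relative degree strictly increasing, while your factor $i$ gives the lower bound $i$ directly.

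In your closing remark, you only need the inclusion $(\prod_C F)([f_0],\dots,[f_{k-1}])\subseteq\prod_C F(f_0(i),\dots,f_{k-1}(i))$, since transcendence over the larger field passes to the subfield. Equality is false in general: already for $k=1$, the class of $i\mapsto\sum_{j<i/2}f_0(i)^j$ lies in the right-hand side but is not a rational function of $[f_0]$ over $\prod_C F$.
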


\begin{proof}

We define the $f_n$'s via recursion.  Let $f_0(0) = 0$.  Assuming $f_0(i)$ is defined, let $f_0(i+1)$ be the first encountered element of $K$ with $\deg_F (f_0 (i) ) < \deg_F (f_0 (i+1)) $.  Clearly $f_0$ is total computable. 

Assume $f_0,\dots , f_n$ have already been defined.  Let $f_{n+1}(0) = 0$, and let $f_{n+1} (i
+1)$ be the first element of $K$ encountered with 

$$\deg_F (f_{n+1} (i+1)) > \deg(f_{n+1}(i)) \prod_{j\leq n} \deg(f_j(i+1)).$$

Clearly, each $f_n$ is total computable. We show that $[ F(f_0(i),\dots  ,f_{n+1}(i)) : F(f_0(i),\dots  ,f_n(i))]$ is strictly increasing in $i$. Certainly, $ [ F(f_0(i+1),\dots  ,f_n(i+1)) : F ] \leq \prod_{j\leq n} \deg_F ( f_j(i+1))$.  By construction, $[ F(f_{n+1}(i) ) : F] > \deg (f_{n+1} (i)) \prod_{j\leq n} \deg(f_j(i+1))  ]$.  Therefore,
$$ [F(f_0(i+1),\dots  ,f_{n+1}(i+1) ) : F(f_0(i+1),\dots  ,f_n(i+1))] = \frac{[F(f_0(i+1),\dots  ,f_{n+1}(i+1) ) : F]}{[F(f_0(i+1),\dots  ,f_{n}(i+1) ) : F]} $$
$$\geq \frac{\deg_F (f_{n+1}(i+1))} {\prod_{j\leq n} \deg_F (f_j(i+1))}> \frac{\deg (f_{n+1}(i))\prod_{j\leq n} \deg_F (f_j(i+1)))}{\prod_{j\leq n} \deg_F (f_j(i+1))} $$

$$= \deg_F (f_{n+1}(i)) \geq [ F(f_0(i),\dots  ,f_{n+1}(i) ): F(f_0(i),\dots  ,f_n(i)) ]$$

In other words,

% https://q.uiver.app/#q=WzAsNCxbMiw0LCJGIl0sWzAsMiwiRihmX3tuKzF9KHgrMSkpIl0sWzQsMiwiRihmXzAoeCsxKSwuLi4sZl9uKHgrMSkpIl0sWzIsMCwiRihmXzAoeCsxKSwuLi4sZl9uKHgrMSksZl97bisxfSh4KyEpKSJdLFswLDEsIj4gXFxkZWcoZl97bisxfSh4KSlcXHByb2Rfe2lcXGxlcSBufSBcXGRlZyhmX2kgKHgrMSkpIiwwLHsic3R5bGUiOnsiaGVhZCI6eyJuYW1lIjoibm9uZSJ9fX1dLFswLDIsIlxcbGVxIFxccHJvZF97aVxcbGVxIG59XFxkZWcoZl9pKHgrMSkpIiwyLHsic3R5bGUiOnsiaGVhZCI6eyJuYW1lIjoibm9uZSJ9fX1dLFsxLDMsIiIsMCx7InN0eWxlIjp7ImhlYWQiOnsibmFtZSI6Im5vbmUifX19XSxbMywyLCI+IFxcZGVnKGZfe24rMX0oeCkpIiwwLHsic3R5bGUiOnsiaGVhZCI6eyJuYW1lIjoibm9uZSJ9fX1dXQ==
\[\begin{tikzcd}
	&& {F(f_0(i+1),\dots  ,f_n(i+1),f_{n+1}(i+1))} \\
	\\
	{F(f_{n+1}(i+1))} &&&& {F(f_0(i+1),\dots  ,f_n(i+1))} \\
	\\
	&& F
	\arrow["{> \deg(f_{n+1}(i))}", no head, from=1-3, to=3-5]
	\arrow[no head, from=3-1, to=1-3]
	\arrow["{> \deg(f_{n+1}(i))\prod_{j\leq n} \deg(f_j (i+1))}", no head, from=5-3, to=3-1]
	\arrow["{\leq \prod_{j\leq n}\deg(f_j(i+1))}"', no head, from=5-3, to=3-5]
\end{tikzcd}\]

Observe that for each $k$, there are only finitely many $i$ with $[ F(f_0 (i),\dots  ,f_{n+1}(i)):F(f_0(i),\dots  ,f_{n}(i)] \leq k$.  Thus, by Corollary \ref{Corollary for fixing that one proof}, $[f_{n+1}]$ is transcendental over $(\prod_C F )([f_0],...,[f_n])$.

\end{proof}

\begin{Corollary}
    Let $K/F$ be a computable algebraic extension, with $F$ having a computable degree map $\deg: K \to \Nb$. Then  $(\prod_C K)/(\prod_C F)$ is algebraic if and only if $\{ \deg_F (\alpha) \mid \alpha \in K\}$ is bounded. 
\end{Corollary}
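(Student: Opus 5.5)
The corollary splits into two directions, and each will come from a result already established in this section.

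For the direction ``bounded implies algebraic'', suppose there is an $n$ with $\deg_F(\alpha)\le n$ for every $\alpha\in K$. I will view $K/F$ as the constant uniformly computable sequence $K_i/F_i = K/F$. The earlier proposition on uniformly bounded degrees then applies directly and gives that $\prod_C K/\prod_C F$ is algebraic. If I want to avoid citing that proposition, I can argue from Lemma \ref{classification of algebraic elements of cohprods}. Take any $[\alpha]\in\prod_C K$. The sets $\{i \mid \alpha(i)\downarrow \wedge \deg_F(\alpha(i)) = k\}$ for $k\le n$ are c.e., because the degree map is computable. They cover $\mathrm{dom}(\alpha)\supseteq^* C$. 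By cohesiveness, exactly one of them, say for $k_0$, contains almost all of $C$. The lemma then gives that $[\alpha]$ is algebraic of degree $k_0$. This step uses the computability of $\deg$ together with the cohesiveness of $C$ to pin down a single degree on almost all of $C$.

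For the direction ``algebraic implies bounded'', I will argue by contrapositive. If $\{\deg_F(\alpha)\mid \alpha\in K\}$ is unbounded, the hypotheses of the preceding theorem hold: $K/F$ is computable, the degree map is computable, and the degrees are unbounded. That theorem shows $\prod_C K/\prod_C F$ has infinite transcendence degree, so it is not algebraic.

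Both directions are immediate applications of earlier results, so I see no real obstacle. The one point to check is that the preceding theorem requires only a computable degree map and not that $K/F$ be algebraic. This holds: its proof uses only the computable $\deg$ function and unboundedness. The algebraicity hypothesis in the corollary is needed so that $\deg_F$ is defined, and hence finite, on all of $K$.
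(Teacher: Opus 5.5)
Your proposal is correct and takes the same approach as the paper, which states the corollary without separate proof as the combination of the bounded-degree proposition (bounded $\Rightarrow$ algebraic) and the immediately preceding theorem (computable $\deg$ with unbounded degrees $\Rightarrow$ infinite transcendence degree). Your direct derivation of the first direction from Lemma \ref{classification of algebraic elements of cohprods} fills in the cohesiveness step left implicit in that proposition's one-line proof: you use cohesiveness to select one of the finitely many c.e.\ level sets of $\deg$.
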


Note that this corollary applies to every field extension $K/F$ where $F$ has a splitting algorithm.  In light of the above corollary, it is natural to ask if the requirement that $F$ have a splitting algorithm is sharp.  

\begin{Question}
    Does there exists a computable field extension $K/F$ with $\{ \deg_F (\alpha) \mid \alpha \in K \}$ unbounded, but $\prod_C K/ \prod_C F$ algebraic?
\end{Question}

The following proposition demonstrates that such a field extension must necessarily be non-separable.  In particular, the fields must be of finite characteristic.

\begin{Proposition}
    
    Let $K/F$ be a separable computable field extension with $\{ \deg_ F (\alpha)\mid \alpha \in K \}$ unbounded.  Then $\prod_C K / \prod_C F $ is transcendental. 
    
\end{Proposition}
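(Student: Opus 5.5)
The plan is to reduce to Lemma \ref{classification of algebraic elements of cohprods}. By that lemma, if $[f]\in\prod_C K$ is algebraic of degree $n$ over $\prod_C F$, then $\deg_F(f(i))=n$ for almost every $i\in C$. So it suffices to produce one computable $f$ with $\{i \mid \deg_F(f(i))\le n\}$ finite for every $n$, i.e.\ $\deg_F(f(i))\to\infty$. A useful observation is that $A_n=\{i\mid \deg_F(f(i))\le n\}$ is always c.e.: search for a nonzero polynomial over $F$ of degree $\le n$ that kills $f(i)$. Hence it is enough to make each $A_n$ finite. If some element of $K$ is transcendental over $F$, the earlier proposition already gives the result, so I would assume throughout that $K/F$ is algebraic.

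\emph{Case 1: $F$ finite, say $|F|=q$.} Here no splitting algorithm is needed. For algebraic $\alpha$, $\deg_F(\alpha)$ is the least $d\ge1$ with $\alpha^{q^d}=\alpha$, and this is computable inside $K$. So $\deg_F$ is computable and the preceding theorem on unbounded computable degree maps applies directly.

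\emph{Case 2: $F$ infinite.} Fix a computable enumeration $a_0,a_1,\dots$ of $K$ and put $L_m=F(a_0,\dots,a_m)$. Then $D_m=[L_m:F]$ is nondecreasing and unbounded, because the degrees of elements are unbounded. Since $K/F$ is separable, the standard proof of the primitive element theorem gives the following. If $\sigma_1,\dots,\sigma_{D_m}$ are the embeddings of $L_m$ into $\overline F$, then for $t\in F$ the element $\theta_m(t)=\sum_{j\le m} t^j a_j$ satisfies
\[
\deg_F\theta_m(t)\;\ge\;\bigl|\{\sigma_k(\theta_m(t))\mid k\le D_m\}\bigr|.
\]
For each pair $k\ne l$, the polynomial $\sum_j(\sigma_k(a_j)-\sigma_l(a_j))T^j$ is nonzero of degree $\le m$. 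It is nonzero because $\sigma_k$ and $\sigma_l$ differ on some generator $a_j$. So $\theta_m(t)$ is primitive for $L_m$ except for at most $m\binom{D_m}{2}$ values of $t$. I would then define $f(m)=\theta_m(t_m)$ for a computable choice of $t_m\in F$. The target is that for each $n$, once $D_m>n$, the choice $t_m$ avoids the bad set. That would give $\deg_F f(m)=D_m\to\infty$, so every $A_n$ is finite.

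The main obstacle is choosing $t_m$ computably, since $D_m$ and hence the size of the bad set are not computable. My first attempt would use the c.e.-ness of $A_n$ and cohesiveness instead of trying to guarantee primitivity. Take a computable family $f_k(m)=\theta_m(s_k)$, where $s_0,s_1,\dots$ are distinct elements of $F$. Suppose every $[f_k]$ were algebraic, of degrees $n_k$. Then each c.e.\ set $A^{(k)}_{n_k}$ would contain almost all of $C$. For any $N$ of these indices $k$, the intersection $\bigcap_{k<N}A^{(k)}_{n_k}$ still contains almost all of $C$. So for some large $m$ with $D_m>\max_{k<N} n_k$, all $N$ values $s_0,\dots,s_{N-1}$ would be bad for $L_m$. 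To reach a contradiction, one needs $N$ to exceed $m\binom{D_m}{2}$, and since $m$ must be taken large relative to $N$, this is exactly the quantitative point where the argument might fail. If it does, I would first refine the bound on the number of bad $t$. For example, restrict to $m$ with $D_m$ close to $\max_k n_k$, or replace the powers $t^j$ by a one-parameter combination whose bad set has size bounded independently of $m$. The fallback is to diagonalize so that $f(m)$ uses $t=s_{g(m)}$ with $g$ growing fast enough to outrun the bad sets along $C$.
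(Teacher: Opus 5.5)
Your reduction to Lemma~\ref{classification of algebraic elements of cohprods} is correct. Case~1 is also fine: for $\alpha$ algebraic over $\mathbb{F}_q$, the least $d\ge 1$ with $\alpha^{q^d}=\alpha$ is $\deg_F(\alpha)$, so the preceding theorem applies. The gap is Case~2, which is the substance of the statement. There you never actually produce an element of $\prod_C K$ that is shown to be transcendental. The cohesiveness argument with $f_k(m)=\theta_m(s_k)$ does not close. You must fix $N$, and hence $n_0,\dots,n_{N-1}$, before choosing $m\in C$ with $D_m>\max_{k<N}n_k$. Your only bound on the set of bad $t$ for $L_m$ is $m\binom{D_m}{2}\ge m$, which grows with $m$. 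So nothing prevents $s_0,\dots,s_{N-1}$ from all being bad, and no contradiction follows. The fallbacks you list are not carried out, so the proof is incomplete.

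The missing observation is that you never need $D_m$ or the size of the bad set. Generating $L_m=F(a_0,\dots,a_m)$ is a $\Sigma^0_1$ property of a candidate $\theta\in K$. It is witnessed by polynomials $p_0,\dots,p_m\in F[x]$ with $p_j(\theta)=a_j$, and these can be enumerated and checked because $F$ is computably embedded in $K$. Since a primitive element of $L_m$ exists, a dovetailed search over candidates and witnesses always halts. Even within your own setup, take $t_m$ to be the first $t\in F$ for which such witnesses for $\theta_m(t)$ appear. Then $\deg_F f(m)=D_m\to\infty$, so every $A_n$ is finite.

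This is essentially the paper's proof. It lets $\beta(i+1)$ be the first element of $K$ found with $\beta(i),\alpha_i\in F(\beta(i+1))$. The fields $F(\beta(i))$ then increase and eventually contain each $\alpha_j$, which forces $\deg_F\beta(i)$ to be nondecreasing and unbounded. Searching over all of $K$ rather than over the $\theta_m(t)$ also removes the need to split into finite and infinite $F$, since the primitive element theorem holds over finite fields too.
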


\begin{proof}
    Let $\alpha_0,\alpha_1,\dots  $ be an effective enumeration of $K$. We define $\beta$ via recursion.  Set  $\beta(0) = \alpha_0$, and take $\beta(i+1) $ to be the first enumerated element of $K$ with $\beta(i) , \alpha_i \in F(\beta(i+1) )$.   By the primitive element theorem, such a $\beta(i+1)$ is guaranteed to exist.

    Now $\deg_F ( \beta(i)) \leq \deg_F(\beta(i+1))$, and $\deg_F (\beta(i))$ grows arbitrarily large, as every $\gamma \in K$ is in all but finitely many $F(\beta(i))$.   Thus, $[\beta] \in \prod_C K$ is transcendental over $\prod_C F$, by Lemma \ref{classification of algebraic elements of cohprods}.
    
\end{proof}

This proof gives no access to the transcendence degree of $\prod_C K /\prod_C F$, which raises the following question:

\begin{Question}
    Does there exists a computable separable field extension $K/F$, with $\prod_C K/ \prod_CF$ having transcendence degree exactly 1?
\end{Question}

The two questions seem intuitively equivalent.  That is, whatever technique is used to answer one question will likely be able to resolve the other.

\subsection{Cohesive powers of infinite Galois extensions}

In this section, we give a description of the Galois group of the algebraic parts of cohesive powers of infinite Galois extensions in terms of the finite Galois groups of the base fields' finite Galois sub-extensions.  

\begin{Theorem}
    Let $\{K_i/F\}_{i\in \omega}$ be a uniformly computable sequence of Galois extension. Then $\text{Alg}( \prod_C K_i) / \prod_C F$ is Galois.  
\end{Theorem}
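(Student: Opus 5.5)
We must show $L:=\mathrm{Alg}(\prod_C K_i)$ is normal and separable over $\prod_C F$. Here $\prod_C F$ denotes the cohesive power of the common base field $F$, embedded in $\prod_C K_i$ via the natural inclusion of the preceding proposition, so every $K_i$ is Galois over the same computable field $F$. Algebraicity of $L$ over $\prod_C F$ holds by definition. The plan is to take an arbitrary $[\alpha]\in L$ and show two things. First, its minimal polynomial over $\prod_C F$ has distinct roots. Second, all of those roots lie in $\prod_C K_i$, and hence in $L$. Both will be read off coordinatewise from the Galois property of each $K_i/F$, using Lemma \ref{classification of algebraic elements of cohprods} and the fundamental theorem of cohesive products.

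Fix $[\alpha]$ algebraic of degree $n$. By Lemma \ref{classification of algebraic elements of cohprods}, $\deg_F(\alpha(i))=n$ for almost every $i$. Let $p(x)=x^n+[g_{n-1}]x^{n-1}+\cdots+[g_0]$ be the minimal polynomial of $[\alpha]$ over $\prod_C F$, so $p([\alpha])=0$. By part (1) of Theorem \ref{thm-LosProdParam} applied to this atomic statement, $p_i(\alpha(i))=0$ for almost every $i$, where $p_i=x^n+g_{n-1}(i)x^{n-1}+\cdots+g_0(i)\in F[x]$. Since $\deg p_i=n=\deg_F(\alpha(i))$, the polynomial $p_i$ is the minimal polynomial of $\alpha(i)$ over $F$ for almost every $i$. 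Because $K_i/F$ is Galois, $p_i$ then splits in $K_i$ into $n$ distinct linear factors.

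Next we build the roots of $p$ as computable sequences. Define a partial computable function $\beta_1$ by searching $K_i$ for a root of $p_i$, and define $\beta_2,\dots,\beta_n$ successively: $\beta_j(i)$ is the first element of $K_i$ found that is a root of $p_i$ and is distinct from $\beta_1(i),\dots,\beta_{j-1}(i)$. Each search uses only the atomic diagram of $K_i$, which is uniformly computable, together with the values $g_k(i)$. The search halts whenever $p_i$ has $n$ distinct roots in $K_i$, which holds for almost every $i$, so each $\beta_j$ is defined on $C$ up to finitely many exceptions and $[\beta_j]\in\prod_C K_i$. The condition \[\prod_{j\le n}(x-\beta_j(i))=p_i(x)\] is a finite system of coefficient equations, and it holds for almost every $i$. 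Similarly $\beta_j(i)\neq\beta_k(i)$ for $j\ne k$ holds for almost every $i$. By part (2) of Theorem \ref{thm-LosProdParam}, applied to these quantifier-free statements, we get $p=\prod_j(x-[\beta_j])$ with the $[\beta_j]$ pairwise distinct in $\prod_C K_i$.

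It follows that $p$ is separable. Each $[\beta_j]$ is algebraic over $\prod_C F$, so it lies in $L$, and hence $p$ splits in $L$. Since $[\alpha]$ was arbitrary, $L/\prod_C F$ is normal and separable, i.e.\ Galois.

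The main obstacle is the step identifying $p_i$ as the minimal polynomial of $\alpha(i)$. It relies on the degree equality from Lemma \ref{classification of algebraic elements of cohprods} and on the fact that a coordinatewise equality transfers to almost every $i$, which is legitimate here because the statement is atomic. Once that identification is made, the remaining steps only transfer finitely many quantifier-free facts, for which the fundamental theorem applies in both directions.
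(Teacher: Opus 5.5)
Your proof is correct and follows essentially the same route as the paper. You identify the coordinate polynomials $p_i$ as the minimal polynomials of $\alpha(i)$ over $F$ for almost every $i$, use that each $K_i/F$ is Galois to find $n$ distinct roots by a uniform search, and transfer the resulting quantifier-free factorization back with the fundamental theorem of cohesive products. The one minor difference is that you certify minimality of $p_i$ through the degree equality of Lemma~\ref{classification of algebraic elements of cohprods}, whereas the paper transfers coordinatewise a universal statement that no lower-degree polynomial vanishes at $\alpha$. Your version avoids having to restrict those quantifiers to $\prod_C F$ and to exclude the zero tuple.
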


\begin{proof}

 Let $[\alpha] \in 
\text{Alg} \left( \prod_C K \right) \backslash \left( \prod_C F \right)$ and let $[a_n] x^n +\cdots+ [a_0]$ be its minimal polynomial.  

Clearly,

$$\prod_C K_i \models \left( [a_n] [\alpha]^n +\cdots+[a_0] =0 \right) \wedge \forall b_{n-1}\dots  \forall b_0 \left( b_{n-1} [\alpha]^{n-1}+\cdots+b_0 \not=0 \right) $$ Thus, for almost every $i$, 

$$K_i \models \left( a_n(i) \alpha(i)^n +\cdots+a_0(i) =0 \right) \wedge \forall b_{n-1}\dots  \forall b_0 \left( b_{n-1} \alpha(i)^{n-1}+\cdots+b_0 \not=0 \right)$$ We conclude that for almost every $i$, $a_n(i) x^n+\cdots+a_0(i)$ is the minimal polynomial of $\alpha(i)$ over $\prod_C F$.  Since $K_i/F$ is Galois, $a_n(i) x^n +\cdots+a_0(i)$ has $n$ distinct roots.

Define $\beta_1$,\dots,$\beta_n$ by recursion.  Set $\beta_1 = \alpha$.  Assuming $\beta_m$ has already been defined for $m<n$, we define $\beta_{m+1}(i)$ by first computing $a_n(i),\dots  ,a_0(i)$, and searching through $K_i$ for roots of $a_n(i) x^n+\cdots+a_0(i)$. We set $\beta_{m+1}(i)$ equal to the first such root we find that is distinct from $ \beta_1(i),\dots ,\beta_m(i)$.  It is clear that $\beta_m$ is defined for almost every $i$.  

Therefore, we have that for almost every $i$,

$$ K_i \models \forall x \left(a_n(i) x^n+\cdots+a_0(i) = (x-\beta_1(i))\cdots(x-\beta_n(i))\right) \wedge \bigwedge_{m<\ell\leq n} \beta_m(i) \not= \beta_\ell (i)$$ Thus, 

$$\prod_C K_i  \models \forall x\left( [a_n] x^n+\cdots+[a_0] = (x-[\beta_1])\dots  (x-[\beta_n])\right) \wedge \bigwedge_{m<\ell\leq n} [\beta_m] \not= [\beta_\ell]$$

We thus conclude $[a_n]x^n+\cdots+[a_0]$ splits completely over $\prod_C F$ into unique linear factors.  Thus, $\text{Alg}(\prod_C K_i) /\prod_C F$ is Galois.  
\end{proof}
 
 \begin{Theorem}\label{Bad Description of the Kernel}
     Let $\{K_i/ F\}_{i\in \omega}$ be a uniformly computable sequence of finite Galois extensions with uniformly computable Galois groups $G_i$.  Assume, furthermore, that the group actions $G_i \curvearrowright K_i$ are uniformly computable.  Then $\prod_C G_i$ acts faithfully on $\prod_C K_i$, and is therefore isomorphic to a subgroup of $\text{Aut} (\prod_C K_i /\prod_C F )$.

    Further, the map $\prod_C G_i \to Gal ( \text{Alg}(\prod_C K_i )/\prod_CF )$, given by restricting the action of $[\sigma]\in \prod_C G_i$ onto the algebraic part, $ [\sigma]\upharpoonright \text{Alg}(\prod_C K_i)$, has dense image. Moreover, the kernel is exactly 
    $$\left\{ [\sigma] \middle| \forall n   \left\{i \middle| \sigma (i) \in \bigcap_{H\leq G ,[G:H] = n }H  \right\} \supseteq^* C \right\} $$
    
 \end{Theorem}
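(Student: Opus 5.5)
We let $\prod_C G_i$ act on $\prod_C K_i$ coordinatewise, by $[\sigma]\cdot[\alpha] = [\psi]$ with $\psi(i) = \sigma(i)(\alpha(i))$. Since the actions $G_i \curvearrowright K_i$ are uniformly computable, $\psi$ is partial computable and converges for almost every $i$.

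\textbf{Well-definedness and automorphisms.} If $\sigma \simeq_C \sigma'$ and $\alpha \simeq_C \alpha'$, then the two resulting functions agree on a set $\supseteq^* C$. Each $\sigma(i)$ is a field automorphism of $K_i$ fixing $F$. The conditions $\sigma(i)(x+y)=\sigma(i)x+\sigma(i)y$, $\sigma(i)(xy)=\sigma(i)x\,\sigma(i)y$ and $\sigma(i)(c)=c$ for $c\in F$ are therefore atomic facts that hold for every $i$ at which everything converges. By the fundamental theorem of cohesive products, $[\sigma]$ acts as a ring endomorphism of $\prod_C K_i$ fixing $\prod_C F$. It is bijective because $[\sigma^{-1}]$ (computed uniformly in the finite groups $G_i$) acts as its inverse. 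Compatibility with the group law is again coordinatewise, so we obtain a homomorphism $\prod_C G_i \to \text{Aut}(\prod_C K_i/\prod_C F)$.

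\textbf{Faithfulness.} Suppose $[\sigma]$ is not the identity of $\prod_C G_i$. The set $\{i \mid \sigma(i)\neq e\}$ is the intersection of a c.e.\ set (the domain of $\sigma$) with a set that is computable relative to it, so it is a Boolean combination of c.e.\ sets. By cohesiveness it therefore contains almost all of $C$. By the Corollary on finding primitive elements from $G\curvearrowright K$, we can compute, uniformly in $i$, a primitive element $\alpha(i)$ of $K_i/F$. A nontrivial automorphism moves a primitive element, so $\sigma(i)\alpha(i)\neq\alpha(i)$ for almost every $i$. Hence $[\sigma]\cdot[\alpha]\neq[\alpha]$.

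\textbf{Dense image.} The restriction to $\text{Alg}(\prod_C K_i)$ lands in the Galois group, since that extension was shown Galois in the preceding theorem. By Lemma~\ref{same fixed field iff same closure}, it suffices to show that the image has fixed field exactly $\prod_C F$. Let $[\alpha]$ be algebraic of degree $n>1$. By Lemma~\ref{classification of algebraic elements of cohprods}, $\deg_F(\alpha(i))=n>1$ for almost every $i$, and this degree is computable from the actions. Since $K_i/F$ is Galois, some element of $G_i$ moves $\alpha(i)$. Define $\sigma(i)$ to be the first such element found by search. Then $[\sigma]$ moves $[\alpha]$, so only degree-one elements are fixed by the whole image.

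\textbf{Kernel.} Write $N_n(i)=\bigcap_{H\le G_i,\,[G_i:H]=n}H$; it is uniformly computable from the finite groups.

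First suppose $\sigma(i)\in N_n(i)$ for almost every $i$, for each $n$. Take any algebraic $[\alpha]$, of degree $n$ say. For almost every $i$, the stabilizer of $\alpha(i)$ in $G_i$ is $\text{Gal}(K_i/F(\alpha(i)))$, which has index $\deg_F\alpha(i)=n$. So $\sigma(i)$ lies in that stabilizer and fixes $\alpha(i)$, and hence $[\sigma]$ fixes $[\alpha]$.

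Conversely, suppose that for some $n$ the computable (relative to $\text{dom}\,\sigma$) set $\{i\mid \sigma(i)\notin N_n(i)\}$ meets $C$ infinitely. By cohesiveness it then contains almost all of $C$. For such $i$, search for a subgroup $H$ of index $n$ with $\sigma(i)\notin H$. Then search $K_i$ for an element $\alpha(i)$ whose stabilizer is exactly $H$; such an element exists, namely a primitive element of $E^H$, by the fundamental theorem of Galois theory. Then $\deg_F\alpha(i)=n$, so $[\alpha]$ is algebraic of degree $n$ by Lemma~\ref{classification of algebraic elements of cohprods}, and $[\sigma]$ moves it.

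\textbf{Main obstacle.} The step I expect to need the most care is the uniformity in this last construction. One must justify that the searches converge on a set $\supseteq^* C$, and that every "almost every $i$" set invoked is a Boolean combination of c.e.\ sets, so that cohesiveness applies. Both points reduce to the uniform computability of the finite groups $G_i$ and of their actions.
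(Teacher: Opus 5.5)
Your proof is correct and follows the paper's argument step for step. It uses the coordinatewise action, a witness moved by $\sigma(i)$ for faithfulness, and a search through $G_i$ for an element moving $\alpha(i)$ to show the image has fixed field $\prod_C F$ and is therefore dense. For the kernel it uses the index-$n$ stabilizer argument in one direction and a search for an element of $E^{H}$ moved by $\sigma(i)$ in the other. The differences are cosmetic: you take a uniformly computed primitive element for faithfulness, and you choose $\alpha(i)$ with stabilizer exactly $H$, so its degree is exactly $n$ rather than at most $n$. You also spell out the cohesiveness steps and bijectivity via $[\sigma^{-1}]$, which the paper leaves implicit.
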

\begin{proof}
Let $[\sigma] \in \prod_C G_i$, and let $[\alpha]\in \prod_C K_i$.  Set $[\sigma][\alpha] = [\sigma \circ \alpha]$.  This is a well-defined group action by the fundamental theorem of cohesive products.  This gives us a well-defined map $ \prod_C G_i \to \text{Aut} (\prod_C K_i)$.  For every  $[\sigma] \in \prod_C G_i$ and $[\alpha] \in \prod_C F$, we have $[\sigma][\alpha] = [\sigma \circ \alpha] = [\alpha]$, as $\sigma(i) \in Gal(K_i/F)$ for almost every $i$. Therefore, the image of this map lands inside $\text{Aut} (\prod_C K_i /\prod_C F)$.    

Suppose $[\sigma] \in \prod_C G_i$ is not the identity. Let $\alpha(i)$ be the first encountered element of $K_i$ that is not fixed by $\sigma(i)$. Then $\alpha$ is a partial computable function that is defined almost everywhere, as $\sigma(i) \not= e_{G_i}$ for almost every $i$.  By construction, $[\sigma][\alpha] \not = [\alpha]$.  Thus, no nontrivial element of $\prod_C G_i$ acts trivially on $\prod_C K_i$.  Therefore, the action $\prod_C G_i \curvearrowright \prod_C K_i$ is faithful.  That is, the map $\prod_C G_i \to \text{Aut} (\prod_C K_i)$ is injective.

Now, we turn our attention to the restriction of the map to the algebraic part. To show that the image of $\prod_C G_i$ is dense in $Gal ( \text{Alg}(\prod_C K_i )/ \prod_C F)$, it is sufficient to show that $E^{\prod_C G_i} = F$ by Lemma \ref{same fixed field iff same closure}.  That is, for every $[\alpha] \in \text{Alg}(\prod_C K_i) - \prod_C F$, there is some $[\sigma] \in \prod_C G_i$ with $[\sigma][\alpha] \not= [\alpha]$.

Let $\alpha \in \text{Alg}(\prod_C K_i) - \prod_C F$.  Let $\sigma(i)$ be the first encountered element of $G_i$ that doesn't fix $\alpha(i)$.  $\sigma$ is a partial computable function defined almost everywhere.  By construction, $[\sigma][\alpha] \not= [\alpha]$.  Thus, $\prod_C G_i $ has fixed field $\prod_C F$, and therefore has dense image in $Gal(\text{Alg}(\prod_C K_i) /\prod_C F)$.

Now, we turn our attention to the kernel.  

First, let $ [\tau] \in \{ [\sigma]\mid \forall n   \{i\mid \sigma (i) \in \bigcap_{H\leq G, [G:H] =n }H  \} \supseteq^* C \} $, and let $ [\alpha] \in \text{Alg}(\prod_C K_i)$.  By Lemma \ref{classification of algebraic elements of cohprods}, $\alpha(i)$ has degree $n$ for almost every $i$.  Let $H_i = Gal(K/F(\alpha(i)))$.  Thus, for almost every $i$, $[G_i : H_i ]$ has degree $n$.  Thus, for almost every $i$, $\tau(i) \in H_i$ as $\tau (i) \in \bigcap_{H\leq G, [G:H] = n }H$ for almost all $i$. Thus, $\tau(i)$ acts trivially on $\alpha(i)$, for almost every $i$.  That is, $[\sigma][\alpha] = [\alpha].$ Since $[\tau]$ and $[\alpha]$ were arbitrary, we conclude that $\{ [\sigma]\mid \forall n  \{i\mid \sigma (i) \in \bigcap_{H\leq G, [G:H] = n }H  \} \supseteq^* C \}  $ is contained in the kernel of the map $\prod_C G_i \to   Gal(\text{Alg} (\prod_C K_i) /\prod_C F )$.

Now suppose $[\tau] \not\in  \{ [\sigma]\mid \forall n\{i\mid \sigma (i) \in \bigcap_{H\leq G, |G:H| = n }H  \} \supseteq^* C \}$. Thus there is some $n$ such that for almost every $i$, there is some $H_i \leq G_i$ with $|G_i : H_i| =n$, and $ \tau(i) \not \in H_i$.  As each of the $G_i$'s is finite, $H_i$ may be found computably.  We define $\alpha(i)$ by searching through $K_i$ for some $\beta$ with $\tau(i) \beta \not = \beta$, and $\rho \beta = \beta$ for every $\rho \in H_i$, i.e., $\beta \in E^{H_i}$.  Such a $\beta$ must exist by the fundamental theorem of Galois theory, and detecting it is effective.  Take $\alpha(i)$ to be the first encountered $\beta$.  

Since, for almost every $i$, $\alpha(i) \in E^{H_i}$ and $[E^{H_i} : F ] = | G_i: H_i | = n$, we must have that $ \deg (\alpha (i)) \leq n $ for almost every $i$.  Thus, by Lemma \ref{classification of algebraic elements of cohprods}, $[\alpha] \in \text{Alg}(\prod_C K_i) $.  By construction $[\tau] [\alpha] \not= [\alpha]$.  Thus, $\sigma$ is not in the kernel of the map $ \prod_C G_i \to Gal(\text{Alg}(\prod_C K_i )/ \prod_C F)$.  
     
\end{proof}

A nicer description of the above phenomenon is given at the end of Section 5.  

We would like to be able to decompose arbitrary cohesive powers of infinite Galois extensions into cohesive products of finite sub extensions.  The following definition and lemma give us a canonical way to do this.

\begin{Definition}

Let $\{\mathcal{A}_i \}_{i\in \omega}$ be a uniformly computable sequence of structures, and let $\varphi : \omega \to \omega$ be a partial computable function with $\text{dom} (\varphi) \supseteq^* C$.  

$$\prod_{C, \varphi} \mathcal{A}_i := \prod_C \mathcal{A}_{\varphi (i) }$$
    
\end{Definition}

\begin{Lemma} 
    Let $\varphi_1$, $\varphi_2$ be partial computable functions defined almost everywhere.  If $\varphi_1 \simeq_C \varphi_2$, then $\prod_{C, \varphi_1} \mathcal{A}_i \cong \prod_{C,\varphi_2} \mathcal{A}_i$.
\end{Lemma}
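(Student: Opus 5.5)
The plan is to show that the identity map on representatives induces the isomorphism. Let $D = \{ i \mid \varphi_1(i)\da = \varphi_2(i)\da\}$. This set is c.e., and since $\varphi_1 \simeq_C \varphi_2$ we have $C \subseteq^* D$. An element of $\prod_{C,\varphi_1}\mathcal{A}_i$ is a class $[\psi]$, where $\psi$ is partial computable with $C \subseteq^* dom(\psi)$ and $\psi(i) \in |\mathcal{A}_{\varphi_1(i)}|$ whenever $\psi(i)\da$. By the remark after the Fundamental Theorem of Cohesive Powers, it suffices that this holds for almost every $i \in C$. Define $\Theta([\psi]) = [\psi']$, where $\psi'(i) = \psi(i)$ if $i \in D$ and $\psi(i)\da$ and $\psi(i) \in |\mathcal{A}_{\varphi_2(i)}|$, and $\psi'(i)\ua$ otherwise.

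First check that $\psi'$ is partial computable. Membership in $D$ is verified by waiting for both computations to halt, and the domains $|\mathcal{A}_j|$ are uniformly computable, so membership in $|\mathcal{A}_{\varphi_2(i)}|$ is decidable once $\varphi_2(i)$ has converged. For $i \in C \cap D$ with $\psi(i)\da$, we have $\mathcal{A}_{\varphi_1(i)} = \mathcal{A}_{\varphi_2(i)}$ as the same structure, so $\psi'(i) = \psi(i)$. Hence $\psi' \simeq_C \psi$ as functions, and $C \subseteq^* dom(\psi')$. This shows that $\Theta$ is well defined: if $\psi_1 \simeq_C \psi_2$, then $\psi_1' \simeq_C \psi_2'$.

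The symmetric construction gives a map back, and the two compositions are the identity because each one changes representatives only off a set containing almost none of $C$. So $\Theta$ is a bijection.

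It remains to check preservation of the language. For a relation symbol $R$, $R^{\prod_{C,\varphi_1}}([\psi_0],\dots)$ holds iff $C \subseteq^* \{ i \mid R^{\mathcal{A}_{\varphi_1(i)}}(\psi_0(i),\dots)\}$. Intersecting with $D$ changes nothing modulo $C$, and on $D$ the structures agree and $\psi_j = \psi_j'$. So the condition is equivalent to $C \subseteq^* \{ i \mid R^{\mathcal{A}_{\varphi_2(i)}}(\psi'_0(i),\dots)\}$, which is the relation in $\prod_{C,\varphi_2}\mathcal{A}_i$. Functions and constants are handled the same way, since $f^{\mathcal{A}_{\varphi_1(i)}} = f^{\mathcal{A}_{\varphi_2(i)}}$ for $i \in D$, and the uniform atomic diagram depends only on the index.

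I expect no real obstacle. The only subtle point is that cohesiveness is not even needed beyond the inclusion $C \subseteq^* D$. Care is required only in restricting representatives computably, so that $\psi'$ always outputs into the correct domain; this is the step handled above.
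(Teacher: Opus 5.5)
Your proposal is correct and is essentially the paper's argument: the paper also sends $[\alpha]$ to the class of $\alpha^*$, which is $\alpha$ restricted to the c.e.\ set where $\varphi_1(i)\da=\varphi_2(i)\da$, and simply declares this the desired isomorphism. You additionally verify well-definedness, give the inverse map, and check preservation of relations, functions and constants, all of which the paper leaves implicit; your extra test that $\psi(i)\in|\mathcal{A}_{\varphi_2(i)}|$ is harmless and just makes $\psi'$ satisfy the strict form of the definition.
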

\begin{proof}
    Let $\alpha \in \prod_{C, \varphi_1} \mathcal{A}_i$.  Let $\alpha^* (i) = \begin{cases} \alpha(i) & \text{if } \varphi_1(i)\downarrow =\varphi_2(i)\downarrow \\ \uparrow & \text{otherwise} \end{cases}$.  
    
    Clearly, $[\alpha^*] \in \prod_{C, \varphi_2} \mathcal{A}_i$ .  The map $[\alpha]\mapsto [\alpha^*]$ is the desired isomorphism.   
\end{proof}

Since $\varphi_1\simeq_C \varphi_2 $ if and only if $[\varphi_1]=[\varphi_2]$ in $\prod_C \Nb$, there is a natural extension of the sequence  $\{ \prod_C \mathcal{A}_i \}_{i\in \Nb}$ to $\{ \prod_{C,[\varphi] } \mathcal{A}_i \}_{[\varphi] \in \prod_C \Nb }$.  We will use capital letters $N,N_1,N_2,\dots  M,M_1,M_2,\dots  $ to represent elements of $\prod_C \Nb$, which we refer to as hyper-natural numbers. Thus, we have the sequence $\{ \prod_{C, M} \mathcal{A}_i \}_{M\in \prod_C\Nb}$. 

\begin{Observation} \label{Not so clear after all}
    If $\mathcal{A}_i \subseteq \mathcal{A}_{i+1}$ for all $i$, then $\prod_{C,N} \mathcal{A}_i \subseteq \prod_{C,M} \mathcal{A}_i$ for all $N \leq M$ in $\prod_C \Nb$.
\end{Observation}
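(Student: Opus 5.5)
The plan is to build an explicit embedding of $\prod_{C,N}\mathcal{A}_i$ into $\prod_{C,M}\mathcal{A}_i$ that is the identity on representatives, and then argue that this embedding is what the statement means by $\subseteq$. Fix representatives $\nu,\mu$ of $N,M$, both partial computable and defined on almost all of $C$.

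The first step is to translate $N\le M$ to the coordinates. The relation $\le$ on $\Nb$ is quantifier-free. So by the fundamental theorem of cohesive powers (Theorem~\ref{thm-LosGeneral}), $N\le M$ holds iff the set
$$S=\{i \mid \nu(i)\da,\ \mu(i)\da,\ \nu(i)\le\mu(i)\}$$
satisfies $S\supseteq^* C$. The set $S$ is c.e.

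The second step uses the chain hypothesis. Since $\mathcal{A}_i\subseteq\mathcal{A}_{i+1}$ for all $i$, induction gives $\mathcal{A}_{\nu(i)}\subseteq\mathcal{A}_{\mu(i)}$ for every $i\in S$, as substructures. For the sequence to be a uniformly computable chain, I read the hypothesis as saying the underlying domains and operations literally extend one another; otherwise one composes with the given uniformly computable inclusion maps. For $[\alpha]\in\prod_{C,N}\mathcal{A}_i=\prod_C\mathcal{A}_{\nu(i)}$, define $\alpha^*(i)=\alpha(i)$ when $i\in S$ (after $\alpha(i)$ and membership in $S$ have been witnessed), and undefined otherwise. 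Then $\alpha^*$ is partial computable, its domain contains almost all of $C$, and $\alpha^*(i)\in|\mathcal{A}_{\mu(i)}|$. So $[\alpha^*]\in\prod_{C,M}\mathcal{A}_i$.

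The third step checks that $\iota\colon[\alpha]\mapsto[\alpha^*]$ is a well-defined injective embedding. Two representatives agree on almost all of $C$ exactly when their starred versions do, because $S\supseteq^* C$. This gives both well-definedness and injectivity. For each function symbol $f$, and each atomic relation $R$, the interpretation in $\mathcal{A}_{\nu(i)}$ agrees with that in $\mathcal{A}_{\mu(i)}$ on elements of the smaller structure, since it is a substructure. So the set of $i$ where $f^{\mathcal{A}_{\nu(i)}}(\alpha(i),\dots)=f^{\mathcal{A}_{\mu(i)}}(\alpha^*(i),\dots)$ contains $S\cap\mathrm{dom}$ and hence almost all of $C$; the same holds for $R$. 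Constants are handled identically. Applying the definition of the cohesive product operations coordinatewise then shows that $\iota$ preserves all symbols. Hence $\iota$ is an embedding, and we identify $\prod_{C,N}\mathcal{A}_i$ with its image.

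The only subtlety I expect is bookkeeping. The product $\prod_{C,N}\mathcal{A}_i$ is defined only up to the isomorphism of the preceding lemma, which depends on the choice of representative $\nu$. So we should check that $\iota$ commutes with those re-indexing isomorphisms. This holds because both maps merely restrict domains of representatives, so the inclusion is canonical.
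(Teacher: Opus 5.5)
The paper records this as an Observation without proof, so there is no official argument to compare against; your proof is correct and is the natural one, using the same device as the paper's preceding lemma on $\varphi_1\simeq_C\varphi_2$ (there representatives are restricted to $\{i \mid \varphi_1(i)\da=\varphi_2(i)\da\}$, here to your c.e.\ set $S=\{i \mid \nu(i)\le\mu(i)\}\supseteq^* C$). Reading ``$\subseteq$'' as this canonical embedding that restricts representatives, checking that relations are preserved in both directions, and verifying compatibility with the re-indexing isomorphisms is exactly what the statement requires.
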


From here on out, brackets will be omitted from elements of cohesive products to avoid notational clutter.  That is, $[\alpha] \in \prod_C K$ will be referred to as $\alpha$.  The variable $i$ will be used exclusively as the argument to the computable functions within the domain of the cohesive power to avoid ambiguity.  Since all reasoning about $\alpha(i)$ will hold for almost all $i$, well-definedness is not obstructed.

\begin{Lemma} \label{sequence-extension-lemma}
    Let $K/F$ be a computable field extension, and let $\{K_i/F\}_{i\in \omega}$ be a uniformly computable sequence of sub-extensions with $\{K_{i+1}/K_i\}_{i\in \omega}$ and $\{K/K_i\}_{i\in \omega}$ being uniformly computable, and with $K = \bigcup_{i\in \Nb} K_i$.  Then $\prod_C K = \bigcup_{M \in \prod_{C} \Nb} \left(\prod_{C, M}K_i\right) $.  
\end{Lemma}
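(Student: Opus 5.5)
We prove the two inclusions separately, first making precise how $\prod_{C,M} K_i$ sits inside $\prod_C K$. For $M = [\varphi] \in \prod_C \Nb$, the structure $\prod_{C,M} K_i = \prod_C K_{\varphi(i)}$ is a cohesive product of the uniformly computable sequence $\{K_{\varphi(i)}\}$. The hypothesis that $\{K/K_i\}_{i\in\omega}$ is uniformly computable provides uniformly computable embeddings $\Psi_j : K_j \to K$. Composing with $\varphi$ gives uniformly computable embeddings $K_{\varphi(i)} \to K$, so the natural inclusion proposition for cohesive products of extensions applies. Explicitly, we send $[\beta] \mapsto [i \mapsto \Psi_{\varphi(i)}(\beta(i))]$. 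This identifies $\prod_{C,M}K_i$ with a subfield of $\prod_C K$, and the inclusion $\bigcup_M \prod_{C,M} K_i \subseteq \prod_C K$ follows. We should also check that these identifications are compatible with those of Observation \ref{Not so clear after all}, so that the union is taken inside a single ambient field. This compatibility holds because the maps $K_{i}\to K_{i+1}\to K$ commute, and that commutation can be arranged or verified from the uniform computability of $\{K_{i+1}/K_i\}$.

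For the reverse inclusion, take $[\alpha] \in \prod_C K$. Since $K = \bigcup_j K_j$, for each $i$ with $\alpha(i)\da$ there is some $j$ with $\alpha(i) \in \Psi_j(K_j)$. We define $\varphi(i)$ to be the least such $j$ found by a dovetailed search. We run through pairs $(j,b)$ with $b \in K_j$ and halt when $\Psi_j(b) = \alpha(i)$, recording $j$ together with the witness $\beta(i) := b$. Both $\varphi$ and $\beta$ are partial computable and defined wherever $\alpha$ is, hence on almost all of $C$. Then $M = [\varphi] \in \prod_C \Nb$ and $[\beta] \in \prod_C K_{\varphi(i)} = \prod_{C,M}K_i$. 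Under the embedding above, $[\beta]$ maps to $[i\mapsto \Psi_{\varphi(i)}(\beta(i))] = [\alpha]$. Hence $[\alpha] \in \bigcup_M \prod_{C,M}K_i$.

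The main subtlety is the search step, which is where the uniformity hypotheses enter. Membership in the image $\Psi_j(K_j)$ need not be decidable. However, we only need to semi-decide it: we search for a witness $b$ with $\Psi_j(b)=\alpha(i)$, and equality in $K$ is computable. So the dovetailed search halts exactly when a witness exists, which is always the case because $K = \bigcup K_j$. A second point to check is well-definedness independent of representatives. If $[\alpha]=[\alpha']$, the resulting $\varphi,\varphi'$ may differ, but both produce the same element of $\prod_C K$. Since we are proving only an equality of sets, this causes no problem.
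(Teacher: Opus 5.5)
Your proof is correct and is essentially the paper's argument: the reverse inclusion comes from computing, for each $i$, an index $\varphi(i)$ at which $\alpha(i)$ is first enumerated into some $K_j$. This is the first $j$ your dovetailed search finds, not necessarily the least $j$ with $\alpha(i)\in K_j$ (which need not be computable), but the first one found is all you need. Your forward inclusion, via the uniform embeddings $K_{\varphi(i)}\to K$ composed with a representative $\varphi$ of $M$, is more explicit than the paper's appeal to the monotonicity observation; and since each $\prod_{C,M}K_i$ is embedded directly into $\prod_C K$, the compatibility check you mention is not actually needed for the set equality.
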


\begin{proof}
    By Observation \ref{Not so clear after all}, each $\prod_{C,M} K_i \subseteq \prod_C K$.   

    Let $\alpha \in \prod_C{K}$.  We compute $M(i)$ by first computing $\alpha(i)$, and then enumerating all $K_i$ simultaneously, until $\alpha(i)$ is enumerated. We set $M(i)=j$, where $K_j$ is the field in which $\alpha(i)$ is enumerated  first.  Thus, we have $\alpha \in \prod_{C, M } K_i$.  
\end{proof}

We can also extend inverse systems of finite groups indexed over $\Nb$ to inverse systems of pseudofinite groups indexed by hyper-naturals in $\prod_C \Nb$.  Since topology and non-standard analysis famously struggle to work together, we will omit topology from the following lemma by giving every group the discrete topology.

\begin{Lemma} \label{New Lemma About Extending Inverse Systems}
    Let $\{ G_i \}_{i\in \omega}$ be a uniformly computable sequence of groups, and for $n< m \in \Nb$,  and let $f_{m,n}: G_{m} \to G_n$ be uniformly computable surjective group homomorphisms, such that for every $m,n,k \in \Nb$ with $m>n>k$ we have $f_{n,k}\circ f_{m,n}= f_{m,k}$.  Then, for every $M,N,K \in \prod_C \Nb $ with $M>N>K$, there are surjective group homomorphisms $f_{M,N} : \prod_{C,M} G_i \to \prod_{C,N}G_i$ such that $f_{N,K}\circ f_{M,N} = f_{M,K}$.
\end{Lemma}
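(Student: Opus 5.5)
Represent $M,N,K\in\prod_C\Nb$ by partial computable functions $M(i),N(i),K(i)$. Since $M>N$ in $\prod_C\Nb$ and the order relation is atomic, Theorem \ref{thm-LosProdParam} (or just the definition of relations in the cohesive power) gives $M(i)>N(i)$ for almost every $i$. We then define $f_{M,N}$ coordinatewise: for $[\sigma]\in\prod_{C,M}G_i=\prod_C G_{M(i)}$, put
\[
f_{M,N}([\sigma]) = [\tau],\qquad \tau(i)=f_{M(i),N(i)}(\sigma(i)),
\]
where $\tau(i)$ is defined exactly when $M(i)\da$, $N(i)\da$, $\sigma(i)\da$ and $M(i)>N(i)$. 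If $M(i)=N(i)$ we may interpret $f_{m,m}$ as the identity, though this is not needed on $C$. The uniform computability of the $f_{m,n}$ makes $\tau$ partial computable, and $\tau$ is defined on almost all of $C$. So $[\tau]\in\prod_C G_{N(i)}$.

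The next step is well-definedness. We must check that the result does not depend on the representative $\sigma$, and is compatible with the isomorphism of the earlier Lemma when $M$ or $N$ is replaced by a $\simeq_C$-equivalent function. Changing $\sigma$ on a set disjoint from almost all of $C$ changes $\tau$ only there. Replacing $M,N$ by $C$-equivalent $M',N'$ and transporting along the maps $[\alpha]\mapsto[\alpha^*]$ from that Lemma, the resulting representatives agree on $\{i: M(i)=M'(i),N(i)=N'(i)\}\supseteq^* C$.

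Homomorphism, surjectivity and compatibility are then checked pointwise and lifted. Since each $f_{M(i),N(i)}$ is a homomorphism, $\tau$ respects products and inverses for almost every $i$, so $f_{M,N}$ is a homomorphism. For surjectivity, take $[\rho]\in\prod_C G_{N(i)}$ and define $\sigma(i)$ as the first element of $G_{M(i)}$ found, by uniform search, with $f_{M(i),N(i)}(\sigma(i))=\rho(i)$. This search halts whenever $M(i)>N(i)$ and $\rho(i)\da$, because $f_{M(i),N(i)}$ is surjective, so it halts for almost every $i\in C$; then $f_{M,N}([\sigma])=[\rho]$. For compatibility, if $M>N>K$, then for almost every $i$ we have $M(i)>N(i)>K(i)$, and the hypothesis $f_{n,k}\circ f_{m,n}=f_{m,k}$ at $m=M(i)$, $n=N(i)$, $k=K(i)$ gives equality of representatives on almost all of $C$.

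The only delicate point is well-definedness. We must verify that the ``almost every $i$'' sets involved are Boolean combinations of c.e.\ sets, or simply cofinite-in-$C$ supersets, so that intersecting finitely many of them stays $\supseteq^* C$. All of them are domains or agreement sets of partial computable functions, hence c.e., so this causes no trouble. No topology is needed, since all groups carry the discrete topology.
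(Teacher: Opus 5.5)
Your proposal is correct and follows essentially the same route as the paper: define $f_{M,N}$ coordinatewise by $f_{M,N}(g)(i)=f_{M(i),N(i)}(g(i))$, check the homomorphism property pointwise, and get surjectivity from an effective search in $G_{M(i)}$ for a preimage of $g(i)$. You go slightly further than the paper by spelling out well-definedness and the compatibility $f_{N,K}\circ f_{M,N}=f_{M,K}$ (the paper leaves both implicit); one quibble is that a finite intersection of sets each $\supseteq^* C$ is automatically $\supseteq^* C$, so the c.e.\ character of those sets plays no role there, and cohesiveness of $C$ is really needed only to pass from $M>N$ in $\prod_C \Nb$ to $M(i)>N(i)$ for almost every $i$.
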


\begin{proof}
    For $g \in \prod_{C,M} G_i$, let $f_{M,N} (g)(i) = f_{M(i),N(i)}(g(i))$.  Clearly, this is a well-defined function $f_{M,N}: \prod_{C,M} G_i \to \prod_{C,N}G_i$.  Let $g,h \in \prod_{C,M} G_i$.  For almost every $i$, we have $f_{M,N}(gh)(i)=f_{M(i),N(i)}(g(i)h(i)) = f_{M(i),N(i)}(g(i))f_{M(i),N(i)}(h(i))= f_{M,N}(g)(i)f_{M,N}(h)(i)$. Thus, $f_{M,N}$ is a group homomorphism.  

    For surjectivity, let $g \in \prod_{C,N}G_i$.  We compute $h(i)$ by setting it equal to the first encountered element of $G_{M(i)}$ with $f_{M(i),N(i)}(h(i) )=g(i)$.  Such an $h(i)$ always exists whenever $M(i)$, $N(i)$ and $g(i)$ do, as all $f_{M(i),N(i)}$ are surjective, and the search is algorithmic since the $f$'s are uniformly computable.  
\end{proof}

\begin{Definition}\label{def-fully-computably}
    We will call a Galois extension $K/F$ \emph{fully computable} if 

    \begin{enumerate}
        \item $(K,F,\Psi)$ is a computable Galois extension.
        \item There exist uniformly computable finite Galois sub-extensions $K_i\subseteq K$ with $(K,K_i,\Psi_i)$ uniformly computable, $(K_{i+1},K_i,\Phi_i)$ uniformly computable and $K= \bigcup_{i\in \Nb} K_i$.
        \item The groups $G_i =Gal (K_i/F)$ are uniformly computable. 
        \item The quotient maps $f_{m,n}: G_{m} \to G_n$ are uniformly computable, where $n<m$.
        \item The group actions $G_i \curvearrowright K_i$ are uniformly computable.  
        \item $K_i \subsetneq K_{i+1} $.
    \end{enumerate}
\end{Definition}

The reader should note that the assumption that $K_i \subsetneq K_{i+1}$ is deducible  from the others.  We may check if $K_i = K_{i+1}$ by checking if $G_i = G_{i+1}$, in which case, we may skip $K_{i+1}$ in our list.

While this condition might seem too strong on first glance, such fields are actually abundant.  

As an example, take $F=\Qb$ and take $K = \Qb^{ab} = \Qb(\zeta_1,\zeta_2,\zeta_3,\dots  )$, where $\zeta_n$ is a primitive $n$-th root of unity.  Let $K_i = \Qb(\zeta_{i!})$.  Then $G_i = Gal(K_i/\Qb)= \left( \Zb /i! \Zb \right)^\times$ are uniformly computable.  Further, the quotient maps $G_{i+1} \to G_i$ sending $[n]$ to $[n] \text{ Mod } i!$ are uniformly computable.  The group actions $G_i \curvearrowright K_i$ are given by $[n] (\zeta_{i!} )= \zeta_{i!}^n$ and are uniformly computable.

Other examples of field extensions with such a decomposition include $\overline{\Qb}/\Qb$ and $\mathbb{F}_p(t)^{sep}/ \mathbb{F}_p(t)$.  More generally, if $F$ is any field with a splitting algorithm, and $K$ is a c.e.\ Galois subfield of $F^{sep}$, the separable closure of $F$, then $K/F$ admits a fully computable extension.

\begin{Theorem} \label{Thm-Gal-Extension-Commutes}
    Let $K/F$ be a fully computable Galois extension. Let $f_{M,N} : \prod_{C,M} G_i \to \prod_{C,N} G_i$ be the induced maps as in Lemma \ref{New Lemma About Extending Inverse Systems}. The induced maps $f_{M,N}$ commute with the group actions $\prod_{C,N}G_i \curvearrowright \prod_{C,N} K_i $ from Theorem \ref{Bad Description of the Kernel}.  

    i.e.

 % https://q.uiver.app/#q=WzAsOCxbNCwyLCJcXHByb2Rfe0MsIE59IEdfaSJdLFs0LDAsIlxccHJvZF97QyxNfSBHX2kiXSxbNiwyLCJHYWwgXFxsZWZ0KCBBbGcgXFxsZWZ0KCBcXHByb2Rfe0MsTn0gS19pIFxccmlnaHQpIC8gXFxsZWZ0KCBcXHByb2RfQyBGIFxccmlnaHQpIFxccmlnaHQpIl0sWzYsMCwiR2FsIFxcbGVmdCggQWxnXFxsZWZ0KCBcXHByb2Rfe0MsTX0gS19pIFxccmlnaHQpLyBcXGxlZnQoXFxwcm9kX0MgRiBcXHJpZ2h0KVxccmlnaHQpIl0sWzIsMCwiS2VyIChcXGV0YV97TX0pIl0sWzIsMiwiS2VyIChcXGV0YV97Tn0pIl0sWzAsMCwiMCJdLFswLDIsIjAiXSxbMywyXSxbMSwzLCJcXGV0YV9NIl0sWzAsMiwiXFxldGFfTiJdLFs0LDEsIiIsMCx7InN0eWxlIjp7InRhaWwiOnsibmFtZSI6Imhvb2siLCJzaWRlIjoidG9wIn19fV0sWzUsMCwiIiwwLHsic3R5bGUiOnsidGFpbCI6eyJuYW1lIjoiaG9vayIsInNpZGUiOiJ0b3AifX19XSxbNiw0XSxbNyw1XSxbMSwwLCJmX3tNLE59Il0sWzQsNV1d
\[\begin{tikzcd}
	0 && {Ker (\eta_{M})} && {\prod_{C,M} G_i} && {Gal \left( \text{Alg}\left( \prod_{C,M} K_i \right)/ \left(\prod_C F \right)\right)} \\
	\\
	0 && {Ker (\eta_{N})} && {\prod_{C, N} G_i} && {Gal \left( \text{Alg} \left( \prod_{C,N} K_i \right) / \left( \prod_C F \right) \right)}
	\arrow[from=1-1, to=1-3]
	\arrow[hook, from=1-3, to=1-5]
	\arrow[from=1-3, to=3-3]
	\arrow["{\eta_M}", from=1-5, to=1-7]
	\arrow["{f_{M,N}}", from=1-5, to=3-5]
	\arrow[from=1-7, to=3-7]
	\arrow[from=3-1, to=3-3]
	\arrow[hook, from=3-3, to=3-5]
	\arrow["{\eta_N}", from=3-5, to=3-7]
\end{tikzcd}\]
with $\eta_N$ as the induced map from the actions, the rows left exact, and the kernels as in theorem \ref{Bad Description of the Kernel}.

\end{Theorem}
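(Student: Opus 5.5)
We need to show that for $M>N$ in $\prod_C\Nb$ the diagram commutes. This requires a vertical right map, and we take it to be restriction: since $\prod_{C,N}K_i\subseteq\prod_{C,M}K_i$ by Observation \ref{Not so clear after all}, we have $\text{Alg}(\prod_{C,N}K_i)\subseteq\text{Alg}(\prod_{C,M}K_i)$. The left vertical map is the restriction of $f_{M,N}$ to $\text{Ker}(\eta_M)$.

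The plan has three steps.

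First, we prove the key compatibility: for $g\in\prod_{C,M}G_i$ and $\alpha\in\prod_{C,N}K_i$,
\[ f_{M,N}(g)\,\alpha = g\,\alpha . \]
Here the right side uses the action of $\prod_{C,M}G_i$ on $\prod_{C,M}K_i\supseteq\prod_{C,N}K_i$. We compute pointwise. For almost every $i$ we have $N(i)<M(i)$, $\alpha(i)\in K_{N(i)}\subseteq K_{M(i)}$ and $g(i)\in G_{M(i)}$. By the finite fundamental theorem of Galois theory, the quotient map $f_{M(i),N(i)}$ is restriction to $K_{N(i)}$. Hence $f_{M(i),N(i)}(g(i))\,\alpha(i)=g(i)\,\alpha(i)$ for almost every $i$. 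Both sides are partial computable functions defined almost everywhere on $C$ and agree there, so they are $\simeq_C$-equivalent. This is exactly the statement that $f_{M,N}$ commutes with the actions.

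Second, we show the right square commutes and the restriction map is well defined. Take $g\in\prod_{C,M}G_i$. Then $\eta_M(g)$ is an automorphism of $\text{Alg}(\prod_{C,M}K_i)$ fixing $\prod_C F$. By the Theorem preceding Theorem \ref{Bad Description of the Kernel}, applied to the sequence $\{K_{N(i)}/F\}$, the extension $\text{Alg}(\prod_{C,N}K_i)/\prod_C F$ is Galois, hence normal. So the restriction of $\eta_M(g)$ maps it into itself, since conjugates of its elements already lie in it. By the first step, this restriction equals $\eta_N(f_{M,N}(g))$. This gives commutativity of the right square and, as a byproduct, a concrete identification of the right vertical arrow.

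Third, we handle the rows and the left square. Left exactness of each row is immediate: $\text{Ker}(\eta_M)$ includes into $\prod_{C,M}G_i$, and the sequence $0\to\text{Ker}\to G\to$ is exact at the kernel by definition. Theorem \ref{Bad Description of the Kernel} applies to the sequence $K_{M(i)}$, after reindexing by the earlier Proposition on computable bijections if needed, and identifies the kernel. Commutativity of the right square then forces $f_{M,N}(\text{Ker}\,\eta_M)\subseteq\text{Ker}\,\eta_N$: if $\eta_M(g)=\text{id}$, then $\eta_N(f_{M,N}(g))$ is its restriction, which is also the identity. So the left vertical map is well defined and the left square commutes trivially.

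The main obstacle is the technical point that Theorem \ref{Bad Description of the Kernel} and the earlier Galois theorem were stated for sequences indexed by $\omega$ with total uniform presentations, whereas $i\mapsto K_{M(i)}$ is only defined on $\text{dom}(M)\supseteq^*C$. We resolve this with the partial-sequence generalization proved earlier (the Proposition with $W\supseteq^*C$). Full computability, specifically the uniform computability of $G_i$, the actions, and the $f_{m,n}$, guarantees the uniformity hypotheses for $\{K_{M(i)}\}$. We also use that the quotient maps $f_{m,n}$ in Definition \ref{def-fully-computably} are the restriction maps, which is what makes the pointwise identity in the first step hold.
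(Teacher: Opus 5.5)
Your proposal is correct and follows essentially the same route as the paper. The paper's proof is exactly your first step: it checks $f_{M,N}(\sigma)\alpha=\sigma\alpha$ for almost every $i$ using the finite fundamental theorem of Galois theory, and concludes that the diagram commutes. Your second and third steps make explicit what the paper leaves implicit: that restriction to $\text{Alg}(\prod_{C,N}K_i)$ is well defined by normality, that $\text{Ker}(\eta_M)$ maps into $\text{Ker}(\eta_N)$, and that the partial sequence $\{K_{M(i)}\}$ can be reindexed so the earlier theorems apply.
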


\begin{proof}

We extend our sequence of extensions $\{K_i/F\}_{i\in \omega}$ to $
\left\{ \left( \prod_{C, M} K_i \right)/ \left(\prod_C F \right)\right\}_{M\in \prod_C \mathbb{N}}$ via Lemma \ref{sequence-extension-lemma}.  Via Lemma \ref{New Lemma About Extending Inverse Systems}, we extend the inverse system of Galois groups $G_{i+1} \to G_i$ to a sequence $\prod_{C,M+1} G_i \to \prod_{C,M} G_i$.  

Assume now that $M,N \in \prod_C \mathbb{N}$ are such that $M > N$.  By Theorem \ref{Bad Description of the Kernel}, we have the horizontal maps $\prod_{C,M_i}G_i \rightarrow Gal(  \text{Alg} \left(\prod_{C,M_i} K_i \right) / \prod_{C} F )$.  To show now that the diagrams commute, it is sufficient to show that the maps $f_{M,N}:\prod_{C,M} G_i \to \prod_{C,N} G_i$ agree on the Galois action on $\prod_{C,N} K_i$.  That is, given $\sigma \in \prod_{C, M} G_i$ and $\alpha \in \prod_{C,N} K_i$, we have $\sigma \alpha = f_{M,N} (\sigma) \alpha$.  

    To show the desired equality, it is sufficient to show that the set $\{ i \mid \sigma(i) \alpha(i) = f_{M,N}(i) (\sigma (i)) \alpha(i) \} $ that is equal to the set $ \{ i \mid \sigma(i) \alpha(i) = f_{M(i),N(i)} (\sigma(i))\alpha(i) \}$ contains almost all $i$.  In the case when $\alpha(i) \in K_{N (i)}$ and $\sigma(i)\in  G_{M(i)}$, it follows by the fundamental theorem of Galois theory that $\sigma(i) \alpha(i) = f_{M(i),N(i)}(\sigma(i))\alpha(i)$.  Since $\alpha \in \prod_{C,N}K_{i}$ and $\sigma \in \prod_{C, M} G_i$, we have that $\{ i \mid \alpha(i)\in K_{N (i)} \}\cap \{ i \mid \sigma(i)\in  G_{M(i)} \}\supseteq^* C$.  Thus, $\{ i \mid \sigma(i) \alpha(i) = f_{M(i),N(i)}(\sigma(i))\alpha(i) \} \supseteq \{ i \mid \alpha(i) \in K_{N (i)} \wedge \sigma(i)\in  G_{M(i)} \}=  \{ i \mid \alpha(i)\in K_{N (i)} \}\cap \{ i \mid \sigma(i)\in  G_{M(i)} \} \supseteq^* C$.

\end{proof}

Recall that, by Lemma \ref{Infinite Galois Theory infinite proj lim}

   $$ \varprojlim_{M \in \prod_C \Nb} Gal \left(\text{Alg} \left( \prod_{C,M} K_i \right) / \prod_C F\right) \cong Gal \left(\prod_C K / \prod_C F \right)$$ as topological groups.

\begin{Theorem} \label{Cohesive Product of Actions Commutes with Quotients}
    Let $K/F$ be a fully computable infinite Galois extension with $\prod_C K /\prod_C F$ Galois.  The induced map $\varprojlim _ { N\in \prod_C \Nb} (\prod_{C,N} G_i ) \to \varprojlim  _ { N\in \prod_C \Nb}Gal (\text{Alg}(\prod_{C,N} K_i ) /\prod_C F )$ has dense image.  Moreover, the kernel is given by $\varprojlim _ { N\in \prod_C \Nb} ker(\eta_N)$.
\end{Theorem}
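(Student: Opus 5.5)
The plan is to assemble the levelwise maps $\eta_N$ into one map of inverse systems and then check density and the kernel on each coordinate. By Theorem \ref{Thm-Gal-Extension-Commutes}, the maps $\eta_N : \prod_{C,N} G_i \to Gal(\text{Alg}(\prod_{C,N} K_i)/\prod_C F)$ commute with the transition maps. On one side these are the maps $f_{M,N}$ of Lemma \ref{New Lemma About Extending Inverse Systems}. On the other side they are the restrictions $Gal(\text{Alg}(\prod_{C,M}K_i)/\prod_C F) \to Gal(\text{Alg}(\prod_{C,N}K_i)/\prod_C F)$ coming from Galois theory. The inclusion $\text{Alg}(\prod_{C,N}K_i) \subseteq \text{Alg}(\prod_{C,M}K_i)$ holds by Observation \ref{Not so clear after all}, and each $\text{Alg}(\prod_{C,N}K_i)/\prod_C F$ is Galois by the earlier theorem on algebraic parts. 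So $\{\eta_N\}$ is a morphism of inverse systems indexed by $\prod_C \Nb$. It therefore induces a map $\eta = \varprojlim \eta_N$, given by $(\sigma_N)_N \mapsto (\eta_N(\sigma_N))_N$, and compatibility of the $\eta_N(\sigma_N)$ follows from the commuting squares.

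\emph{Kernel.} A thread $(\sigma_N)_N$ maps to the identity exactly when $\eta_N(\sigma_N)$ is trivial for every $N$, that is, when $\sigma_N \in ker(\eta_N)$ for all $N$. By the left square of Theorem \ref{Thm-Gal-Extension-Commutes}, $f_{M,N}$ carries $ker(\eta_M)$ into $ker(\eta_N)$, so the kernels form a subsystem. Hence $ker(\eta) = \varprojlim ker(\eta_N)$; this is just left exactness of $\varprojlim$. I would add one remark. The explicit kernel description of Theorem \ref{Bad Description of the Kernel} is stable under $f_{M,N}$, because a surjection carries an index-$n$ subgroup's preimage structure appropriately. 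This is not needed for the equality itself.

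\emph{Density.} The topology on the target is the projective-limit topology. A basic open set is determined by a single level $N$ and a single $\tau \in Gal(\text{Alg}(\prod_{C,N}K_i)/\prod_C F)$, possibly refined further by restriction to a finite Galois subextension $L \subseteq \text{Alg}(\prod_{C,N}K_i)$, since each level is itself profinite. So I must show that, given $N$, a finite Galois $L/\prod_C F$ inside level $N$, and $\tau$, some thread $(\sigma_M)$ has $\eta_N(\sigma_N)\upharpoonright L = \tau\upharpoonright L$. By density of $\eta_N$ (Theorem \ref{Bad Description of the Kernel}, applied to the sequence $K_{N(i)}$), there is $\sigma_N \in \prod_{C,N}G_i$ with $\eta_N(\sigma_N)\upharpoonright L = \tau \upharpoonright L$. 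It then remains to extend $\sigma_N$ to a full thread $(\sigma_M)_{M\in\prod_C\Nb}$.

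\emph{Main obstacle: lifting a level element to a thread.} The index set $\prod_C \Nb$ is uncountable-like and non-well-ordered, so I cannot lift step by step. The natural idea is to use the standard system itself. The target thread must lie in $\varprojlim_{M}\prod_{C,M}G_i$, and by Lemma \ref{sequence-extension-lemma} I would try to build it from a single computable choice. Let $g(i) = \sigma_N(i) \in G_{N(i)}$, and pick uniformly a compatible sequence of lifts, that is, an element $\hat g(i) \in \varprojlim_j G_j = Gal(K/F)$ restricting to $g(i)$. Such a lift is not computable in general. So instead I would use the finite data: define $h_i(j)$ for $j \ge N(i)$ by searching $G_j$ for compatible lifts level by level, always choosing the first lift of the previous one. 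This gives a uniformly computable function $(i,j)\mapsto h_i(j)$ with $f_{j,j'}(h_i(j)) = h_i(j')$, and for $j < N(i)$ I set $h_i(j) = f_{N(i),j}(g(i))$. Then set $\sigma_M(i) = h_i(M(i))$. This is partial computable and defined on $C$ almost everywhere, and compatibility $f_{M,M'}(\sigma_M)=\sigma_{M'}$ holds pointwise for almost every $i$. Also $\sigma_N = g$. This handles density for basic opens of the level-$N$ form, and I expect checking that these exhaust a base, via Lemma \ref{Infinite Galois Theory infinite proj lim}, to be the remaining delicate point.
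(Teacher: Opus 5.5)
Your proof is correct and follows the paper's skeleton. The kernel argument is identical: a thread maps to the identity iff every coordinate lies in $\ker(\eta_N)$. Density comes, as in the paper, from taking a single-level element $\sigma_N \in \prod_{C,N}G_i$ and extending it to a thread. The differences are in how density is certified and in how the extension is obtained.

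\textbf{How density is certified.} The paper uses the fixed-field criterion. It views the image inside $\varprojlim_N Gal(\text{Alg}(\prod_{C,N}K_i)/\prod_C F) \cong Gal(\text{Alg}(\prod_C K)/\prod_C F)$. For $\alpha \notin \prod_C F$ it picks a level $N$ containing $\alpha$ and an element $g_N$ moving $\alpha$, supplied by the density part of the theorem describing the kernel. It then says only ``extend $g_N$ to $g$ arbitrarily.''

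You instead work directly with basic open sets of the projective-limit topology and use levelwise density of $\eta_N$ as a black box. This avoids identifying the limit with $Gal(\text{Alg}(\prod_C K)/\prod_C F)$ and avoids needing $\text{Alg}(\prod_C K)=\bigcup_N \text{Alg}(\prod_{C,N}K_i)$. The point you flag as delicate is routine. Since $\prod_C\Nb$ is linearly ordered and the restriction maps are continuous, a finite intersection $\bigcap_k \pi_{N_k}^{-1}(U_k)$ equals $\pi_N^{-1}(V)$ with $N=\max_k N_k$ and $V$ open at level $N$. So your level-$N$ sets do form a base.

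\textbf{The lifting step.} This is where your write-up is more complete than the paper's. You choose lifts greedily and uniformly computably, building $h_i(j)$ and setting $\sigma_M(i)=h_i(M(i))$. This gives a genuine element of $\varprojlim_M \prod_{C,M}G_i$ with $N$-coordinate $\sigma_N$. Compatibility checks pointwise in the three cases given by the positions of $M(i)$ and $M'(i)$ relative to $N(i)$. The paper leaves this step unjustified. An alternative justification is that $\prod_C\Nb$ is countable with no maximum, so it has a cofinal $\omega$-sequence along which one can lift step by step. Your construction is uniform and effective, which is nicer.

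One small correction. Your greedy sequence $(h_i(j))_j$ is itself a computable thread in $\varprojlim_j G_j$. So your aside that such a lift ``is not computable in general'' contradicts what you then build and should be dropped. Likewise $\prod_C\Nb$ is countable, not ``uncountable-like.''
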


The existence of the inverse system of left exact sequences is enough to prove the theorem, as projective limits are left exact.

% https://q.uiver.app/#q=WzAsOCxbNCwyLCJcXHByb2Rfe0MsIE59IEdfaSJdLFs0LDAsIlxccHJvZF97QyxNfSBHX2kiXSxbNiwyLCJHYWwgXFxsZWZ0KCBBbGcgXFxsZWZ0KCBcXHByb2Rfe0MsTn0gS19pIFxccmlnaHQpIC8gXFxsZWZ0KCBcXHByb2RfQyBGIFxccmlnaHQpIFxccmlnaHQpIl0sWzYsMCwiR2FsIFxcbGVmdCggQWxnXFxsZWZ0KCBcXHByb2Rfe0MsTX0gS19pIFxccmlnaHQpLyBcXGxlZnQoXFxwcm9kX0MgRiBcXHJpZ2h0KVxccmlnaHQpIl0sWzIsMCwiS2VyIChcXGV0YV97TX0pIl0sWzIsMiwiS2VyIChcXGV0YV97Tn0pIl0sWzAsMCwiMCJdLFswLDIsIjAiXSxbMywyXSxbMSwzLCJcXGV0YV9NIl0sWzAsMiwiXFxldGFfTiJdLFs0LDEsIiIsMCx7InN0eWxlIjp7InRhaWwiOnsibmFtZSI6Imhvb2siLCJzaWRlIjoidG9wIn19fV0sWzUsMCwiIiwwLHsic3R5bGUiOnsidGFpbCI6eyJuYW1lIjoiaG9vayIsInNpZGUiOiJ0b3AifX19XSxbNiw0XSxbNyw1XSxbMSwwLCJmX3tNLE59Il0sWzQsNV1d
\[\begin{tikzcd}
	0 && {Ker (\eta_{M})} && {\prod_{C,M} G_i} && {Gal \left( \text{Alg}\left( \prod_{C,M} K_i \right)/ \left(\prod_C F \right)\right)} \\
	\\
	0 && {Ker (\eta_{N})} && {\prod_{C, N} G_i} && {Gal \left( \text{Alg} \left( \prod_{C,N} K_i \right) / \left( \prod_C F \right) \right)}
	\arrow[from=1-1, to=1-3]
	\arrow[hook, from=1-3, to=1-5]
	\arrow[from=1-3, to=3-3]
	\arrow["{\eta_M}", from=1-5, to=1-7]
	\arrow["{f_{M,N}}", from=1-5, to=3-5]
	\arrow[from=1-7, to=3-7]
	\arrow[from=3-1, to=3-3]
	\arrow[hook, from=3-3, to=3-5]
	\arrow["{\eta_N}", from=3-5, to=3-7]
\end{tikzcd}\]

  For those unfamiliar with projective limits, or uncomfortable with the category theoretic language, we present a direct proof of the result:

\begin{proof}

The induced map $$\Theta : \varprojlim_{N\in \prod_C \Nb} \prod_{C,N} G_i\to \varprojlim_{N\in \prod_C \Nb } Gal(\text{Alg}(\prod_{C,N} K_i)/\prod_CF)$$ sends $\{ g_N\}_{N\in \prod_C \Nb} $ to $\{ \eta_Ng_N\}_{N\in \prod_C \Nb}$, where $$\eta_N: \prod_{C,N} G_i \to Gal(\text{Alg}(\prod_{C,N} K_i )/\prod_C F)$$ is as in Theorem \ref{Bad Description of the Kernel}.  Theorem \ref{Cohesive Product of Actions Commutes with Quotients} guarantees that this map is well-defined. That is, that $\{ \eta_Ng_N\}_{N\in \prod_C \Nb}$ respects the compatibility criteria for membership in $\varprojlim_{N\in \prod_C \Nb } Gal(\text{Alg}(\prod_{C,N} K_i)/\prod_CF)$.

To show that the image is dense, it is sufficient to show that the image has fixed field $\prod_C F$ by Lemma \ref{same fixed field iff same closure}.  Let $\alpha \in \text{Alg} (\prod_C K) - \prod_C F$.  Since $\prod_C K = \bigcup_{N \in \prod_C \Nb } \left (\prod_{C,N} K_i \right)$, we have that $\alpha \in \text{Alg}(\prod_{C,N} K_i)$ for some $N$.  By Theorem \ref{Bad Description of the Kernel}, there is some $g_N \in \prod_{C,N} G_i$ that acts nontrivially on $\alpha$.  Extend $g_N$ to $g \in \varprojlim_{N\in \prod_C \Nb}\prod_{C,N} G_i  $ arbitrarily.  By definition, $g\alpha = \Theta(g)\alpha= \eta_N(g)\alpha=g_N \alpha \neq \alpha$.  Since $\alpha$ was arbitrary, the image is dense.  

To show that $ker \Theta = \varprojlim_{N\in \prod_C \Nb} ker (\eta_N)$, simply notice that $g\in \varprojlim_{N\in \prod_C \Nb}\prod_{C,N} G_i $ acts trivially on $\text{Alg}(\prod_C K)$ if and only if $g_N$ acts trivially on $\text{Alg}(\prod_{C,N} K_i)$ for every $N \in \prod_C \Nb$. 
    
\end{proof}

As with Theorem \ref{Bad Description of the Kernel}, this result will be re-characterized in a more legible way in section \ref{hyp}.

\section{Hyper-Automorphism Groups} \label{hyp}

There is a great deal of hidden extra structure in a cohesive power.  For example, consider the sequence $\{\sqrt[i]{2}\}_{i\in \omega}$.  When the polynomials $\{ x^i -2\}_{i\in \omega}$ are applied to the sequence term by term, the sequence is identically $0$.  The sequence of numbers is naturally an element of $\prod_C \overline{\Qb}$, and the sequence of polynomials is naturally an element of $\prod_C \Zb[x]$.  This suggests an ``evaluation-like map'' $\prod_C \Zb [x] \curvearrowright \prod_C \overline{\Qb} $.  

Further, if we consider the sequences $\{ \sqrt[2^i]{2} \}_{i\in \omega}$ and $\{ \sqrt[3^i]{2}\}_{i\in \omega}$ acted upon by the sequence of polynomials $\{ x^{2^i} - y^{3^i} \}_{i \in \omega}$, the resulting sequence is identically $0$.  As before, this suggests an ``evaluation-like map'' $\prod_C \Zb[x,y] \times \left( \prod_C \overline\Qb  \right)^2 \to \prod_C \overline \Qb$.

 Similarly, there is an action $\Zb [x_1,x_2,\dots] \curvearrowright \overline\Qb $.  This suggests an action from $\prod_C \Zb[x_1,x_2,\dots  ]$ onto $\prod_C \overline{\Qb}$. The action $ \Zb [x_0,x_1,x_2,\dots] \curvearrowright \overline\Qb$ requires us to know how many variables there are in a given polynomial, and to input the correct number of arguments.  However, an element of $\prod_C \Zb[x_0,x_1,\dots]$ can have an infinite number of variables.  An example of this is $p=\{ x_0x_1\cdots x_i\}_{i\in \omega} $, as $\prod_C \Zb[x_0,x_1,x_i,\dots] \models x_i \mid p$ for every $i$. Such an action would need to take as input some infinite collection of elements from $\prod_C \overline{\Qb}$.  However, not just any infinite collection will suffice.  We need a notion of a non-standard tuple, but a non-standard tuple requires some notion of non-standard arity, which requires the development of suitable machinery.

\begin{Definition}
    Let $\mathcal{A}$ be a computable structure.  Let $\mathcal{T}_\mc{A}$ denote the tree of tuples of elements of $A$, together with the extension order and concatenation operation.  That is, $\mathcal{T}_\mathcal{A} = ( \mathcal{A}^{< \omega}, \preceq , ^ \wedge) $.
\end{Definition}

\begin{Proposition}\label{Prop53}

    Let $\{\mathcal{A}_i\}_{i\in \omega}$ be a uniformly computable sequence of structures.  The following maps exist and have the following properties.

    \begin{enumerate}
        \item A non-standard arity map, $\text{arity} : \prod_C\mathcal{T}_{\mathcal{A}_i} \to \prod_C \Nb$.

         \begin{enumerate}

             \item For $\vec{a}, \vec{b} \in \prod_C \mathcal{T}_{\mathcal{A}_i}$, $\text{arity} (\vec{a} ^\wedge \vec{b} ) = \text{arity}(\vec{a}) + \text{arity}(\vec{b})$.

              \item For $\vec{a},\vec{b} \in \prod_C \mc{T}_{\mc{A}_i}$, if $ \vec{a} \preceq \vec{b}$ then $arity(\vec{a}) \leq arity (\vec{b})$.
         \end{enumerate}

        \item A partial map that ``retrieves entries,'' $\text{entry}:\prod_C \mathcal{T}_{\mathcal{A}_i} \times \prod_C \Nb \to \prod_C \mathcal{A}_i$.

        \begin{enumerate}        
            \item $\text{entry} (\vec{a}, I)$ is defined if and only if $I < \text{arity}(\vec{a})$.

            \item If $I < \text{arity}(\vec{a}),$ and $\vec{a} \preceq \vec{b}$, then $\text{entry}(\vec{a},I ) = \text{entry}(\vec{b},I)$.

            \item If $I < \text{arity}(\vec{b})$, then $\text{entry}( \vec{b},I) = \text{entry}(\vec{a}^\wedge \vec{b}, \text{arity}(\vec{a})+I )$.       
        \end{enumerate}

        \item For $\vec{a},\vec{b} \in \prod_C \mathcal{T}_{\mathcal{A}_i}$, $\vec{a}=\vec{b}$ if and only if $artiy(\vec{a})=arity(\vec{b})$ and for every $ I < arity(\vec{a})$,  we have $ entry(\vec{a},I)=entry(\vec{b},I)$.
        
    \end{enumerate}
    
\end{Proposition}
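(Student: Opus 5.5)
The plan is to define every map coordinatewise, using the standard length and projection functions on finite sequences, and then obtain each property from the Fundamental Theorem of Cohesive Products (Theorem \ref{thm-LosProdParam}). I first fix a computable coding of $\mathcal{T}_{\mathcal{A}_i}$ uniformly in $i$, so that the length function $\ell_i:\mathcal{T}_{\mathcal{A}_i}\to\omega$, the projection $\pi_i(\sigma,k)=\sigma(k)$ for $k<\ell_i(\sigma)$, concatenation, and the relation $\preceq$ are all uniformly computable. This makes $\{\mathcal{T}_{\mathcal{A}_i}\}_{i\in\omega}$ a uniformly computable sequence of structures. For $\vec a=[\alpha]$ I set $\text{arity}(\vec a)=[i\mapsto \ell_i(\alpha(i))]$. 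For $I=[\iota]$ with $I<\text{arity}(\vec a)$ I set $\text{entry}(\vec a,I)=[\psi]$, where $\psi(i)=\pi_i(\alpha(i),\iota(i))$ whenever $\iota(i)<\ell_i(\alpha(i))$ and $\psi(i)\ua$ otherwise. Well-definedness under $\simeq_C$ is immediate, because changing representatives on a finite part of $C$ changes the output only there.

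Each property is a quantifier-free fact, or a $\Pi^0_1$ fact with a finite bounded part, that holds in every coordinate.

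(1a) follows because $\ell_i(\sigma^\wedge\tau)=\ell_i(\sigma)+\ell_i(\tau)$ for every $i$.

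(1b) follows because $\preceq$ between $\vec a$ and $\vec b$ holds for almost every $i$, and $\sigma\preceq\tau$ implies $\ell_i(\sigma)\le\ell_i(\tau)$.

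(2a) uses the fact that the order on $\prod_C\Nb$ is computed coordinatewise. So $I<\text{arity}(\vec a)$ holds iff $\{i\mid \iota(i)<\ell_i(\alpha(i))\}\supseteq^* C$, which is exactly the condition for $\psi$ to be defined on almost all of $C$. If the condition fails, cohesiveness (this set is c.e.) puts almost all of $C$ outside it, so $\psi$ is undefined on almost all of $C$ and the entry is undefined.

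(2b) follows because $\sigma\preceq\tau$ and $k<\ell(\sigma)$ imply $\sigma(k)=\tau(k)$.

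(2c) follows because $(\sigma^\wedge\tau)(\ell(\sigma)+k)=\tau(k)$.

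In each case the set of $i$ where the identity holds contains the intersection of finitely many sets that each $\supseteq^* C$, so it too $\supseteq^* C$.

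For (3), the forward direction is trivial. The converse is where I expect the only real work, since the hypothesis quantifies over all nonstandard indices $I$ and does not directly give a single c.e. set. I would argue by contrapositive. Suppose $\vec a\ne\vec b$ but the arities are equal, so $\ell_i(\alpha(i))=\ell_i(\beta(i))$ for almost every $i\in C$. Then $\alpha(i)\neq\beta(i)$ for almost every $i\in C$, by cohesiveness applied to the c.e. set where they agree. Define a partial computable $\iota(i)$ to be the least $k<\ell_i(\alpha(i))$ with $\pi_i(\alpha(i),k)\neq\pi_i(\beta(i),k)$. This search is a bounded computable search, and it converges for almost all $i\in C$, because two sequences of equal length that differ must differ at some position. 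Then $I=[\iota]<\text{arity}(\vec a)$ and $\text{entry}(\vec a,I)\ne\text{entry}(\vec b,I)$ coordinatewise. By Theorem \ref{thm-LosProdParam} the two entries are distinct in $\prod_C\mathcal{A}_i$, which gives the contradiction. The key point is that the nonstandard index witnessing the difference is produced uniformly by a computable search, which is exactly the kind of element $\prod_C\Nb$ contains.
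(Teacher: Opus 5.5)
Your proposal is correct and follows essentially the same route as the paper. Both define arity and entry coordinatewise, read each property off the fundamental theorem of cohesive products, and for (3) take the witness $I(i)$ to be the least position where $\vec{a}(i)$ and $\vec{b}(i)$ differ. The only minor differences are two. You prove (1b) directly from the coordinatewise relation $\preceq$, where the paper passes through a $\vec{c}$ with $\vec{a}^\wedge \vec{c} = \vec{b}$ and (1a). You also spell out the cohesiveness step showing that $\text{entry}(\vec{a},I)$ is undefined when $I \not< \text{arity}(\vec{a})$, which the paper leaves implicit.
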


\begin{proof}

\hspace{1mm}

    \begin{enumerate}
        \item First, we define the map.  For $\vec{a}\in \prod_C \mathcal{T}_{\mathcal{A}_i}$, $(\text{arity}(\vec{a}))(i)= |\vec{a}(i)|$.  This map is well-defined, as the arity of a tuple $ b \in \mathcal{T}_{\mathcal{A}_i}$ can be found computably by counting the elements in the list, and if two sequences have  $\vec{a}(i)=\vec{b}(i)$ for almost every $i$, then $|\vec{a}(i)|=|\vec{b}(i)|$ for almost every $i$.

        \begin{enumerate}
            \item Let $\vec{a}, \vec{b} \in \prod_C \mathcal{T}_{\mathcal{A}_i}$.  Then
            
            $\prod_C \Nb \models \text{arity} ( \vec{a}^\wedge \vec{b} ) = \text{arity}(\vec{a})+ \text{arity}(\vec{b})$
            
            $\iff \{ i \mid \text{arity}( \vec{a}^\wedge \vec{b}) (i) = \text{arity}(\vec{a})(i) + \text{arity}(\vec{b})(i) \} \supseteq^* C $
            
            $\iff \{ i \mid |\vec{a}(i)^\wedge \vec{b}(i)|=|\vec{a}(i)|+|\vec{b}(i)| \} \supseteq^* C $
            
            $\iff dom(\vec{a}) \cap dom (\vec{b}) \supseteq^* C$

            $\iff \vec{a} , \vec{b} \in \prod_C T_{\mathcal{A}_i}$

            \item If $\vec{a} \preceq \vec{b}$, then there is some $\vec{c}$ with $\vec{a} ^\wedge \vec{c} = \vec{b}$.  Thus, $arity(\vec{a}) + arity(\vec{c}) = arity (\vec{b})$.  Therefore $arity(\vec{a}) \leq arity (\vec{ b} )$.
            
        \end{enumerate}

        \item Let $(\text{entry}( \vec{a}, I ) )(i)=((\vec{a})(i))_{I(i)} $.  That is, $\text{entry}(\vec{a},I)(i)$ is the $I(i)^{\text{th}}$ entry of $\vec{a}(i) \in \mathcal{T}_{\mathcal{A}_i}$.  Because entry retrieval is computable, the map exists and is well-defined.

        \begin{enumerate}
            \item Now let $I \in \prod_C \Nb$.  We have $I < arity(\vec{a})$.
            
            $\iff \{ i \mid I(i) < |\vec{a}(i) | \}\supseteq^* C $
           
            $\iff \{ i \mid (\vec{a}(i))_{I(i)} \text{ exists} \} \supseteq^* C$
           
            $ \iff  \{ i \mid entry(\vec{a},I) (i) \downarrow \} \supseteq^* C$
           
            $\iff entry(\vec{a},I) \in \prod_C \mathcal{A}_i$ exists.

            \item Let $I < arity(\vec{a})$, and let $\vec{a} \preceq \vec{b}$. 

            $ entry(\vec{a}, I ) = entry(\vec{b},I)$

            $\iff \{ i \mid entry(\vec{a},I) (i) = entry (\vec{b},I)(i) \} = \{ i \mid \vec{a}(i)_{I(i)} = \vec{b}(i)_{I(i)}\}\supseteq^* C$

           Clearly $\{ i \mid \vec{a}(i)_{I(i)} = \vec{b}(i)_{I(i)}\} \supseteq \{ i \mid I(i) < |\vec{a}(i)|\} \cap \{ i \mid \vec{a}(i) \preceq \vec{b(i)} \}$.

            The latter set is large on $C$ by the fundamental theorem of cohesive products, as $\prod_C \Nb \models I < arity( \vec{a})$, and $\prod_C\mathcal{T}_{\mathcal{A}_i} \models\vec{a} \preceq \vec{b}$.

            \item Similar to (b).
        \end{enumerate}

        \item ($\Rightarrow$)  Trivial.

        ($\Leftarrow$) Let $\vec{a}, \vec{b} \in \prod_C \mathcal{T}_{\mathcal{A}_i}$ be distinct.  If $arity(\vec{a})\not=arity(\vec{b})$, then $\vec{a}\not=\vec{b}$, as $arity$ is a function. Therefore, assume further that $\text{arity}(\vec{a}) = \text{arity}( \vec{b})$.  Since $\vec{a} \not= \vec{b}$, we have $\{ i \mid \vec{a}(i) \not=\vec{b(i)}\}\supseteq^* C$.  Let $I(i) = $ be the least $x$ such that $ (\vec{a}(i))_x \not = (\vec{b}(i))_x$.  $I(i) \downarrow$ if and only if $\vec{a}(i) \not= \vec{b(i)}$.  Thus, $\text{dom}(I) \supseteq^* C$.  Therefore, $I \in \prod_C \Nb$.  By construction, $entry(\vec{a},I) \not= entry(\vec{b},I)$.
        
    \end{enumerate}
\end{proof}

The proposition illustrates that the elements of $\prod_C \mathcal{T}_{\mathcal{A}_i}$ are simply lists of elements of $\prod_C \mathcal{A}_i$, indexed over initial segments of $\prod_C \Nb$, and the internal evaluation and arity maps are as they should be.  Thus, we will use the more legible notation $|\vec{a}|$ and $(\vec{a})_I$ to denote $arity(\vec{a})$ and $entry(\vec{a},I)$.  The context will make clear in which domain the tuples reside.  We now have a collection of ``hyper-finite'' tuples, internal to the logic of a cohesive power.  This internal class of \emph{hyper-tuples} allows to take hyper-finite sums and products, among other things. 

\begin{Proposition}
    Let $\{K_i\}_{i\in \omega}$ be a uniformly computable sequence of fields.  The following maps exist and have the following properties. 

    \begin{enumerate}
        \item A hyper-finite ``sum'' function $\Sigma : \prod_C\mathcal{T}_{K_i} \to \prod_C K_i$, where

        \begin{enumerate}
            \item For $\vec{a} \in \prod_C \mathcal{T}_{K_i}$, with $| \vec{a}| \in \Nb$ a standard number, $\Sigma \vec{a} = \sum_{j < |\vec{a}|} (\vec{a})_j$;

            \item For $\vec{a} , \vec{b} \in \prod_C \mathcal{T}_{K_i}$, $\Sigma (\vec{a} ^\wedge \vec{b}) = \Sigma \vec{a} + \Sigma \vec{b}$.
        \end{enumerate}

        \item A hyper-finite ``product'' function $\Pi : \prod_C \mathcal{T}_{K_i} \to \prod_C K_i$, where

         \begin{enumerate}
            \item For $\vec{a} \in \prod_C \mathcal{T}_{K_i}$, with $|\vec{a}| \in \Nb$ a standard number, $\Pi \vec{a} = \prod_{i < |\vec{a}|} (\vec{a})_i$;

            \item For $\vec{a} , \vec{b} \in \prod_C \mathcal{T}_{K_i}$, $\Pi (\vec{a} ^\wedge \vec{b}) = (\Pi \vec{a} )( \Pi \vec{b})$;

            \item For $\vec{a} \in \prod_C \mathcal{T}_{K_i}$, $\Pi \vec{a} = 0 $ if and only if there is some $I \in \prod_C \Nb$ with $\prod_C K_i \models (\vec{a} )_I = 0$.
        \end{enumerate}

    \end{enumerate}
    
\end{Proposition}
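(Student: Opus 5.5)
The plan is to define both maps coordinatewise, as was done for arity and entry in Proposition \ref{Prop53}, and then verify each property by reducing it to a statement that holds for almost every $i$.

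For $\vec{a} \in \prod_C \mathcal{T}_{K_i}$, set $(\Sigma\vec{a})(i) = \sum_{j<|\vec{a}(i)|} (\vec{a}(i))_j$ and $(\Pi\vec{a})(i) = \prod_{j<|\vec{a}(i)|}(\vec{a}(i))_j$, with the sums and products computed in $K_i$. The empty sum is $0$ and the empty product is $1$. These are partial computable functions, because field operations are uniformly computable and a finite tuple can be read off computably. Their domains contain $\text{dom}(\vec{a}) \supseteq^* C$. If $\vec{a}\simeq_C\vec{b}$, then the values agree wherever $\vec{a}(i)=\vec{b}(i)$, so both maps are well defined on equivalence classes.

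Next come the concatenation laws (b) for $\Sigma$ and $\Pi$. For every $i$ in $\text{dom}(\vec{a})\cap\text{dom}(\vec{b})$ we have $\Sigma(\vec{a}(i)^\wedge\vec{b}(i)) = \Sigma\vec{a}(i)+\Sigma\vec{b}(i)$ in $K_i$, and likewise for products. This set contains almost all of $C$, which gives the identities in the cohesive product.

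For (a), suppose $|\vec{a}| = n$ is standard. Then $|\vec{a}(i)| = n$ for almost every $i$, since equality of classes in $\prod_C\Nb$ means equality on almost all of $C$. Proposition \ref{Prop53} gives $(\vec{a})_j(i) = (\vec{a}(i))_j$ for almost every $i$, for each of the finitely many $j<n$, and intersecting finitely many cofinite-on-$C$ sets keeps the intersection cofinite on $C$. On that intersection $(\Sigma\vec{a})(i) = \sum_{j<n}(\vec{a})_j(i)$. This is exactly the representative of the standard finite sum $\sum_{j<n}(\vec{a})_j$ computed with the field operations of $\prod_C K_i$, and the product case is identical.

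The only step that needs an idea is (c). For ($\Leftarrow$), if $(\vec{a})_I = 0$, then for almost every $i$ we have $I(i)<|\vec{a}(i)|$ and $(\vec{a}(i))_{I(i)} = 0$, so $(\Pi\vec{a})(i) = 0$ because each $K_i$ is a field. For ($\Rightarrow$), suppose $\Pi\vec{a} = 0$, so $(\Pi\vec{a})(i)=0$ for almost every $i$. Since $K_i$ has no zero divisors, some entry of $\vec{a}(i)$ vanishes for each such $i$. Define $I(i)$ to be the least $x<|\vec{a}(i)|$ with $(\vec{a}(i))_x = 0$, searching only after $\vec{a}(i)$ converges. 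This $I$ is partial computable, and it converges exactly where some entry is $0$, a set containing almost all of $C$, so $I\in\prod_C\Nb$. By construction $I<|\vec{a}|$ and $(\vec{a})_I = 0$ in $\prod_C K_i$. I expect this search, producing a computable witness index, to be the main point, since it is what internalizes the existential quantifier. It mirrors the construction in part (3) of Proposition \ref{Prop53}.
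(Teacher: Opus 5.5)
Your proposal is correct and follows essentially the same route as the paper. Both define $\Sigma$ and $\Pi$ coordinatewise, verify each identity on a set that is cofinite on $C$, and for the forward direction of (2c) take $I(i)$ to be the least index of a vanishing entry of $\vec{a}(i)$, using that $K_i$ is a field. You also write out the easy converse of (2c) and the well-definedness check, both of which the paper leaves implicit.
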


\begin{proof}

\hspace{1mm}

    \begin{enumerate}
        \item $(\Sigma \vec{a} )(i) = \sum_{j < | \vec{a}(i)| } (\vec{a})_j$.  The map is well defined as addition in the $K_i$'s is uniformly computable. 

        \begin{enumerate}
            \item Let $\vec{a} \in \prod_C \mathcal{T}_{K_i}$, with $|\vec{a}| = n \in \Nb$.  Thus, $\{ i \mid |\vec{a}(i)| = n \} \supseteq^*C$.

            By the fundamental theorem of cohesive products, $ \prod_C K_i \models \Sigma \vec{a} = (\vec{a})_0+\dots  + (\vec{a})_{n-1}$ if and only if $\{ i \mid (\Sigma\vec{a})(i)= (\vec{a})_0(i)+\dots  +(\vec{a})_{n-1}(i)\} \supseteq^* C$.

            The latter set is equal to $\{ i \mid \sum_{j < |\vec{a} (i)| } (\vec{a}(i))_j = \vec{a}(i)_0+\dots  +\vec{a}(i)_{n-1} \}$.  This set is possibly not equal to $\omega$, as it might not be the case that $\vec{a}(i)_{n-1}$ exists for every $i$.  Rather, it is equal to the set $\{ i \mid |\vec{a}(i)| = n \} $, which is large on $C$. 

            \item Let $\vec{a},\vec{b},\vec{c} \in \prod_C \mathcal{T}_{K_i}$ such that $\prod_C \mathcal{T}_{K_i} \models \vec{a} ^\wedge \vec{b} = \vec{c}$.  Then, $\{ i \mid \vec{a}(i) ^\wedge \vec{b}(i) = \vec{c}(i) \} \supseteq^* C$.
            
            $\prod_C K_i \models \Sigma (\vec{c})= \Sigma\vec{a}+ \Sigma\vec{b}$ 

            $\iff \{ i \mid (\Sigma\vec{c})(i)= (\Sigma \vec{a})(i) + (\Sigma\vec{b})(i) \} = \supseteq^*C$

            $\iff \{i \mid \sum_{j < | \vec{c}(i)|}  (\vec{c}(i))_j = \sum_{j < |\vec{a}(i)|} (\vec{a}(i))_j + \sum_{j < |\vec{b}|}(\vec{b}(i))_j  \}\supseteq^* C $

            The set $\{i \mid \sum_{j < | \vec{c}|} (\vec{c}(i))_j = \sum_{j < |\vec{a(i)}|} (\vec{a}(i))_j + \sum_{j < |\vec{b}|}(\vec{b}(i))_j  \}$ contains $\{ i \mid \vec{c}(i) = \vec{a}(i) ^\wedge \vec{b}(i)\}$, which is large on $C$.
        \end{enumerate}

        \item This is similar to (1), replacing sum with product.

        \begin{enumerate}
            \item Similar to (1.a).

            \item Similar to (1.b).

            \item Let $\vec{a} \in \prod_C \mathcal{T}_{K_i}$ such that $\prod_C K_i \models \Pi \vec{a} = 0$. 

            Then $\{ i \mid \prod_{j < |\vec{a}(i)|}  (\vec{a}(i))_j = 0 \}\supseteq^* C$ by the fundamental theorem of cohesive products.  Let $I(i)$ be the least $x$ such that $  (\vec{a}(i))_x = 0$.  Since $K_i$ is a field (hence an integral domain), $I(i) \downarrow $ if and only if $\prod_{ j < | \vec{a}(i) |} (\vec{a}(i))_j = 0$. Thus, $\text{dom}(I) \supseteq^*C$.  By construction, $(\vec{a})_I=0$.
        \end{enumerate}
    \end{enumerate}
\end{proof}

The key algebraic object we need to get access to the extra structure within the cohesive power is the ring of hyper-polynomials.  In particular, the ring of hyper-polynomials with integer coefficients will be of central importance.  

\begin{Definition}
    Let $\{R_i\}_{i\in \omega}$ be a uniformly computable sequence of rings.  The ring of hyper-polynomials over $\prod_C R_i$ is the ring $\prod_C R_i[x_1,x_2,\dots  ]$, where $R_i[x_1,x_2,\dots  ]$ is given the ``canonical'' presentation.  That is, the presentation in which both the degree map and the coefficient map $R_i[x_1,x_2,\dots  ]\times \omega^{<\omega}\to R_i$ are computable, where $(p, (k_1,\dots  ,k_n)) $ is mapped to the coefficient for $x_1^{k_1}x_2^{k_2}\cdots  x_n^{k_n}$ in $p$.
\end{Definition}

Note that  $p \in \prod_C F_i[x]$ will be thought of as both a function of $i$, i.e., $p(i) \in F_i[x]$, and as a ``polynomial function'' of $x$.  That is, $p(a) \in \prod_C F_i$, for $a \in \prod_C F_i$.  To avoid confusion, $i$ will be exclusively used to refer to the situation in which $p$ is a function from $\omega$ to $F_i$'s, and $x,y,\dots$ will be exclusively used as the formal variables in polynomials and hyper-polynomials. 

\begin{Proposition} \label{properties of hyperpolynomails}
   Let $\{ K_i/F_i \}_{i\in \omega}$ be a uniformly computable sequence of field extensions. Let $\{ R_i \}_{i\in \omega}$ be $\{ \Zb \}_{i\in \omega}$, $\{ F_i \}_{i\in \omega}$, or $\{ K_i \}_{i\in \omega}$.  The following maps exist and have the following properties. 

    \begin{enumerate}
        \item A non-standard degree map $\deg : \prod_C R_i [x_1,x_2,\dots  ] \to \prod_C \Nb$

         \begin{enumerate}
             \item For $p,q \in \prod_C R_i  [x_1,x_2,\dots  ]$, $\deg ( p  q ) = \deg(p) + \deg(q)$.

             \item For $p,q \in \prod_C R_i [x_1,x_2,\dots  ]$, $\deg ( p +q ) \leq \text{max}\{\deg (p), \deg(q)\}$.
         \end{enumerate}
    
    \item A variable counting map $\text{var} : \prod_C R_i [x_1,x_2,\dots  ] \to \prod_C \Nb$ which ``returns the index of the largest variable occurring within a given hyper-polynomial.''

    \begin{enumerate}
    
        \item For $p,q \in \prod_C R_i [x_1,x_2,\dots  ]$, $\text{var} (pq) = \max \{ \text{var}(p), \text{var}(q)\}$

        \item For $p,q \in \prod_C R_i [x_1,x_2,\dots  ]$, $\text{var} (p+q) \leq \max \{ \text{var}(p), \text{var}(q)\}$.
    \end{enumerate}
    
    \end{enumerate}
    
\end{Proposition}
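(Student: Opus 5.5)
We define both maps coordinatewise, exactly as for arity and entry in Proposition \ref{Prop53}, and transfer each property through the fundamental theorem of cohesive products (Theorem \ref{thm-LosProdParam}). For $p \in \prod_C R_i[x_1,x_2,\dots]$ set $(\deg p)(i) = \deg(p(i))$. Set $(\text{var}\, p)(i)$ to be the largest index $k$ such that $x_k$ occurs in $p(i)$ with nonzero coefficient, and $0$ if $p(i)$ is constant. Both are partial computable, defined wherever $p(i)$ is, because the canonical presentation makes the degree and coefficient maps computable. To find the variables occurring, we read off the finitely many monomials with nonzero coefficient from the code of $p(i)$. If $p \simeq_C q$, then $\deg p \simeq_C \deg q$ and $\text{var}\, p \simeq_C \text{var}\, q$, so both maps are well defined on $\prod_C \Nb$.

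Each property is an atomic or quantifier-free statement about standard polynomials holding coordinatewise, so it lifts in the same way. For instance, $\deg(pq)=\deg p+\deg q$ holds in $\prod_C \Nb$ if and only if
\[
\{ i \mid \deg(p(i)q(i)) = \deg p(i) + \deg q(i)\} \supseteq^* C ,
\]
and this set contains every $i$ at which $p(i),q(i)$ are both defined and nonzero, since $R_i$ is $\Zb$ or a field and hence an integral domain. Similarly, $\deg(p+q)\le \max\{\deg p,\deg q\}$ and $\text{var}(p+q)\le\max\{\text{var}\,p,\text{var}\,q\}$ hold at every $i$ where both sides are defined, because a monomial of $p(i)+q(i)$ with nonzero coefficient must already occur in $p(i)$ or in $q(i)$. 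For $\text{var}(pq)=\max\{\text{var}\,p,\text{var}\,q\}$, we use the integral domain property again: ordering monomials lexicographically, the leading monomial of $p(i)q(i)$ is the product of the leading monomials, so the largest variable of each factor survives. Thus equality holds whenever $p(i),q(i)\neq 0$.

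The one real obstacle is the zero polynomial. Classically $\deg 0$ is either $-\infty$ or a convention, and (a) and the $\text{var}$ identity can fail when a factor is $0$. I will handle this by a case split using cohesiveness. The set $Z_p=\{i \mid p(i)=0\}$ is decidable relative to the domain of $p$, so it is a Boolean combination of c.e.\ sets. Hence either $C\subseteq^* Z_p$, in which case $p=0$ in the cohesive power, or $C\subseteq^* \overline{Z_p}$. In the first case I adopt the convention that (a) and the multiplicative $\text{var}$ identity are asserted only for nonzero $p,q$ (equivalently, set $\deg 0 = 0$ and note that $pq=0$). In the second case the coordinatewise argument above applies on a set $\supseteq^* C$. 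Intersections of finitely many sets $\supseteq^* C$ are again $\supseteq^* C$, which completes each verification.
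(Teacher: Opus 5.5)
Your proposal is correct and takes the same route as the paper: both define $\deg$ and $\text{var}$ coordinatewise and transfer each identity through the fundamental theorem of cohesive products via inclusions such as $\{i \mid p(i)q(i)=r(i)\}\subseteq\{i \mid \deg r(i)=\deg p(i)+\deg q(i)\}$. Your integral-domain and leading-monomial arguments spell out what the paper dismisses as ``similar.''

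Your explicit treatment of the zero polynomial is more careful than the paper's convention $\deg 0=\infty$, which leaves $\prod_C \Nb$ and breaks (1b) when $p=-q\neq 0$. Two corrections, though. First, drop the parenthetical ``equivalently, set $\deg 0=0$'': with that convention $\deg(0\cdot q)=0\neq\deg q$, so restricting (1a) and (2a) to nonzero $p,q$ is genuinely necessary. Second, in the $\text{var}$ argument, use the lexicographic order in which higher-indexed variables are more significant; otherwise the leading monomial of $x_1+x_2$ is $x_1$, which does not contain the largest variable.
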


We follow the convention that the degree of the $0$ polynomial is $\infty$.

\begin{proof}

\hspace{1mm}

    \begin{enumerate}
        \item Let $p\in \prod_C R_i [x_1,x_2,\dots  ]$.  Set $\deg(p)(i) =\deg(p(i))$. The map exists and is well-defined in the cohesive power, as recognizing the degree of a polynomial in the canonical presentation of $R_i[x_1,x_2,\dots  ]$ is computable uniformly from $R_i$. 

        \begin{enumerate}
            \item Let $p,q,r \in \prod_C R_i[x_1,x_2,\dots  ]$ such that $\prod_C R_i[x_1,x_2,\dots  ] \models pq=r$.  Then $\{i \mid p(i) q(i) = r(i) \} \supseteq^* C $.  The set $\{ i \mid \deg(p(i))+\deg(q(i))= \deg(r(i)) \} \supseteq \{ i \mid p(i)q(i)=r(i)\}$.  Thus, $\prod_C \Nb \models \deg(p) +\deg(q)=\deg(r)$.

            \item Similar to (1.a)
        \end{enumerate}

    \item Let $var(p)(i)$ be the largest index of a variable appearing in $p(i)$. 

    \begin{enumerate}
        \item Similar to (1.a).

        \item Similar to (1.b.)
    \end{enumerate}
        
    \end{enumerate}
\end{proof}

It is helpful to think of hyper-polynomials as hyper-finite sums, indexed over hyper-naturals, of terms of the form $a_i \prod_{I\leq J} x_I^{K_i}$, where $a_i \in \prod_C F_i$, and $\prod_{I\leq J} x_I^{K_i}$ is a hyper-finite product of variables, with indices, coefficients, and endpoints from $\prod_C \Nb$.  

\begin{Proposition}
    The univariate subring $\prod_C K_i [x]$  of $\prod_C K_i [x,x_1,x_2,\dots  ] $ for fields $K_i$, is a hyper-Euclidean domain in the following sense:

 Given $p,q \in \prod_C K_i [x]$, there are $r,s \in \prod_C K_i[x]$ such that $ p = sq+r$, and $\deg (r) < \deg (q)$, that are unique up to scaling by some unit $u \in \prod_C K_i$. 

\end{Proposition}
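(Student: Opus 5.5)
Let $p,q \in \prod_C K_i[x]$ with $q \neq 0$. The plan is to do polynomial long division coordinatewise and then transfer the identities to the cohesive product through the fundamental theorem of cohesive products, as in the proofs of Proposition \ref{properties of hyperpolynomails}. We may take $q\neq 0$, since otherwise $\deg(r)<\deg(q)=\infty$ is vacuous or degenerate. Then $\{i \mid q(i)\neq 0\}\supseteq^* C$, because $\neq$ is quantifier-free. Define partial computable functions $s(i)$ and $r(i)$ to be the quotient and remainder of the ordinary Euclidean division of $p(i)$ by $q(i)$ in $K_i[x]$. This is uniformly computable: in the canonical presentation we can read off coefficients and degrees, and the leading coefficient of $q(i)$ can be inverted in the computable field $K_i$. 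Hence $s$ and $r$ converge on almost every $i$ and so represent elements of $\prod_C K_i[x]$. Since $\{i \mid p(i)=s(i)q(i)+r(i)\}$ contains the domain of convergence, we get $p = sq+r$ in $\prod_C K_i[x]$. Likewise $\{i\mid \deg(r(i))<\deg(q(i))\}\supseteq^* C$, so $\deg(r)<\deg(q)$ in $\prod_C \Nb$ by the definition of the degree map and of $<$ in the cohesive power.

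For uniqueness, suppose $p=sq+r=s'q+r'$ with $\deg(r),\deg(r')<\deg(q)$. Each equation holds coordinatewise on a set $\supseteq^* C$, and so do the degree inequalities, since these are atomic or quantifier-free in the presentations used. Intersecting finitely many such sets, for almost every $i$ we have two Euclidean divisions of $p(i)$ by $q(i)$ in $K_i[x]$. Classical uniqueness then gives $s(i)=s'(i)$ and $r(i)=r'(i)$, so $s=s'$ and $r=r'$ outright. The claimed uniqueness ``up to a unit'' is therefore satisfied with $u=1$. If the statement is meant to allow normalizing $q$, say monic, then scaling $q$ by a unit $u$ rescales $s$ by $u^{-1}$ and fixes $r$, and the same argument handles this.

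The main obstacle I expect is the bookkeeping around the convention $\deg 0=\infty$ and the meaning of $<$ on $\prod_C\Nb$ augmented with $\infty$. The remainder $r$ may be $0$ on some coordinates and nonzero on others, but cohesiveness resolves this. The set $\{i\mid r(i)=0\}$ is computable relative to convergence, so either $r=0$ or $r\neq0$ on almost all of $C$, and the degree comparison is then settled coordinatewise on a cofinite part of $C$. One should also check that the degree comparison of hyper-polynomials is a decidable (quantifier-free) relation in the canonical presentation, so that Theorem \ref{thm-LosProdParam} applies in both directions. Because the degree map is computable, this is immediate.
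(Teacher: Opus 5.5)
Your existence argument is the same as the paper's. You divide coordinatewise in $K_i[x]$ and observe that the quotient and remainder are uniformly computable and converge wherever $p(i),q(i)$ do. That convergence needs $q(i)\neq 0$, which you make explicit and the paper leaves implicit. You then transfer $p=sq+r$ and the degree comparison through the definitions.

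For uniqueness you take a different and sharper route. Given a second pair $s',r'$, the paper picks coordinatewise scalars $u(i),v(i)$ with $s(i)=u(i)s'(i)$ and $r(i)=v(i)r'(i)$, and concludes uniqueness up to units. As written, that step has two problems:
\begin{itemize}
\item It never records the hypothesis $\deg(r')<\deg(q)$. Without it no such $u(i)$ need exist, e.g.\ $s'=0$, $r'=p$.
\item It ignores coordinates where $s(i)$ or $r(i)$ is $0$. There the scalar is not unique and need not be nonzero.
\end{itemize}
You instead pull both degree bounds down to almost every coordinate and apply classical uniqueness of Euclidean division in $K_i[x]$. This gives $s=s'$ and $r=r'$ exactly, which yields the stated ``up to a unit'' version with $u=1$ and avoids both problems.

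One caveat, which the paper shares. Under the literal convention $\deg(0)=\infty$, the inequality $\deg(r)<\deg(q)$ is false whenever $r=0$, for instance when $q$ divides $p$. So cohesiveness does not ``resolve'' this as your last paragraph suggests. It only tells you that $r=0$ or $r\neq 0$ on almost all of $C$. The intended reading must be $\deg(0)=-\infty$, equivalently the remainder condition ``$r=0$ or $\deg(r)<\deg(q)$''. With that reading, both your existence and uniqueness arguments go through verbatim.
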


\begin{proof}
        The argument essentially boils down to ``polynomial division is computable''.

    Let $p,q \in \prod_C K_i[x]$.  For each $i$, let $r(i), s(i)$ be the unique polynomials such that $p(i)= s(i)q(i)+r(i)$.  Note that $r(i), s(i)$ exist whenever both $p(i)$ and $q(i)$ exist.  Thus, $dom(r)=dom(s) = dom(p) \cap dom(q) \supseteq^* C$. It follows from construction that $\prod_C K_i[x] \models p=sq+r$.

    Now $\prod_C \Nb \models \deg( r) < \deg(q)$, also by construction.

    As for uniqueness, let $s',r'$ be such that $\prod_C K_i[x] \models p=s'q+r'$.  Let $u(i)$ be the unique element of $K_i$, such that $s(i)=u(i)s'(i)$, and similarly $v(i) $ such that $r(i)=v(i)r'(i)$.  $u(i)$ exists and is unique by elementary algebra, whenever $s(i)$ and $s'(i)$ exist, and similarly for $v(i)$.  Thus $dom(u) = dom(s)\cap dom(s') \supseteq^*C$.  Similarly $dom(v) = dom(r)\cap dom(r')$.  By construction, $u,v \in \prod_C K_i$ are units, as they are non-zero field elements, and $ \prod_C K_i[x] \models s=us'\wedge r=vr'$.

\end{proof}

Finally, we may define the ``hyper-polynomial structure'' hidden inside a cohesive product of fields.

\begin{Proposition}
    Let $\{ K_i/F_i \}_{i\in \omega}$ be a uniformly computable sequence of field extensions.  Let $\{ R_i \}_{i\in \omega}$ be $\{ \Zb \}_{i\in \omega}$, $\{ F_i \}_{i\in \omega}$, or $\{ K_i \}_{i\in \omega}$.  Then there is a partial map 

    $$ eval: \prod_C R_i[x_1,x_2,\dots  ] \times \prod_C \mathcal{T}_{K_i} \to \prod_C K_i$$,

    which has the following properties:  

    \begin{enumerate}
        \item For $p \in \prod_C R_i[x_1,x_2,\dots  ]$ and $\vec{a} \in \prod_C \mathcal{T}_{K_i}$, $eval(p,\vec{a})$ exists if and only if $\text{var}( p) =  |\vec{a}|$. 

        \item For $p \in \prod_C R_i[x_1,x_2,\dots  ]$ and $\vec{a} \in \prod_C \mathcal{T}_{K_i}$, $eval(p+q, \vec{a} ) = eval(p,\vec{a}) +eval(q,\vec{a})$, and $eval(pq, \vec{a} ) = \left(eval(p,\vec{a})\right) \left(eval(q,\vec{a})\right)$. 
    \end{enumerate}
\end{Proposition}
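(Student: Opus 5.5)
The plan is to define $eval$ coordinatewise, as the paper does for $arity$, $entry$, $\Sigma$, $\Pi$, $\deg$ and $var$. For $p \in \prod_C R_i[x_1,x_2,\dots]$ and $\vec{a} \in \prod_C \mathcal{T}_{K_i}$, set
\[
eval(p,\vec{a})(i) =
\begin{cases}
p(i)\bigl((\vec{a}(i))_0,\dots,(\vec{a}(i))_{|\vec{a}(i)|-1}\bigr) & \text{if } var(p(i)) = |\vec{a}(i)|,\\
\uparrow & \text{otherwise},
\end{cases}
\]
where the coefficients of $p(i)$ are pushed into $K_i$: via $\Psi_i$ when $R_i = F_i$, via the canonical map $\Zb \to K_i$ when $R_i = \Zb$, and unchanged when $R_i = K_i$. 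This function is partial computable, uniformly in $i$. In the canonical presentation we can compute $var(p(i))$ and the finitely many nonzero monomials with their coefficients. The entries of $\vec{a}(i)$ can be retrieved, and the field operations of $K_i$ are uniformly computable. So evaluation is a finite computation.

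For well-definedness, I would check that if $p \simeq_C p'$ and $\vec{a} \simeq_C \vec{a}'$, then the two defining functions agree on the intersection of the agreement sets. This intersection is $\supseteq^* C$, by the remark that intersections of sets $\supseteq^* C$ are again $\supseteq^* C$.

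For (1), I would run the chain of equivalences used in Proposition \ref{Prop53}(2a):
\[
eval(p,\vec{a}) \in \prod_C K_i \iff dom(eval(p,\vec{a})) \supseteq^* C \iff \{ i \mid var(p)(i) = |\vec{a}|(i)\} \supseteq^* C \iff \prod\nolimits_C \Nb \models var(p) = |\vec{a}|.
\]
The last step is the Fundamental Theorem of Cohesive Products applied to the atomic formula $x=y$. The middle step holds because the domain of $eval(p,\vec{a})$ is exactly $dom(p)\cap dom(\vec{a}) \cap \{ i \mid var(p(i)) = |\vec{a}(i)|\}$, and $dom(p) \cap dom(\vec{a}) \supseteq^* C$.

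For (2), I would argue as in (1.b) of Proposition \ref{properties of hyperpolynomails}. Let $s = p+q$. Then $\{ i \mid s(i) = p(i)+q(i)\} \supseteq^* C$. On that set, intersected with the domains of $eval(p,\vec{a})$ and $eval(q,\vec{a})$, we have
\[
eval(s,\vec{a})(i) = eval(p,\vec{a})(i) + eval(q,\vec{a})(i).
\]
This is because evaluation at a fixed tuple is a ring homomorphism $R_i[x_1,\dots,x_n] \to K_i$. Products are handled the same way.

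\textbf{Main obstacle.} The hypothesis of (2) must be read as requiring $eval(p,\vec{a})$ and $eval(q,\vec{a})$ to be defined. Even then, by Proposition \ref{properties of hyperpolynomails}(2b), $var(p+q)$ may drop below $var(p)$ through cancellation. In that case the equality condition $var(p+q) = |\vec{a}|$ in (1) fails, and $eval(p+q,\vec{a})$ is undefined. I expect this to be the real difficulty. My first fix would be to relax the domain condition so that $eval(p,\vec{a})(i)$ is defined whenever $var(p(i)) \le |\vec{a}(i)|$, simply ignoring the extra entries. Part (1) then becomes the statement with $\le$, proved by the same chain. Part (2) then holds whenever both sides are defined: since $var(p+q) \le \max\{var(p),var(q)\} \le |\vec{a}|$ and the same bound holds for $pq$, the left side is automatically defined once $eval(p,\vec{a})$ and $eval(q,\vec{a})$ are. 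If the equality form must be kept, I would instead state (2) only for those $p,q$ with $var(p)=var(q)=var(p+q)=|\vec{a}|$, and note that products are always fine by (2a).
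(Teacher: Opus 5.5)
Your proposal follows the paper's proof: the paper defines $eval(p,\vec{a})(i) = p(i)(\vec{a}(i))$ exactly when $|\vec{a}(i)| = \text{var}(p(i))$, observes that this set is the domain of $eval(p,\vec{a})$ to get (1), and dismisses (2) as straightforward, which is your coordinatewise ring-homomorphism argument. Your caveat is correct, and the paper does not address it: with the exact condition $\text{var}(p)=|\vec{a}|$, cancellation can make $eval(p+q,\vec{a})$ undefined while the right-hand side is defined (e.g.\ constant $p = x_1+x_2$, $q=-x_2$ with $|\vec{a}|=2$), so (2) for sums holds only when both sides are defined, or under your relaxed condition $\text{var}(p)\le|\vec{a}|$.
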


\begin{proof}

Given $p \in \prod_C R_i[x_1,x_2,\dots  ]$, and $\vec{a} \in \prod_C \mathcal{T}_{K_i}$, set $$eval(p,\vec{a})(i) = \begin{cases}
            p(i)(\vec{a}(i)) & \text{ if } |\vec{a}(i)| = var(p(i)) \\
            \uparrow & \text{ otherwise}
        \end{cases}$$

    \begin{enumerate}

       \item If $|\vec{a}| = var(p)$, then $\{ i \mid |\vec{a}(i)| =var(p(i))\} \supseteq^* C$.  Since $\{ i \mid |\vec{a}(i) | = var(p(i)) \} = \{ i \mid eval(p,\vec{a})(i)\downarrow\}$, $eval(p,\vec{a})$ is a well-defined element of $\prod_C K_i$ if and only if $var (p) = |\vec{a}|$.

       \item The proof is straightforward.
    \end{enumerate}
\end{proof}
Where appropriate, the prefix hyper-, and words like non-standard or pseudo-finite, will be used to make it clear in which structure objects reside. For example, elements of $K[x]$ are polynomials, while elements of $\prod_C (K[x])$ are hyper-polynomials.  Similarly, elements of $\mathcal{T}_A$ are finite tuples, while elements of $\prod_C \mathcal{T}_A$ are pseudo-finite tuples.  

Where expedient, we will abuse notation, and write things such as $p(\vec{a}) =b$ to mean $eval(\vec{p},\vec{a}) = b$ via $eval:\prod_C K[x_1,x_2,\dots  ] \times \prod_C \mathcal{T}_K\to \prod_C K$.

\begin{Proposition}
    Let $\{K_i/F_i\}_{i\in \omega}$ be a uniformly computable sequence of algebraic field extensions.  Then for every $\alpha \in \prod_C K_i $, there is some $p \in \prod_C ( F[x])$ with $p(\alpha) = 0$.
    
    Moreover, if the minimal polynomial maps $K_i \to F[x]$ are uniformly computable, then every $\alpha\in \prod_C K_i$ has a ``minimal hyper-polynomial''.  That is, there is a unique $p_\alpha \in \prod_C (F[x])$ such that for every other $q \in \prod_C (F[x])$, if $q(\alpha) = 0$, then there is a unique $r \in \prod_C F[x]$ with $q = p_\alpha \cdot r$.
\end{Proposition}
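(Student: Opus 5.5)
For the first part I would construct the annihilating hyper-polynomial pointwise by search. Given $\alpha \in \prod_C K_i$, define a partial computable $p$ by letting $p(i)$ be the first nonzero polynomial in $F_i[x]$, in a fixed effective enumeration of the canonical presentation, for which $\alpha(i)\downarrow$ and $p(i)(\alpha(i)) = 0$ in $K_i$. Since each $K_i/F_i$ is algebraic, this search terminates whenever $\alpha(i)\downarrow$, so $\mathrm{dom}(p) \supseteq \mathrm{dom}(\alpha) \supseteq^* C$ and $p \in \prod_C (F_i[x])$. The evaluation map in the univariate case is computed pointwise, so $\{ i \mid p(\alpha)(i) = 0\} \supseteq \mathrm{dom}(\alpha)\cap\mathrm{dom}(p) \supseteq^* C$, and hence $p(\alpha)=0$ in $\prod_C K_i$.

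For the second part, let $p_\alpha(i)$ be the minimal polynomial of $\alpha(i)$ over $F_i$, computed with the uniformly computable minimal polynomial maps. Again $p_\alpha$ is defined on $\mathrm{dom}(\alpha)$, and $p_\alpha(\alpha)=0$. Now take $q \in \prod_C(F_i[x])$ with $q(\alpha)=0$. By the fundamental theorem of cohesive products (the relation is atomic after computing evaluation pointwise), the set $\{ i \mid q(i)(\alpha(i)) = 0\}$ contains almost all of $C$. For such $i$ the classical fact gives $p_\alpha(i) \mid q(i)$ in $F_i[x]$. I would let $r(i)$ be the quotient of $q(i)$ by $p_\alpha(i)$ under polynomial division. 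This is computable, and its domain is at least $\mathrm{dom}(q)\cap\mathrm{dom}(\alpha)$. On the set where the remainder vanishes, which is $\supseteq^* C$, we have $q(i) = p_\alpha(i) r(i)$, so $q = p_\alpha \cdot r$ in the cohesive product. This is the hyper-Euclidean division from the earlier proposition, specialized to remainder zero. The quotient is unique: if $p_\alpha r = p_\alpha r'$, then pointwise $p_\alpha(i)(r(i)-r'(i))=0$ with $p_\alpha(i)\neq 0$, and since $F_i[x]$ is a domain, $r(i)=r'(i)$ for almost every $i$.

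The main obstacle is the uniqueness of $p_\alpha$ itself, which is only meaningful up to normalization. I would read it as uniqueness among monic hyper-polynomials with the stated divisibility property. Suppose $p'$ is another such element. Then $p_\alpha = p' s$ and $p' = p_\alpha r$, so $p_\alpha = p_\alpha r s$. Cancelling pointwise as above gives $rs = 1$ almost everywhere on $C$. Hence $r(i), s(i)$ are nonzero constants for almost every $i$, and since both polynomials are monic, $r(i)=1$ almost everywhere, so $p' = p_\alpha$. The point needing care is that "monic" and "is a constant" must be decidable pointwise from the canonical presentation, which is the case. With that, both parts reduce to pointwise classical facts transferred along sets containing almost all of $C$.
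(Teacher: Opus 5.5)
Your proposal is correct and follows essentially the same route as the paper: a pointwise search for an annihilating polynomial, the pointwise minimal polynomial $p_\alpha(i)=p_{\alpha(i)}$, and pointwise division producing $r$ on a set containing almost all of $C$. Your insistence on a \emph{nonzero} polynomial in $F_i[x]$ repairs the paper's wording, which searches $K_i[x]$ and does not exclude the zero polynomial, and your cancellation argument supplies the uniqueness of $r$ in $\prod_C F[x]$ that the paper only asserts pointwise.

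The one slip is in your uniqueness argument for $p_\alpha$, which the paper does not address: the normalization must be ``monic \emph{and} $p'(\alpha)=0$'', not merely monic with the divisibility property, since otherwise the constant hyper-polynomial $1$ qualifies. Your step $p'=p_\alpha r$ tacitly uses exactly this vanishing condition.
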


\begin{proof}

Let $\alpha\in \prod_C K_i$.  For each $i\in \omega$, let $p(i)$ be the first encountered element of $K_i[x]$ with $p(i) ( \alpha(i)) = 0$. The function $p$ is clearly computable and defined whenever $\alpha$ is.  Thus $p$ is an element of $\prod_C F[x]$.  By construction, $p(\alpha) = 0$.

Now suppose further that the minimal polynomial maps are uniformly computable.  Let $p_\alpha(i) = p_{\alpha(i)}$. That is, $p_\alpha(i)$ is the minimal polynomial of $\alpha(i)$. Now, suppose there is $ q \in \prod_C F[x]$ with $q(\alpha) = 0$.  By the fundamental theorem of cohesive products, we have $q(i) (\alpha(i)) = 0$ for almost every $i$.  For such $i$, we may effectively find the unique $r(i)$ with $q(i) = p_\alpha(i) r(i)$.  Thus, $\prod_C F[x] \models q= (p_\alpha) (r)$.
    
\end{proof}

In nice families of extensions $K_i/F$ where elements of $\prod_C K_i$ have minimal hyper-polynomials over $\prod_C F$, we also have a notion of non-standard \textit{degree}.  The degree map is found by composing the minimal hyper-polynomial map with the non-standard degree map from Proposition \ref{properties of hyperpolynomails}  

With a notion of hyper-degree, we may build a notion of ``hyper-Galois.''

\begin{Proposition}
    Let $\{K_i / F\}_{i\in\omega}$ be uniformly computable field extensions, with each $K_i/F$ Galois, and uniformly computable minimal polynomial maps.  Then, $\prod_C K_i /\prod_C F$ is hyper-Galois in the following sense. 

    For each $ a \in \prod_C K_i$ with minimal hyper-polynomial $p_a \in \prod_C(F[x])$, there is a $ \vec{b} \in \prod_C \mathcal{T}_{K_i}$ with the following properties: 
    
    \begin{enumerate}
        \item $|\vec{b}| = \deg (p_a)$
        \item $(\vec{b})_0= a$
        \item for $I <J < |\vec{b}|$, we have $(\vec{b})_I \not= (\vec{b})_J$
        \item for every $c\in \prod_C K_i$, we have $p_a(c) =0$ if and only if $c= (\vec{b})_I$ for some $I$.
    \end{enumerate}
\end{Proposition}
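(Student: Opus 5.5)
The plan is to build the hyper-tuple $\vec{b}$ coordinatewise and then transfer each property through the fundamental theorem of cohesive products, as in the earlier propositions of this section. Fix $a \in \prod_C K_i$. By the preceding proposition, its minimal hyper-polynomial is $p_a(i) = p_{a(i)}$, the minimal polynomial of $a(i)$ over $F$; this is computable uniformly in $i$ by hypothesis. Since $K_i/F$ is Galois, $p_{a(i)}$ splits in $K_i$ into $\deg(p_{a(i)})$ distinct linear factors. Define $\vec{b}(i)$ as follows: compute $a(i)$ and $p_{a(i)}$, put $a(i)$ as entry $0$, and then search through $K_i$ in its enumeration order for roots of $p_{a(i)}$ distinct from those already listed. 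Append each one found until $\deg(p_{a(i)})$ entries are listed. This search halts whenever $a(i)\da$, because all roots lie in $K_i$ and root-testing is computable. Hence $\vec{b}$ is partial computable with $dom(\vec{b}) \supseteq^* C$, so $\vec{b} \in \prod_C \mathcal{T}_{K_i}$.

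Properties (1)--(3) then follow directly from the coordinatewise definitions in Proposition \ref{Prop53} and Proposition \ref{properties of hyperpolynomails}. For (1), we have $|\vec{b}(i)| = \deg(p_a(i))$ for almost every $i$, so $|\vec{b}| = \deg(p_a)$ in $\prod_C \Nb$. For (2), $(\vec{b}(i))_0 = a(i)$ for almost every $i$. For (3), take $I < J < |\vec{b}|$. Then $\{i \mid I(i) < J(i) < |\vec{b}(i)|\} \supseteq^* C$, and on that set the entries $(\vec{b}(i))_{I(i)}$ and $(\vec{b}(i))_{J(i)}$ are distinct by construction. Hence $\{i \mid entry(\vec{b},I)(i) \neq entry(\vec{b},J)(i)\} \supseteq^* C$, which gives $(\vec{b})_I \neq (\vec{b})_J$ in $\prod_C K_i$.

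For the ``if'' direction of (4), let $c = (\vec{b})_I$ with $I < |\vec{b}|$. For almost every $i$, $c(i)$ is an entry of $\vec{b}(i)$, hence a root of $p_{a(i)}$. Since $p_a(c)=0$ is an atomic formula, this lifts to $\prod_C K_i \models p_a(c) = 0$.

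The main step is the ``only if'' direction of (4), which requires producing the index $I$ effectively. Suppose $p_a(c) = 0$. By the fundamental theorem of cohesive products, $p_{a(i)}(c(i)) = 0$ for almost every $i$. For such $i$, $c(i)$ is one of the finitely many roots of $p_{a(i)}$, and all of them are listed in $\vec{b}(i)$. Define $I(i)$ to be the least $x < |\vec{b}(i)|$ with $(\vec{b}(i))_x = c(i)$. This is a partial computable function whose domain contains $\{i \mid p_{a(i)}(c(i))=0\} \supseteq^* C$, so $I \in \prod_C \Nb$, and $entry(\vec{b},I) = c$ by construction. The point to watch is that the uniform computability of the minimal polynomial maps is used twice. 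It is needed to compute $\deg(p_{a(i)})$, which tells the search when to stop. It is also needed so that $\vec{b}(i)$ lists every root, which is what makes $I$ converge for almost every $i$.
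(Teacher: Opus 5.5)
Your proposal is correct and follows essentially the same route as the paper. The paper also defines $\vec{b}(i)$ coordinatewise, as the first tuple in $\mathcal{T}_{K_i}$ with first entry $a(i)$ and $p_a(i)=\prod_{j<|\vec{b}(i)|}(y-\vec{b}(i)_j)$, which is a cosmetic variant of your greedy search for distinct roots, and then appeals to the fundamental theorem of cohesive products; your explicit construction of the index $I$ in the ``only if'' direction of (4) spells out a step the paper leaves implicit, since that direction quantifies over hyper-natural indices rather than being a single first-order transfer.
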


\begin{proof}

Let $a \in \prod_C K_i$, and let $ p_a $ be the corresponding minimal hyper-polynomial.  For each $i$, let $\vec{b}(i) $ be the first encountered element of $\mathcal{T}_{K_i}$ with $ \vec{b}(i)_0 = a(i)$, and $p_a(i) = \prod_{j < |b(i)|} (y-b(i)_j)$.  Such a $b(i)$ exists as $K_i$ is Galois. 

The rest follows immediately from the fundamental theorem of cohesive products.
    
\end{proof}

\begin{Definition}
    Let $\{\mathcal{A}_i \}_{i\in\omega}$ be a uniformly computable sequence of structures.  The hyper-automorphism group of $\prod_C \mathcal{T}_{\mathcal{A}_i}$, denoted $HAut( \prod_C \mathcal{T}_{\mathcal{A}_i})$, is the subgroup of $Aut (\prod_C \mathcal{T}_{\mathcal{A}_i}, \preceq,^\wedge)$ that preserves the arity map $|\cdot| : \prod_C \mathcal{T}_{\mathcal{A}_i} \to \prod_C \Nb$.  That is, 

    $$ HAut \left(\prod_C \mathcal{T}_{\mathcal{A}_i}\right) = \left\{ \sigma \in Aut\left(\prod_C \mathcal{T}_{\mathcal{A}_i}\right) \middle| (\forall \vec{a} \in \prod_C \mathcal{T}_{\mathcal{A}_i})(|\vec{a}|=|\sigma \vec{a}| )\right\}$$
\end{Definition}

\begin{Proposition}\label{presentry}
    $HAut(\prod_C \mathcal{T}_{\mathcal{A}_i})$ preserves the entry maps.  That is, for every $\vec{a} \in \prod_C \mathcal{T}_{\mathcal{A}_i}$, $I < |\vec{a}|$, and $\sigma \in HAut(\prod_C \mathcal{T}_{\mathcal{A}_i} )$, $\sigma((\vec{a})_I) = (\sigma(\vec{a}))_I$, where $(\vec{d} )_I$ is thought of as a 1-tuple in $\prod_C \mathcal{T}_{\mathcal{A}_i}$.
\end{Proposition}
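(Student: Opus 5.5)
The plan is to express the entry $(\vec{a})_I$ purely in terms of the order $\preceq$, concatenation $^\wedge$, and the arity map. Since $\sigma$ preserves all three, it must preserve entries. The key characterization comes from Proposition \ref{Prop53}. Given $\vec{a}$ and $I < |\vec{a}|$, there are unique $\vec{u},\vec{v},\vec{w} \in \prod_C \mathcal{T}_{\mathcal{A}_i}$ with
\[
\vec{a} = \vec{u}^\wedge \vec{v}^\wedge \vec{w}, \qquad |\vec{u}| = I, \qquad |\vec{v}| = 1,
\]
and then $\vec{v} = (\vec{a})_I$, viewed as a $1$-tuple.

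First I would establish existence and uniqueness of this decomposition. For existence, computably split $\vec{a}(i)$ at position $I(i)$ whenever $I(i) < |\vec{a}(i)|$; this holds for almost every $i$ by part (2a) of Proposition \ref{Prop53}. The middle piece $\vec{v}(i)$ is then exactly $\mathrm{entry}(\vec{a},I)(i)$ as a $1$-tuple. For uniqueness, suppose $\vec{u}'^\wedge\vec{v}'^\wedge\vec{w}' = \vec{a}$ with the same arities. Then $\vec{u}' \preceq \vec{a}$ and $\vec{u}'^\wedge\vec{v}' \preceq \vec{a}$. Parts (2b), (2c) and (3) of Proposition \ref{Prop53} then show that the entries of $\vec{v}'$ agree with those of $\vec{v}$. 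Alternatively, apply the fundamental theorem of cohesive products coordinatewise, since the statement ``$\vec{u}^\wedge\vec{v}^\wedge\vec{w}=\vec{a}$, $|\vec{u}|=I$, $|\vec{v}|=1$'' is atomic/$\Sigma^0_1$ in the parameters. Either way $\vec{v}=\vec{v}'$.

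Next, apply $\sigma \in HAut(\prod_C \mathcal{T}_{\mathcal{A}_i})$. Since $\sigma$ is an automorphism of $(\prod_C\mathcal{T}_{\mathcal{A}_i},\preceq,^\wedge)$, we get
\[
\sigma(\vec{a}) = \sigma(\vec{u})^\wedge\sigma(\vec{v})^\wedge\sigma(\vec{w}).
\]
Since $\sigma$ preserves arity, $|\sigma(\vec{u})| = I$, $|\sigma(\vec{v})| = 1$, and $|\sigma(\vec{a})| = |\vec{a}| > I$. By the uniqueness just proved, applied to $\sigma(\vec{a})$, we get $(\sigma(\vec{a}))_I = \sigma(\vec{v}) = \sigma((\vec{a})_I)$. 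Here one identifies an element of $\prod_C\mathcal{A}_i$ with the $1$-tuple of arity $1$. This identification is legitimate because the $1$-tuples are exactly the elements of arity $1$, a set that $\sigma$ preserves.

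The main obstacle is the uniqueness step, specifically making rigorous that a $1$-tuple $\vec{v}$ equals its own entry $(\vec{v})_0$ and that concatenation interacts with entry as expected. I would handle this directly at the level of representatives: all the relevant equalities hold for almost every $i$ by construction, and cohesiveness lets us intersect finitely many cofinite-on-$C$ sets.
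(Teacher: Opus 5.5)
Your proposal is correct and follows essentially the paper's argument. You write $\vec{a}$ as a prefix of arity $I$, then the $1$-tuple $(\vec{a})_I$, then a suffix; you push $\sigma$ through the concatenation using arity preservation; and you read off the $I$-th entry of $\sigma(\vec{a})$. Two differences are only cosmetic: you get that entry from uniqueness of the decomposition rather than the paper's direct appeal to the entry-of-concatenation rule, and you impose $|\vec{u}| = I$ explicitly, a condition the paper leaves implicit even though its uniqueness claim needs it.
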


\begin{proof}
    Let $\vec{a}$, $I$, and $\sigma$ be as above. Then there are unique $b,c \in \prod_C \mathcal{T}_{\mathcal{A}_i}$ such that $\prod_C \mathcal{T}_{\mathcal{A}_i} \models \vec{a} = \vec{b}^\wedge ((\vec{a})_I )^\wedge \vec{c}$ by the fundamental theorem of cohesive products. (Existence is $\Sigma_1^0$ and uniqueness is $\Pi_1^0$.)  $(\sigma( \vec{a}))_I = (\sigma (\vec{b})^ \wedge \sigma((\vec{a})_I)^\wedge \sigma(\vec{c}))_I= \sigma((\vec{a})_I)$.
\end{proof}

\begin{Corollary}\label{corhaut}
    Two elements $\sigma ,\tau \in HAut(\prod_C \mathcal{T}_{\mathcal{A}_i})$ are equal if and only if they agree on all 1-tuples.
\end{Corollary}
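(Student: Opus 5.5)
The forward direction is immediate: if $\sigma=\tau$ as maps on $\prod_C \mathcal{T}_{\mathcal{A}_i}$, they agree in particular on the 1-tuples. For the converse, the plan is to reduce equality on arbitrary pseudo-finite tuples to equality on their entries. Assume $\sigma$ and $\tau$ agree on every 1-tuple, and fix $\vec{a} \in \prod_C \mathcal{T}_{\mathcal{A}_i}$. We will show $\sigma(\vec{a})=\tau(\vec{a})$ using the extensionality criterion, item (3) of Proposition \ref{Prop53}: two elements of $\prod_C \mathcal{T}_{\mathcal{A}_i}$ are equal if and only if they have the same arity and the same entry at every index $I$ below that arity.

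For the arity, both $\sigma$ and $\tau$ lie in $HAut(\prod_C \mathcal{T}_{\mathcal{A}_i})$, so by definition they preserve the arity map. Hence
$$|\sigma(\vec{a})| = |\vec{a}| = |\tau(\vec{a})|.$$

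For the entries, fix $I \in \prod_C \Nb$ with $I < |\vec{a}|$. By Proposition \ref{presentry} we have $(\sigma(\vec{a}))_I = \sigma((\vec{a})_I)$, and likewise $(\tau(\vec{a}))_I = \tau((\vec{a})_I)$. Here $(\vec{a})_I$ is regarded as a 1-tuple, so the hypothesis gives $\sigma((\vec{a})_I) = \tau((\vec{a})_I)$. Combining these, $(\sigma(\vec{a}))_I = (\tau(\vec{a}))_I$ for every $I < |\vec{a}|$. Proposition \ref{Prop53}(3) then yields $\sigma(\vec{a}) = \tau(\vec{a})$, and since $\vec{a}$ was arbitrary, $\sigma=\tau$.

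The only delicate point is an identification. In Proposition \ref{presentry}, an element of $\prod_C \mathcal{A}_i$ is treated as a 1-tuple, i.e.\ an element of $\prod_C \mathcal{T}_{\mathcal{A}_i}$ of arity $1$. Conversely, the value $\sigma((\vec{a})_I)$ must be read back as the entry at index $0$ of a 1-tuple, so that it can be compared as an element of $\prod_C \mathcal{A}_i$ in the extensionality criterion. This is consistent because $\sigma$ preserves arity, so $\sigma$ sends 1-tuples to 1-tuples. Moreover, a 1-tuple is determined by its single entry, by Proposition \ref{Prop53}(3) once more. With this identification made explicit, the argument above goes through without further obstruction, and no other step requires more than citing the earlier results.
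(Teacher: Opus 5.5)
Your proposal is correct and takes essentially the same approach as the paper, whose proof simply cites Proposition~\ref{presentry} and Proposition~\ref{Prop53}(3). You additionally write out the two ingredients those citations leave implicit: arity preservation, which holds by the definition of $HAut$, and entrywise agreement, which gives extensionality.
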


\begin{proof}
    This follows from Propositions \ref{presentry} and \ref{Prop53} (3).
\end{proof}

\begin{Definition}
    Let $K_i/F$ be a uniformly computable sequence of field extensions.  The group of hyper-automorphisms of $\prod_C K_i /\prod_C F$, denoted $HAut ( \prod_C K_i  )$, is the group 

    $$\left \{ \sigma \in HAut \left( \prod_C \mathcal{T}_{K_i} \right) \middle| \forall p \in \prod_C (\Zb[x_1,x_2,\dots  ] ) \forall \vec{a} \in \prod_C \mathcal{T}_{K_i} , [ |\vec{a}| = deg(p) \implies p(\sigma (\vec{a}) )= \sigma (p(\vec{a}))]\right \}$$

Further, 

$$HAut \left( \prod_C K_i  /\prod_C F \right) = \left \{  \sigma \in HAut \left( \prod_C K_i \right) \middle| (\forall \vec{a} \in \prod_C \mathcal{T}_F) [\sigma(\vec{a}) =\vec{a}] \right \}$$

\end{Definition}

While it is strange that the hyper-automorphism group of a structure is a subgroup of the automorphism group of its tree of hyper-tuples, we must preserve the non-standard arities of tuples.  Classical automorphisms do this for free, since bijections preserve cardinality.  However, we must impose preservation of non-standard size from without.  A structure is canonically a subset of its tree of tuples by restricting to the set of $1$-tuples, so the automorphisms of the tree restrict to automorphisms of the field.  

The following lemma shows that the hyper-automorphism group $HAut(\prod_C K_i /\prod_C F )$ preserves univariate hyper-polynomial functions from $\prod_C F[x]$.

\begin{Lemma}\label{commutes-univ-hyper}
Let $\{K_i/F\}_{i\in \omega}$ be a uniformly computable sequence of field extensions.  Let $\sigma \in HAut(\prod_C K_i /\prod_C F)$,  let $p \in \prod_C F[x_0]$, and let $ a\in \prod_C K_i$.  Then $\sigma(p(a))= p(\sigma(a))$.
\end{Lemma}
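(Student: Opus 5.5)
The plan is to reduce the univariate statement to the defining property of $HAut(\prod_C K_i)$, which concerns hyper-polynomials with \emph{integer} coefficients in many variables, by absorbing the coefficients of $p$ into extra variables. Given $p \in \prod_C F[x_0]$, let $D = \deg(p) \in \prod_C \Nb$. I would build the coefficient hyper-tuple $\vec{c} \in \prod_C \mathcal{T}_F$ by setting $\vec{c}(i)$ to be the list of coefficients of $p(i)$, so that $|\vec{c}| = D+1$ and $(\vec{c})_J$ is the $J$-th coefficient. Next, define the ``generic'' integer hyper-polynomial $q \in \prod_C \Zb[x_1,x_2,\dots]$ by
\[
q(i) = \sum_{j \le D(i)} x_{j+1}\, x_{D(i)+2}^{\,j}.
\]
This is computable uniformly in $i$ from $D(i)$, and $\text{var}(q) = D+2$. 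Then form the hyper-tuple $\vec{a} = \vec{c}\,^\wedge (a) \in \prod_C \mathcal{T}_{K_i}$, where $\vec{c}$ is viewed in $\mathcal{T}_{K_i}$ via the embedding of $F$. Arities add by Proposition \ref{Prop53}, so $|\vec{a}| = D+2 = \text{var}(q)$ and $eval(q,\vec{a})$ is defined. Checking componentwise and applying the fundamental theorem of cohesive products gives $eval(q,\vec{a}) = p(a)$, since the set of $i$ where both are defined and agree is all of $C$ up to finitely many.

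Now apply $\sigma$. By the definition of $HAut(\prod_C K_i)$, $\sigma(q(\vec{a})) = q(\sigma(\vec{a}))$. Since $\sigma$ is an automorphism of $(\prod_C\mathcal{T}_{K_i},\preceq,{}^\wedge)$, we have $\sigma(\vec{a}) = \sigma(\vec{c})^\wedge \sigma((a))$. Because $\vec{c} \in \prod_C \mathcal{T}_F$ and $\sigma$ fixes such tuples by definition of $HAut(\prod_C K_i/\prod_C F)$, this equals $\vec{c}\,^\wedge(\sigma(a))$. Evaluating $q$ at this tuple gives $p(\sigma(a))$ by the same componentwise computation. Combining, $\sigma(p(a)) = p(\sigma(a))$.

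I expect the main obstacle to be a bookkeeping mismatch in the definition: $HAut(\prod_C K_i)$ imposes commutation under the hypothesis $|\vec{a}| = \deg(p)$, whereas $eval$ requires $|\vec{a}| = \text{var}(p)$. To satisfy both, I would pad $q$ so that its total degree equals its number of variables. Concretely, I would multiply $x_{D+2}^{j}$ by suitable powers of a dummy variable, or add a zero-valued monomial and place a dummy entry $1$ in $\vec{a}$. Either way, the dummy entries must be chosen from $F$ (e.g.\ the constant $1$), so that $\sigma$ still fixes them and the evaluation is unchanged. The remaining verifications are routine instances of the fundamental theorem of cohesive products together with Proposition \ref{presentry}, which ensures that $\sigma$ acts entrywise on concatenations.
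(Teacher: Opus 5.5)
Your proposal is correct and is essentially the paper's proof. The paper also writes $p(i)(x_0)=q(i)(x_0,\vec{c}(i))$, using the generic integral hyper-polynomial $q(i)=\sum_{j<N(i)} y_j x_0^{j}$ and the coefficient hyper-tuple $\vec{c}\in\prod_C\mathcal{T}_F$, and then concludes from $\sigma(q(a,\vec{c}))=q(\sigma(a),\sigma(\vec{c}))=q(\sigma(a),\vec{c})$. The one addition in your version is that you deal explicitly with the $\deg$ versus $\text{var}$ mismatch in the definition of $HAut$, which the paper's proof passes over (it is probably a typo for $\text{var}$); padding with a dummy entry $1$ from $F$ repairs it correctly.
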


\begin{proof}
     For each $i$, let $p(i)(x_0) = q(i)(x_0,\vec{c}(i))$, where $q(i)(x_0,y_0,\dots  ,y_{N(i)-1}) = \sum_{ j < N(i)} (y_{j}) (x_0)^{j} = y_{N(i)-1} x_0^{N(i)-1}+y_{N(i)-2}x_0^{N(i)-2}+\cdots+ y_0 $, and $\vec{c}(i) \in \mathcal{T}_{K_i}$ is the vector of coefficients of $p(i)$.

    Note that $\vec{c} \in \prod_C \mathcal{T}_{F}$, and $q(x_0,\vec{y}) \in \prod_C \Zb[x_0,y_0,y_1,\dots  ]$.

    Now $\prod_C K_i \models q(a, \vec{c}) = b$ by the fundamental theorem of cohesive products.  Therefore, $\sigma(b) = \sigma(p(a))=\sigma(q(a,\vec{c})) = q(\sigma(a),\sigma(\vec{c}))= q(\sigma(a),\vec{c}) =p(\sigma(a))$. Note that $\sigma (\vec{c}) = \vec{c}$, as $\sigma \in HAut(\prod_C K_i / \prod_C F)$, and as $\vec{c} \in \prod_C T_F$.
\end{proof}

\begin{Lemma}\label{pflemma}

Let $\{K_i/F\}_{i\in \omega}$ be a uniformly computable sequence of finite Galois extensions with uniformly computable Galois groups and actions $\{ G_i \curvearrowright K_i\}_{i\in \omega}$.  Let $ \alpha (i)$ be a primitive generator of $K_i /F$.  

An element $\sigma \in HAut ( \prod_C K_i / \prod_C F)$ is determined uniquely by the value of $\sigma (\alpha)$.

\end{Lemma}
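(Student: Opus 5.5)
By Corollary \ref{corhaut}, an element of $HAut(\prod_C K_i/\prod_C F)$ is determined by what it does on $1$-tuples, that is, on $\prod_C K_i$. So it suffices to show that $\sigma(\beta)$ is determined by $\sigma(\alpha)$ for every $\beta \in \prod_C K_i$. The plan is to write every $\beta$ as a hyper-polynomial in $\alpha$ with coefficients in $\prod_C F$, and then invoke Lemma \ref{commutes-univ-hyper}.

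\textbf{Step 1: computing $\alpha$.} Since the Galois groups and actions are uniformly computable, Proposition \ref{tfaegal} lets us compute a primitive generator $\alpha(i)$ of $K_i/F$ uniformly in $i$. By that proposition we can also compute its minimal polynomial and the degree $n_i = [K_i:F] = |G_i|$. Thus $\alpha$ is a total computable function and $\alpha \in \prod_C K_i$.

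\textbf{Step 2: the coefficient hyper-polynomial.} Fix $\beta \in \prod_C K_i$. Whenever $\beta(i)\downarrow$, the set $\{1,\alpha(i),\dots,\alpha(i)^{n_i-1}\}$ is an $F$-basis of $K_i$. Search for coefficients $c_0(i),\dots,c_{n_i-1}(i) \in F$ with $\beta(i) = \sum_{j<n_i} c_j(i)\alpha(i)^j$. This search halts because such coefficients exist; we compare candidate sums using the computable embedding of $F$ into $K_i$. Let $p_\beta(i) = \sum_{j<n_i} c_j(i)x_0^j \in F[x_0]$. Then $p_\beta$ is partial computable with $\mathrm{dom}(p_\beta) \supseteq \mathrm{dom}(\beta) \supseteq^* C$, so $p_\beta \in \prod_C F[x_0]$. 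Moreover $p_\beta(i)(\alpha(i)) = \beta(i)$ for almost every $i$, so $p_\beta(\alpha) = \beta$ in $\prod_C K_i$ by the fundamental theorem of cohesive products, applied to an atomic formula via the eval map.

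\textbf{Step 3: conclusion.} Let $\sigma,\tau \in HAut(\prod_C K_i/\prod_C F)$ with $\sigma(\alpha)=\tau(\alpha)$. By Lemma \ref{commutes-univ-hyper},
\[
\sigma(\beta) = \sigma(p_\beta(\alpha)) = p_\beta(\sigma(\alpha)) = p_\beta(\tau(\alpha)) = \tau(p_\beta(\alpha)) = \tau(\beta).
\]
Hence $\sigma$ and $\tau$ agree on all $1$-tuples, and Corollary \ref{corhaut} gives $\sigma = \tau$.

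The main obstacle is Step 2. The exponent range $n_i$ is non-standard, so $p_\beta$ is a genuine hyper-polynomial and not an element of $(\prod_C F)[x]$. This is exactly why Lemma \ref{commutes-univ-hyper}, which uses preservation of hyper-tuples of coefficients, is needed instead of ordinary field-automorphism reasoning. One must also check that the eval map applied to $p_\beta$ and the $1$-tuple $\alpha$ is defined, i.e.\ that the variable count matches the arity. This holds because $p_\beta$ is univariate in $x_0$, so it agrees with the convention used in Lemma \ref{commutes-univ-hyper}.
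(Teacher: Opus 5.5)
Your proposal is correct and is essentially the paper's proof. You compute the primitive generator $\alpha$ uniformly via Proposition~\ref{tfaegal}, write each $\beta$ as a univariate hyper-polynomial $p_\beta\in\prod_C F[x_0]$ evaluated at $\alpha$, apply Lemma~\ref{commutes-univ-hyper} to get $\sigma(\beta)=p_\beta(\sigma(\alpha))$, and conclude with Corollary~\ref{corhaut}, which is the same chain of steps the paper uses. Your version is slightly more careful in two places: you take $\deg p_\beta(i)\le n_i-1$ where the paper writes ``equal to $N(i)-1$'', and you explicitly check that $\mathrm{dom}(p_\beta)\supseteq^* C$.
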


\begin{proof}
    We have that $\alpha(i)$ is computable via \ref{tfaegal}.  Let $ b\in \prod_C K_i$ be arbitrary. Let $N(i) = [K_i:F]$.  Then $b(i)$ may be expressed uniquely as $ p(i)(\alpha(i))$, where $p(i) \in F[x_0]$ of degree equal to $N(i)-1$, whenever $b(i)$ exists.  This $p(i)$ may be found computably by searching through $F[x_0]$ and evaluating every polynomial on $\alpha(i)$.  

    By \ref{commutes-univ-hyper} $\sigma (b) = \sigma (p(\alpha)) = p( \sigma(\alpha))$.  By \ref{corhaut}, this uniquely determines $\sigma$.   
\end{proof} 

It is interesting to note that $N\in \prod_C \Nb$ is the non-standard degree of the pseudofinite extension $\prod_C K_i / \prod_C F$.

\begin{Theorem} \label{Thm-HAut-is-Product}
    Let $\{K_i /F\}_{i\in \omega} $ be a uniformly computable sequence of finite Galois extensions with Galois groups $G_i$ uniformly computable, and Galois actions uniformly computable.  Then $HAut ( \prod_C K_i / \prod_C F) \cong \prod_C G_i$.  
\end{Theorem}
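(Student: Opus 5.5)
We construct a map $\Theta:\prod_C G_i \to HAut(\prod_C K_i/\prod_C F)$ and show it is a bijective homomorphism. Given $[\sigma]\in\prod_C G_i$, we first let it act on $\prod_C K_i$ by $[\sigma][\beta]=[\sigma\circ\beta]$, as in Theorem \ref{Bad Description of the Kernel}. We then extend this to hyper-tuples coordinatewise: for $\vec a\in\prod_C\mathcal{T}_{K_i}$, set $(\Theta(\sigma)\vec a)(i)=\sigma(i)(\vec a(i))$, applying $\sigma(i)$ to each entry of the finite tuple $\vec a(i)$. This is uniformly computable because the actions $G_i\curvearrowright K_i$ are. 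It is well defined modulo $\simeq_C$, and its inverse is induced by $[\sigma^{-1}]$.

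Next we verify that $\Theta(\sigma)$ really is a hyper-automorphism. All required properties reduce to atomic or quantifier-free facts holding for almost every $i$, so they follow from the fundamental theorem of cohesive products.
\begin{itemize}
\item Preservation of $\preceq$ and ${}^\wedge$ holds because coordinatewise application commutes with concatenation and extension in each $\mathcal{T}_{K_i}$.
\item Preservation of arity holds since $|\sigma(i)\vec a(i)|=|\vec a(i)|$.
\item For $p\in\prod_C\Zb[x_1,x_2,\dots]$ with $|\vec a|$ matching, we have $p(i)(\sigma(i)\vec a(i))=\sigma(i)(p(i)(\vec a(i)))$, because $\sigma(i)$ is a field automorphism and $p(i)$ has integer coefficients.
\item Tuples from $\prod_C\mathcal{T}_F$ are fixed because $\sigma(i)$ fixes $F$.
\end{itemize}
The homomorphism property $\Theta(\sigma\tau)=\Theta(\sigma)\Theta(\tau)$ is immediate coordinatewise. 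Injectivity follows from the faithfulness argument in Theorem \ref{Bad Description of the Kernel}: if $[\sigma]\ne e$, search for the first element of $K_i$ moved by $\sigma(i)$ to get a witness in $\prod_C K_i$.

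The main work is surjectivity. Let $\tau\in HAut(\prod_C K_i/\prod_C F)$. Using Proposition \ref{tfaegal}, uniformly compute a primitive generator $\alpha(i)$ of $K_i/F$ and its minimal polynomial $p_\alpha(i)$, so that $p_\alpha\in\prod_C F[x]$. By Lemma \ref{commutes-univ-hyper}, $p_\alpha(\tau(\alpha))=\tau(p_\alpha(\alpha))=\tau(0)=0$. Write $\beta=\tau(\alpha)$, so $p_\alpha(i)(\beta(i))=0$ for almost every $i$.

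Define $\sigma(i)$ to be the unique element of $G_i$ with $\sigma(i)\alpha(i)=\beta(i)$. It exists because $K_i/F$ is Galois and $\beta(i)$ is a conjugate of $\alpha(i)$, and it can be found by a computable search through the finite group $G_i$. This $\sigma$ is partial computable with domain $\supseteq^* C$, so $[\sigma]\in\prod_C G_i$. Then $\Theta(\sigma)$ and $\tau$ are both hyper-automorphisms over $\prod_C F$ that agree on $\alpha$, hence they are equal by Lemma \ref{pflemma}.

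The obstacle I expect is in making Lemma \ref{pflemma} apply cleanly. It requires that every $b\in\prod_C K_i$ is $p(\alpha)$ for some hyper-polynomial $p\in\prod_C F[x]$, which the lemma supplies. It also requires that agreement on $1$-tuples extends to agreement on all hyper-tuples, which Corollary \ref{corhaut} gives. So the essential points are computability of the conjugate-matching search and the fact that $\tau$ sends roots of $p_\alpha$ to roots of $p_\alpha$.
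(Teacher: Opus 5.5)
Your proposal is correct and follows the paper's proof essentially step for step. Both arguments extend the Galois actions coordinatewise to hyper-tuples, check the hyper-automorphism conditions almost everywhere, and get surjectivity by showing $\tau(\alpha)$ is a conjugate of the primitive generator (via Lemma \ref{commutes-univ-hyper}), running a computable search in $G_i$, and applying Lemma \ref{pflemma}. The only differences are minor: you prove injectivity with the faithfulness witness from Theorem \ref{Bad Description of the Kernel} rather than by testing on $\alpha$ as the paper does, and you explicitly justify that each $\Theta(\sigma)$ is a bijection via $[\sigma^{-1}]$, which the paper leaves implicit.
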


\begin{proof}

First note the Galois actions can be extended to an action $G_i \curvearrowright \mathcal{T}_{K_i}$ by acting coordinate-wise.  The map $\prod_C G_i \to HAut(\prod_C \mathcal{T}_{K_i})$ is the cohesive product of the extended Galois actions $G_i \curvearrowright \mathcal{T}_{K_i}$.  That is, $g \in \prod_C G_i$ is mapped to $\psi_g$, where 
$$\left(\psi_g \left(\vec{a}\right)\right)\left(i\right)= g(i)(\vec{a}(i))= \left(g\left(i\right)\left(\vec{a}\left(i\right)\right)_0,g\left(i\right)\left(\vec{a}\left(i\right)\right)_1,\dots  ,g\left(i\right)\left(\vec{a}\left(i\right)\right)_{|\vec{a}(i)|-1}\right)$$

$\psi_g \curvearrowright \prod_C \mathcal{T}_{K_i}$ preserves $\preceq, ^\wedge$ and arity, as $g(i) \curvearrowright \mathcal{T}_{K_i}$ preserves $\preceq, ^\wedge$ and arity for every $i$. Thus we conclude $\psi_g $ is a well defined element of $HAut(\prod_C \mathcal{T}_{K_i})$.

Further, 

$$\psi_g (\psi_h(\vec{a}))(i)=\psi_g(h(i)(\vec{a}(i))=g(i)(h(i)(\vec{a}(i)))=((g\circ h)(i))(\vec{a}(i))=\psi_{g\circ h} (\vec{a})(i)$$

Thus, the function $\prod_C G_i \to HAut(\prod_C \mathcal{T}_{K_i})$ is a group homomorphism.  

As we now have a well defined group action $\prod_C G_i \curvearrowright \prod_C \mathcal{T}_{K_i}$, we will write $h(\vec{a}) = \vec{b}$ to mean $\psi_h(\vec{a})=\vec{b}$.  Note that this action agrees on 1-tuples with the action from \ref{Bad Description of the Kernel}. 

Now, we will show that $\psi_g \in HAut(\prod_C K_i)$.  That is, for integral hyper-polynomials $p\in \prod_C \Zb[x_0,x_1,\dots  ]$ and $\vec{a} \in \prod_C \mathcal{T}_{K_i}$ with $var(p)=|\vec{a}|$, $\psi_g(p(\vec{a})) = p(\psi_g(\vec{a}))$.  This is true if and only if $\left\{ i | g\left(i\right)\left(p\left(i\right)\left(\vec{a}\left(i\right)\right)\right)= p\left(i\right)\left(g\left(i\right)\left(\vec{a}\left(i\right)\right)\right)\right\} \supseteq^* C$.

We have that  $g(i)$ always commutes with integral polynomials, as its action is a field automorphism.  Thus
$$\left\{ i | g\left(i\right)\left(p\left(i\right)\left(\vec{a}\left(i\right)\right)\right)= p\left(i\right)\left(g\left(i\right)\left(\vec{a}\left(i\right)\right)\right)\right\} = dom(g)\cap dom(p)\cap dom(\vec{a})\cap \{ i \mid var(p(i))=\vec{a}(i) \} \supseteq^*C$$

Each of the latter sets is large, as $g\in \prod_C G_i$, $p\in \prod_C \Zb[x_0,x_1,\dots  ], \vec{a} \in \prod_C \mathcal{T}_{K_i}$ and $\Nb \models var(p) = |\vec{a}|$. 

Further, $\psi_g \in HAut( \prod_C K_i / \prod_C F)$.  To see this, note that for $\vec{b} \in \prod_C \mathcal{T}_F$, 

$$\psi_g ( \vec{b})(i) = g(i) \vec{b}(i) = \vec{b}(i)$$

whenever $\vec{b}(i)$ and $g(i)$ are both defined, which happens for almost every $i$. 

We show that the kernel of this map is trivial.  Suppose $\psi_g$ is trivial.  Then $\psi_g(\alpha) = \alpha$, where $\alpha(i)$ is the primitive generator for $K_i/F$.  Thus, $g(i) \alpha(i) = \alpha(i)$ for almost every $i$.  As $\alpha(i)$ is a generator for $K_i$, we must have $g(i) = e_{G_i}$ for almost every $i$.  That is, $\prod_C G_i \models g = e$.  

Now, we establish surjectivity.  Let $\sigma \in HAut ( \prod_C K_i /\prod_C F)$, and let $\beta=\sigma(\alpha)$.  Let $p_\alpha \in \prod_C F[x]$ be the minimal hyper-polynomial of $\alpha$.  That is, $p_\alpha(i)$ is the minimal polynomial of $\alpha(i)$ for almost every $i$.  Since, by Lemma \ref{commutes-univ-hyper}, $0=\sigma(0)=\sigma(p_\alpha(\alpha)) = p_\alpha(\sigma(\alpha))=p_\alpha(\beta)$ , we must have also $p_\alpha(i) (\beta(i))= 0$ for almost every $i$.  Thus, $\beta(i)$ is a Galois conjugate of $\alpha(i)$ for almost every $i$.  

Knowing this, we define $h(i)$ to be the unique element of $G_i$ with $h(i)\alpha(i) = \beta(i)$. We may compute $h(i)$ by searching through $G_i$ for some $k$ with $k\alpha(i)=\beta(i)$.  This is an effective search, since $G_i \curvearrowright K_i$ is uniformly computable. By construction, $h (\alpha) = \beta=\sigma(\alpha)$.  Since $\psi_h(\alpha) =h(\alpha)$, Lemma \ref{pflemma} gives us that $\psi_h = \sigma$. The map is thus an isomorphism.  
    
\end{proof}

The naturalness of the $HAut$ groups is clear from the above result, in contrast to Theorem \ref{Bad Description of the Kernel}.  

Now we consider infinite extensions. 

\begin{Lemma} \label{HAut-commutes-with-restriction}
    Let $F \subseteq K_0\subseteq\cdots\subseteq K$ be a fully computable, infinite Galois extension in the sense of Definition \ref{def-fully-computably}.  Let $N,M \in \prod_C \Nb$ with $N < M$.  Let $ \prod_{C,M} G_i \to \prod_{C,N} G_i$ be the induced quotient map as in Theorem \ref{Thm-Gal-Extension-Commutes}.  The following diagram commutes:

    % https://q.uiver.app/#q=WzAsNCxbMCwwLCJcXHByb2Rfe0MsTX1HX2kiXSxbMCwyLCJcXHByb2Rfe0MsTn0gR19pIl0sWzIsMiwiSEF1dChcXHByb2Rfe0MsTn0gS19pLyBcXHByb2RfQyBGKSAiXSxbMiwwLCJIQXV0KFxccHJvZF97QyxNfSBLX2kvIFxccHJvZF9DIEYpICJdLFswLDFdLFsxLDIsIlxcc2ltZXEiLDAseyJsZXZlbCI6Miwic3R5bGUiOnsiaGVhZCI6eyJuYW1lIjoibm9uZSJ9fX1dLFswLDMsIlxcc2ltZXEiLDIseyJsZXZlbCI6Miwic3R5bGUiOnsiaGVhZCI6eyJuYW1lIjoibm9uZSJ9fX1dLFszLDJdXQ==
\[\begin{tikzcd}
	{\prod_{C,M}G_i} && {HAut(\prod_{C,M} K_i/ \prod_C F) } \\
	\\
	{\prod_{C,N} G_i} && {HAut(\prod_{C,N} K_i/ \prod_C F) }
	\arrow["\simeq"', equals, from=1-1, to=1-3]
	\arrow[from=1-1, to=3-1]
	\arrow[from=1-3, to=3-3]
	\arrow["\simeq", equals, from=3-1, to=3-3]
\end{tikzcd}\]

That is to say, the induced quotient map $\prod_{C,M} G_i \to \prod_{C,N} G_i$ is the same as the restriction map $HAut (\prod_{C,M} K_i/ \prod_C F) \to HAut(\prod_{C,N} K_i/\prod_C F)$, via the isomorphism from Theorem \ref{Thm-HAut-is-Product}.

\end{Lemma}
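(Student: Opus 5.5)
The plan is to chase a single element $g \in \prod_{C,M} G_i$ around the square and compare the two resulting hyper-automorphisms of $\prod_{C,N} K_i/\prod_C F$ on $1$-tuples, using Corollary \ref{corhaut}. First I need the right vertical map to make sense. For $\sigma \in HAut(\prod_{C,M} K_i/\prod_C F)$, take the restriction to $\prod_C \mathcal{T}_{K_{N(i)}}$. By Observation \ref{Not so clear after all}, $\prod_{C,N} K_i \subseteq \prod_{C,M} K_i$, and likewise $\prod_C \mathcal{T}_{K_{N(i)}} \subseteq \prod_C \mathcal{T}_{K_{M(i)}}$. So I must check that $\sigma$ maps the smaller tree into itself.

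For that stability step I would not argue abstractly. Instead I would use Theorem \ref{Thm-HAut-is-Product} applied to the sequence $\{K_{M(i)}/F\}$. This sequence is uniformly computable with computable groups and actions, because $K/F$ is fully computable and $M$ is partial computable. So every such $\sigma$ equals $\psi_h$ for some $h \in \prod_{C,M} G_i$, acting coordinatewise. For almost every $i$, $h(i)$ is a field automorphism of the Galois extension $K_{M(i)}/F$. It therefore maps the normal subfield $K_{N(i)}$ onto itself, acting there as $f_{M(i),N(i)}(h(i))$. Hence $\psi_h$ preserves $\prod_C\mathcal{T}_{K_{N(i)}}$.

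The restriction is then the map $\vec a \mapsto (i \mapsto f_{M(i),N(i)}(h(i))(\vec a(i)))$. This is exactly $\psi_{f_{M,N}(h)}$ as defined in the proof of Theorem \ref{Thm-HAut-is-Product} for the sequence $K_{N(i)}$. In particular the restriction lies in $HAut(\prod_{C,N}K_i/\prod_C F)$, since it is the image of an element of $\prod_{C,N} G_i$ under that isomorphism. This single computation gives both the well-definedness of the right arrow and the commutativity of the square.

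The equality needed is the one already established in the proof of Theorem \ref{Thm-Gal-Extension-Commutes}:
\[
\{ i \mid h(i)\alpha(i) = f_{M(i),N(i)}(h(i))\alpha(i)\} \supseteq \{i \mid \alpha(i)\in K_{N(i)}\}\cap\{i\mid h(i)\in G_{M(i)}\} \supseteq^* C.
\]
It is applied coordinatewise to tuples, or only to $1$-tuples, after which I invoke Corollary \ref{corhaut}. Alternatively, I could evaluate at the primitive generator of $K_{N(i)}$ and use Lemma \ref{pflemma}.

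The main obstacle is bookkeeping rather than mathematics. The cohesive products are indexed by $K_{M(i)}$ versus $K_{N(i)}$, and the embeddings $K_{N(i)} \hookrightarrow K_{M(i)}$ are given by the uniformly computable maps $\Phi$. Relations such as ``$\alpha(i) \in K_{N(i)}$'' must be read through these embeddings, and I would make that identification explicit once at the start. I would also note that $N < M$ gives $N(i) < M(i)$ for almost every $i$, so that $f_{M(i),N(i)}$ is defined on a set containing almost all of $C$.
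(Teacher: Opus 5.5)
Your proposal is correct and is essentially the paper's argument. Both reduce commutativity to the coordinatewise fact that, for almost every $i$, $g(i)$ acts on $K_{N(i)}$ (and hence on tuples from it) exactly as $f_{M(i),N(i)}(g(i))$, because $K_{N(i)}/F$ is Galois. Your extra step of writing each $\sigma$ as $\psi_h$, to check that restriction actually lands in $HAut(\prod_{C,N} K_i/\prod_C F)$, is a detail the paper leaves implicit; it is a welcome addition rather than a change of route.
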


\begin{proof}
    For finite numbers $n,m \in \Nb$ with $n<m$, we let $f_{m,n} : G_m \to G_n$ be the canonical quotient map.  For infinite numbers $N,M \in \prod_C \Nb$, with $N<M$, $f_{M,N} : \prod_{C,M} G_i \to \prod_{C,N} G_i$ is as in Theorem \ref{Thm-Gal-Extension-Commutes}.  That is to say, $f_{M,N}(g)(i)= f_{M(i),N(i)}(g(i))$.

    Let $\vec{a} \in \prod_{C,N} \mathcal{T}_{K_i}$ which is a subset of $\prod_{C,M}\mathcal{T}_{K_i}$ by Lemma \ref{sequence-extension-lemma}, and let $g \in \prod_{C,M}G_i$.  It is sufficient to show that $g \vec{a} = (f_{M,N}(g))\vec{a}$.  This is true, as the set $\{ i \mid g(i) \vec{a}(i)=f_{M(i),N(i)}g(i) \vec{a}(i)\} = dom (g) \cap dom(\vec{a})\cap dom(N)\cap dom(M) \supseteq^* C $.
\end{proof}

\begin{Theorem} \label{HAut-Projective-Limit-Theorem}
    Let $F \subseteq K_0\subseteq\cdots\subseteq K$ be a fully computable, infinite Galois extension in the sense of Definition \ref{def-fully-computably}.  Then, $$HAut ( \prod_C K/\prod_C F) \cong \varprojlim_{N \in \prod_C \Nb} HAut (\prod_{C,N} K_i / \prod_C F ) \cong \varprojlim_{N\in \prod_C \Nb }\prod_{C,N} G_i$$

    That is, the hyper-automorphism group of the cohesive power of an infinite Galois extension is isomorphic to the projective limit of the hyper-automorphism groups of the canonical pseudo-finite sub-extensions (\ref{sequence-extension-lemma}), which are in turn isomorphic to the pseudo-finite cohesive product of finite Galois groups.
\end{Theorem}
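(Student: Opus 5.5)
The second isomorphism follows from the first: Theorem \ref{Thm-HAut-is-Product}, applied to the sequence $\{K_{M(i)}/F\}$, gives $HAut(\prod_{C,N} K_i/\prod_C F)\cong\prod_{C,N}G_i$ for each $N$. Lemma \ref{HAut-commutes-with-restriction} says these isomorphisms intertwine the restriction maps with the quotient maps $f_{M,N}$. So the two inverse systems are isomorphic, and hence so are their limits. The real content is the first isomorphism. I would define it as the map
$$\Theta: HAut\Big(\prod_C K/\prod_C F\Big)\to \varprojlim_{N} HAut\Big(\prod_{C,N}K_i/\prod_C F\Big),\qquad \sigma\mapsto\{\sigma\upharpoonright \prod_{C,N}\mathcal{T}_{K_i}\}_N .$$

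\textbf{Step 1: $\Theta$ is well defined.} For this I must show that $\sigma$ maps $\prod_{C,N}\mathcal{T}_{K_i}$ into itself. Let $\alpha_N(i)$ be the primitive generator of $K_{N(i)}$; it is computable by Proposition \ref{tfaegal}, and let $p_{\alpha_N}$ be its minimal hyper-polynomial over $\prod_C F$. By Lemma \ref{commutes-univ-hyper}, $\sigma(\alpha_N)$ is a root of $p_{\alpha_N}$. Taking a representative, almost every $\sigma(\alpha_N)(i)$ is a root of $p_{\alpha_N(i)}$ in $K$, and since $K_{N(i)}/F$ is normal, it lies in $K_{N(i)}$. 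So $\sigma(\alpha_N)\in\prod_{C,N}K_i$.

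An arbitrary $b\in\prod_{C,N}K_i$ equals $p(\alpha_N)$ for some $p\in\prod_C F[x]$, with $p$ found computably as in Lemma \ref{pflemma}. Hence $\sigma(b)=p(\sigma(\alpha_N))\in\prod_{C,N}K_i$. For a hyper-tuple I apply this entrywise, using Proposition \ref{presentry}, and then use Proposition \ref{Prop53}(3) to conclude $\sigma(\vec a)\in\prod_{C,N}\mathcal{T}_{K_i}$. The same argument applied to $\sigma^{-1}$ gives surjectivity onto $\prod_{C,N}\mathcal{T}_{K_i}$.

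The remaining properties are inherited. Preservation of arity, $\preceq$, ${}^\wedge$, integral hyper-polynomials and $\prod_C\mathcal{T}_F$ all pass to the restriction, since $eval$ computed inside $\prod_{C,N}$ agrees with $eval$ in $\prod_C K$. Compatibility of the family under restriction is automatic. Clearly $\Theta$ is a homomorphism.

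\textbf{Step 2: $\Theta$ is injective.} By Lemma \ref{sequence-extension-lemma} (applied to tuples, taking $N(i)$ large enough to contain every entry of $\vec a(i)$), every $\vec a\in\prod_C\mathcal{T}_K$ lies in some $\prod_{C,N}\mathcal{T}_{K_i}$. So a $\sigma$ that is trivial on each piece is trivial.

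\textbf{Step 3: $\Theta$ is surjective.} This is the step I expect to be the main obstacle. Given a compatible family $\{\sigma_N\}$, I define $\sigma(\vec a)=\sigma_N(\vec a)$ for any $N$ with $\vec a\in\prod_{C,N}\mathcal{T}_{K_i}$. Compatibility makes this independent of $N$: for $N,N'$ both valid, pass to $\max(N,N')$, which exists in $\prod_C\Nb$, and use Observation \ref{Not so clear after all}.

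The subtle point is checking the hyper-automorphism axioms for $\sigma$ on all of $\prod_C\mathcal{T}_K$. Given $\vec a,\vec b$, or $p$ and $\vec a$, I choose a single $N$ bounding everything in sight. For an integral hyper-polynomial $p$ this works because $p(\vec a)$ lies in the field generated by the entries of $\vec a$, so it already lies in $\prod_{C,N}K_i$. Each axiom then follows from the corresponding property of $\sigma_N$.

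Bijectivity follows by building the inverse from the compatible family $\{\sigma_N^{-1}\}$. Fixing $\prod_C\mathcal{T}_F$ is inherited from the $\sigma_N$. This yields $\Theta(\sigma)=\{\sigma_N\}$.
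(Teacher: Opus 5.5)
Your proposal is correct and follows essentially the same route as the paper: restrict $\sigma$ to each $\prod_{C,N}K_i$, get injectivity from Lemma~\ref{sequence-extension-lemma}, glue a compatible family $\{\sigma_N\}$ for surjectivity, and obtain the second isomorphism from Theorem~\ref{Thm-HAut-is-Product}, with Lemma~\ref{HAut-commutes-with-restriction} matching the two inverse systems. Your Step~1, which shows that $\sigma$ actually preserves $\prod_{C,N}\mathcal{T}_{K_i}$, is a step the paper omits, and one spot in it should be tightened: Proposition~\ref{Prop53}(3) only compares two given hyper-tuples, so you need a comparison tuple, for instance $\psi_h(\vec a)$ with $h\in\prod_{C,N}G_i$ chosen so that $h\alpha_N=\sigma(\alpha_N)$; this tuple has the same arity and the same entries as $\sigma(\vec a)$, and the comparison also shows that $\sigma\upharpoonright\prod_{C,N}\mathcal{T}_{K_i}=\psi_h$.
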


\begin{proof}

We construct the map $HAut( \prod_C K /\prod_C F) \to \varprojlim_{N \in \prod_C \Nb} HAut (\prod_{C,N} K_i / \prod_C F ) $ by sending $\sigma \in HAut (\prod_C K/ \prod_C F)$ to $\{ \sigma \upharpoonright \prod_{C,N} K_i \}_{N\in \prod_C \Nb}$.  

Suppose we have $ \sigma \in HAut (\prod_C K/ \prod_C F)$ with $\sigma \not= e$.  Then there is some $a \in \prod_C K$ with $\sigma (a) \not= a$, by Corollary \ref{corhaut}. Since $\prod_C K = \bigcup_{N \in \prod_C \Nb} \prod_{C,N} K_i$, $a \in \prod_{C,N} K_i$ for some $N$.  Thus $ \sigma \upharpoonright \prod_{C,N} K_i $ is non-trivial.  Therefore, the map is injective.

For surjectivity, let $\{ \tau_N \}_{N\in \prod_C \Nb} \in \varprojlim HAut(\prod_{C,N} K_i/\prod_C F)$.  First, note that each $\vec{a} \in \prod_C \mc{T}_K $ is in some $\prod_{C,N} T_{K_i}$ via \ref{sequence-extension-lemma}, as $T_K = \bigcup_{i\in \omega} T_{K_i} $.  Since each $\vec{a} \in \prod_C \mathcal{T}_{K}$ is contained in some $\prod_{C,N} \mathcal{T}_{K_i}$, we may take $\tau (\vec{a}) = \tau_N (\vec{a})$ for sufficiently large $N$. Via the compatibility for $\prod_{C,N} G_i$ from Lemma \ref{HAut-commutes-with-restriction}, $\tau(\vec{a})$ does not depend on the choice of $N$.  Further, $\tau(p(\vec{a}))=\tau_N(p(\vec{a}))=p(\tau_N(\vec{a}))=p(\tau(\vec{a}))$, for integral hyper-polynomials $p\in \prod_C \Zb [x_0,x_1,...]$, as $\tau_N \in HAut(\prod_{C,N} K_i /\prod_C F)$.  Thus $\tau \in HAut(\prod_CK/\prod_C F)$.

Clearly $\tau \mapsto \{ \tau_N\}_{N\in \prod_C \Nb}$.  Thus, the map is surjective, and hence an isomorphism.  Further, by Theorem \ref{Thm-HAut-is-Product}, $\varprojlim_{N \in \prod_C \Nb} HAut (\prod_{C,N} K_i / \prod_C F ) \cong \varprojlim_{N\in \prod_C \Nb }\prod_{C,N} G_i$.

\end{proof}

The purpose of the following lemma is to simplify notation.

\begin{Lemma} \label{We may freely conflate these three groups}
    Let $E/F$ be a finite Galois extension with Galois group $G$.  The following are canonically isomorphic, and the isomorphism commutes with the Galois action.

\begin{enumerate}
    \item $ G \curvearrowright E$

    \item $Gal( \prod_C E/\prod_C F) \curvearrowright \prod_C E$

    \item $\prod_C G \cong HAut(\prod_C E/ \prod_C F)$
\end{enumerate}

Moreover, the map $HAut(\prod_C E/ \prod_C F) \to Gal(\prod_C E/\prod_C F) $ is induced by ``forgetting'' the hyper-field structure on $\prod_C E$, and $Gal(\prod_C E/ \prod_C F) \to G$ is induced by restriction to $E$.

In a diagram:

% https://q.uiver.app/#q=WzAsMTAsWzAsMSwiSEF1dChcXHByb2RfQyBIL1xccHJvZF9DRikiXSxbMiwxLCJHYWwoXFxwcm9kX0MgSC9cXHByb2RfQ0YpIl0sWzQsMSwiR2FsKEgvRikiXSxbNCwzLCJIIl0sWzAsMywiXFxwcm9kX0MgSCJdLFsyLDMsIlxccHJvZF9DIEgiXSxbMSwwXSxbMSw0XSxbMCwwLCJIeXBlcmZpZWxkcyJdLFszLDAsIkZpZWxkcyJdLFszLDUsIiIsMCx7InN0eWxlIjp7InRhaWwiOnsibmFtZSI6Imhvb2siLCJzaWRlIjoiYm90dG9tIn19fV0sWzQsNV0sWzAsMSwiXFxjb25nIiwyLHsibGFiZWxfcG9zaXRpb24iOjMwfV0sWzEsMiwiXFxjb25nIiwyXSxbMCw0LCIiLDEseyJjdXJ2ZSI6NX1dLFsxLDUsIiIsMSx7ImN1cnZlIjo1fV0sWzIsMywiIiwxLHsiY3VydmUiOjV9XSxbNiw3LCIiLDIseyJzdHlsZSI6eyJib2R5Ijp7Im5hbWUiOiJkYXNoZWQifSwiaGVhZCI6eyJuYW1lIjoibm9uZSJ9fX1dLFs4LDksIlxcdGV4dHsgRm9yZ2V0ZnVsIEZ1bmN0b3J9IiwyXV0=
\[\begin{tikzcd}
	Hyperfields & {} && Fields & \\
	{HAut(\prod_C E/\prod_CF)} && {Gal(\prod_C E/\prod_CF)} && {Gal(E/F)} \\
	\\
	{\prod_C E} && {\prod_C E} && E \\
	& {}
	\arrow["{\text{ Forgetful Functor}}"', from=1-1, to=1-4]
	\arrow[dashed, no head, from=1-2, to=5-2]
	\arrow["\cong"'{pos=0.3}, from=2-1, to=2-3]
	\arrow[curve={height=30pt}, from=2-1, to=4-1]
	\arrow["\cong"', from=2-3, to=2-5]
	\arrow[curve={height=30pt}, from=2-3, to=4-3]
	\arrow[curve={height=30pt}, from=2-5, to=4-5]
	\arrow[from=4-1, to=4-3]
	\arrow[hook', from=4-5, to=4-3]
\end{tikzcd}\]

\end{Lemma}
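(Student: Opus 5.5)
The plan is to assemble the three isomorphisms from results already in hand and then check that each one commutes with the relevant action. Throughout, $E/F$ is a computable finite Galois extension with $G$ finite. Its Galois group, its action and a primitive element $\alpha$ are therefore computable, using Proposition \ref{tfaegal}.

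\textbf{From (1) to (2).} Here I would apply Theorem \ref{Finite Galois Groups Cohesive Power Finite Extensions} together with the corollary on Galois-ness. Those arguments were written for $F=\Qb$, but they only used two facts:
\begin{itemize}
\item $\prod_C E = (\prod_C F)(\alpha)$, with $\alpha$ having the same minimal polynomial (Theorem \ref{Coh Pow number fields have same degree}, whose proof is a $\Sigma^0_3$/$\Pi^0_1$ transfer that works for any computable $F$ given as a computable subset of $E$);
\item the computability of each $\sigma\in G$.
\end{itemize}
So I would send $\sigma\mapsto\Psi_\sigma$, where $\Psi_\sigma([\beta])=[\sigma\circ\beta]$. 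This is an automorphism over $\prod_C F$ that extends $\sigma$ along the canonical embedding $E\hookrightarrow\prod_C E$.

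\textbf{Injectivity and surjectivity of $\sigma\mapsto\Psi_\sigma$.} Injectivity holds because $\Psi_\sigma$ restricts to $\sigma$ on $E$. For surjectivity, I would avoid the definability argument used in the $\Qb$ case. Any automorphism $\Psi$ over $\prod_C F$ must send $\alpha$ to a root of $p_\alpha$, and all roots of $p_\alpha$ already lie in $E$. Hence $\Psi$ is determined by $\Psi(\alpha)=\sigma(\alpha)$ for some $\sigma\in G$, which gives $\Psi=\Psi_\sigma$. In the reverse direction, the map $Gal(\prod_C E/\prod_C F)\to G$ is restriction to $E$; this is well defined since $\Psi(E)=E$ by the same root argument. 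This step also gives the claimed description of the map to $G$ as restriction.

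\textbf{From (2) to (3).} Theorem \ref{Thm-HAut-is-Product}, applied to the constant sequence $K_i=E$, gives $\prod_C G\cong HAut(\prod_C E/\prod_C F)$ via $g\mapsto\psi_g$, acting coordinatewise. Since $G$ is finite, every $g\in\prod_C G$ is eventually constant on $C$. So $\prod_C G=\{[f_\sigma]:\sigma\in G\}$ and $\prod_C G\cong G$ canonically.

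\textbf{Compatibility.} I would then check that the composite $G\to\prod_C G\to HAut\to Gal$ sends $\sigma$ to $\Psi_\sigma$. The middle map is the "forgetful" one, restricting $\psi$ from $\prod_C\mathcal{T}_E$ to $1$-tuples, and $\psi_{[f_\sigma]}$ acts on $1$-tuples by $[\beta]\mapsto[\sigma\circ\beta]=\Psi_\sigma([\beta])$. That this restriction map is an isomorphism onto $Gal$ then follows from counting: both groups have order $|G|$ and the composite is a bijection. Injectivity can also be seen directly from Corollary \ref{corhaut}.

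\textbf{Remaining checks and main obstacle.} Commutation with the actions is immediate from these formulas. On $E\subseteq\prod_C E$ the action of $\Psi_\sigma$ is just $\sigma$, and for $\psi_g$ the action on $\prod_C E$ agrees with $\Psi_\sigma$ by definition. I expect the only genuinely delicate point to be the step from (1) to (2), namely justifying that Theorem \ref{Coh Pow number fields have same degree} and Theorem \ref{Finite Galois Groups Cohesive Power Finite Extensions} extend beyond $\Qb$. Concretely, this requires $F$ to be a computable subset of $E$ so that the predicate $Q$ can be added. That holds because $F$ is exactly the fixed set of the computable finite action $G\curvearrowright E$.
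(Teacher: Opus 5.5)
Your proposal is correct. For the $HAut$ leg it matches the paper: $\prod_C G\cong G$ because $G$ is finite, and Theorem~\ref{Thm-HAut-is-Product} applies to the constant sequence. You differ genuinely on identifying $Gal(\prod_C E/\prod_C F)$ with $G$, which the paper's two-line proof leaves implicit. The only place the paper proves this is Theorem~\ref{Finite Galois Groups Cohesive Power Finite Extensions}, for number fields over $\Qb$. There surjectivity comes from defining $K$ inside $\prod_C K$ using Park's theorem and Hilbert's tenth problem for $O_K$, and that argument does not obviously extend to an arbitrary computable base $F$. Your root-counting argument replaces it and works for every base field. The $n$ distinct roots of $p_\alpha$ already lie in $E$, and $\prod_C E$ is a field, so $\Psi(\alpha)=\sigma(\alpha)$ for some $\sigma\in G$; hence $\Psi=\Psi_\sigma$ because $\prod_C E=(\prod_C F)(\alpha)$. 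Your remark that $F$ is decidable in $E$, as the fixed set of $G$, is exactly what is needed to port Theorem~\ref{Coh Pow number fields have same degree}. You also check, via Corollary~\ref{corhaut} or counting, that the forgetful map is the isomorphism and that the three actions match. That is the content of ``canonically'' and ``commutes with the Galois action'', which the paper asserts without verification; its version buys only brevity. One simplification is available. Porting the $\Sigma^0_3$ transfer requires naming the coefficients of $p_\alpha$ as constants, and the companion linear-independence sentence in the paper must exclude the all-zero tuple. You can avoid both: for $[\beta]\in\prod_C E$, compute the unique $c_j(i)\in F$ with $\beta(i)=\sum_{j<n}c_j(i)\alpha^j$, which gives $[\beta]=\sum_{j<n}[c_j]\alpha^j$. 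Then check linear independence of $1,\alpha,\dots,\alpha^{n-1}$ over $\prod_C F$ coordinatewise.
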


\begin{proof}
    The groups are pairwise isomorphic, due to the fact that cohesive powers of finite structures are isomorphic and Theorem \ref{Thm-HAut-is-Product}.  The fact that the actions commutes follows from this.
\end{proof}

In the future, we will free conflate the above three groups for finite extensions $ E/F$.

In the classical setting, infinite Galois groups can be equipped with a metric structure.  The metric measures ``how much'' of $K$ one must check to determine if two automorphisms are distinct.

\begin{Definition}
    Let $G$ be the Galois group of the infinite Galois extension $K/F$, and let $K_i$ be finite Galois sub-extensions of $K$ such that $K= \bigcup K_i$ and $K_i \subseteq K_{i+1}$.  Given $\sigma, \tau \in G$ we take 

    $$d(\sigma, \tau) = \begin{cases}
        0 & \sigma = \tau \\
        2^{-n} & \text{ where } n \text{ is the least number such that $\sigma \upharpoonright K_n \not= \tau \upharpoonright K_n$}
    \end{cases}$$
\end{Definition}

\begin{Proposition}
    Let $K/F$ be a fully computable Galois extension in the sense of Definition \ref{def-fully-computably}. Let $\sigma, \tau \in Gal(K/F)$. Then $d(\sigma,\tau) < \frac{1}{2^n}$ is decidable uniformly in $\sigma, \tau$, and $n$.
\end{Proposition}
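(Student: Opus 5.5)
Unwinding the definition, $d(\sigma,\tau) < 2^{-n}$ holds if and only if $\sigma = \tau$ or the least index $m$ with $\sigma\upharpoonright K_m \neq \tau\upharpoonright K_m$ satisfies $m > n$. Since $K_0 \subseteq K_1 \subseteq \cdots$, agreement on $K_n$ forces agreement on every $K_m$ with $m \le n$. So the condition is equivalent to the single finite check $\sigma\upharpoonright K_n = \tau\upharpoonright K_n$. Our plan is to reduce the decision problem to this check and then show the check is effective uniformly in $n$, using clauses (2)--(5) of Definition~\ref{def-fully-computably}.

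First we make explicit how $\sigma$ and $\tau$ are given. Following Lemma~\ref{Infinite Galois Theory infinite proj lim}, we take them as elements of $\varprojlim G_i$, i.e.\ as computable (oracle) sequences $(\sigma_i)_{i\in\omega}$ with $\sigma_i \in G_i$ and $f_{m,k}(\sigma_m) = \sigma_k$. Equivalently, they can be given as automorphisms of $K$ (as oracles), and we show how to recover $\sigma_n = \sigma\upharpoonright K_n$ uniformly.
\begin{itemize}
\item In the sequence presentation, we simply read off $\sigma_n$ and $\tau_n$ and compare them in the uniformly computable group $G_n$ (clause (3)).
\item In the automorphism presentation, we first find a primitive element $\alpha_n$ of $K_n/F$. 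This is uniform by the Corollary following Proposition~\ref{tfaegal}, using the uniformly computable actions $G_n \curvearrowright K_n$ (clause (5)). We then map $\alpha_n$ into $K$ via the uniformly computable embedding $\Psi_n$ of clause (2). Finally we apply $\sigma$ and $\tau$ and test whether $\sigma(\Psi_n(\alpha_n)) = \tau(\Psi_n(\alpha_n))$ using equality in $K$.
\end{itemize}
Because $\alpha_n$ generates $K_n$, this single equality holds exactly when $\sigma\upharpoonright K_n = \tau\upharpoonright K_n$. If needed, we can also identify the element of $G_n$ that $\sigma$ restricts to, by searching $G_n$ for the $g$ with $\Psi_n(g\alpha_n) = \sigma(\Psi_n(\alpha_n))$.

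The key steps, in order, are:
\begin{enumerate}
\item Prove the equivalence $d(\sigma,\tau) < 2^{-n} \iff \sigma\upharpoonright K_n = \tau\upharpoonright K_n$ from the monotonicity of the chain. The $\sigma=\tau$ case is covered trivially.
\item Uniformly compute the primitive element $\alpha_n$, or directly the restrictions $\sigma_n, \tau_n$.
\item Decide equality in $G_n$, or in $K$, using only finitely many oracle queries.
\end{enumerate}
Every procedure here is a bounded search or a finite computation, uniform in $n$, so the relation is decidable uniformly in $\sigma$, $\tau$ and $n$. Note that we only claim decidability of $d < 2^{-n}$ and not of $d = 0$, which would require checking all levels.

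The main obstacle is the bookkeeping around presentations, not the mathematics. We must ensure the comparison happens inside a single structure: either $G_n$, or $K$ via the uniformly computable maps $\Psi_n$ and the uniform compatibility with $K_{n+1}/K_n$. We also have to settle the off-by-one convention in the definition of $d$, so that strict inequality corresponds to agreement on $K_n$ itself. Once these are fixed, the argument is a direct application of the uniformity clauses.
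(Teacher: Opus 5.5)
Your proposal is correct and follows essentially the paper's argument: compute a primitive element $\alpha_n$ of $K_n$ uniformly from the Galois action via Proposition~\ref{tfaegal} and its corollary, then observe that $d(\sigma,\tau) < \frac{1}{2^n}$ holds exactly when $\sigma(\alpha_n) = \tau(\alpha_n)$. Your added bookkeeping makes explicit two things the paper leaves implicit: how $\sigma$ and $\tau$ are presented (as oracles, or as compatible sequences in $\varprojlim G_i$), and the off-by-one check that strict inequality corresponds to agreement on $K_n$.
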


\begin{proof}
    Let $K_i,G_i$ be as in Definition \ref{def-fully-computably}, and let $\alpha_i$ be a primitive generator for $K_i$.  $d(\sigma,\tau) < \frac{1}{2^i}$ if and only if $\sigma(\alpha_i)= \tau(\alpha_i)$.  Note that $\alpha_i$ can be computed uniformly from $G_i$ by \ref{tfaegal}.
\end{proof}

Note that $d(\sigma, \tau) = 0$ is not decidable.  However, it is co-c.e.  It should be noted that the metric remembers facts about the presentation of the Galois extension, as opposed to its algebra. However, while the metric is unnatural for algebra, it is useful and interesting for cohesive powers.  

We define something resembling a hyper-metric on the hyper-automorphism group $HAut (\prod_C K / \prod_C F )$ for fully computable Galois extensions, which takes values in $\prod_C \Qb$.  

\begin{Definition} \label{def-hypermetric}

    Let $\sigma ,\tau \in HAut( \prod_C K/ \prod_C F)$, where K/F is a fully computable Galois extension. Set $$d ( \sigma,\tau) = \begin{cases}
        0 & \text{ if } \sigma=\tau; \\
        \frac{1}{2^N} & \text{ where }  N \text{ is the least element of $\prod_{C} \Nb$ such that $\sigma \upharpoonright \prod_{C,N}K_i \not= \tau \upharpoonright \prod_{C,N} K_i$},
        
        \end{cases}$$ where $\frac{1}{2^N}$ is understood to be an element of $\prod_C \Qb$. 
\end{Definition}

There is warranted concern that such a least $N$ may not in general exist, as $\prod_C \Nb$ has almost no induction (i.e., has definable proper cuts).  However, the situation is somewhat unique as a minimal $N$ does, in fact, exist.

\begin{Theorem} \label{Least-Nonstandard-N-Hypermetric}
Let $K/F$ be a fully computable Galois extension, as in Definition \ref{def-fully-computably}.  For each distinct pair $\sigma, \tau \in HAut(\prod_C K/ \prod_C F)$, there is a least $N$ such that $$\sigma_N = \sigma\upharpoonright \prod_{C,N} K_i \not= \tau\upharpoonright \prod_{C,N} K_i = \tau_N$$
\end{Theorem}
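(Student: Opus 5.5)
Via Theorem \ref{HAut-Projective-Limit-Theorem} and Theorem \ref{Thm-HAut-is-Product}, I will identify $\sigma$ and $\tau$ with compatible families $\{g_N\}$ and $\{h_N\}$, where $g_N,h_N\in\prod_{C,N}G_i$. By Lemma \ref{HAut-commutes-with-restriction}, $\sigma\upharpoonright\prod_{C,N}K_i=\tau\upharpoonright\prod_{C,N}K_i$ holds if and only if $g_N=h_N$ in $\prod_{C,N}G_i$. The plan is to build the least $N$ explicitly as a partial computable function of $i$ and then compare it with an arbitrary hyper-natural by a cohesiveness argument.

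\textbf{Step 1 (one level suffices).} Since $\sigma\neq\tau$, Corollary \ref{corhaut} gives a $1$-tuple $a\in\prod_C K$ with $\sigma(a)\neq\tau(a)$. By Lemma \ref{sequence-extension-lemma}, $a\in\prod_{C,M}K_i$ for some $M$. Let $g=g_M$ and $h=h_M$, represented by partial computable functions with $g(i),h(i)\in G_{M(i)}$ for almost every $i$; then $g\neq h$, so $g(i)\neq h(i)$ for almost every $i$. Compatibility gives $g_N=f_{M,N}(g)$ for $N\le M$, and for $N\ge M$ we have $g_N\neq h_N$ automatically, since $f_{N,M}(g_N)=g\neq h=f_{N,M}(h_N)$.

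\textbf{Step 2 (the candidate).} Define
\[
N_0(i)=\text{the least } n\le M(i) \text{ with } f_{M(i),n}(g(i))\neq f_{M(i),n}(h(i)).
\]
This is partial computable because the quotient maps are uniformly computable and the $G_j$ are uniformly computable finite groups, and it converges whenever $g(i)\neq h(i)$ (taking $n=M(i)$ if needed). Hence $N_0\in\prod_C\Nb$. By construction, $f_{M,N_0}(g)(i)\neq f_{M,N_0}(h)(i)$ for almost every $i$, so $\sigma_{N_0}\neq\tau_{N_0}$.

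\textbf{Step 3 (minimality; main obstacle).} Let $N<N_0$ in $\prod_C\Nb$, so $N(i)<N_0(i)\le M(i)$ for almost every $i$. By minimality in the definition of $N_0$, $f_{M(i),N(i)}(g(i))=f_{M(i),N(i)}(h(i))$ for almost every $i$; that is, $f_{M,N}(g)=f_{M,N}(h)$, so $g_N=h_N$ and $\sigma_N=\tau_N$.

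The delicate point is that $\prod_C\Nb$ lacks induction, so minimality must be checked pointwise through the fundamental theorem of cohesive products, not by reasoning about definable subsets. For the same reason I must make sure the comparison $N<N_0$ really yields $N(i)<N_0(i)$ on a set $\supseteq^* C$. Since $\{i : N(i)<N_0(i)\}$ is a c.e. set intersected with domains, the order on $\prod_C\Nb$ is decided on $C$ by cohesiveness, so this holds. It follows that every $N$ with $\sigma_N\neq\tau_N$ satisfies $N\ge N_0$, and therefore $N_0$ is the least such $N$.
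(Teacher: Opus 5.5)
Your proof is correct and follows essentially the same route as the paper. Both fix an $M$ with $\sigma_M\neq\tau_M$, define $N(i)$ pointwise as the least level at which the components $\sigma_M(i)$ and $\tau_M(i)$ differ, and verify minimality pointwise via the fundamental theorem of cohesive products. The differences are cosmetic: the paper detects the differing level through the action on primitive generators $\alpha_n$ of $K_n$ rather than through the quotient maps $f_{M(i),n}$ (equivalent, since elements of $G_n$ are equal iff they agree on $\alpha_n$), and it checks minimality only against $N-1$ rather than against an arbitrary $N<N_0$.
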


\begin{proof}

    In light of Theorem \ref{HAut-Projective-Limit-Theorem}, we will freely conflate $\sigma \in HAut(\prod_C K/\prod_C F)$ with its isomorphic image $$\{ \sigma_N \}_{N \in \prod_C \Nb} \in \varprojlim_{N\in \prod_{C} \Nb} HAut\left(\prod_{C,N} K_i / \prod_C F\right)$$

    Since $\sigma \not= \tau$, there is some $M \in \prod_C \Nb$ with $\sigma_M \not= \tau_M$.  Recall that by Theorem \ref{Thm-HAut-is-Product}, $\sigma_M, \tau_M \in \prod_{C,M} G_i$.  For each $i$, let $\alpha_i $ be a primitive generator of $K_i$.  Let $N(i)$ be the least number such that $\sigma_{M}(i)(\alpha_{N(i)} )\not= \tau_M(i)(\alpha_{N(i)})$.  Such a $N(i)$ exists for almost every $i$, as $\sigma_M(i) \not= \tau_M(i)$ for almost every $i$, and elements of the finite Galois groups $G_{M(i)}$ differ if and only if they act differently on $\alpha_{M(i)}$.  

    Thus, we have by construction that $\sigma_N = \sigma_M \upharpoonright \prod_{C,N} K_i \not= \tau_M \upharpoonright\prod_{C,N} K_i =\tau_N$.

    We will now show minimality.  It is sufficient to show that $\sigma_{N-1} =\tau_{N-1}$.  By minimality of $N(i)$, we have $\sigma_{N}(i) \upharpoonright K_{N(i)-1} = \tau_{N}(i) \upharpoonright K_{N(i)-1}$ for almost every $i$, as $\sigma_{N}(i)(\alpha_{N(i)-1})=\tau_{N}(i)(\alpha_{N(i)-1})$.  Therefore, for almost every $i$, we have $\sigma_{N-1}(i) = \sigma_{N}(i) \upharpoonright K_{N(i)-1} = \tau_{N}(i) \upharpoonright K_{N(i)-1}=\tau_{N-1}(i)$.  Thus, $\sigma_{N-1} = \tau_{N-1}$.
\end{proof}

\begin{Proposition}
    Let $\sigma, \tau, \rho \in HAut(\prod_C K/ \prod_C F)$. The hyper-metric of Definition \ref{def-hypermetric} satisfies the following:

    \begin{enumerate}
        \item $0\leq d(\sigma,\tau)$;
    
        \item $d(\sigma,\tau)= 0$ if and only if $\sigma=\tau$;

        \item $d(\sigma,\tau) = d( \tau, \sigma)$;

        \item $d( \sigma, \tau) \leq d(\sigma, \rho) + d(\rho, \tau) $.
    \end{enumerate}
\end{Proposition}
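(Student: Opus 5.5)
The plan is to reduce everything to the ``least disagreement level'' furnished by Theorem \ref{Least-Nonstandard-N-Hypermetric}. For distinct $\sigma,\tau$ we write $N_{\sigma,\tau}\in\prod_C\Nb$ for that least level, so $d(\sigma,\tau)=2^{-N_{\sigma,\tau}}$, computed in $\prod_C\Qb$. Throughout we identify $\sigma$ with the compatible family $\{\sigma_N\}_{N}$, as in Theorem \ref{HAut-Projective-Limit-Theorem}.

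For (1), note first that $2^{-N}$ is represented by the function $i\mapsto 2^{-N(i)}$, which is positive in $\Qb$ for every $i$ where it is defined. The ordering of $\prod_C\Qb$ is given by a quantifier-free (atomic) formula, so the fundamental theorem gives $2^{-N}>0$. Together with the value $0$ when $\sigma=\tau$, this yields $0\le d(\sigma,\tau)$ and also the reverse direction of (2). The forward direction of (2) follows because $2^{-N}\neq 0$ when $\sigma\neq\tau$, by the same positivity. For (3), the defining condition $\sigma\upharpoonright\prod_{C,N}K_i\neq\tau\upharpoonright\prod_{C,N}K_i$ is symmetric in $\sigma$ and $\tau$. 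Hence the least such $N$ coincides for both orders, and $d(\sigma,\tau)=d(\tau,\sigma)$.

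For (4), I expect to prove the stronger ultrametric inequality $d(\sigma,\tau)\le\max\{d(\sigma,\rho),d(\rho,\tau)\}$. I would first dispose of the trivial cases in which two of $\sigma,\tau,\rho$ coincide, using (2). In the remaining case, set $N=N_{\sigma,\tau}$, $A=N_{\sigma,\rho}$ and $B=N_{\rho,\tau}$. The key step is to show $\min\{A,B\}\le N$. Suppose instead that $N<A$ and $N<B$. Since $A$ is least, $\sigma$ and $\rho$ agree on $\prod_{C,N}K_i$. For this to follow from minimality, agreement at a level must propagate downward: if $\sigma_M=\rho_M$ and $N\le M$, then $\sigma_N=\rho_N$. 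This holds because $\prod_{C,N}K_i\subseteq\prod_{C,M}K_i$ by Observation \ref{Not so clear after all}, or equivalently by the compatibility maps of Lemma \ref{HAut-commutes-with-restriction}. Applying minimality to the level $N$ itself, we get $\sigma_N=\rho_N$, and likewise $\rho_N=\tau_N$. Then $\sigma_N=\tau_N$, which contradicts the choice of $N$.

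Here $\prod_C\Nb$ is linearly ordered, which the Fundamental Theorem gives since linearity is a $\Pi_1$ sentence. So $\min\{A,B\}\le N$, hence $2^{-N}\le 2^{-\min\{A,B\}}=\max\{2^{-A},2^{-B}\}\le 2^{-A}+2^{-B}$. The middle and last comparisons use that $N\mapsto 2^{-N}$ is order-reversing and that the terms are positive in $\prod_C\Qb$. This needs a small lemma: for $M\le N$ in $\prod_C\Nb$ we have $2^{-N}\le 2^{-M}$. It follows from the definition of the order on $\prod_C\Qb$, since the inequality holds coordinatewise at almost every $i$.

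The main obstacle is making precise the downward-propagation fact and the well-definedness of $2^{-N}$ as an element of $\prod_C\Qb$. The function $2^{-N(i)}$ is partial computable with domain containing $C$ almost everywhere, so it defines an element, but the exponential is not part of the field language. I would therefore handle all comparisons at the level of representatives rather than through formulas.
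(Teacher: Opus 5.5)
Your proposal is correct and follows essentially the same route as the paper. Items (1)--(3) come directly from the definition and Theorem \ref{Least-Nonstandard-N-Hypermetric}, and for (4) the paper argues exactly as you do: if both $d(\sigma,\rho)$ and $d(\rho,\tau)$ were below $d(\sigma,\tau)=2^{-N}$, then $\sigma,\rho,\tau$ would all agree on $\prod_{C,N}K_i$, a contradiction, which is your step $\min\{A,B\}\le N$ (the downward-propagation remark is not needed, since minimality of $A$ and $B$ applied directly at the level $N$ already gives $\sigma_N=\rho_N=\tau_N$).
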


\begin{proof}
    \begin{enumerate}
        \item Trivial.
    
        \item Follows from Theorem \ref{Least-Nonstandard-N-Hypermetric}.

        \item Trivial.

        \item If $d(\sigma, \tau)$ is $0$, or is less than or equal to either $d(\sigma, \rho)$ or $d(\rho, \tau)$, then the result is immediate.  Therefore suppose both $d(\sigma, \rho)$ and $ d(\rho,\tau) $ are less than $ d(\sigma,\tau)$, which is non-zero.  Let $N \in \prod_C \Nb$ be such that $d(\sigma, \tau) = \frac{1}{2^N}$.  Since $d(\sigma,\rho) < \frac{1}{2^N}$, it must be the case that $\sigma$ agrees with $\rho $ on $\prod_{C,N} K_i$.  Similarly, $ \rho$ agrees with $\tau$ on $\prod_{C,N} K_i$.  Thus $\sigma$ and $\tau$ agree on $\prod_{C,N} K_i$, which implies that $d(\sigma,\tau) < \frac{1}{2^N}=d(\sigma,\tau)$, which is a contradiction.
    \end{enumerate}
\end{proof}

\begin{Definition}
    Let $K/F$ be a fully computable infinite Galois extension.  Take $$INF\left(\prod_C K/\prod_C F\right) = \left\{ \sigma \in HAut\left(\prod_C K/ \prod_C F \right) \middle| d(id,\sigma) =\frac{1}{2^N} \text{ for some } N \in \prod_C \Nb - \Nb \right\} $$

    In other words, $INF (\prod_C K / \prod_C F)$ is the set of infinitesimals in $HAut (\prod_C K/ \prod_C F)$.
\end{Definition}

\begin{Theorem}
    Let $K/F$ be a fully computable infinite Galois extension.  Then $INF (\prod_C K /\prod_C F)$ is a normal subgroup of $HAut (\prod_C K/ \prod_C F)$.  Furthermore, it consists of exactly the hyper-automorphisms that fix $ K$.  Thus, the quotient $HAut (\prod_C K /\prod_C F ) /INF (\prod_C K/ \prod_C F)$ is isometrically isomorphic to a dense subgroup of $Gal (K/F)$.
\end{Theorem}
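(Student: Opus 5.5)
The plan is to characterize $INF$ as a kernel, which makes normality automatic, and then read off the quotient. First I will show $INF(\prod_C K/\prod_C F) = \{\sigma \in HAut(\prod_C K/\prod_C F) \mid \sigma\upharpoonright K = id\}$, where $K$ sits inside $\prod_C K$ via the canonical embedding. By the definition of the hyper-metric and Theorem \ref{Least-Nonstandard-N-Hypermetric}, $d(id,\sigma)=\frac{1}{2^N}$ with $N$ nonstandard exactly when $\sigma_n = id$ in $\prod_{C,n} G_i$ for every standard $n$. Here I also count $\sigma=id$ as infinitesimal, with $d=0$, so that $INF$ contains the identity. For standard $n$, $\prod_{C,n}G_i = \prod_C G_n \cong G_n$ and $\prod_{C,n}K_i=\prod_C K_n$, by Lemma \ref{We may freely conflate these three groups}. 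Lemma \ref{pflemma} then says $\sigma_n$ is determined by its value on the constant primitive generator $\alpha_n\in K_n$. So $\sigma_n=id$ iff $\sigma(\alpha_n)=\alpha_n$ iff $\sigma$ fixes $K_n$. Taking this over all $n$ and using $K=\bigcup_n K_n$ gives the characterization.

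Next I will define $\rho : HAut(\prod_C K/\prod_C F)\to Gal(K/F)$ by $\rho(\sigma) = \{\sigma_n\}_{n\in\Nb}$, where each $\sigma_n\in\prod_C G_n\cong G_n$. By Theorem \ref{HAut-Projective-Limit-Theorem} these components are compatible under the standard quotient maps $f_{m,n}$ (Lemma \ref{HAut-commutes-with-restriction}), so $\rho(\sigma)\in\varprojlim G_n\cong Gal(K/F)$. Concretely, $\rho(\sigma)$ is just $\sigma\upharpoonright K$. To see this, note that $\sigma$ sends $K_n$ into $K_n$: an element of $\prod_C G_n$ acts on a constant $\beta\in K_n$ by $g(i)\beta$, and $g$ is eventually constant on $C$ because $G_n$ is finite. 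The map $\rho$ is a homomorphism, and by the first step its kernel is exactly $INF$. Hence $INF$ is a normal subgroup and $HAut/INF \cong \mathrm{im}(\rho)$.

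For the isometry, I compare the two metrics. If $\sigma\tau^{-1}\notin INF$, the least $N$ in Theorem \ref{Least-Nonstandard-N-Hypermetric} is a standard $n$, namely the least $n$ with $\sigma_n\ne\tau_n$. This is the same $n$ at which $\rho(\sigma)$ and $\rho(\tau)$ first differ on $K_n$. Therefore $d(\sigma,\tau)=2^{-n}=d(\rho\sigma,\rho\tau)$. If instead $\sigma\tau^{-1}\in INF$, both cosets coincide and the image distance is $0$. So the metric induced on the quotient (taking standard parts, or equivalently the value on coset representatives) matches that of $Gal(K/F)$.

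Density is the step I expect to be the main obstacle, because $\rho$ need not be surjective: a compatible sequence $(g_n)$ need not be computable. I would instead show that the image meets every basic open set $U_{g,n}$, for $g\in G_n$. By surjectivity of the $f_{m,n}$ and Lemma \ref{New Lemma About Extending Inverse Systems}, I would choose computably, uniformly in $i$, an element $h(i)\in G_i$ with $f_{i,n}(h(i))=g$ for $i\ge n$. This $h$ defines an element of $\prod_{C,M}G_i$ with $M(i)=i$. Via Theorem \ref{Thm-HAut-is-Product} and Lemma \ref{sequence-extension-lemma}, $\prod_{C,M}K_i=\prod_C K$, so $h$ gives a $\sigma\in HAut(\prod_C K/\prod_C F)$, and its restriction to $K_n$ is $g$. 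I still need to check carefully that this element, together with its projections $f_{M,N}(h)$, is a genuine element of the projective limit. That follows because the $f_{M,N}$ are compatible, and I expect it to be the delicate point. Once it holds, $\rho(\sigma)\in U_{g,n}$, which proves density. Alternatively, Lemma \ref{same fixed field iff same closure} gives density once the image's fixed field is $F$, since $\prod_C G_i$ acts with fixed field $F$ on each $K_n$.
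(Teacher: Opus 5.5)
Your kernel characterization, the normality argument and the isometry argument are essentially the paper's: restrict to $K$, identify the kernel with $INF$ via Lemma \ref{We may freely conflate these three groups}, and compare the least standard index. You are also right that the identity has to be counted in $INF$.

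\textbf{The gap is in the density step.} Your claim that $\prod_{C,M}K_i=\prod_C K$ for $M(i)=i$ is false.
\begin{itemize}
\item Lemma \ref{sequence-extension-lemma} only writes $\prod_C K$ as the union of the $\prod_{C,M}K_i$ over all $M\in\prod_C\Nb$, and this union never stabilizes.
\item Indeed, since $K_i\subsetneq K_{i+1}$, the function sending $i$ to a primitive generator of $K_{i+1}$ is computable by Proposition \ref{tfaegal}. It gives an element of $\prod_{C,M+1}K_i$ that is not in $\prod_{C,M}K_i$.
\item So your $h$ is only an element of $\prod_{C,M}G_i\cong HAut(\prod_{C,M}K_i/\prod_C F)$.
\item The projections $f_{M,N}(h)$ supply only the components with $N\le M$ of a would-be element of $\varprojlim_N\prod_{C,N}G_i$. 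Compatibility of the $f_{M,N}$ says nothing about the components at the cofinally many $N>M$, and those have to be constructed.
\end{itemize}
Your fallback through Lemma \ref{same fixed field iff same closure} is the paper's route, but it has the same dependency. You need an element of $HAut(\prod_C K/\prod_C F)$, not merely of $\prod_C G_i$, that moves $\beta$. The paper's proof takes exactly this surjectivity for granted when it says that $HAut(\prod_C K_n/\prod_C F)$ is a quotient of $HAut(\prod_C K/\prod_C F)$.

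\textbf{Either of two arguments closes the gap.}

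First approach: lift along a cofinal sequence.
\begin{itemize}
\item $\prod_C\Nb$ is countable with no largest element. Enumerate it as $N_0,N_1,\dots$ and put $M_0=M$, $M_{k+1}=\max(M_k,N_k)+1$; this gives a cofinal sequence $M_0<M_1<\cdots$.
\item Using the surjectivity in Lemma \ref{New Lemma About Extending Inverse Systems}, lift $h_0=h$ successively to $h_k\in\prod_{C,M_k}G_i$ with $f_{M_{k+1},M_k}(h_{k+1})=h_k$.
\item Let the $N$-component be $f_{M_k,N}(h_k)$ for any $M_k>N$. By the transitivity $f_{N,N'}\circ f_{M,N}=f_{M,N'}$, this does not depend on $k$ and the components are compatible.
\item Its $n$-component is $f_{M,n}(h)=g$.
\end{itemize}

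Second approach, more direct: choose the lifts coherently rather than independently for each $i$.
\begin{itemize}
\item Set $g_n=g$, and let $g_{m+1}$ be the first element of $G_{m+1}$ with $f_{m+1,m}(g_{m+1})=g_m$.
\item These glue to a computable automorphism $\tilde g$ of $K$ over $F$ with $\tilde g\upharpoonright K_n=g$.
\item Let $\tilde g$ act coordinatewise on representatives, $[\vec a]\mapsto[\tilde g\circ\vec a]$, exactly as $\Psi_\sigma$ does in Theorem \ref{Finite Galois Groups Cohesive Power Finite Extensions}.
\item This map has inverse induced by $\tilde g^{-1}$. It preserves arity, $\preceq$ and concatenation, commutes pointwise with integral hyper-polynomials, and fixes $\prod_C\mathcal{T}_F$.
\item So it is a hyper-automorphism of $\prod_C K/\prod_C F$ restricting to $g$ on $K_n$.
\end{itemize}

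Either way the image of $\rho$ meets every $U_{g,n}$, which gives density.
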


\begin{proof}

Consider the map $\Psi: HAut(\prod_C K/ \prod_C F) \to Gal(K/F)$ that sends $\sigma$ to $\sigma \upharpoonright K$. Now, for any given hyper-automorphism $\sigma$,  $\sigma \in ker(\Psi)$ if and only if $\sigma \upharpoonright K_n \in G_n$ is trivial for every $n\in \omega$.  Recall that for $n \in \omega$, we have $\prod_{C,n} K_i \cong \prod_C K_n$. By Lemma \ref{We may freely conflate these three groups}, for any given $n$, we have that $\sigma \upharpoonright K_n$ is trivial if and only if $ \sigma\upharpoonright \prod_C K_n$, that is, $\sigma$ restricted to the cohesive \textit{power} of $K_n$ is the identity. This is true if and only if $d(id, \sigma) < \frac{1}{2^n}$. Thus, $ker \Psi = \{ \sigma \mid (\forall n\in \omega)\sigma \upharpoonright\prod_C K_n  =id \}=\{ \sigma \mid(\forall n\in \omega) d(id,\sigma) < \frac{1}{2^n} \}= INF(\prod_C K /\prod_C F)$.

To show density, by Lemma \ref{same fixed field iff same closure} it is sufficient to show that the fixed field of $Im(\Psi)$ is $F$.  Let $\beta \in K-F$.  Since $K= \bigcup_i K_i, $ it follows that $\beta \in K_n$ for some $n$.  Thus, there is some element $g\in G_n$ with $g(\beta) \not=\beta$.  By Lemma \ref{We may freely conflate these three groups}, $G_n \cong \prod_C G_n \cong HAut( \prod_C K_n/\prod_C F)$. Therefore, there is some $\sigma \in HAut(\prod_C K/\prod_C F)$ extending $g$, as $HAut(\prod_C K_n /\prod_C F)$ is a quotient of $HAut(\prod_C K/\prod_C F)$.  Thus, $(\Psi(\sigma))(\beta) =\sigma (\beta) = g(\beta) \not= \beta$.  Since $\beta$ was arbitrary, $Im(\Psi)$ has fixed field $F$.

Now, we show isometry.  To see this, we note that $\Psi(\sigma)$ acts non-trivially on $K_m$ if and only if $\sigma$ acts non-trivially on $K_m$, which is true if and only if $\sigma$ acts non-trivially on $\prod_C K_m$.  Suppose that $\sigma,\tau \in HAut(\prod_C K/\prod_C F)$ are such that $d(\sigma,\tau) = \frac{1}{2^n}$ for some $n\in \Nb$, a standard natural number. Then $\sigma$ and $\tau$ act differently on $\prod_C K_n$.  Thus, $\Psi(\sigma)$ and $\Psi(\tau)$ act differently on $K_n$, meaning $d(\Psi(\sigma),\Psi(\tau)) \geq \frac{1}{2^n}$.  Since $d(\sigma, \tau)< \frac{1}{2^{n-1}}$, we must have that $\sigma$ and $\tau$ act the same on $\prod_C K_{n-1}$. Therefore, they act the same on $K_{n-1}$.  Thus, $d(\Psi(\sigma),\Psi(\tau))<\frac{1}{2^{n-1}}$.  We conclude that $d(\Psi(\sigma),\Psi(\tau))= \frac{1}{2^n}=d(\sigma,\tau)$.
\end{proof}

We may think of $HAut(\prod_C K/\prod_C F)$ as some dense subgroup  of $Gal(K/F)$, together with a large group of infinitesimals that acts exclusively on the extension $\prod_C K -K$. 

Now, we turn our attention to the image of $HAut(\prod_C K /\prod_C F) \to Gal(\text{Alg}(\prod_C K)/\prod_C F)$. Theorem \ref{Cohesive Product of Actions Commutes with Quotients} gives us a characterization of this map already; however, the treatment of the kernel is unsatisfactory.  We now have the tools to give better description of Theorem \ref{Bad Description of the Kernel}.

\begin{Definition}
    Let $\{G_i\}_{i\in \omega}$ be a uniformly computable sequence of finite groups, and let $\{H_i\}_{i\in \omega}$ be a uniformly computable sequence of subgroups $H_i \leq G_i$.  We call the subgroup $\prod_C H_i $ an internal subgroup of $ \prod_C G_i$.
\end{Definition}

These subgroups are internal to the ``hyper-structure'' of the cohesive power.  

\begin{Theorem} \label{Internal Subgroups Finite Index}
    Let $\prod_C H_i \leq \prod_C G_i$ be an internal subgroup. Then  $\prod_C H_i$ has finite index $n$ if and only if $\{ i \mid [G_i : H_i] =n \} \supseteq^*C $.
\end{Theorem}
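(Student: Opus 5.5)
We will identify the cosets of $\prod_C H_i$ in $\prod_C G_i$ with the cohesive product of the finite coset spaces $G_i/H_i$. Since the $G_i$ are uniformly computable finite groups and the $H_i$ are uniformly computable subgroups, we can regard the sequence of coset spaces $\{G_i/H_i\}_{i\in\omega}$ as a uniformly computable sequence of finite structures. Concretely, we represent each coset by its least element in a fixed enumeration of $G_i$, and we can compute $[G_i:H_i]=|G_i|/|H_i|$ uniformly in $i$. We then define $\Theta:\prod_C G_i/\prod_C H_i \to \prod_C (G_i/H_i)$ by $gH\mapsto [i\mapsto g(i)H_i]$.

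We check that $\Theta$ is well defined and injective by the fundamental theorem of cohesive products. We have $g^{-1}g'\in \prod_C H_i$ iff $g(i)^{-1}g'(i)\in H_i$ for almost every $i$, and this holds iff $g(i)H_i=g'(i)H_i$ for almost every $i$. Membership in $H_i$ is atomic, so this equivalence is covered by Theorem \ref{thm-LosProdParam}, or more directly by the definition of $\simeq_C$.

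Surjectivity comes from a computable search. Given a partial computable $c$ with $c(i)$ a coset of $H_i$ for almost all $i$, we let $g(i)$ be the first element of $G_i$ lying in $c(i)$. Thus the index $[\prod_C G_i:\prod_C H_i]$ equals the cardinality of $\prod_C(G_i/H_i)$.

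It remains to show that $\prod_C(G_i/H_i)$ has exactly $n$ elements iff $[G_i:H_i]=n$ for almost every $i$. The sets $\{i\mid [G_i:H_i]=n\}$ are computable, so by cohesiveness $C$ is almost contained in exactly one of them, or in none. For the ($\Leftarrow$) direction, assume $|G_i/H_i|=n$ for almost all $i$. We build $n$ partial computable functions $c_0,\dots,c_{n-1}$, where $c_k(i)$ is the $k$-th coset in the canonical enumeration. These are pairwise distinct on almost every $i$, so we get $n$ distinct elements. Any $c$ takes one of these $n$ values at each $i$, and the sets $\{i\mid c(i)=c_k(i)\}$ are c.e. and partition $\mathrm{dom}(c)\supseteq^* C$. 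Hence $C$ is almost contained in one of them, and $c\simeq_C c_k$ for that $k$. Alternatively, we can note directly that the sentence ``there are exactly $n$ elements'' is a Boolean combination of $\Sigma_1$ and $\Pi_1$ sentences, hence transferred by Theorem \ref{thm-LosProdParam}.

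The main obstacle is the ($\Rightarrow$) direction, which requires ruling out the case where $[G_i:H_i]$ is not almost constant on $C$. In that case, for every $k$ the computable set $\{i\mid [G_i:H_i]\le k\}$ must fail to almost contain $C$, so by cohesiveness $[G_i:H_i]>k$ for almost every $i\in C$. For each $k$, the functions $c_0,\dots,c_{k}$ defined as above (with $c_j(i)$ the $j$-th coset, defined whenever the index exceeds $j$) are then in $\mathcal{D}$ and are pairwise distinct modulo $C$. So $\prod_C(G_i/H_i)$ is infinite and the index is not finite. Combined with the uniqueness of the $n$ obtained from cohesiveness, this gives the equivalence.
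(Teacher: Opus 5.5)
Your proof is correct, but it takes a different route from the paper's. The paper never passes to coset spaces. It expands each $G_i$ by a predicate $\hat H$ for $H_i$ and notes that ``$\hat H$ has index exactly $n$'' is a conjunction of a $\Sigma^0_1$ and a $\Pi^0_1$ sentence. Such a sentence is simultaneously $\Sigma^0_2$ and $\Pi^0_2$, so the two clauses of the fundamental theorem of cohesive products give both implications in one step.

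In particular, the direction you call the main obstacle costs nothing there. If $\prod_C H_i$ has index $n$, the $\Sigma^0_2$ sentence holds in the product and hence in almost every factor; there is no need to first rule out a non-constant index. Your own ``alternatively'' remark, applied to $\prod_C(G_i/H_i)$, gives the same shortcut in both directions, not only for ($\Leftarrow$).

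What your route gains is the structural identification $\prod_C G_i/\prod_C H_i \cong \prod_C (G_i/H_i)$. It also gives an explicit dichotomy: either $[G_i:H_i]$ is almost constant on $C$ with value $n$ and the index is $n$, or the index eventually exceeds every $k$ on $C$ and the index is infinite. And it uses only the definition of cohesiveness rather than the transfer theorem.

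One small correction. Under the paper's definition of a uniformly computable sequence, domains are only uniformly decidable. So $|G_i|$, and hence $[G_i:H_i]$, need not be computable from $i$, and the sets $\{i \mid [G_i:H_i]=n\}$ need not be computable. This does not damage your argument. The set $\{i \mid [G_i:H_i]\ge k\}$ is uniformly c.e., found by searching for $k$ elements in pairwise distinct cosets. So the level sets are Boolean combinations of c.e.\ sets, which cohesiveness still decides. Your functions $c_k$ stay partial computable without knowing the index, and least coset representatives are computable by a bounded search below the given element.
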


\begin{proof}
    Since the $H_i$'s are uniformly computable subgroups of finite groups, they are uniformly decidable subsets of the $G_i$'s. Therefore, we may augment $G_i$ with a predicate $\hat H$ such that $(G_i, \hat H) \models \hat H(g)$ iff $ g\in H_i$, while preserving uniform computability.  The statement ``the subgroup defined by $\hat H$ has index $n$'' is a Boolean combination of $\Sigma_1^0$ formulas with this predicate.  That is, $H_i$ has index $n$ if and only if 
    $$ (G_i, \hat H) \models \left(\exists g_1\dots  \exists g_n  \bigwedge_{j\not=k} \neg \hat H(g_jg_k^{-1}) \right) \wedge\left( \forall h_1\dots \forall h_{n+1} \bigvee_{j\not=k} \hat H(h_jh_k^{-1})\right).$$ The result now follows by the fundamental theorem of cohesive products.  
\end{proof}

\begin{Theorem}
    Let $\{K_i /F\}_{i\in \omega}$ be a uniformly computable sequence of finite Galois extensions with uniformly computable Galois actions $\{G_i \curvearrowright K_i\}_{i\in \omega}$.  There is a Galois correspondence between finite sub-extensions of $\text{\emph{Alg}}(\prod_C K_i)/\prod_C F$ and internal subgroups of $\prod_C G_i$.  That is, the finite sub extensions of $\text{\emph{Alg}}(\prod_C K_i)/\prod_C F$ are in an inclusion-reversing bijection with the internal subgroups of $\prod_C G_i$ of finite index.  %As before, \emph is just to make it plain text in the theorem enviroment
\end{Theorem}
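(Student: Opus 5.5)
The plan is to build the two maps of the correspondence explicitly at the level of the index $i$, and then use the fundamental theorem of cohesive products to pass to the cohesive product.

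\textbf{From subgroups to fields.} Given an internal subgroup $\prod_C H_i$ of finite index $n$, let $E_i = E^{H_i} \subseteq K_i$. This is the fixed field of $H_i$, and a basis or primitive element for it can be found uniformly by searching $K_i$ for an element fixed by all of $H_i$ and having exactly $[G_i:H_i]$ conjugates. By Theorem \ref{Internal Subgroups Finite Index}, $[G_i:H_i]=n$ for almost every $i$. So $[E_i:F]=n$ almost everywhere, and $\prod_C E_i \subseteq \prod_C K_i$. By Lemma \ref{classification of algebraic elements of cohprods}, every element of $\prod_C E_i$ has degree at most $n$, so it lies in $\text{Alg}(\prod_C K_i)$.

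To see that $\prod_C E_i$ is a finite extension of degree exactly $n$, take $\gamma(i)$ to be a primitive element of $E_i/F$, chosen uniformly as in Proposition \ref{tfaegal}. Then write every $\beta\in\prod_C E_i$ as $p(i)(\gamma(i))$ with $\deg p(i)<n$. This makes the coefficients $n$ fixed-length functions, i.e.\ elements of $\prod_C F$, exactly as in Theorem \ref{Coh Pow number fields have same degree}. Hence $\prod_C E_i = (\prod_C F)(\gamma)$ with $[\,:\,]=n$.

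\textbf{From fields to subgroups.} Let $L$ be a finite sub-extension of $\text{Alg}(\prod_C K_i)/\prod_C F$. By the primitive element theorem (characteristic considerations aside, since $\text{Alg}(\prod_C K_i)/\prod_C F$ is Galois and hence separable), $L=(\prod_C F)(\alpha)$ for some $\alpha$ of degree $n$. By Lemma \ref{classification of algebraic elements of cohprods}, $\deg_F\alpha(i)=n$ almost everywhere. Set $H_i=\mathrm{Gal}(K_i/F(\alpha(i)))=\{g\in G_i : g\alpha(i)=\alpha(i)\}$. This is uniformly computable from the action, since the stabilizer of $\alpha(i)$ can be computed; where $\alpha(i)\uparrow$, put $H_i=G_i$. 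Then $[G_i:H_i]=n$ almost everywhere, so $\prod_C H_i$ has index $n$ by Theorem \ref{Internal Subgroups Finite Index}.

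The two maps are mutually inverse. Going from $L$ to $\prod_C H_i$ and back gives $E^{H_i}=F(\alpha(i))$, and the field built above is $(\prod_C F)(\alpha)=L$. Going the other way, $\prod_C H_i\mapsto \prod_C E^{H_i}$ has primitive element $\gamma$, and the stabilizer of $\gamma(i)$ is $H_i$ for almost every $i$. One must still check that $\prod_C H_i$ depends only on $L$ and not on the chosen $\alpha$. If $\alpha'$ is another generator, then $\alpha'=q(\alpha)$ and $\alpha=q'(\alpha')$ with coefficients in $\prod_C F$ of bounded degree. By the fundamental theorem this gives $F(\alpha(i))=F(\alpha'(i))$ almost everywhere, so the stabilizers agree almost everywhere.

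\textbf{Order reversal.} If $\prod_C H_i\le \prod_C H'_i$, I first need to reduce this to $H_i\le H'_i$ almost everywhere. For that, use the $\Pi^0_1$ statement $\forall g(\hat H(g)\to \hat H'(g))$ in the augmented structures, together with the fundamental theorem, to argue that a failure on a $C$-large set yields a witness $g(i)$ defined almost everywhere. Once this holds, $E^{H_i}\supseteq E^{H'_i}$ almost everywhere, and hence $\prod_C E_i\supseteq \prod_C E'_i$; the converse runs the same way.

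The main obstacle is the well-definedness and containment bookkeeping. Containment of cohesive products must be converted to almost-everywhere containment of the components, which needs the witness-extraction argument just described. One must also verify that every element of a finite $L$ is captured by a single primitive $\alpha$ with componentwise behaviour, which is where Lemma \ref{classification of algebraic elements of cohprods} does the essential work.
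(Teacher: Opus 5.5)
Your proposal is correct and follows essentially the same route as the paper: you use componentwise stabilizers $H_i$ of a primitive element $\alpha$, and componentwise fixed fields $E^{H_i}$ with a computably found primitive element, with the index computed via Theorem~\ref{Internal Subgroups Finite Index} and degrees via Lemma~\ref{classification of algebraic elements of cohprods}; you also supply the well-definedness and almost-everywhere containment (witness-extraction) arguments that the paper dismisses as ``analogous to the fundamental theorem of Galois theory''. One small fix: you cannot computably set $H_i=G_i$ where $\alpha(i)\ua$, since divergence is undecidable, so instead take the partial sequence indexed by $\mathrm{dom}(\alpha)\supseteq^* C$, which the paper's relaxed definition of cohesive products permits and which its own proof uses implicitly.
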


\begin{proof}
    Let $(\prod_C F )(\alpha)/\prod_C F $ be a finite sub-extension of $\text{Alg}(\prod_C K_i)/\prod_C F$.  For each $i \in \omega$, let $H_i$ be the subgroup of $G_i$ that fixes $F(\alpha(i))$.  That is, $H_i = \{ g\in G_i \mid g(\alpha(i))= \alpha(i)\}$.  The $H_i$'s are computable subsets of the $G_i$'s, uniformly in $i$, as both $\alpha(i)$ and $G_i \curvearrowright K_i$ are uniformly computable.  Thus, $\prod_C H_i$ is an internal subgroup of $ \prod_C G_i$. 

    By the fundamental theorem of Galois theory, we have $[ G_i : H_i] = \deg_F (\alpha(i))$.  Furthermore, $\deg_F (\alpha(i)) = \deg_{\prod_C F} (\alpha)$ for almost every $i$ by Lemma \ref{classification of algebraic elements of cohprods}.  Thus, $[ G_i : H_i] = \deg_{\prod_C F} (\alpha)$ for almost every $i$, implying $[ \prod_C G_i : \prod_C H_i ]  = \deg_{\prod_C F} (\alpha)$. 

    Furthermore, $\prod_C H_i = \{ \tau \in HAut(\prod_C K_i /\prod_C F) \mid \tau(\alpha)=\alpha \}$, since for any $\tau$, we have $\tau(\alpha) = \alpha$ if and only if $\tau(i)(\alpha(i))=\alpha(i)$ for almost every $i$.  This happens if and only if $\tau(i) \in H(i)$ as $H(i)$ is defined to be the group with fixed field $F(\alpha(i))$.  Thus, $\tau \in \prod_C H_i$ if and only if $\tau$ fixes $\alpha$.  We conclude that $ H^{\prod_C F(\alpha)}=\prod_C H_i$, where $ H^{\prod_C F(\alpha)}$ is the subgroup of $HAut (\prod_C K_i / \prod_C F)$ that fixes $(\prod_C F)(\alpha)$.

    For the other direction, let $\prod_C H_i$ be an internal subgroup of $\prod_C G_i$ of finite index $n$.  Thus, for almost every $i$, $H_i \leq G_i$ has index $n$ by Theorem \ref{Internal Subgroups Finite Index}.  This means that the fixed field of $H_i$ is $F(\alpha(i))$ for some $\alpha(i)$ of degree $n$. Also, $\alpha(i)$ may be found computably.  Therefore, $\alpha \in \prod_C K_i$, and has degree $n$ over $\prod_C F$ by Lemma \ref{classification of algebraic elements of cohprods}.  Further, by construction, $ \prod_C H_i = \{ \tau \in \prod_C G_i \mid \tau(\alpha) = \alpha\}$. Thus, $\prod_C H_i$ is the subgroup of $\prod_C G_i$ that fixes $(\prod_C F)(\alpha)$.

    The fact that this is inclusion reversing follows analogously to the fundamental theorem of Galois theory.   
\end{proof}
\begin{Corollary} \label{third and fourth rows are exact}
    The map $HAut(\prod_C K_i /\prod_C F) \to Gal(\text{Alg}(\prod_C K_i)/\prod_C F)$ has kernel exactly the intersection of all internally finite subgroups of $\prod_C G_i$.  
    \end{Corollary}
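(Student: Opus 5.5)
The plan is to identify $HAut(\prod_C K_i/\prod_C F)$ with $\prod_C G_i$ via Theorem \ref{Thm-HAut-is-Product}. Under this identification the map to $Gal(\text{Alg}(\prod_C K_i)/\prod_C F)$ becomes the restriction map $\eta$ of Theorem \ref{Bad Description of the Kernel}. This is legitimate because the action $\psi_g$ agrees on $1$-tuples with the action used there. So it suffices to show
$$\ker \eta = \bigcap \left\{ \prod_C H_i \;\middle|\; \prod_C H_i \text{ an internal subgroup of finite index}\right\}.$$
I would prove the two inclusions separately, using the Galois correspondence just established together with Theorem \ref{Internal Subgroups Finite Index}.

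For $\ker\eta \subseteq \bigcap$, let $\tau\in\ker\eta$ and let $\prod_C H_i$ be internal of finite index $n$. By the preceding theorem, $\prod_C H_i$ is exactly the stabilizer in $\prod_C G_i$ of some $\alpha\in\text{Alg}(\prod_C K_i)$ of degree $n$; here $\alpha(i)$ is a computably found generator of the fixed field of $H_i$. Since $\tau$ acts trivially on the algebraic part, $\tau(\alpha)=\alpha$, and hence $\tau\in\prod_C H_i$.

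For the reverse inclusion, suppose $\tau$ lies in every internal subgroup of finite index, and let $\alpha\in\text{Alg}(\prod_C K_i)$ be arbitrary. By Lemma \ref{classification of algebraic elements of cohprods}, $\alpha$ has some finite degree $n$. Put $H_i=\{g\in G_i\mid g\alpha(i)=\alpha(i)\}$. These sets are uniformly computable, since $\alpha$ is partial computable and the actions are uniformly computable. The preceding theorem then shows that $\prod_C H_i$ is internal of index $n$ and equals the stabilizer of $\alpha$. Hence $\tau\in\prod_C H_i$, so $\tau(\alpha)=\alpha$. As $\alpha$ was arbitrary, $\tau\in\ker\eta$.

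The main obstacle is that the $H_i$ are only defined for $i$ in $\text{dom}(\alpha)$, which is merely a c.e.\ set containing almost all of $C$. To get a uniformly computable total sequence, I would set $H_i=G_i$ until $\alpha(i)$ converges. This is not literally computable, so instead I would use the partial-sequence convention introduced before the reindexing proposition, which allows sequences indexed over a c.e.\ set $W\supseteq^* C$. Alternatively, one can note that membership only needs to be decided for almost every $i$, so a partial computable presentation suffices for the cohesive-product arguments. A secondary check is consistency with the explicit description in Theorem \ref{Bad Description of the Kernel}. The intersection of all index-$n$ subgroups of $G_i$ is itself uniformly computable, and its cohesive product is internal, though not of finite index. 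I would remark that the two descriptions coincide, but I would not rely on this for the proof.
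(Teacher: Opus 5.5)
Your proof is correct and follows the paper's route. You identify $HAut(\prod_C K_i/\prod_C F)$ with $\prod_C G_i$ and read the kernel off the Galois correspondence between finite sub-extensions of $\text{Alg}(\prod_C K_i)$ and finite-index internal subgroups. Your two inclusions simply unpack the paper's one-line argument (``fixes every finite sub-extension iff lies in every finite-index internal subgroup'').

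Your concern about $\text{dom}(\alpha)$ is well founded, since the paper's correspondence proof silently treats the stabilizers of a partial $\alpha$ as a uniformly computable sequence. Adopting the partial-sequence convention is the right repair, and it is genuinely needed in general. I believe (sketch only, not fully checked) that for $K_i=\Qb(\sqrt{p_1},\dots,\sqrt{p_i})$ and a suitably diagonalized cohesive $C$, there is a $\tau\notin\ker\eta$ lying in every finite-index internal subgroup given by a total sequence. Such a $C$ is necessarily not co-c.e., since for co-c.e.\ $C$ total representatives exist.
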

\begin{proof}
    An element of $ HAut(\prod_C K_i / \prod_C F) $, which is canonically isomorphic to $\prod_C G_i$, is in the kernel of the map if and only if it fixes every finite sub-extension of $\text{Alg}(\prod_C K_i)$, which is true if and only if it is in the intersection of all internal subgroups of $\prod_C G_i $ of finite index.
\end{proof}
This final theorem is a compact summary of the analysis of the relationship between $HAut(\underline{\hspace{4mm}} ,\prod_C F)$ and $Gal(\text{Alg}(\underline{\hspace{4mm}}),\prod_C F)$.

\begin{Theorem}
    Let $K/F$, $\{K_i\}_{i\in \omega}$, $\{G_i\}_{i\in \omega}$ be a fully computable Galois extension, and let $N,M \in \prod_C \Nb$ with $N<M$.  Then, the following diagram commutes:

% https://q.uiver.app/#q=WzAsMTIsWzEsMCwiSEF1dChcXHByb2RfQyBLL1xccHJvZF9DIEYpIl0sWzIsMCwiR2FsKEFsZyhcXHByb2RfQyBLKSAvXFxwcm9kX0MgRikiXSxbMCwxLCJcXHZhcnByb2psaW0gXFxiaWdjYXBfe1xce0hfe04oaSl9XFx9fVxccHJvZF9DIEhfe04oaSl9Il0sWzEsMSwiXFx2YXJwcm9qbGltXFxwcm9kX3tDLE59IEdfaSJdLFsyLDEsIlxcdmFycHJvamxpbSBHYWwoQWxnKFxccHJvZF97QyxOfSBLX2kgKS9cXHByb2RfQyBGKSJdLFswLDIsIlxcYmlnY2FwX3tcXHtIX3tNKGkpfVxcfX1cXHByb2RfQyBIX3tNKGkpfSJdLFsxLDIsIlxccHJvZF97QyxNfSBHX2lcXGNvbmcgSEF1dChcXHByb2Rfe0MsTX0gS19pL1xccHJvZF9DIEYpIl0sWzIsMiwiR2FsKEFsZyhcXHByb2Rfe0MsTX0gS19pICkvXFxwcm9kX0MgRikiXSxbMCwzLCJcXGJpZ2NhcF97XFx7SF97TihpKX1cXH19XFxwcm9kX0MgSF97TihpKX0iXSxbMSwzLCJcXHByb2Rfe0MsTn0gR19pXFxjb25nIEhBdXQoXFxwcm9kX3tDLE59IEtfaS9cXHByb2RfQyBGKSJdLFsyLDMsIkdhbChBbGcoXFxwcm9kX3tDLE59IEtfaSApL1xccHJvZF9DIEYpIl0sWzAsMCwia2VyXFxQc2kiXSxbMCwxLCJcXFBzaSJdLFs1LDZdLFs2LDddLFs3LDEwXSxbOSwxMF0sWzYsOV0sWzUsOF0sWzgsOV0sWzExLDBdLFsxMSwyLCIiLDIseyJsZXZlbCI6Miwic3R5bGUiOnsiaGVhZCI6eyJuYW1lIjoibm9uZSJ9fX1dLFswLDMsIiIsMix7ImxldmVsIjoyLCJzdHlsZSI6eyJoZWFkIjp7Im5hbWUiOiJub25lIn19fV0sWzEsNCwiIiwwLHsibGV2ZWwiOjIsInN0eWxlIjp7ImhlYWQiOnsibmFtZSI6Im5vbmUifX19XSxbNCw3XSxbMyw2XSxbMiw1XSxbMiwzXSxbMyw0XV0=
\[\begin{tikzcd}
	{ker\Psi} & {HAut(\prod_C K/\prod_C F)} & {Gal(\text{Alg}(\prod_C K) /\prod_C F)} \\
	{\varprojlim \bigcap_{\{H_{N(i)}\}}\prod_C H_{N(i)}} & {\varprojlim\prod_{C,N} G_i} & {\varprojlim Gal(\text{Alg}(\prod_{C,N} K_i )/\prod_C F)} \\
	{\bigcap_{\{H_{M(i)}\}}\prod_C H_{M(i)}} & {\prod_{C,M} G_i\cong HAut(\prod_{C,M} K_i/\prod_C F)} & {Gal(\text{Alg}(\prod_{C,M} K_i )/\prod_C F)} \\
	{\bigcap_{\{H_{N(i)}\}}\prod_C H_{N(i)}} & {\prod_{C,N} G_i\cong HAut(\prod_{C,N} K_i/\prod_C F)} & {Gal(\text{Alg}(\prod_{C,N} K_i )/\prod_C F)}
	\arrow[from=1-1, to=1-2]
	\arrow[equals, from=1-1, to=2-1]
	\arrow["\Psi", from=1-2, to=1-3]
	\arrow[equals, from=1-2, to=2-2]
	\arrow[equals, from=1-3, to=2-3]
	\arrow[from=2-1, to=2-2]
	\arrow[from=2-1, to=3-1]
	\arrow[from=2-2, to=2-3]
	\arrow[from=2-2, to=3-2]
	\arrow[from=2-3, to=3-3]
	\arrow[from=3-1, to=3-2]
	\arrow[from=3-1, to=4-1]
	\arrow[from=3-2, to=3-3]
	\arrow[from=3-2, to=4-2]
	\arrow[from=3-3, to=4-3]
	\arrow[from=4-1, to=4-2]
	\arrow[from=4-2, to=4-3]
\end{tikzcd}\]
where the rows are left exact sequences.  We can imagine $0$'s on the left, as is the convention in commutative algebra.
\end{Theorem}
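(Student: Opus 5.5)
The diagram decomposes into three kinds of claim. The vertical maps out of the second row are commuting squares of inverse systems. The top row is identified with the second row. Each horizontal row is left exact. I would verify these in that order, assembling results already proved rather than proving anything new.

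\textbf{Third and fourth rows.} Fix $N<M$ in $\prod_C \Nb$. Each row is exactly the situation of Corollary \ref{third and fourth rows are exact}, applied to the uniformly computable sequence $\{K_{M(i)}/F\}_{i\in\omega}$ (resp.\ $\{K_{N(i)}/F\}$) with groups $G_{M(i)}$. For this I use that $\prod_{C,M} G_i = \prod_C G_{M(i)}$ by definition, that $\prod_{C,M}G_i\cong HAut(\prod_{C,M}K_i/\prod_C F)$ by Theorem \ref{Thm-HAut-is-Product}, and that the kernel of the map to $Gal(\text{Alg}(\prod_{C,M}K_i)/\prod_C F)$ is the intersection of the internal subgroups of finite index $\prod_C H_{M(i)}$. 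This gives left exactness of rows three and four.

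\textbf{Vertical maps from row three to row four.} The middle square is Lemma \ref{HAut-commutes-with-restriction}, together with the commutation of $f_{M,N}$ with the Galois actions from Theorem \ref{Thm-Gal-Extension-Commutes}. The right vertical map is restriction of automorphisms from $\text{Alg}(\prod_{C,M}K_i)$ to $\text{Alg}(\prod_{C,N}K_i)$, which is well defined by Observation \ref{Not so clear after all} and commutes with $\eta_M,\eta_N$ as shown in Theorem \ref{Thm-Gal-Extension-Commutes}. The left vertical map is the restriction of $f_{M,N}$ to the kernels. It lands in the kernel of $\eta_N$ by a diagram chase: if $\eta_M(g)=\mathrm{id}$, then $\eta_N(f_{M,N}g)=\eta_M(g)\upharpoonright=\mathrm{id}$. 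Hence it lands in $\bigcap\prod_C H_{N(i)}$.

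\textbf{Second row and top row.} Because each square commutes, we obtain an inverse system of left exact sequences indexed by $\prod_C\Nb$. Taking $\varprojlim$ preserves left exactness, which can also be checked directly, and this gives row two. The kernel term is the limit of the kernels, as in Theorem \ref{Cohesive Product of Actions Commutes with Quotients}. The maps from row two down to row three are the limit projections, so those squares commute automatically. For the top row:
\begin{itemize}
\item the middle vertical equality is Theorem \ref{HAut-Projective-Limit-Theorem};
\item the right vertical equality is Lemma \ref{Infinite Galois Theory infinite proj lim}, applied to $\text{Alg}(\prod_C K)=\bigcup_N \text{Alg}(\prod_{C,N}K_i)$, which holds by Lemma \ref{sequence-extension-lemma};
\item the left vertical equality then follows: $\ker\Psi$ consists of the $\sigma$ whose components $\sigma_N$ act trivially on every $\text{Alg}(\prod_{C,N}K_i)$, which is exactly $\varprojlim\ker\eta_N$.
\end{itemize}

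\textbf{Main obstacle.} The delicate step is checking that the top-right square commutes. This requires that $\Psi$, restriction of $\sigma$ to $\text{Alg}(\prod_C K)$, agrees with $\varprojlim\eta_N$ under the two identifications. Concretely, I must show $\sigma\upharpoonright \text{Alg}(\prod_{C,N}K_i)=\eta_N(\sigma_N)$. Both sides act on $a\in\prod_{C,N}K_i$ by $\sigma(a)=\sigma_N(a)$, using the construction of $\sigma_N$ as the restriction of $\sigma$ in Theorem \ref{HAut-Projective-Limit-Theorem}, so they agree. With this in place, the remaining verifications are the routine compatibility checks above.
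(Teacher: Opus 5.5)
Your proposal is correct and follows the paper's proof. Rows three and four are left exact by Corollary \ref{third and fourth rows are exact}, row two by left exactness of $\varprojlim$, the top-row identifications come from Theorem \ref{HAut-Projective-Limit-Theorem} and Lemma \ref{Infinite Galois Theory infinite proj lim}, and the commutativity is checked by diagram chasing. The only difference is that you identify $\ker\Psi$ with $\varprojlim\ker\eta_N$ directly, by unwinding components as in Theorem \ref{Cohesive Product of Actions Commutes with Quotients}, where the paper invokes the five lemma; since the rows are only left exact, your direct argument is if anything the cleaner justification.
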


\begin{proof}
    The left exactness of the third and fourth rows follows from Theorem \ref{third and fourth rows are exact}. The left exactness of the second row follows from the left exactness of the third and fourth rows, as the projective limit functor is left exact. The isomorphism between $Gal(\text{Alg}(\prod_C K)/\prod_C F) $ and $\varprojlim_{N\in \prod_C \Nb} Gal(\text{Alg}(\prod_{C,N}K_i ) /\prod_C F$ follows from Lemma \ref{Infinite Galois Theory infinite proj lim}. The isomorphism between $HAut(\prod_C K/\prod_CF)$ and $\varprojlim_{N\in \prod_C \Nb} \prod_{C,N} G_i$ follows from Theorems \ref{Thm-HAut-is-Product} and \ref{HAut-Projective-Limit-Theorem}. The isomorphism from $ker \Psi$ to $\varprojlim \bigcap_{\{H_{N(i)}\}}\prod_C H_{N(i)}$ follows from the five lemma of homological algebra.  Thus the first row is left exact.  The commutativity now follows from diagram chasing.  
\end{proof}

\section*{Acknowledgments} The last three authors gratefully acknowledge support by FRG NSF grant DMS-2152095. Their research at the Hausdorff Institute for Mathematics, Bonn in Fall 2025, was partially funded by the Deutsche Forschungsgemeinschaft (DFG, German Research Foundation) under Germany's Excellence Strategy – EXC-2047/1–390685813.

\bibliographystyle{amsplain}
\bibliography{Paper}

\vfill

\end{document}